\newcommand{\br}{\mathbb{R}}
\newcommand{\bc}{\mathbb C}
\newcommand{\bz}{\mathbb Z}
\newcommand{\bn}{\mathbb N}
\newcommand{\bq}{\mathbb Q}
\newcommand{\bS}{\mathbb S}
\newcommand{\cE}{\mathcal E}
\newcommand{\ep}{\epsilon}
\newcommand{\vp}{\varphi}
\newcommand{\ssm}{\smallsetminus}
\newcommand{\sE}{\mathscr E}
\newcommand{\Enp}{\sE_{\mathrm{np}}}
\newcommand{\Eno}{\sE_{\mathrm{no}}}
\newcommand{\sU}{\mathscr U}
\newcommand{\sV}{\mathscr V}
\newcommand{\sW}{\mathscr W}
\DeclareMathOperator{\mcg}{MCG}
\DeclareMathOperator{\Ends}{\cE}
\newcommand{\Homeo}{H}
\DeclareMathOperator{\supp}{supp}
\providecommand{\co}{\colon\thinspace}
\newtheorem{Thm}{Theorem}[section]
\newtheorem{Thm*}{Theorem}
\newtheorem{Prop}[Thm]{Proposition}
\newtheorem{Lem}[Thm]{Lemma}
\newtheorem{Cor}[Thm]{Corollary}
\newtheorem{Cor*}[Thm*]{Corollary}
\newtheorem{Conj}[Thm]{Conjecture}
\newtheorem*{MainThm1}{Theorem~\ref{thm:main1}}
\newtheorem*{MainThm2}{Theorem~\ref{thm:main2}}
\newtheorem*{MainThm3}{Theorem~\ref{thm:commutator}}
\newtheorem*{MainThm4}{Theorem~\ref{thm:cb}}
\newtheorem*{MainThm5}{Theorem~\ref{thm:rokhlin}}
\newtheorem*{MainThm6}{Theorem~\ref{thm:distortion}}
\theoremstyle{definition}
\newtheorem{Def}[Thm]{Definition}
\newtheorem*{Ex*}{Examples}
\newtheorem*{Rem*}{Remark}
\newtheorem{Problem}[Thm]{Problem}
\numberwithin{equation}{section}
\title{Homeomorphism groups of self-similar 2-manifolds}
\author{Nicholas~G.~Vlamis}
\address{Department of Mathematics \\ CUNY Graduate Center \\ New York, NY 10016, and \newline Department of Mathematics \\ CUNY Queens College \\ Flushing, NY 11367}
\email{nvlamis@gc.cuny.edu}
\begin{document}  

\begin{abstract}
The class of self-similar 2-manifolds consists of manifolds exhibiting a type of homogeneity akin to the 2-sphere and the Cantor set, and includes both the 2-sphere and the 2-sphere with a Cantor set removed.
This chapter aims to provide a narrative thread between recent results on the structure of homeomorphism groups/mapping class groups of self-similar 2-manifolds, and also connections to classical structural results on the homeomorphism group of the 2-sphere and the Cantor set. 
In order to do this, we provide a survey of recent results, an exposition on classical results about homeomorphism groups, provide a treatment of the structure of stable sets, and prove extensions/strengthenings  of the recent results surveyed.

Of particular note, we establish the following theorems:
(1) A characterization of homeomorphisms of (orientable) perfectly self-similar 2-manifolds that normally generate the group of (orientation-preserving) homeomorphisms---a strengthening of a result of Malestein--Tao.
(2) The homeomorphism group of a perfectly self-similar 2-manifold is strongly distorted---an extension of a result of Calegari--Freedman for spheres. 
(3) The homeomorphism group of a perfectly tame 2-manifold is Steinhaus, and hence has the automatic continuity property---an extension of a result of Mann in dimension two---providing the first examples of homeomorphism groups of infinite-genus 2-manifolds with the Steinhaus property.
\end{abstract}

\maketitle

\vspace{-0.5in}


\tableofcontents
\clearpage

\section{Introduction}

Over the last decade, there has been growing interest in the study of mapping class groups of infinite-type 2-manifolds, often referred to as \emph{big mapping class groups}\index{mapping class group!big}.  
Recently, Mann and Rafi's work \cite{MannLarge} has provided a framework for partitioning infinite-type 2-manifolds into several natural classes, which has led to a flurry of activity. 
We focus here on a particular subclass of 2-manifolds that exhibit a high degree of homogeneity at infinity, which leads to their homeomorphism groups and mapping class groups having ``low complexity'' from the viewpoint of their geometry and subgroup structure (as compared to other 2-manifolds). 
The recent progress in this subclass connects back to the work of Anderson, which has been adapted to this setting.

In 1958, Anderson \cite{AndersonAlgebraic} introduced a simple yet ingenious technique to establish the algebraic simplicity of a class of transformation groups, including the group of orientation-preserving self-homeomorphisms of the 2-sphere and the group of self-homeomorphisms of the Cantor set.
In 1947, Ulam and von Neumann announced this same result for the 2-sphere, but it was never published. 
It is worth noting that Anderson's method fails in the context of diffeomorphism groups, and the analogous theorem in this setting is due to Thurston \cite{ThurstonSimple}.
Thurston did not publish his proof, but the details are presented in \cite{Banyaga}.
A short proof has been given by Mann (see \cite{MannShort} and the set of notes \cite[Section~2]{MannMinicourse}).

\subsection*{The main object of study: self-similar 2-manifolds}

The subclass of 2-manifolds we restrict ourselves to are known as self-similar 2-manifolds, and they exhibit a high-degree of homogeneity at infinity, which is meant to be captured in the following definition. 

\begin{Def}[Self-similar 2-manifold\index{2-manifold!self-similar}]
A subset \( K \) of a manifold \( M \) is \emph{displaceable}\index{displaceable} if there exists a homeomorphism \( f\co M \to M \) such that \( f(K) \cap K = \varnothing \). 
A 2-manifold \( M \) is \emph{self-similar} if every proper compact subset is displaceable and, for any separating simple closed curve \( c \) in \( M \), there is a component of \( M \ssm c \) that is homeomorphic to \( M \) with a point removed. 
We further partition the class of self-similar 2-manifolds into two subclasses: 
A self-similar 2-manifold \( M \) is \emph{perfectly self-similar}\index{2-manifold!self-similar!perfectly} if \( M \#M \) is homeomorphic to \( M \); otherwise, it is \emph{uniquely self-similar}\index{2-manifold!self-similar!uniquely}. 
\end{Def}

It follows from the Jordan--Schoenflies theorem that the 2-sphere and the plane are self-similar; moreover, the 2-sphere is perfectly self-similar, but the plane is not. 
These two examples are the only finite-type self-similar 2-manifolds; all other cases must necessarily have infinitely many ends or infinite genus. 
The simplest infinite-type examples of self-similar 2-manifolds are the 2-sphere with a Cantor set removed, the plane with an infinite discrete set removed, and  the orientable one-ended infinite-genus 2-manifold. 
Among these examples, only the 2-sphere with the Cantor set removed is perfectly self-similar. 
We encourage the reader to keep these examples in mind throughout the chapter, as they capture the essence of self-similar 2-manifolds.

More generally, a self-similar 2-manifold can be obtained by removing a compact countable subset of Cantor--Bendixson degree one from the 2-sphere; this procedure results in an uncountably family of pairwise non-homeomorphic  self-similar 2-manifolds (all of which are uniquely self-similar).  
If one imagines blowing up the highest rank accumulation point in the previous examples to a Cantor set, an uncountable collection of perfectly self-similar 2-manifolds is obtained.

The original definition of a self-similar 2-manifold was given by Malestein--Tao in \cite{MalesteinSelf} and was framed in terms of the topological ends of the manifold, which in turn used the terminology introduced by Mann--Rafi in \cite{MannLarge}. 
The advantage of the definition above is that we do not rely on the notion of an end of a manifold, and hence the definition clearly includes the 2-sphere, which allows us to see how the recent developments in the theory of homeomorphism groups of (perfectly) self-similar 2-manifolds are a natural extension of results regarding the homeomorphism group of the 2-sphere.

\subsection*{Goals and outline}

Many of the recent articles investigating the structure of mapping class groups of self-similar 2-manifolds have been released in a brief period of time, and hence, this chapter benefits by cross-pollinating ideas between the articles. 
For instance, we are able to give a stronger version of a result of Malestein--Tao \cite{MalesteinSelf} using ideas from Calegari--Chen \cite{CalegariNormal} (see Theorem~\ref{thm:main2}). 
This is an example of our first goal: strengthen and/or extend the current results in the literature.

Our second goal is to generalize recent results about mapping class groups to the setting of homeomorphism groups (e.g., Theorem~\ref{thm:rokhlin}), and in the process, provide the reader with a background in the topology of homeomorphism groups.
This provides a narrative thread between recent progress in the study of big mapping class groups with the history of studying homeomorphism groups. 

The next goal, realized through various corollaries, is to exhibit a clear connection between the homeomorphism groups of 2-manifolds and the homeomorphism groups of second-countable Stone spaces, which factors through the classification of surfaces.  
In particular, many results about the homeomorphism group of the 2-sphere generalize naturally to homeomorphisms groups of perfectly self-similar 2-manifolds, including the 2-sphere with a Cantor set removed. 
We can therefore see how a single proof can simultaneously establish properties of the homeomorphisms group of the 2-sphere and the Cantor set. 

Our final goal is to collect, in one place, useful tools and results for understanding and working with end spaces of 2-manifolds and their stable sets.
These tools are used throughout the chapter in various arguments.

With these goals in mind, the chapter has been written in a nearly self-contained fashion relying in most places only on a minimal set of prerequisites, including point-set topology, algebraic topology (e.g., \cite[Chapters~0,  1, and 2]{Hatcher}), and the basics of topological group theory (e.g., \cite[Chapter 1]{Zippin}).
However, this is not to imply that the chapter makes for easy reading, as many ideas are introduced quickly and treated concisely. 
In the places where providing all the required details would send us too far afield, we provide appropriate references for the reader. 
We also point the reader to \cite{AramayonaVlamis}, a chapter in an earlier volume of this series written by Aramayona and the author giving an introduction to big mapping class groups.

The outline of the paper is as follows:
\begin{itemize}
\item Section~\ref{sec:overview} provides a survey of recent results on homeomorphism groups and mapping class groups of self-similar 2-manifolds.
\item Section~\ref{appendix:topology} gives an overview of the key topological properties of 2-manifolds, including their classification up to homeomorphism. 
\item Section~\ref{sec:stable-sets} introduces tools for studying end spaces of 2-manifolds, and in particular, develops the structure theory of stable sets.
\item Section~\ref{sec:Anderson} introduces the notion of Freudenthal subsurfaces and introduces a version of Anderson's method that is well suited to our purposes.
\item Section~\ref{sec:homeomorphism-groups} is an exposition on fundamental properties of homeomorphism groups of 2-manifolds.
\item Section~\ref{sec:self-similar} gives several equivalent notions of self-similarity. 
\item Sections~7--12 provide proofs of the various theorems surveyed in Section~\ref{sec:overview}.

\end{itemize}

\subsection*{Acknowledgements}

The proof of Theorem~\ref{thm:mcg_defined} is due to Mladen Bestvina, as communicated to the author by Jing Tao.
The author thanks George Domat for pointing out that automatic continuity implies the existence of a unique Polish topology; Kathryn Mann for many helpful conversations;  Diana Hubbard and Chaitanya Tappu for comments on an earlier draft; and the anonymous reader for their comments. 
The author is supported by NSF Grant DMS--2212922 and PSC-CUNY Award \#~64129-00 52.
The author was also partially supported by the Faculty Fellowship Publication Program and thanks his cohort for reading a portion of a draft of this chapter. 


\section{Overview of results}
\label{sec:overview}

Let us begin by setting basic notation.
In what follows, a \emph{2-manifold}\index{2-manifold} (resp.,~\emph{surface}\index{surface}) refers to a connected second-countable Hausdorff topological space in which every point admits an open neighborhood homeomorphic to the plane \( \br^2 \) (resp., a closed half-plane, that is,  \( \{ (x,y) \in \br^2 : y \geq 0\} \)).
In particular, a 2-manifold is a surface whose boundary is empty.
Note that every 2-manifold is metrizable. 
A 2-manifold is of \emph{finite type}\index{2-manifold!finite-type} if it can be realized as the interior of a compact surface; otherwise, it is of \emph{infinite type}\index{2-manifold!infinite-type}. 
A \emph{simple closed curve}\index{simple closed curve} on a surface \( S \) is the image of a topological embedding of the circle in \( S \); it is \emph{separating}\index{simple closed curve!separating} if its complement is disconnected. 

Given a 2-manifold \( M \), let \( \mathrm{Homeo}(M) \) be the \emph{homeomorphism group}\index{group}\index{group!homeomorphism} of \( M \), that is, the group of homeomorphisms \( M \to M \).
Equipped with the compact-open topology\index{compact-open topology}, \( \mathrm{Homeo}(M) \) is a \emph{Polish group}\index{group!Polish}, that is, a separable and completely metrizable topological group (see Section~\ref{sec:homeomorphism-groups} for more details).
If \( M \) is orientable,  \( \mathrm{Homeo}^+(M) \) denotes the subgroup of \( \mathrm{Homeo}(M) \) consisting of orientation-preserving homeomorphisms, which, as a closed subgroup, is also Polish. 

\textbf{Notation.}
Let \( M \) be a 2-manifold. 
If \( M \) is orientable, we set \( H(M) = \mathrm{Homeo}^+(M) \); otherwise, we set \( H(M) = \mathrm{Homeo}(M) \).

The \emph{mapping class group}\index{mapping class group} of a 2-manifold \( M \), denoted \( \mcg(M) \), is the quotient group \( H(M) / H_0(M) \), where \( H_0(M) \) is the connected component of the identity in \( H(M) \), or equivalently, \( \mcg(M) \) is the group of isotopy classes of elements in \( H(M) \); this equivalence is established in Section~\ref{sec:homeomorphism-groups}. 
Since \( H_0(M) \) is a closed subgroup of \( H(M) \), it follows that \( \mcg(M) \) is a Polish topological group when equipped with the quotient topology  (see \cite[Theorem~8.19]{KechrisClassical}). 

We write \( 2^\bn \) to represent the countable product \( \prod_{n\in\bn} \{0,1\} \) equipped with the product topology, where \( \{0,1\} \) is given the discrete topology. 
With this topology, \( 2^\bn \) is homeomorphic to the Cantor set. 

In what follows, we survey and motivate a number of recent results.
In the following sections, we provide proofs of all the numbered theorems and corollaries presented in this section.
For each of the numbered theorems, we are able to give some extension of the original result in the literature, such as extending to a larger classes of surfaces and including non-orientable surfaces, or generalizing from mapping class groups to homeomorphism groups. 
Despite these extensions, the proofs generally follow the existing proofs in the literature with appropriate adaptation to our setting.


\subsection{Normal generation and purity}
\label{subsec:normal}

As noted in the introduction, Anderson \cite{AndersonAlgebraic} proved that \( H(\bS^2) \) and \( \mathrm{Homeo}(2^\bn) \) are \emph{simple groups}\index{group!simple} (i.e., they only have two normal subgroups).  
The first theorem we introduce is a natural generalization of these results to the setting of perfectly self-similar 2-manifolds (and their end spaces) and strengthens a result of Malestein--Tao \cite[Theorem~A]{MalesteinSelf}. 
The theorem relies on what we call \emph{Anderson's method} (see Section~\ref{sec:Anderson} and, more specifically, Proposition~\ref{prop:main2}).
The version of Anderson's method presented is suited to our purposes and is a straightforward generalization of Calegari--Chen's \cite[Lemma~1]{CalegariNormal}, which itself is a variation of Anderson's original technique presented in \cite{AndersonAlgebraic}.

The \emph{commutator}\index{commutator} of two elements \( g \) and \( h \) in a group is the group element \( [g,h]=ghg^{-1}h^{-1} \). 
A group \( G \) is \emph{perfect}\index{group!perfect} if it is equal to its commutator subgroup  \( [G,G]  \)---the subgroup generated by the commutators in \( G \).
A group \( G \) is \emph{uniformly perfect}\index{group!perfect!uniformly} if there exits \( p \in \bn \) such that every element of \( G \)  can be written as a product of \( p \) commutators; the minimal such \( p \) is called the \emph{commutator width\index{commutator width} of \( G \)}. 
An element \( g \) of a group \emph{normally generates}\index{normal generator} the group if every element can be expressed as a product of conjugates of \( g \).

To state the theorem, we need the notion of a half-space: a closed subset \( D \) of a self-similar 2-manifold \( M \) is a \emph{half-space}\index{half-space} if \( D \) is a subsurface of \( M \) with connected compact boundary and the closure of \( M \ssm D \) is homeomorphic to \( D \) (see Definition~\ref{def:half-space}). 
For example, in the 2-sphere, every embedded closed disk is a half-space.

\begin{MainThm2} 
Let \( M \) be a perfectly self-similar 2-manifold, and let \( G  \) denote either \( H(M) \) or \( \mcg(M) \).
\begin{enumerate}[(1)]
\item \( G \) is uniformly perfect and its commutator width is at most two. 
\item
If \( g \in G \) displaces a half-space of \( M \), then every element of \( G \)  can be expressed as a product of  at most eight conjugates of \( g \) and \( g^{-1} \). In particular, \( G \) is normally generated by \( g \).
\item
If \( n \in \bn \) such that \( n \geq 2 \), then every element of \( G \) can be expressed as the product of at most eight elements of order \( n \).
\end{enumerate}
\end{MainThm2}

In the case of the 2-sphere minus a Cantor set, Theorem~\ref{thm:main2}(1) was established in a blog post of Calegari, and Theorem~\ref{thm:main2}(2) was established by Calegari--Chen \cite{CalegariNormal}. 
Under the additional assumption of orientability, Malestein--Tao \cite{MalesteinSelf} established Theorem~\ref{thm:main2}(1) (with commutator width three instead of two) and Theorem~\ref{thm:main2}(3) restricted to \( n = 2 \). 
As already noted, the 2-sphere is perfectly self-similar, and moreover, every nontrivial homeomorphism must displace a disk (i.e.,~a half-space).
Therefore, by Theorem~\ref{thm:main2}, every nontrivial element of \( H(\bS^2) \) normally generates the group, yielding:

\begin{Cor}[Anderson \cite{AndersonAlgebraic}]
\label{cor:simple}
\( H(\bS^2) \) is a simple group.
\qed
\end{Cor}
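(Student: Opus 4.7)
The plan is to deduce simplicity directly from Theorem~\ref{thm:main2}(2) by verifying the two hypotheses for \( M = \bS^2 \): that \( \bS^2 \) is perfectly self-similar, and that every nontrivial \( g \in H(\bS^2) \) displaces a half-space. The first hypothesis is recorded in the paragraph following the definition of self-similarity (the Jordan--Schoenflies theorem gives self-similarity, and \( \bS^2 \# \bS^2 \cong \bS^2 \) yields the ``perfectly'' part), so the substantive task is to identify enough half-spaces in \( \bS^2 \) and to produce a displaced one from any nontrivial homeomorphism.

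First I would point out that every tame closed disk \( D \subseteq \bS^2 \) is a half-space in the sense of Definition~\ref{def:half-space}: its boundary is a single simple closed curve, and by the Jordan--Schoenflies theorem the closure of \( \bS^2 \ssm D \) is also a closed disk, hence homeomorphic to \( D \). Next, given a nontrivial \( g \in H(\bS^2) \), I would pick a point \( p \in \bS^2 \) with \( g(p) \neq p \), choose disjoint open neighborhoods \( U \ni p \) and \( V \ni g(p) \), and then use continuity of \( g \) at \( p \) to shrink \( U \) if necessary so that \( g(U) \subseteq V \). Inside such a \( U \) I can select a tame closed disk \( D \ni p \); then \( D \cap g(D) \subseteq U \cap V = \varnothing \), so \( g \) displaces the half-space \( D \).

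With these two observations in hand, Theorem~\ref{thm:main2}(2) applies to \( g \): every element of \( H(\bS^2) \) is a product of at most eight conjugates of \( g^{\pm 1} \), so \( g \) normally generates \( H(\bS^2) \). To conclude simplicity, I would let \( N \trianglelefteq H(\bS^2) \) be a normal subgroup and observe that if \( N \) contains any nontrivial element \( g \), then every conjugate of \( g^{\pm 1} \) lies in \( N \), and hence by normal generation \( N = H(\bS^2) \); thus \( N \) is either trivial or the whole group.

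There is no genuine obstacle here beyond verifying the displacement hypothesis; the only mild subtlety is being careful that a small enough neighborhood of \( p \) actually has image disjoint from itself under \( g \), which is handled by the continuity argument above. Everything else is an immediate invocation of Theorem~\ref{thm:main2}(2).
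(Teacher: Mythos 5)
Your argument is correct and follows exactly the route the paper intends: the proof is the \qed following the sentence ``As already noted, the 2-sphere is perfectly self-similar, and moreover, every nontrivial homeomorphism must displace a disk (i.e., a half-space). Therefore, by Theorem~\ref{thm:main2}, every nontrivial element of \( H(\bS^2) \) normally generates the group.'' You have simply filled in the (correct) details the paper leaves to the reader, namely that tame closed disks are half-spaces by Jordan--Schoenflies and that continuity at a non-fixed point produces a displaced disk.
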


A \emph{quasimorphism}\index{quasimorphism} of a group \( G \) is a function \( q \co G \to \br \) such that there exists \( d \in \br_+ \) satisfying \( |q(gh)-q(g)-q(h)| < d \) for all \( g,h \in G \) (see \cite[Chapter~2]{scl} for an introduction to quasimorphisms and their properties, and \cite{KotschikWhat} for an informal introduction to quasimorphisms and additional references).
By Theorem~\ref{thm:main2},  if \( M \) is a perfectly self-similar 2-manifold, then every element of \( H(M) \) is the product of at most two commutators; this implies that every quasimorphism of \( H(M) \) is bounded. 

\begin{Cor}
\label{cor:quasimorphism}
If \( M \) is a perfectly self-similar 2-manifold, then every quasimorphism of \( H(M) \) (resp., \( \mcg(M) \)) is bounded. \qed
\end{Cor}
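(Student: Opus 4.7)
The plan is to deduce the corollary as a purely group-theoretic consequence of Theorem~\ref{thm:main2}(1): if every element of a group $G$ is a product of at most two commutators, then every quasimorphism $q \co G \to \br$ with defect $d$ must be bounded. I would carry out the following standard chain of estimates, working in the group $G = H(M)$ (and identically in $G = \mcg(M)$, since Theorem~\ref{thm:main2}(1) applies to both).

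First I would collect the elementary bounds on a quasimorphism $q$ with defect $d$. Setting $g = h = e$ in the defining inequality yields $|q(e)| \leq d$. Next, the inequality $|q(g g^{-1}) - q(g) - q(g^{-1})| \leq d$ together with the previous estimate gives $|q(g) + q(g^{-1})| \leq 2d$ for every $g \in G$. Now I would bound $q$ on commutators: writing $[g,h] = ghg^{-1}h^{-1}$ and applying the defect inequality three times gives
\[
\bigl|q([g,h]) - q(g) - q(h) - q(g^{-1}) - q(h^{-1})\bigr| \leq 3d,
\]
so the inverse bound implies $|q([g,h])| \leq 7d$ for all $g,h \in G$.

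Finally, by Theorem~\ref{thm:main2}(1), any $g \in G$ can be written as $g = c_1 c_2$ with $c_1, c_2$ commutators. One more application of the defect inequality yields
\[
|q(g)| \leq |q(c_1)| + |q(c_2)| + d \leq 15 d,
\]
so $q$ is uniformly bounded on $G$. Applying this to both $H(M)$ and $\mcg(M)$ finishes the proof. There is no real obstacle here: the only nontrivial input is Theorem~\ref{thm:main2}(1), which has already been established, and the rest is the standard observation that bounded commutator width forces quasimorphisms to be bounded.
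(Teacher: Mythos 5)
Your proof is correct and takes essentially the same route the paper intends: appeal to Theorem~\ref{thm:main2}(1) that every element of $H(M)$ (resp.\ $\mcg(M)$) is a product of at most two commutators, and then invoke the standard estimate that a quasimorphism is bounded on any group of bounded commutator width. The paper leaves this as immediate (hence the \qed after the statement), whereas you have spelled out the elementary chain of inequalities, which is the correct filling-in.
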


Informally, Corollary~\ref{cor:quasimorphism} implies that \( H(M) \) admits no ``interesting'' geometric actions; we will not be more explicit here, as we will prove a stronger geometric restriction in the following subsection. 

Next, let us explain how Theorem~\ref{thm:main2} is also a generalization of Anderson's theorem stating that \( \mathrm{Homeo}(2^\bn) \) is simple.
An end of a perfectly self-similar 2-manifold is \emph{maximally stable}\index{stable!maximally} if it corresponds to a nested sequence of half-spaces (see Definition~\ref{def:half-space} for the general definition of a maximally stable end\index{end!maximally stable} and see Section~\ref{appendix:topology} for an introduction to ends). 
Recall that a 2-manifold is \emph{planar}\index{2-manifold!planar} if it is homeomorphic to an open subset of \( \br^2 \). 
Let \( E \) be the end space of a planar perfectly self-similar 2-manifold \( M \).  
From Richards's work on the classification of surfaces \cite{RichardsClassification}, we obtain the following two facts (see Section~\ref{appendix:topology} for more details): 
(1) The end space of a planar 2-manifold is a second-countable Stone space\footnote{A \emph{Stone space}\index{Stone space} is a compact zero-dimensional Hausdorff topological space.}, and conversely, every second-countable Stone space can be realized as the end space of a planar 2-manifold.
(2) The action of \( H(M) \) on \( E \) induces an epimorphism of \( H(M) \) onto \( \mathrm{Homeo}(E) \).

We say that a second-countable Stone space \( E \)  is \emph{(perfectly/uniquely) self-similar}\index{Stone space!self-similar} if it is realized as the end space of a planar (perfectly/uniquely) self-similar 2-manifold \( M \) and that \( e \in E \) is maximally stable if it is a maximally stable end of \( M \)  (intrinsic definitions can be given with the language introduced in Section~\ref{sec:stable-sets}). 
Together with Theorem~\ref{thm:main2}, the above facts yield the following corollary:

\begin{Cor}
Let \( E \) be a perfectly self-similar second-countable Stone space.
Every element of \( \mathrm{Homeo}(E) \) that permutes the maximally stable points nontrivially normally generates \( \mathrm{Homeo}(E) \).
In particular, \( \mathrm{Homeo}(2^\bn) \) is simple. 
\qed
\end{Cor}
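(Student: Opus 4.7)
The plan is to reduce the corollary to Theorem~\ref{thm:main2}(2) via the epimorphism $\pi\co H(M) \to \mathrm{Homeo}(E)$ recalled in the paragraph preceding the corollary, where $M$ is any planar perfectly self-similar 2-manifold realizing $E$ as its end space.

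Suppose $\phi \in \mathrm{Homeo}(E)$ moves a maximally stable end $e$ to a distinct end $\phi(e)$. Since $e$ is maximally stable, it admits a neighborhood basis in $E$ consisting of sets of the form $D \cap E$ for half-spaces $D \subset M$; I would first pick one such $D$ small enough that $\phi(D \cap E) \cap (D \cap E) = \varnothing$. Next I would produce $g \in H(M)$ with $\pi(g) = \phi$ and $g(D) \cap D = \varnothing$: start with any lift $g_0$ of $\phi$; observe that $g_0(D)$ is a half-space with end set $\phi(D \cap E)$ disjoint from $D \cap E$, so $g_0(D) \cap D$ is compact in $M$; then compose $g_0$ with a compactly supported homeomorphism $h$---which automatically lies in the kernel of $\pi$---chosen to push $g_0(D)$ off of $D$ inside a large enough compact subsurface. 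Theorem~\ref{thm:main2}(2) then gives that $g$ normally generates $H(M)$, and projecting through $\pi$ shows that $\phi$ normally generates $\mathrm{Homeo}(E)$.

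For the final assertion about $\mathrm{Homeo}(2^\bn)$, I would realize $2^\bn$ as the end space of the perfectly self-similar 2-manifold $\bS^2 \ssm 2^\bn$. By Brouwer's characterization of the Cantor set, every nonempty proper clopen subset of $2^\bn$ is itself homeomorphic to $2^\bn$ and hence appears as $D \cap E$ for some half-space $D$; in particular every point of $2^\bn$ is maximally stable. Any nontrivial $\phi \in \mathrm{Homeo}(2^\bn)$ therefore moves a maximally stable point and normally generates the group by the first part, whence $\mathrm{Homeo}(2^\bn)$ is simple.

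The main obstacle will be arranging the lift $g$ to displace the half-space $D$ inside $M$ rather than merely displace its end set inside $E$. This rests on the standard surface-topology fact that, given two half-spaces in a 2-manifold with disjoint end sets, their overlap is compact and can be eliminated by a compactly supported ambient homeomorphism; once this push-off is in hand, everything else is bookkeeping built on the dictionary between planar 2-manifolds and second-countable Stone spaces recalled in the excerpt.
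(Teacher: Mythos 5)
Your high-level strategy — realize $E$ as $\sE(M)$ for a planar perfectly self-similar $M$, lift $\phi$ through the epimorphism $\pi\co H(M) \to \mathrm{Homeo}(E)$, invoke Theorem~\ref{thm:main2}(2), and project — is exactly right, and your treatment of the simplicity of $\mathrm{Homeo}(2^\bn)$ via Brouwer's theorem is also correct. However, the way you arrange for the lift to displace a half-space introduces an unnecessary and unproven sub-claim. You fix a half-space $D$ first, take an arbitrary lift $g_0$, and then want a \emph{compactly supported} $h$ with $(h\circ g_0)(D)\cap D=\varnothing$. The ``push-off'' you invoke — that two half-spaces with disjoint end sets can always be disjoined by a compactly supported ambient homeomorphism — is plausible but not a one-line fact; it amounts to disjoining two connected subsurfaces of a compact piece $\Sigma$ rel $\partial\Sigma$, and your write-up treats it as standard without justifying it. You flag it as the main obstacle, correctly, but then don't resolve it.

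The cleaner route, which is what the proof of the Purity Theorem (Corollary~\ref{cor:countable}) uses and what the paper's $\qed$ is implicitly invoking, is to \emph{not} fix $D$ in advance. Take any lift $g$ of $\phi$; since $\phi$ moves a maximally stable point $e$, so does $g$. Then $g$ extends continuously to the Freudenthal compactification $M^*$ of $M$, and a maximally stable end $e$ has a neighborhood basis in $M^*$ consisting of (closures of) half-spaces — this is precisely the parenthetical remark in the proof of Corollary~\ref{cor:countable}, and follows from $\sE(M)$ being a stable neighborhood of $e$ plus Lemma~\ref{lem:basis}, Proposition~\ref{prop:well-defined}, and the classification of surfaces. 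By continuity and the fact that $g(e)\neq e$ are separated by disjoint open sets in $M^*$, one finds a half-space $D$ (now depending on the chosen lift $g$) with $g(D)\cap D=\varnothing$. This sidesteps the push-off entirely. The rest of your argument then goes through unchanged: Theorem~\ref{thm:main2}(2) gives that $g$ normally generates $H(M)$, and applying $\pi$ shows $\phi$ normally generates $\mathrm{Homeo}(E)$.

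Minor notational point: $E=\sE(M)$ is not a subset of $M$, so expressions like ``$D\cap E$'' should be $\widehat D$.
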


As another corollary, we give an extension of the Purity Theorem of Calegari--Chen \cite{CalegariNormal}.
The Purity Theorem below completely characterizes the normal generators of \( H(M) \) for a non-compact perfectly self-similar 2-manifold, and therefore, is the natural generalization of Anderson's theorem, Corollary~\ref{cor:simple}, to the non-compact setting.

\begin{Cor}[Purity Theorem]
\label{cor:countable}
Let \( M \) be a non-compact perfectly self-similar 2-manifold, and let \( G \) denote either \( H(M) \) or \( \mcg(M) \).
Then, every proper normal subgroup of \( G \) is contained in the kernel of the action of \( G \) on the set of maximally stable ends of \( M \). 
In particular, an element of \( H(M) \) normally generates \( H(M) \) if and only if it induces a nontrivial permutation of the maximally stable ends of \( M \).
Moreover, \( G \) does not contain a proper normal subgroup of countable index. 
\end{Cor}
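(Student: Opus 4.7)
The plan is to reduce all three assertions to Theorem~\ref{thm:main2}(2) via a key lemma: any element $g \in G$ that acts nontrivially on the set of maximally stable ends of $M$ must displace some half-space of $M$. Once this lemma is in hand, parts (a) and (b) are essentially formal consequences, and (c) requires one further input, namely that the image of $G$ in the permutation group of the maximally stable ends is uncountable.

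To prove the key lemma, I will take a maximally stable end $e$ with $e' := g(e) \neq e$ and choose a nested sequence of half-spaces $D_1 \supset D_2 \supset \cdots$ whose closures in the end compactification $M \cup E$ form a neighborhood basis of $e$; in particular $\bigcap_n \overline{D_n} = \{e\}$. Since every self-homeomorphism of $M$ extends uniquely to a self-homeomorphism of $M \cup E$, the nested closed sets $\overline{g(D_n)}$ shrink down to $\{e'\}$. As $M \cup E$ is compact Hausdorff and $e \neq e'$, the sets $\overline{D_n}$ and $\overline{g(D_n)}$ must be disjoint for all sufficiently large $n$: otherwise $\bigcap_n(\overline{D_n} \cap \overline{g(D_n)})$ would be a non-empty decreasing intersection of closed sets in a compact space, yet contained in $\{e\} \cap \{e'\} = \varnothing$. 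In particular $g(D_n) \cap D_n = \varnothing$, so $g$ displaces the half-space $D_n$.

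Given the key lemma, part (a) is immediate: if some element $g$ of a proper normal subgroup $N$ moved a maximally stable end, then $g$ would displace a half-space, and Theorem~\ref{thm:main2}(2) would force $N = G$, a contradiction. Hence every proper normal subgroup lies in the kernel $K$ of the $G$-action on the maximally stable ends. Part (b) then follows from (a) after observing that $K$ is itself a proper normal subgroup---by self-similarity of $M$, any two distinct maximally stable ends can be interchanged by an element of $G$---so that an element of $H(M)$ normally generates $H(M)$ if and only if it is not in $K$, if and only if it permutes the maximally stable ends nontrivially.

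For (c), I argue by contradiction: a proper normal subgroup $N$ of countable index is contained in $K$ by (a), so $[G:K] \leq [G:N]$ is countable. However, for a non-compact perfectly self-similar 2-manifold the set of maximally stable ends is uncountable---a structural fact established in the paper's development of stable sets---and the self-similarity of $M$ guarantees that the image of $G$ in the permutation group of this end set is uncountable: for instance, it realizes the ``swap'' associated to every clopen bisection of the maximally stable ends, of which there are a continuum. This contradicts the countability of $[G:K]$, so no such $N$ exists. The principal obstacle is the key lemma itself---concretely, arranging that one works in the end compactification $M \cup E$ rather than in $M$, so that nested closed sets really force $D_n \cap g(D_n) = \varnothing$ inside $M$; the remaining deductions are formal consequences of Theorem~\ref{thm:main2}(2) together with the structure of self-similar end spaces.
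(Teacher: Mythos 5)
Your overall structure is the same as the paper's: your key lemma is exactly the observation the paper compresses into ``by continuity, if \( g \in G \setminus \Gamma \), then there exists a half-space \( D \) for which \( g(D) \cap D = \varnothing \),'' and your argument in the Freudenthal compactification correctly supplies those details; parts (a) and (b) then follow as in the paper via Theorem~\ref{thm:main2}(2).

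For the uncountable-index claim, however, your uncountability argument has a counting error. The set of maximally stable ends is homeomorphic to \( 2^\bn \) (Proposition~\ref{prop:dichotomy} together with Lemma~\ref{lem:uniformlyinfinite} and Brouwer's theorem), and a zero-dimensional compact metrizable space has only \emph{countably} many clopen subsets---each is a finite union of basic clopen sets---hence only countably many clopen bisections. So the ``swaps'' you describe furnish at most countably many distinct cosets of \( K \), not a continuum. The conclusion is of course still true: the paper observes that the \( G \)-action on the maximally stable ends induces an isomorphism \( G/\Gamma \cong \mathrm{Homeo}(2^\bn) \), which is uncountable. To repair your version in the same spirit you would need a different supply of permutations realized by \( G \), e.g.\ the continuum-many coordinate-flip homeomorphisms \( f_S \) of \( 2^\bn \) indexed by \( S \subseteq \bn \) (flip the \( n \)-th coordinate iff \( n \in S \)), each of which extends to a homeomorphism of \( \sE(M) \) respecting \( \Enp \) and \( \Eno \) and hence lifts to \( G \) by Theorem~\ref{thm:surjection}.
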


We give a proof of Corollary~\ref{cor:countable} in Section~\ref{sec:normal}.
Let \( K \subset \br^2 \subset \bS^2 \) be a Cantor space (that is, homeomorphic to \( 2^\bn \)). 
The Purity Theorem was first proved by Calegari--Chen \cite{CalegariNormal} in the special case of  \( \bS^2 \ssm K \) (they also give a statement for finite-type surfaces with a Cantor space removed). 
To foreshadow a subsection below, we note that in this special case, the author \cite{VlamisThree} had previously established the weaker result that neither  \( \Homeo(\bS^2\ssm K) \) nor \( \Homeo(\br^2\ssm K) \) contain a proper normal subgroup of countable index.  
The proof given in \cite{VlamisThree} is an example application of the automatic continuity of these homeomorphism groups (established by Mann \cite{MannAutomatic2}), which we explore in a later subsection. 

In a separable topological group, any open subgroup must have countable index.
Therefore, Corollary~\ref{cor:simple} (in the case of the 2-sphere) and Corollary~\ref{cor:countable} (for all other perfectly self-similar 2-manifolds) yield the following corollary, which we will later generalize to the setting of all self-similar 2-manifolds.

\begin{Cor}
\label{cor:open}
If \( M \) is a perfectly self-similar 2-manifold, then neither \( H(M) \) nor \( \mcg(M) \) contain a proper open normal subgroup.
\qed
\end{Cor}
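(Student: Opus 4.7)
The plan is to reduce Corollary~\ref{cor:open} to the two preceding corollaries by way of a short separability argument. The key observation is that in any separable topological group $G$, every open subgroup $N \leq G$ has countable index: the cosets of $N$ form a partition of $G$ into pairwise disjoint nonempty open sets, and any such partition of a separable space is countable (each part must meet a fixed countable dense set). Since $H(M)$ and $\mcg(M)$ are Polish, hence separable, this observation applies directly.

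With that in hand, I would split into two cases according to whether $M$ is compact. If $M = \bS^2$ (the only compact perfectly self-similar 2-manifold, by Richards's classification as discussed earlier), then Corollary~\ref{cor:simple} says $H(\bS^2)$ is simple, so it has no proper normal subgroups whatsoever, open or otherwise; the statement for $\mcg(\bS^2)$ then follows because $\mcg(\bS^2)$ is a quotient of $H(\bS^2)$ (and in fact trivial, since $H(\bS^2)$ is connected). If $M$ is non-compact, I would invoke Corollary~\ref{cor:countable}, which says that neither $H(M)$ nor $\mcg(M)$ admits a proper normal subgroup of countable index. Combined with the separability observation, any proper open normal subgroup would have to have countable index, contradicting Corollary~\ref{cor:countable}.

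I do not anticipate any significant obstacle here, since the content of the corollary is essentially packaged in Corollaries~\ref{cor:simple} and~\ref{cor:countable}; the only nontrivial step is the elementary separability lemma, which is standard. The one small subtlety worth flagging in the write-up is the passage from $H(M)$ to $\mcg(M)$ in the compact case (one must note that the quotient of a simple group is either trivial or simple, and in particular has no proper normal subgroups), but this is immediate and does not require any additional machinery.
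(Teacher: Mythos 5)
Your proposal is correct and follows essentially the same route as the paper: observe that in a separable group any open subgroup has countable index, then invoke Corollary~\ref{cor:simple} in the compact case ($M = \bS^2$) and Corollary~\ref{cor:countable} in the non-compact case. The extra remark you make about $\mcg(\bS^2)$ being trivial (or, more robustly, a quotient of a simple group) is a nice point of care that the paper leaves implicit.
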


As a last corollary to Theorem~\ref{thm:main2}, we give an extension of \cite[Lemma~2.5]{CalegariNormal}.

\begin{Cor}
\label{cor:torsion}
Let \( M \) be a planar perfectly self-similar 2-manifold. 
Every proper normal subgroup of \( H(M) \) and \( \mcg(M) \) is torsion free. 
In particular, every torsion element of \( H(M) \) (resp., \( \mcg(M) \)) is a normal generator. 
\end{Cor}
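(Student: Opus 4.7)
My plan is to prove the contrapositive: every nontrivial torsion element of $G \in \{H(M), \mcg(M)\}$ is a normal generator, which immediately implies that no proper normal subgroup contains a nontrivial torsion element. By the Purity Theorem (Corollary~\ref{cor:countable}), this reduces to showing that every nontrivial torsion element of $G$ induces a nontrivial permutation on the set of maximally stable ends of $M$. The case $M = \bS^2$ is immediate from simplicity of $H(\bS^2)$ (Corollary~\ref{cor:simple}), so I assume henceforth that $M$ is non-compact. Planarity lets me identify $M$ with $\bS^2 \ssm E$ for a Stone space $E \subset \bS^2$ homeomorphic to the end space of $M$, and perfect self-similarity together with the structure theory of stable sets from Section~\ref{sec:stable-sets} ensures that the maximally stable ends are dense in $E$.

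For a nontrivial torsion $g \in H(M)$, I would first extend $g$ uniquely to $\tilde g \in H(\bS^2)$ setwise preserving $E$, using the functoriality of the end compactification for planar surfaces. Since $g$ has finite order $n \geq 2$, so does $\tilde g$; by the Brouwer--Ker\'ekj\'art\'o classification of periodic orientation-preserving homeomorphisms of $\bS^2$, the map $\tilde g$ is topologically conjugate to a rotation by $2\pi/n$, and in particular has exactly two fixed points on $\bS^2$. Consequently $\tilde g$ fixes at most two points of $E$; since the dense subset of maximally stable ends is infinite, $\tilde g$ must move some maximally stable end, and Purity yields that $g$ normally generates $H(M)$.

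For a nontrivial torsion $\phi \in \mcg(M)$, I would argue via the induced action on $E$: the homomorphism $\mcg(M) \to \mathrm{Homeo}(E)$ sends $\phi$ to a torsion $\phi_*$, and if $\phi_*$ is nontrivial it moves an open subset of $E$, which must meet the dense set of maximally stable ends, whereupon Purity concludes. The principal obstacle I anticipate is ruling out nontrivial torsion in the kernel of $\mcg(M) \to \mathrm{Homeo}(E)$, that is, in the pure mapping class group of $M$. I would handle this either by a direct isotopy argument---a finite-order homeomorphism of a planar 2-manifold that fixes every end is isotopic to the identity---or by lifting $\phi$ via a Nielsen-realization-style argument to a finite-order element of $H(M)$ and invoking the Brouwer--Ker\'ekj\'art\'o dichotomy above, which forbids a nontrivial periodic homeomorphism of $\bS^2$ from fixing every point of an infinite set.
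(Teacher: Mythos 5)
Your argument for $H(M)$ is essentially the paper's: extend a finite-order $g$ to a finite-order homeomorphism of $\bS^2$ via the Freudenthal compactification, invoke the classification of finite-order orientation-preserving homeomorphisms of $\bS^2$ as conjugates of rotations (hence at most two fixed points), and conclude that $g$ must move one of the infinitely many maximally stable ends, so Theorem~\ref{thm:main2} (equivalently Purity) gives normal generation. One small point: you assert that the maximally stable ends are \emph{dense} in $\sE(M)$, but what the argument actually uses---and what the paper uses---is only that they form an \emph{infinite} set (indeed a perfect set, by Lemma~\ref{lem:uniformlyinfinite}). Density can fail for planar perfectly self-similar $M$: for example, take $\sE(M)$ to be a Cantor set $C$ together with, at each point of a dense countable subset of $C$, a sequence of isolated points converging to it. The maximally stable ends form the Cantor set part, which is a nowhere dense perfect subset of $\sE(M)$. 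Also, the hypothesis that $M$ is planar already excludes $\bS^2$, so the separate compact case is unnecessary.

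For $\mcg(M)$, the paper takes the most direct route: by Nielsen realization for infinite-type surfaces (the citation is to Afton--Calegari--Chen--Lyu), a torsion mapping class has a finite-order representative in $H(M)$, which reduces to the previous paragraph in one step. Your proposed case analysis has two gaps. First, the density claim above also infects the ``$\phi_*$ nontrivial'' branch: $\phi_*$ moving some clopen set need not imply it moves a maximally stable end, since that clopen set may avoid the maximally stable locus. Second, in the ``$\phi_*$ trivial'' branch, your direct isotopy argument is stated for a finite-order \emph{homeomorphism}, but the object in hand is a finite-order \emph{mapping class}; producing a finite-order homeomorphism representative is exactly Nielsen realization, so this branch isn't an alternative to Nielsen realization---it silently presupposes it. Your third option (invoke Nielsen realization and reduce to $H(M)$) is the one that works, and it renders the case split unnecessary: once you have a finite-order representative, the two-fixed-point argument on $\bS^2$ handles everything uniformly, which is exactly what the paper does.
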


The proof of Corollary~\ref{cor:torsion} is given in Section~\ref{sec:normal}.
The planarity assumption in Corollary~\ref{cor:torsion} is necessary: by the work of Aougab, Patel, and the author (see \cite[Lemmas~3.5~\&~3.6]{AougabIsometry}), the pure mapping class group of an orientable infinite-genus  self-similar 2-manifold with no planar ends contains an isomorphic copy of every countable group; in particular, it contains torsion elements that do not normally generate.
Here, the \emph{pure mapping class group}\index{mapping class group!pure} refers to the kernel of the action of the mapping class group on the end space.


\subsection{Strong distortion}
\label{subsec:sd}

In the previous section, we showed the extent to which the algebraic simplicity of \( H(\mathbb S^2) \) can be generalized to the setting of perfectly self-similar 2-manifolds. 
Here, we will give a strong notion of geometric simplicity that is exhibited by \( H(\mathbb S^2) \),  which can  also be seen in homeomorphism groups of perfectly self-similar 2-manifolds.

A group \( G \) is \emph{strongly distorted}\index{group!strongly distorted} if there exists \( m \in \bn \) and a sequence \( \{w_n\}_{n\in\bn} \subset \bn \)  such that, given any sequence \( \{g_n\}_{n\in\bn} \) in \( G \), there exists a set \( S \subset G \) of cardinality at most \( m \) such that \( g_n \in S^{w_n} \) for every \( n \in \bn \). 
One immediate consequence is that every non-identity element of \( G \) is distorted in the standard sense, that is, if \( g \in G \) is not the identity, then there exists a finitely generated subgroup \( \Gamma \) of \( G \) such that the word length of \( g^n \) in \( \Gamma \) grows sublinearly in \( n \) (equivalently, the homomorphism from \( \bz \) to \( \Gamma \) given by \( n \mapsto g^n \) fails to be a quasi-isometric embedding). 
Quasi-morphisms are often used to detect undistorted group elements, and so Corollary~\ref{cor:quasimorphism} suggests that the elements of \( H(M) \) are distorted, and indeed:

\begin{MainThm6}
If \( M \) is a perfectly self-similar 2-manifold, then \( H(M) \) and \( \mcg(M) \) are strongly distorted. 
\end{MainThm6}

The proof of Theorem~\ref{thm:distortion} is an adaptation of Calegari--Freedman's proof of \cite[Theorem~C]{CalegariDistortion}, and setting \( M = \mathbb S^2 \), we recover Calegari--Freedman's result in dimension two.

\begin{Cor}[Calegari--Freedman \cite{CalegariDistortion}]
\( H(\mathbb S^2) \) is strongly distorted. 
\end{Cor}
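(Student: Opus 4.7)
The plan is to deduce this corollary directly from Theorem~\ref{thm:distortion} by taking $M = \bS^2$; no new work beyond Theorem~\ref{thm:distortion} is required, only the verification that $\bS^2$ lies in the class of manifolds to which that theorem applies. In this sense the corollary is simply the specialization of Theorem~\ref{thm:distortion} to the sphere, which is precisely how the excerpt above frames it.

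Accordingly, the only step is to check that $\bS^2$ is perfectly self-similar in the sense of the definition given in the introduction, which unpacks into three routine verifications. Displaceability of every proper compact subset follows from the Jordan--Schoenflies theorem: a proper compact set of $\bS^2$ can be pushed into an arbitrarily small disk by an ambient homeomorphism, which can then be rotated off itself. The separating-curve condition likewise follows from Jordan--Schoenflies, since each complementary component of a simple closed curve in $\bS^2$ is an open disk, hence homeomorphic to $\br^2 \cong \bS^2 \ssm \{*\}$, so each side is homeomorphic to $\bS^2$ with a point removed, as required. Perfect self-similarity then reduces to the identification $\bS^2 \# \bS^2 \cong \bS^2$, obtained by gluing two closed disks along their common boundary circle.

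With $\bS^2$ verified to be perfectly self-similar, Theorem~\ref{thm:distortion} applied with $M = \bS^2$ immediately yields that $H(\bS^2)$ is strongly distorted. There is essentially no obstacle to be overcome in this reduction: all of the analytic content lives inside Theorem~\ref{thm:distortion}, and the corollary merely records the sphere case. The only subtlety worth flagging when reading the hypotheses is that the separating-curve clause demands each complementary side be homeomorphic to $\bS^2 \ssm \{*\}$ rather than to $\bS^2$ itself, so one identifies each open disk with $\br^2$ (equivalently, $\bS^2$ minus a point), not with the full sphere.
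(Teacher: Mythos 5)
Your proposal is correct and follows exactly the route the paper intends: the corollary is stated as the specialization of Theorem~\ref{thm:distortion} to \( M = \bS^2 \), with the perfect self-similarity of the sphere justified (as in the introduction) by the Jordan--Schoenflies theorem. Your three verifications are all sound, so nothing is missing.
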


Strong distortion is inherited by quotients, and so arguing as in the preceding subsection, we deduce the following corollary for second-countable Stone spaces.

\begin{Cor}
The homeomorphism group of a perfectly self-similar second-countable Stone space is strongly distorted; in particular, \( \mathrm{Homeo}(2^\bn) \) is strongly distorted.  
\qed
\end{Cor}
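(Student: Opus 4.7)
The plan is to follow the hint in the preceding paragraph: deduce the corollary by pushing Theorem~\ref{thm:distortion} through the quotient map from the homeomorphism group of a planar 2-manifold onto the homeomorphism group of its end space. So the first step is to invoke the two facts from Richards's classification already recorded in Subsection~\ref{subsec:normal}: given a perfectly self-similar second-countable Stone space \( E \), realize \( E \) as the end space of a planar perfectly self-similar 2-manifold \( M \), and use the resulting epimorphism \( \pi \co H(M) \twoheadrightarrow \mathrm{Homeo}(E) \).

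The second step is the general observation that strong distortion passes to (not necessarily continuous) quotients by surjective homomorphisms. I would spell this out as a short lemma: if \( \varphi \co G \twoheadrightarrow Q \) is a surjective group homomorphism and \( G \) is strongly distorted with constant \( m \) and sequence \( \{w_n\} \), then for any sequence \( \{q_n\} \subset Q \), choose lifts \( g_n \in \varphi^{-1}(q_n) \), apply strong distortion of \( G \) to obtain a set \( S \subset G \) with \( |S| \leq m \) and \( g_n \in S^{w_n} \) for all \( n \), and set \( T = \varphi(S) \); then \( |T| \leq m \) and \( q_n = \varphi(g_n) \in T^{w_n} \), so \( Q \) is strongly distorted with the same constant \( m \) and the same sequence \( \{w_n\} \).

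Combining these two steps with Theorem~\ref{thm:distortion} applied to \( M \) yields that \( \mathrm{Homeo}(E) \) is strongly distorted. For the final clause about \( 2^\bn \), I would note that \( 2^\bn \) is the end space of \( \bS^2 \ssm K \) for any Cantor space \( K \subset \bS^2 \), and that \( \bS^2 \ssm K \) is a planar perfectly self-similar 2-manifold (as recalled in the introduction), so the general statement applies.

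There is no real obstacle here: the argument is essentially an algebraic triviality once Theorem~\ref{thm:distortion} is in hand and once one has the surjection \( H(M) \twoheadrightarrow \mathrm{Homeo}(E) \). The only mild subtlety is that strong distortion is a purely algebraic property (it makes no reference to the topology of the group), so one does not need continuity of any lift; pointwise choice of preimages suffices, which is what makes the quotient argument clean.
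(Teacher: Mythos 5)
Your proposal is correct and matches the paper's intended argument exactly: the \( \qed \) follows from the observation "Strong distortion is inherited by quotients," combined with the epimorphism \( H(M) \twoheadrightarrow \mathrm{Homeo}(E) \) from Richards's classification and Theorem~\ref{thm:distortion} applied to the planar realization \( M \). Your explicit lemma verifying that strong distortion descends along arbitrary surjective homomorphisms, and the remark that no continuity of lifts is needed, fills in precisely what the paper leaves implicit.
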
 

To the best of the author's knowledge, this result does not appear in the literature; however, it is known that \( \mathrm{Homeo}(2^\bn) \) is strongly bounded \cite{Kwiatkowska}. 

There are several other consequences of being strongly distorted; for instance, if \( G \) is strongly distorted, then:
\begin{itemize}
\item \( G \) is \emph{strongly bounded}\index{group!strongly bounded} (i.e.,~every isometric action of \( G \) on a metric space has bounded orbits, and in particular, every left-invariant metric on \( G \) is bounded), 
\item \( G \) has \emph{uncountable cofinality}\index{uncountable cofinality} (i.e.,~\( G \) is not the union of a countable strictly increasing sequence of subgroups), and
\item \( G \) has the \emph{Schreier property}\index{Schreier property} (i.e.,~every countable subset of \( G \) is contained in a finitely generated subgroup of \( G \)). 
\end{itemize}
A more in-depth discussion of these various properties, and how they are related, can be found in the introduction of \cite{LeRouxStrong}.


\subsection{Coarse Boundedness}
\label{subsec:cb}

Thus far, we have only discussed perfectly self-similar 2-manifolds, and we have motivated the results by comparing to the structure of \( H(\mathbb S^2) \). 
We will now broaden our discussion to include  uniquely self-similar 2-manifolds.
Ideally, we would like to compare the structure of homeomorphism groups of uniquely self-similar 2-manifolds with that of \( H(\br^2) \).
Let us quickly summarize the analogs of Theorems~\ref{thm:main2}~and~\ref{thm:distortion} for \( H(\br^2) \): every element of \( H(\br^2) \) has commutator length at most three (see Cor~\ref{cor:unif})  and \( H(\br^2) \) is strongly distorted \cite[Theorem~1.6]{LeRouxStrong}.

Unfortunately, we cannot generalize these statements to the entire class of uniquely self-similar 2-manifolds. 
Let \( L \) denote the orientable one-ended infinite-genus 2-manifold, often referred to as the \emph{Loch Ness monster surface}\index{surface!Loch Ness monster}\index{2-manifold!Loch Ness monster}. 
Domat--Dickmann \cite{DomatBig} showed that \( H(L) \) admits an epimorphism to \( \br \), which implies \( H(L) \) fails to be perfect, strongly distorted, strongly bounded, and fails to have uncountable cofinality and the Schreier property. 
Moreover, realizing \( \br \) as a direct sum of a continuum worth of copies of \( \bq \), we see that \( H(L) \) has many countable-index subgroups.

Despite this, we can obtain some positive comparisons to \( H(\br^2) \) if we take into account the topology of \( H(M) \). 
Rosendal has laid the foundation for extending the tools of geometric group theory to the setting of non-locally compact topological groups (see \cite{RosendalBook}), and in particular, understanding when such a group has a canonical metric (up to quasi-isometry). 
Mann--Rafi \cite{MannLarge} have classified the mapping class groups that admit such a metric.
Here, we will focus on the portion of Mann--Rafi's work dedicated to understanding which 2-manifolds' homeomorphism groups behave---geometrically---like finite (or compact) groups in Rosendal's theory.

A topological group \( G \) is \emph{coarsely bounded}\index{group!coarsely bounded} if every \( G \)-orbit is bounded whenever \( G \) acts continuously on a metric space by isometries\footnote{This also appears in the literature as the \emph{topological Bergman property} or \emph{property (OB)}.}.
In particular, every continuous left-invariant (pseudo-)metric on a coarsely bounded topological group has finite diameter, and in fact, this is equivalent to being coarsely bounded. 
In the setting of Polish groups, there are numerous equivalent conditions (see \cite[Proposition~2.7]{RosendalBook}). 
Note that if \( G \) is strongly bounded, then it is coarsely bounded. 

A historical remark: in the article introducing the notion of a coarsely bounded group \cite{RosendalTopological}, Rosendal proved that the homeomorphism group of a sphere is coarsely bounded. 
The preprint appeared within a day of the preprint of Calegari--Freedman \cite{CalegariDistortion}, establishing strong distortion for these homeomorphism groups.
In a later version of \cite{CalegariDistortion}, an appendix, written by Corulier, was added showing that strong distortion implies strongly bounded, and hence strengthening the result of Rosendal.

In \cite[Proposition~3.1]{MannLarge}, Mann--Rafi establish that the mapping class group of every self-similar 2-manifold is coarsely bounded.
We will see that their proof readily adapts to the case of homeomorphism groups and non-orientable 2-manifolds.

\begin{MainThm4} 
Let \( M \) be a self-similar 2-manifold. 
Then, \( \Homeo(M) \) and \( \mcg(M) \) are coarsely bounded. 
\end{MainThm4}

Being coarsely bounded is inherited by quotients by closed subgroups, and so we have the following.

\begin{Cor}
\label{cor:cb-end}
The homeomorphism group of a uniquely self-similar second-countable Stone space is coarsely bounded. 
\qed
\end{Cor}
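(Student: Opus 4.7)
The plan is to derive Corollary~\ref{cor:cb-end} directly from Theorem~\ref{thm:cb} via the epimorphism onto the homeomorphism group of the end space. First I would fix a uniquely self-similar second-countable Stone space \( E \) and, using the definition recalled in Section~\ref{subsec:normal}, select a planar uniquely self-similar 2-manifold \( M \) whose end space is (canonically identified with) \( E \). Because \( M \) is planar it is orientable, so \( H(M) = \mathrm{Homeo}^+(M) \), and Theorem~\ref{thm:cb} asserts that \( H(M) \) is coarsely bounded.

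Next I would invoke the second fact recorded in Section~\ref{subsec:normal}: the natural action of \( H(M) \) on \( \Ends(M) = E \) induces a surjective group homomorphism \( \rho \co H(M) \to \mathrm{Homeo}(E) \). Continuity of \( \rho \) comes for free, since the action of \( H(M) \) on the compact Hausdorff space \( E \) is continuous and \( \mathrm{Homeo}(E) \) carries the compact-open topology. It then suffices to prove the general fact that coarse boundedness passes through continuous surjective homomorphisms. Given any continuous isometric action of \( \mathrm{Homeo}(E) \) on a metric space \( (X,d) \), precomposing with \( \rho \) gives a continuous isometric action of the coarsely bounded group \( H(M) \), so every \( H(M) \)-orbit in \( X \) is bounded; surjectivity of \( \rho \) identifies these orbits with the \( \mathrm{Homeo}(E) \)-orbits, proving that \( \mathrm{Homeo}(E) \) is coarsely bounded.

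The only point that could conceivably require scrutiny is the verification that the end-space functor delivers \( \rho \) as a continuous homomorphism onto \emph{all} of \( \mathrm{Homeo}(E) \); both continuity and surjectivity are encapsulated in the statement recalled from Section~\ref{subsec:normal}, so no substantive obstacle is anticipated, and the argument reduces to a one-line reference followed by the general quotient principle just described.
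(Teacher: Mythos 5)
Your proposal is correct and follows the paper's own one-line argument: invoke Theorem~\ref{thm:cb} for a planar uniquely self-similar \( M \) with \( \sE(M) \cong E \), then push coarse boundedness through the canonical continuous epimorphism \( H(M) \to \mathrm{Homeo}(E) \). The paper phrases this as ``coarse boundedness is inherited by quotients by closed subgroups,'' which is the same principle you prove directly by precomposing an isometric action with the epimorphism, so there is no substantive difference.
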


There are many natural examples of uniquely self-similar second-countable Stone spaces, such as every compact countable Hausdorff space of Cantor--Bendixson degree one.
Using the classification of compact countable Hausdorff spaces \cite{CountableClassification}, every such space of Cantor--Bendixson degree one is homeomorphic to an ordinal space\footnote{Given an ordinal \( \beta \), the ordinal space associated to \( \beta \), also denoted \( \beta \), is the space \( \{ \eta : \eta < \beta\} \) equipped with the order topology.}  of the form \( \omega^\alpha + 1 \), where \( \omega \) is the first countable ordinal and \( \alpha \) is a countable ordinal.
When \( \alpha = 1 \), we have from Corollary~\ref{cor:cb-end} that \( \mathrm{Sym}(\bn) \), the symmetric group on \( \bn \), is coarsely bounded. 
This is a corollary of the stronger result of Bergman \cite{BergmanGenerating} stating that every word metric on \( \mathrm{Sym}(\bn) \) is bounded.
In fact, it can be deduced from the work of Galvin \cite{GalvinGenerating} that \( \mathrm{Sym}(\bn) \) is strongly distorted, and hence strongly bounded (which implies Bergman's result).

\subsection{Rokhlin property}

In Theorem~\ref{thm:cb}, by taking into account the topological structure of homeomorphism groups, we were able to establish a weaker version of Theorem~\ref{thm:distortion} in the case of uniquely self-similar 2-manifolds.  
The next result is motivated by asking to what extent can the topological group structure be used to recover a weaker version of Theorem~\ref{thm:main2} for uniquely self-similar 2-manifolds. 
We offer one answer, which is to show that Corollary~\ref{cor:open}, a corollary of Theorem~\ref{thm:main2}, can be extended to include uniquely self-similar 2-manifolds.

A topological group has the \emph{Rokhlin property}\index{Rokhlin property} if it contains a dense conjugacy class. 
Note that the existence of a dense conjugacy class precludes a topological group from having any proper open normal subgroups (since the complement of an open normal subgroup is an open conjugation-invariant subset). 
A classification of mapping class groups of orientable 2-manifolds with the Rokhlin property was given independently by Lanier and the author \cite{LanierMapping} and by Hern\'andez--Hru\v{s}\'ak--Morales--Randecker--Sedano--Valdez \cite{HernandezConjugacy}; this result was extended to non-orientable 2-manifolds by Estrada \cite{EstradaConjugacy}. 
We give the relevant part of this classification here, extended to homeomorphism groups:

\begin{MainThm5}
If \( M \) is a uniquely self-similar 2-manifold, then \( H(M) \) and \( \mcg(M) \) have the Rokhlin property. 
\end{MainThm5}

The Rokhlin property was introduced in \cite{GlasnerZero}, and it was shown in \cite{GlasnerTopological} that \( \Homeo(\mathbb S^{2d}) \) has the Rokhlin property, where \( d \in \bn \) and \( \mathbb S^{2d} \) is the \( (2d) \)-dimensional sphere (see Theorem~\ref{thm:2sphere}). 
It follows from this fact, Theorem~\ref{thm:rokhlin}, the results of \cite{LanierMapping,HernandezConjugacy}, and forthcoming work of Lanier and the author \cite{LanierVirtual} that if \( M \) is a  2-manifold, then \( H(M) \) has the Rokhlin property if and only if \( M \cong \mathbb S^2 \) or \( M \) is uniquely self-similar.

Given the discussion above, Theorem~\ref{thm:rokhlin} shows that \( H(M) \) has no proper open normal subgroup whenever \( M \) is uniquely self-similar, which allows us to extend Corollary~\ref{cor:open} to all self-similar 2-manifolds. 
Before stating the corollary, we also note that, in a Polish group \( G \), any closed countable-index subgroup \( N \) of \( G \) is open.
To see this, observe that the left cosets of \( N \) cover \( G \), and hence \( N \) must have nonempty interior, as a Polish space cannot be expressed as a countable union of nowhere dense subsets (a consequence of the Baire Category Theorem).
To finish, observe that any subgroup of a topological group with nonempty interior is open, since the subgroup is a union of the translates of any open subset contained in the subgroup.

\begin{Cor}
If \( M \) is a self-similar 2-manifold, then neither \( H(M) \) nor \( \mcg(M) \) contain a proper open normal subgroup. Moreover, any proper closed normal subgroup of \( H(M) \) or \( \mcg(M) \) has uncountable index.  
\qed
\end{Cor}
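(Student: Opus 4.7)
The plan is to split into the two cases of the self-similar dichotomy and then derive the closed-countable-index statement formally from the open-normal-subgroup statement.

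First I would recall that the class of self-similar 2-manifolds is partitioned into perfectly self-similar and uniquely self-similar 2-manifolds. In the perfectly self-similar case, Corollary~\ref{cor:open} already gives that neither \( H(M) \) nor \( \mcg(M) \) contains a proper open normal subgroup, so there is nothing further to do in this case.

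For the uniquely self-similar case, I would invoke Theorem~\ref{thm:rokhlin}: both \( H(M) \) and \( \mcg(M) \) have the Rokhlin property, so they contain a dense conjugacy class \( C \). Suppose for contradiction that \( N \) is a proper open normal subgroup of such a Polish group \( G \). Since \( N \) is open, \( N \) and \( G\ssm N \) are each nonempty open sets, and since \( N \) is normal, both are conjugation-invariant. The conjugacy class \( C \) is also conjugation-invariant, hence is contained entirely in \( N \) or entirely in \( G\ssm N \). But density of \( C \) forces \( C \) to meet every nonempty open set, a contradiction. Hence no such \( N \) exists.

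For the ``moreover'' clause, I would reuse the Polish-group argument given in the paragraph preceding the corollary: the left cosets of a closed countable-index subgroup cover \( G \), so the Baire category theorem forces the subgroup to have nonempty interior, and any subgroup of a topological group with nonempty interior is open. Applied to \( H(M) \) and \( \mcg(M) \), both of which are Polish, this says that a proper closed normal subgroup of countable index would be a proper open normal subgroup, which is ruled out by the first part of the corollary. The main thing to be careful about is simply ensuring that the Rokhlin argument uses only openness of \( N \) (to make \( G\ssm N \) a nonempty open set) together with normality (to make \( G\ssm N \) conjugation-invariant), and these are exactly what the hypotheses provide, so no genuine obstacle arises.
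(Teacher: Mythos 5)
Your proof is correct and follows essentially the same route as the paper: split according to the perfectly/uniquely self-similar dichotomy, use Corollary~\ref{cor:open} in the perfect case and the Rokhlin property (Theorem~\ref{thm:rokhlin}) in the unique case, and obtain the closed countable-index statement from the Baire category argument the paper records just before the corollary. No gaps.
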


We are also able to establish Corollary~\ref{cor:quasimorphism} in the topological category for all self-similar 2-manifolds.
As quasimorphisms are bounded on conjugacy classes (see \cite[Section~2.2]{scl}), any continuous quasimorphism of a group with the Rokhlin property must be bounded.
Using this fact for uniquely self-similar 2-manifolds together with Theorem~\ref{thm:rokhlin} and Corollary~\ref{cor:quasimorphism} in the perfectly self-similar case, we have the following corollary. 

\begin{Cor}
If \( M \) is a self-similar 2-manifold, then every continuous quasimorphism of \( H(M) \) (resp., \( \mcg(M) \)) is bounded. 
\end{Cor}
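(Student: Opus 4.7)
The plan is to split along the perfectly/uniquely self-similar dichotomy and invoke the two immediately preceding results, one for each case. Let $G$ denote either $H(M)$ or $\mcg(M)$, and let $q \co G \to \br$ be a continuous quasimorphism.

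If $M$ is perfectly self-similar, then Corollary~\ref{cor:quasimorphism} applies directly---in fact it does not even require the continuity of $q$---to conclude that $q$ is bounded.

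If $M$ is uniquely self-similar, Theorem~\ref{thm:rokhlin} furnishes an element $g_0 \in G$ whose conjugacy class $\cc \subset G$ is dense. I would then recall the standard fact (from \cite[Section~2.2]{scl}) that any quasimorphism is uniformly bounded on each conjugacy class of its domain: briefly, comparing $q$ with its homogenization $\bar q(g) := \lim_{n\to\infty} q(g^n)/n$, which is conjugation-invariant and differs from $q$ by at most the defect $D$ of $q$, yields $|q(hg_0h^{-1}) - q(g_0)| \leq 2D$ for every $h \in G$. Hence $|q|$ is bounded on $\cc$ by some constant $B$. Because $\cc$ is dense in $G$ and $q$ is continuous, for every $x \in G$ one can choose a sequence $x_n \in \cc$ converging to $x$; then $q(x) = \lim_n q(x_n)$ satisfies $|q(x)| \leq B$. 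Thus $q$ is bounded on $G$.

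There is not much of a genuine obstacle: both of the required ingredients---boundedness of quasimorphisms on conjugacy classes, and the existence of a dense conjugacy class in the uniquely self-similar case---are already available. The conceptual point to highlight in the writeup is that, whereas for perfectly self-similar 2-manifolds one has the purely algebraic input of uniform perfectness (Theorem~\ref{thm:main2}), for uniquely self-similar 2-manifolds one has no analog of uniform perfectness; instead, the topological input provided by the Rokhlin property suffices to recover the boundedness conclusion once continuity of $q$ is assumed.
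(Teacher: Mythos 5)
Your proof is correct and takes essentially the same approach as the paper: split along the perfectly/uniquely self-similar dichotomy, invoking the uniform perfectness result (Corollary~\ref{cor:quasimorphism}) in the perfect case and combining the Rokhlin property (Theorem~\ref{thm:rokhlin}) with boundedness of quasimorphisms on conjugacy classes and continuity in the unique case. The paper states this argument more compactly in the paragraph preceding the corollary, but the content is identical.
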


There is a fascinating body of work establishing automatic continuity properties for homomorphisms between classes of groups.
When coupled with the Rokhlin property, the topological condition of having a group element with a dense conjugacy class has algebraic consequences; in particular, there are a plethora of countable groups that we know cannot appear as the quotient of a Polish group with a unique open normal subgroup.

A group \( H \) is \emph{cm-slender}\index{group!cm-slender} if the kernel of any abstract homomorphism from a completely metrizable topological group to \( H \) is open. 
We readily see that any homomorphism from a Polish group with the Rokhlin property to a cm-slender group is trivial. 
Examples of cm-slender groups are abundant, and include free abelian groups \cite{DudleyContinuity}; torsion-free word-hyperbolic groups, Baumslag--Solitar groups, and Thompson’s group \( F \) (see \cite{ConnerNote});  non-exceptional spherical Artin groups (e.g., braid groups) \cite{CorsonPreservation}; and any torsion-free subgroup of a mapping class group of a finite-type 2-manifold \cite{BogopolskiAbstract}. 
In fact, Conner has made the following conjecture:

\begin{Conj}[Conner's Conjecture]
A countable group is cm-slender if and only if it is torsion free and does not contain an isomorphic copy of \( \bq \). 
\end{Conj}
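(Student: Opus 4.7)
The plan is to split the biconditional into its two implications and attack each separately, expecting the sufficiency (``if'') direction to be the hard one. For necessity, I would proceed by contrapositive: given a countable $G$ that is either not torsion-free or contains a copy of $\bq$, one must exhibit a completely metrizable group $H$ and an abstract homomorphism $H \to G$ with non-open kernel. In the case that $G$ contains $\bq$, one would use classical constructions from infinite abelian group theory producing surjections from the Baer--Specker group $\bz^\bn$ (a Polish abelian group in its product topology) onto $\bq$ whose kernel contains the dense direct sum $\bigoplus_{\bn} \bz$ and hence cannot be open. If instead $G$ has a nontrivial torsion element of order $n$, one needs a map from a completely metrizable group into $\bz/n\bz \subset G$ with non-open kernel; this is subtler than the $\bq$ case because reductions from profinite cyclic groups have open kernels, so one would look for density or extension arguments using divisible hulls rather than profinite completions.

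For the sufficiency direction, I would first handle abelian $G$ by invoking the classical Specker--Nunke slenderness theorem: a countable abelian group is slender exactly when it is torsion-free and contains no copy of $\bq$, and any homomorphism from $\bz^\bn$ to such a group factors through finitely many coordinates. Combined with Dudley's continuity theorem cited in the excerpt (which handles homomorphisms out of general completely metrizable groups into $\bz^k$), this yields cm-slenderness for countable torsion-free abelian groups without $\bq$-subgroups. The non-abelian case would then be attacked by a structural strategy: given a homomorphism $\varphi \co H \to G$ from a completely metrizable $H$, pass to the abelianization $G^{\mathrm{ab}}$, argue that the kernel of $H \to G^{\mathrm{ab}}$ is open (applying the abelian case to the image in $G^{\mathrm{ab}}$), and then examine the restriction of $\varphi$ to this open subgroup to perform an inductive argument on the derived series of $G$---using along the way the established cm-slender classes (torsion-free hyperbolic, Baumslag--Solitar, spherical Artin, Thompson's $F$) as base cases.

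The principal obstacle is precisely the generality demanded by Conner's Conjecture: the known cm-slender classes are all verified by highly group-specific arguments (negative curvature, small cancellation, acylindricity, Bass--Serre-type decompositions, etc.), and there is no known invariant of a countable group that controls the open/closed behaviour of kernels of arbitrary abstract homomorphisms from Polish groups. Moreover, one cannot restrict to continuous homomorphisms a priori, so standard topological-group-theoretic tools do not immediately apply, and any structural induction must navigate extensions whose kernels are themselves potentially pathological. I would expect that resolving the conjecture---even for countable torsion-free solvable groups without $\bq$-subgroups---will require substantively new techniques capable of translating the purely algebraic hypotheses on $G$ into topological control over all of its potential Polish preimages, perhaps via a new cohomological or model-theoretic invariant sensitive to precisely the combination ``torsion-free plus no $\bq$.''
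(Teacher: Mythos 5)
This statement is labeled in the paper as a \emph{Conjecture} (Conner's Conjecture), not a theorem; the paper offers no proof of it, and the conjecture remains open in the literature. There is therefore nothing in the paper against which to compare your argument, and indeed what you have written is not a proof but a research program---one you yourself concede at the end ``will require substantively new techniques.'' To be precise about where the gaps lie: the necessity direction is genuinely known and you are close to the standard argument, but your sketch is incomplete in the torsion case---the usual approach is not ``divisible hulls'' but rather to observe that a compact Polish abelian group such as \( \prod_{\bn} \bz/n\bz \) admits discontinuous homomorphisms onto \( \bz/n\bz \) (via an ultrafilter or Hamel-basis-type construction), whose kernels cannot be open; similarly if \( G \supset \bq \) one uses a discontinuous \( \bq \)-linear projection \( \br \to \bq \). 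The sufficiency direction, by contrast, is the entire content of the conjecture and is not addressed by your outline: the proposed induction along the derived series does not even make sense for non-solvable \( G \), and even for solvable groups the inductive step---showing that openness of the kernel into \( G^{\mathrm{ab}} \) plus cm-slenderness of the derived subgroup yields openness of the full kernel---does not follow from anything known, because the restriction of \( \varphi \) to an open subgroup of \( H \) need not land in \( [G,G] \) and the extension problem for kernels of abstract (discontinuous) homomorphisms is precisely the open difficulty. In short, you have correctly identified the problem as open and sketched a plausible line of attack for the easy half, but there is no proof here and none is expected.
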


We capture this discussion in the following corollary: 

\begin{Cor}
Let \( M \) be a self-similar 2-manifold. 
Every homomorphism from either \( H(M) \) or \( \mcg(M) \) to a cm-slender group is trivial. 
\qed
\end{Cor}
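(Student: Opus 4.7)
The plan is to deduce the statement as an immediate formal consequence of the corollary just stated---that for self-similar $M$, neither $H(M)$ nor $\mcg(M)$ admits a proper open normal subgroup---combined with the completely metrizable structure of these groups. Let $\varphi \co G \to H$ be an abstract group homomorphism, where $G \in \{H(M), \mcg(M)\}$ and $H$ is a cm-slender group.

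First, I would record that $G$ is completely metrizable: $H(M)$ is Polish (as noted at the start of Section~\ref{sec:overview}), and $\mcg(M) = H(M)/H_0(M)$ inherits the Polish structure as the quotient of a Polish group by a closed normal subgroup. The defining property of a cm-slender target then forces $\ker(\varphi)$ to be an open subgroup of $G$; being a kernel, it is also normal, so $\ker(\varphi)$ is an open normal subgroup.

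Next, I would invoke the preceding corollary: because $M$ is self-similar, $G$ contains no proper open normal subgroup. Hence $\ker(\varphi) = G$, and $\varphi$ is the trivial homomorphism.

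There is no genuine obstacle at this stage; all the content is pushed upstream into the preceding corollary. That corollary in turn bifurcates along the perfectly/uniquely self-similar dichotomy, drawing on Corollary~\ref{cor:open} (a consequence of the Anderson-style Theorem~\ref{thm:main2}) in the perfect case and on Theorem~\ref{thm:rokhlin} (via the elementary observation, recalled in the text, that a dense conjugacy class obstructs any proper open normal subgroup) in the unique case.
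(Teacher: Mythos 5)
Your proposal is correct and is exactly the argument the paper intends: the corollary is stated with a terminal \(\qed\) precisely because it follows immediately from the definition of cm-slender (kernel is open), the Polish structure of \(H(M)\) and \(\mcg(M)\), and the immediately preceding corollary ruling out proper open normal subgroups for self-similar \(M\). Your tracing of the upstream bifurcation into the perfectly self-similar case (via Theorem~\ref{thm:main2}/Corollary~\ref{cor:open}) and the uniquely self-similar case (via Theorem~\ref{thm:rokhlin}) matches the paper's logic.
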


The Rokhlin property is inherited by quotients by closed subgroups, yielding the following corollary.

\begin{Cor}
The homeomorphism group of a uniquely self-similar second-countable Stone space has the Rokhlin property.
\qed
\end{Cor}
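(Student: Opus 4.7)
The plan is to reduce the statement immediately to Theorem~\ref{thm:rokhlin} via the standard dictionary between planar 2-manifolds and their end spaces. Let $E$ be a uniquely self-similar second-countable Stone space. By definition, $E$ is realized as the end space of some planar uniquely self-similar 2-manifold $M$. As recalled in Section~\ref{sec:overview}, the two facts from Richards's classification give that the action of $H(M)$ on $E$ induces a (continuous) epimorphism $\pi \co H(M) \to \mathrm{Homeo}(E)$.

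With this in hand, the argument is a two-line quotient argument. First, Theorem~\ref{thm:rokhlin} applied to $M$ yields that $H(M)$ has the Rokhlin property, so there exists $f \in H(M)$ whose conjugacy class $\{gfg^{-1} : g \in H(M)\}$ is dense in $H(M)$. Second, since $\pi$ is a continuous surjective homomorphism, the image of this conjugacy class under $\pi$ is the conjugacy class of $\pi(f)$ in $\mathrm{Homeo}(E)$; and since continuous surjections send dense sets to dense sets, $\pi(f)$ has dense conjugacy class in $\mathrm{Homeo}(E)$. Equivalently, one may note that $\ker\pi$ is closed in the Hausdorff group $H(M)$ (as the kernel of a continuous homomorphism into a Hausdorff target), and invoke the general fact, already used in the excerpt, that the Rokhlin property passes to quotients by closed normal subgroups.

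There is really no obstacle here beyond bookkeeping: the content of the corollary lies entirely in Theorem~\ref{thm:rokhlin} and in the surjectivity of $H(M) \to \mathrm{Homeo}(E)$, both of which are available by the point we reach this statement. The only sanity check worth spelling out is that the class of uniquely self-similar Stone spaces in the sense used here is indeed exhausted by end spaces of planar uniquely self-similar 2-manifolds---this is exactly how ``(perfectly/uniquely) self-similar Stone space'' was defined in Section~\ref{sec:overview}, so no further argument is needed.
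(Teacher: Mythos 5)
Your proposal is correct and takes exactly the paper's route: the paper justifies this corollary with the single remark that the Rokhlin property passes to quotients by closed subgroups, applied to the continuous epimorphism $H(M)\to\mathrm{Homeo}(E)$ from Theorem~\ref{thm:rokhlin}. You simply spell out that one-liner (including the direct argument that a continuous surjection carries the dense conjugacy class of $f$ onto the dense conjugacy class of $\pi(f)$), so there is nothing substantively different to compare.
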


This corollary tells us that \( \mathrm{Sym}(\bn) \) has the Rokhlin property.
A much stronger property is known for the symmetric group, namely \( \mathrm{Sym}(\bn) \) has ample generics (see \cite{KechrisTurbulence}).


\subsection{Automatic continuity}
\label{subsec:automatic}

A topological group \( G \) has the \emph{automatic continuity property}\index{automatic continuity property} if every abstract homomorphism from \( G \) to a separable topological group is continuous.  
This is a strong property indicating a deep connection between the algebra and topology of a group.
Automatic continuity has a relatively long history of being studied, and we refer the reader to Rosendal's excellent survey \cite{RosendalSurvey} for an introduction.

We need a few brief definitions to begin.
A subset \( W \) of a group \( G \) is \emph{countably syndetic}\index{countably syndetic subset} if \( G \) is the union of countably many left translates of \( W \).
A topological group \( G \) is \emph{Steinhaus}\index{group!Steinhaus} if there exists \( m \in \bn \), depending only on \( G \), such that \( W^m \) contains an open neighborhood of the identity for any symmetric countably syndetic set \( W \subset G \).

The notion of a Steinhaus topological group was introduced by Rosendal--Solecki in order to provide a general strategy for establishing automatic continuity; in particular, every Steinhaus topological group has the automatic continuity property \cite[Proposition~2]{RosendalAutomatic1}.

Rosendal \cite{RosendalAutomatic2} showed that \( H(M) \) is Steinhaus for every compact 2-manifold (and Mann \cite{MannAutomatic1} subsequently showed that \( H(M) \) is Steinhaus for every compact manifold).
It is therefore natural to ask: \emph{which non-compact manifolds have Steinhaus homeomorphisms groups?}
In \cite[Question~2.4]{MannAutomatic2}, Mann asks this question in the context of mapping class groups of 2-manifolds.

For starters, we know that there exist 2-manifolds whose mapping class groups (and hence homeomorphism groups) fail to have the automatic continuity property.
Mann gives such examples in \cite{MannAutomatic2}.
Additionally, for the Loch Ness monster surface \( L \), the epimorphism \( \mcg(L) \to \mathbb R \) of Domat--Dickmann \cite{DomatBig} mentioned previously fails to be continuous, and hence \( \mcg(L) \) does not have the automatic continuity property.
The discontinuity of this homomorphism can be deduced in several ways, for instance, it follows from any one of the following facts: (1)  \( \mcg(L) \) is coarsely bounded (Theorem~\ref{thm:cb}), (2) \( \mcg(L) \) has the Rokhlin property (Theorem~\ref{thm:rokhlin}), and (3) \( \mcg(L) \) has a dense subgroup that is perfect (a fact we have not discussed, see \cite{AramayonaFirst} for the relevant discussion). 
This example was bootstrapped by Malestein--Tao \cite{MalesteinSelf} to construct additional examples of surfaces whose mapping class group fails to have the automatic continuity property, including the plane with an infinite discrete set removed (i.e., the flute surface).

Now, let \( M \) be a 2-manifold obtained by removing the union of a totally disconnected perfect set and a finite set from a compact 2-manifold (e.g., removing a Cantor set from the 2-sphere). 
Then, Mann \cite{MannAutomatic2} showed that \( H(M) \) and \( \mcg(M) \) are Steinhaus.
The next theorem makes progress on \cite[Question~2.4]{MannAutomatic2} mentioned above by extending Mann's work to the setting of perfectly tame 2-manifolds. 

\begin{Def}
\label{def:perfectly_stable}
A 2-manifold \( M \) is \emph{perfectly tame}\index{2-manifold!perfectly tame}\footnote{The naming is motivated by the fact that a perfectly tame 2-manifold is tame in the sense of Mann--Rafi \cite{MannLarge}.} if it is homeomorphic to the connected sum  of a finite-type 2-manifold and finitely many perfectly self-similar 2-manifolds, each of whose space of ends can be written as \( \mathscr P \cup \mathscr D \), where \( \mathscr P \) is perfect\footnote{Recall that the empty set is a totally disconnected, perfect, and discrete subset of every  space.}, \( \mathscr D \) is a discrete set consisting of planar ends, and \( \partial \mathscr D =  \mathscr P \).
\end{Def}

The definition of a perfectly tame 2-manifold is engineered so that the arguments given by Mann in \cite{MannAutomatic2} can be adapted to establish:

\begin{MainThm1}
The homeomorphism group of a perfectly tame 2-manifold is Steinhaus. 
\end{MainThm1}

In the proof of Theorem~\ref{thm:main1}, we will see that the Steinhaus constant can be taken to be 60. 
In forthcoming work, a stronger version of Theorem~\ref{thm:main1} has been independently obtained by Bestvina--Domat--Rafi. 
If \( G \) is a Polish Steinhaus group and \( G' \) is a Polish group such that there exists an epimorphism \( G \to G' \), then \( G' \) is Steinhaus \cite[Corollary~3]{RosendalAutomatic1}, which yields the following corollary.

\begin{Cor}
The mapping class group of a perfectly tame 2-manifold is Steinhaus. \qed
\end{Cor}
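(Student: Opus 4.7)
The plan is to deduce the corollary directly from Theorem~\ref{thm:main1} by invoking the quotient-stability of the Steinhaus property, namely \cite[Corollary~3]{RosendalAutomatic1}, which is already cited in the paragraph immediately preceding the corollary. The key observation is that, by definition, \( \mcg(M) = H(M)/H_0(M) \) arises as the image of \( H(M) \) under the canonical quotient homomorphism, so once both groups are known to be Polish and the source is known to be Steinhaus, the conclusion is automatic.

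Concretely, I would proceed in three short steps. First, record that \( H(M) \) is a Polish group (stated in Section~\ref{sec:overview}) and that it is Steinhaus by Theorem~\ref{thm:main1}. Second, verify that \( \mcg(M) \) is itself a Polish group: since \( H_0(M) \) is the connected component of the identity, it is a closed normal subgroup of \( H(M) \), and hence the quotient \( H(M)/H_0(M) \), equipped with the quotient topology, is a Polish topological group (as noted in Section~\ref{sec:overview} with reference to \cite[Theorem~8.19]{KechrisClassical}). The quotient map \( H(M) \to \mcg(M) \) is then a continuous surjective homomorphism, i.e., an epimorphism of Polish groups. Third, apply \cite[Corollary~3]{RosendalAutomatic1} to transfer the Steinhaus property from \( H(M) \) to \( \mcg(M) \); in fact the same Steinhaus constant (at most \( 60 \), as noted after Theorem~\ref{thm:main1}) will serve for the quotient.

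There is no substantive obstacle in this argument: all of the work has been absorbed into Theorem~\ref{thm:main1}, and the corollary is a formal consequence of that theorem together with two standard facts about Polish groups. The only thing one should be mildly careful about is ensuring that the Polish structure on \( \mcg(M) \) is genuinely the quotient Polish structure referenced by \cite[Corollary~3]{RosendalAutomatic1}, but this is precisely how \( \mcg(M) \) was topologized in Section~\ref{sec:overview}.
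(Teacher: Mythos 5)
Your proposal matches the paper's argument exactly: the paper deduces the corollary from Theorem~\ref{thm:main1} together with the quotient-stability of the Steinhaus property for Polish groups, citing \cite[Corollary~3]{RosendalAutomatic1}, and you fill in the same supporting details (that \( \mcg(M) = H(M)/H_0(M) \) is Polish and the quotient map is an epimorphism). There is no substantive difference.
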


A noteworthy example of a perfectly tame 2-manifold is the blooming Cantor tree surface\index{surface!blooming Cantor tree}\index{2-manifold!blooming Cantor tree} (i.e.,~the orientable 2-manifold whose end space is a Cantor set of non-planar ends), which is infinite genus; hence, Theorem~\ref{thm:main1} gives the first example of an infinite-genus 2-manifold with Steinhaus homeomorphism group. 
Dickmann \cite{DickmannAutomatic} recently classified the surfaces whose pure mapping class groups have automatic continuity, and also includes examples of an infinite-genus surfaces with non-empty non-compact boundary whose mapping class group has the Steinhaus property.

As already noted, the Steinhaus property implies automatic continuity, yielding:

\begin{Cor}
If \( M \) is a perfectly tame 2-manifold, then \( H(M) \) and \( \mcg(M) \) have the automatic continuity property. 
\qed
\end{Cor}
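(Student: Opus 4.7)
The plan is to deduce this corollary immediately from Theorem~\ref{thm:main1} and the preceding corollary, via the general principle (due to Rosendal--Solecki \cite[Proposition~2]{RosendalAutomatic1} and recalled just before Theorem~\ref{thm:main1}) that every Steinhaus topological group has the automatic continuity property. Concretely, given a perfectly tame 2-manifold \( M \), Theorem~\ref{thm:main1} provides that \( H(M) \) is Steinhaus, and the preceding corollary (obtained from \cite[Corollary~3]{RosendalAutomatic1} applied to the Polish quotient \( \mcg(M) = H(M)/H_0(M) \) by the closed subgroup \( H_0(M) \)) provides that \( \mcg(M) \) is Steinhaus. Applying the Steinhaus-implies-automatic-continuity principle to each group then yields the claim.

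There is essentially no new content required here: the corollary is a formal consequence of material already established in the surrounding text. The main obstacle, as one would expect, lies not in this corollary but in Theorem~\ref{thm:main1} itself, where one must adapt Mann's arguments from \cite{MannAutomatic2} to the broader class of perfectly tame 2-manifolds and verify that a Steinhaus constant (stated to be \( 60 \)) suffices. Once that theorem is in hand, both of the stated automatic continuity consequences---for \( H(M) \) and for \( \mcg(M) \)---follow in one line.
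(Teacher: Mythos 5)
Your proposal is correct and matches the paper's own (implicit) argument exactly: the corollary is stated with a \qed precisely because it follows immediately from Theorem~\ref{thm:main1}, the preceding corollary on \( \mcg(M) \), and the Rosendal--Solecki fact that Steinhaus implies automatic continuity. Nothing further is needed.
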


As \( 2^\bn \) can be realized as the end space of a planar perfectly tame 2-manifold, namely the 2-sphere with a Cantor set removed, we can appeal to the fact that quotients of Steinhaus groups by closed subgroups are Steinhaus to obtain the following corollary, which is a result of Rosendal--Solecki \cite{RosendalAutomatic1}:

\begin{Cor}
\( \mathrm{Homeo}(2^\bn) \) is Steinhaus and hence has the automatic continuity property. 
\qed
\end{Cor}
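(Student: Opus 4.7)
The plan is to apply Theorem~\ref{thm:main1} to a specific perfectly tame 2-manifold having \(2^\bn\) as its end space, and then descend the Steinhaus property to \(\mathrm{Homeo}(2^\bn)\) via the epimorphism induced by the action on ends.

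First, I would take \(M = \bS^2 \ssm K\), where \(K\) is a Cantor set embedded in \(\bS^2\). To see that \(M\) is perfectly tame, I would verify that it fits Definition~\ref{def:perfectly_stable} directly: \(M\) itself is perfectly self-similar (as already noted in the excerpt), and its end space can be written as \(\mathscr P \cup \mathscr D\) with \(\mathscr P = K\) perfect, \(\mathscr D = \varnothing\), so that \(\partial \mathscr D = \varnothing = \mathscr P \cap \overline{\mathscr D}\) trivially. By Theorem~\ref{thm:main1}, \(H(M)\) is therefore Steinhaus.

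Next, I would invoke the fact, recorded in the excerpt during the discussion of second-countable Stone spaces, that for a planar 2-manifold the action on the end space induces an epimorphism from \(H(M)\) onto \(\mathrm{Homeo}(E(M))\); here \(E(M)\) is homeomorphic to \(2^\bn\) since \(K\) is a Cantor set. This epimorphism is continuous with respect to the compact-open topologies involved, so its kernel is a closed subgroup of \(H(M)\). Applying the result of Rosendal--Solecki \cite[Corollary~3]{RosendalAutomatic1} that quotients of Polish Steinhaus groups by closed subgroups remain Steinhaus, I conclude that \(\mathrm{Homeo}(2^\bn)\) is Steinhaus. Automatic continuity then follows at once from the Rosendal--Solecki implication (\cite[Proposition~2]{RosendalAutomatic1}) already used in Subsection~\ref{subsec:automatic}.

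There is no real obstacle in this proof; the only point requiring care is the verification that \(\bS^2 \ssm K\) is perfectly tame, which is immediate once the definition is unpacked, and the observation that the end-space quotient map lands \emph{surjectively} onto \(\mathrm{Homeo}(2^\bn)\), which was already cited above in the planar setting. Thus the corollary is essentially a combination of Theorem~\ref{thm:main1}, the realization of \(2^\bn\) as an end space of a planar perfectly tame 2-manifold, and the standard preservation of the Steinhaus property under continuous surjections with closed kernel.
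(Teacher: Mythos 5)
Your proposal matches the paper's argument exactly: realize \(2^\bn\) as the end space of the planar perfectly tame 2-manifold \(\bS^2\ssm K\), apply Theorem~\ref{thm:main1} to conclude \(H(\bS^2\ssm K)\) is Steinhaus, and then descend through the continuous epimorphism onto \(\mathrm{Homeo}(2^\bn)\) using the Rosendal--Solecki fact that quotients of Polish Steinhaus groups by closed subgroups remain Steinhaus. This is precisely how the paper justifies the corollary, so there is nothing to add.
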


One application of automatic continuity is establishing the uniqueness of a Polish group structure.
Let \( G \) be a group, and let \( \tau, \tau' \) be two topologies on \( G \) such that \( G \) equipped with each of the topologies is a Polish group. 
Assume \( (G, \tau) \) has the automatic continuity property, so that the identity homomorphism \( (G, \tau) \to  (G,\tau') \) is continuous. 
Let \( V \in \tau \), and let us consider \( V \) as a subset of \( (G, \tau') \). 
Then \( V \), being the continuous image of an open set of a Polish space, is analytic in \( (G, \tau') \).
Similarly, \( G \ssm V \), being the continuous image of a closed subset of a Polish space, is analytic in \( (G, \tau') \).
A theorem of Suslin implies that \( V \) is Borel in \( (G, \tau') \) as both \( V \) and its complement are analytic in \( (G, \tau') \).
In other words, the identity homomorphism \( (G, \tau') \to ( G, \tau) \) is Baire measurable.
Now, every Baire measurable homomorphism between Polish groups is continuous (see \cite[Section~2]{RosendalSurvey}), and hence, the identity homomorphism is a topological isomorphism. 

\begin{Cor}
\label{cor:unique}
If \( M \) is a perfectly tame 2-manifold, then \( H(M) \) and \( \mcg(M) \) have unique Polish group topologies. 
\qed
\end{Cor}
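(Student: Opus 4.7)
The proof proposal is essentially that Corollary~\ref{cor:unique} is an immediate consequence of the preceding corollary (automatic continuity of $H(M)$ and $\mcg(M)$ for perfectly tame $M$) together with the general principle worked out in the paragraph immediately preceding the statement. Concretely, I would apply that general argument verbatim, taking $G$ to be either of the two groups.

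Fix $G \in \{H(M), \mcg(M)\}$ and suppose $\tau$ and $\tau'$ are two topologies on $G$, each making $G$ a Polish group. By the previous corollary, $(G,\tau)$ has the automatic continuity property, so the identity map $\mathrm{id}\co (G,\tau) \to (G,\tau')$ is a continuous homomorphism. Symmetrically, automatic continuity of $(G,\tau')$ gives continuity of $\mathrm{id}\co (G,\tau') \to (G,\tau)$, and we are done. (In fact, one does not even need automatic continuity on both sides: once $\mathrm{id}\co (G,\tau)\to(G,\tau')$ is continuous, the Suslin/Baire-measurability argument in the preceding paragraph already promotes its inverse to a continuous map, so automatic continuity on just one side would suffice.)

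Since the argument is entirely general and all of its ingredients are either supplied by the preceding corollary or are standard facts cited in the preceding paragraph (analytic plus coanalytic implies Borel by Suslin's theorem, and Baire measurable homomorphisms between Polish groups are continuous), there is no real obstacle to overcome: the entire substantive content lies in the previously established automatic continuity of $H(M)$ and $\mcg(M)$ for perfectly tame $M$. The ``hard part,'' such as it is, is simply pointing at the right two results and observing that the conclusion is immediate.
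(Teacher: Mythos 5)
Your overall identification is correct: the corollary is indeed immediate from the preceding automatic continuity corollary together with the Suslin/Baire-measurability argument worked out in the paragraph above the statement, and your parenthetical remark is precisely the paper's argument. However, the primary ``symmetric'' version you state first has a subtle gap. The preceding corollary establishes automatic continuity only for $H(M)$ with its compact-open topology and $\mcg(M)$ with the corresponding quotient topology; it says nothing about an arbitrary Polish group topology $\tau'$ on the underlying abstract group. So the step ``automatic continuity of $(G,\tau')$ gives continuity of $\mathrm{id}\co(G,\tau')\to(G,\tau)$'' is not justified --- one would need to already know that $\tau'$ is the standard topology, which is precisely what is being proved. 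The correct argument is the one-sided one you mention parenthetically, and it is exactly what the paper does: fix $\tau$ to be the topology for which automatic continuity has been established, conclude $\mathrm{id}\co(G,\tau)\to(G,\tau')$ is continuous, and then promote continuity of the inverse via Suslin's theorem and the fact that Baire measurable homomorphisms between Polish groups are continuous.
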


Corollary~\ref{cor:unique} is most interesting in the case of mapping class groups. 
In the case of homeomorphism groups, Corollary~\ref{cor:unique} is much weaker than what is already known: Kallman \cite{KallmanUniqueness} has given very general criteria for a closed subgroup of a homeomorphism group to have a unique Polish group topology.  
In particular, the homeomorphism group of any manifold has a unique Polish group topology, namely the  compact-open topology.

\subsection{Commutator subgroups}
\label{subsec:commutator}

Given a group \( G \), the group \( G/[G,G] \) is called the \emph{abelianization}\index{abelianization} of \( G \) and is denoted \( G^{\textrm{ab}} \); it has the property that every homomorphism from \( G \) to an abelian group factors through \( G^{\textrm{ab}} \). 
In particular, a group is perfect if and only if its abelianization is trivial. 

Pure mapping class groups of finite-type 2-manifolds of genus at least three are perfect (see \cite[Section~5.1.2]{FarbPrimer} for a short proof and an overview of the history).
In contrast, combining the work of Domat--Dickmann \cite{DomatBig} and Aramayona, Patel, and the author \cite{AramayonaFirst}, the abelianization of the pure mapping class group of an orientable infinite-type 2-manifold is infinite.
On the other hand, we saw in  Theorem~\ref{thm:main2} that \( H(M) \) and  \( \mcg(M) \) are uniformly perfect whenever \( M \) is perfectly self-similar. 

Abelian groups always admit countable quotients, and hence the abelianization is useful in trying to understand the structure of countable-index subgroups.
Therefore, there is a natural interest in computing abelianizations:

\begin{Problem}
Given a 2-manifold \( M \), compute the abelianization of \( H(M) \) and \( \mcg(M) \). 
\end{Problem}

With this problem in mind, we give a slight generalization of a result of Field--Patel--Rasmussen \cite{FieldStable} to the setting of homeomorphism groups and to include non-orientable 2-manifolds. 

\begin{MainThm3}
Let \( M \) be a 2-manifold obtained by taking the connected sum of a finite-type 2-manifold and finitely many perfectly self-similar 2-manifolds.
Then, the commutator subgroup of \( H(M) \)  is open.
\end{MainThm3}

Given a subgroup \( H \) of a topological group \( G \), the canonical map \( G \to G/H \) is open when \( G/H \) is equipped with the quotient topology.  
This follows from the fact that the saturation of an open subset \( U \) of \( G \) is simply \( UH \).
Using this basic fact about topological groups, together with the fact that the image of the commutator subgroup in the domain of an epimorphism is the commutator subgroup in the codomain, yields the following two corollaries:

\begin{Cor}
Let \( M \) be a 2-manifold obtained by taking the connected sum of a finite-type 2-manifold and finitely many perfectly self-similar 2-manifolds.
Then, \( [\mcg(M),\mcg(M)] \) is an open subgroup of \( \mcg(M) \).
\qed
\end{Cor}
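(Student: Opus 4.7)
The plan is to deduce the corollary directly from Theorem~\ref{thm:commutator} by transporting openness through the quotient map \( q \co H(M) \to \mcg(M) = H(M)/H_0(M) \). By definition, \( \mcg(M) \) is given the quotient topology, and since \( H_0(M) \) is a closed normal subgroup of \( H(M) \), the map \( q \) is a continuous surjective homomorphism of Polish groups. The two ingredients I need are: (i) \( q \) is an open map, and (ii) \( q \) carries the commutator subgroup of \( H(M) \) onto the commutator subgroup of \( \mcg(M) \). Both facts are recorded in the paragraph immediately preceding the corollary.

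First I would write \( [\mcg(M),\mcg(M)] = q([H(M),H(M)]) \). This equality holds because any epimorphism sends commutators to commutators, so the image of \( [H(M),H(M)] \) is generated by the commutators of \( \mcg(M) \); conversely, every commutator \( [\bar g, \bar h] \) in \( \mcg(M) \) lifts to the commutator \( [g,h] \) in \( H(M) \), whose image under \( q \) is \( [\bar g, \bar h] \).

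Next I would invoke Theorem~\ref{thm:commutator}, which states that \( [H(M),H(M)] \) is an open subgroup of \( H(M) \) under the hypothesis on \( M \). Since \( q \) is an open map (the saturation of an open set \( U \) under the equivalence relation defining \( \mcg(M) \) is \( U \cdot H_0(M) \), which is open as a union of translates of \( U \)), the image \( q([H(M),H(M)]) \) is open in \( \mcg(M) \). Combining with the identification in the previous step, \( [\mcg(M),\mcg(M)] \) is open, which is exactly what is to be shown.

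There is no real obstacle here; the corollary is a formal consequence of Theorem~\ref{thm:commutator} and standard facts about quotient topologies on topological groups, which is precisely why the excerpt packages it as an immediate corollary.
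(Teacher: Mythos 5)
Your argument matches the paper's exactly: invoke Theorem~\ref{thm:commutator} for openness of \( [H(M),H(M)] \), note that the quotient map \( q \co H(M) \to \mcg(M) \) is open because the saturation of an open set \( U \) is \( U\,H_0(M) \), and use that an epimorphism carries the commutator subgroup onto the commutator subgroup of the target. No gaps; this is the intended deduction.
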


\begin{Cor}
Let \( M \) be a 2-manifold obtained by taking the connected sum of a finite-type 2-manifold and finitely many perfectly self-similar 2-manifolds.
The abelianization of \( H(M) \) and \( \mcg(M) \) are isomorphic.
Moreover, the abelianization of \( H(M) \) and \( \mcg(M) \) are countable.
\end{Cor}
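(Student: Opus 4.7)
The plan is to derive both assertions from Theorem~\ref{thm:commutator}; once the openness of the commutator subgroup is granted, the entire argument reduces to formal manipulations with topological groups.

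First, I would record the elementary fact that any open subgroup of a topological group is automatically closed, since its complement is a union of left translates of the subgroup and therefore open. Applying this to Theorem~\ref{thm:commutator}, the subgroup $[H(M), H(M)]$ is clopen in $H(M)$. Because the identity component $H_0(M)$ is a connected set containing the identity, and the identity lies in the clopen subgroup $[H(M), H(M)]$, we conclude that $H_0(M) \subseteq [H(M), H(M)]$.

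Next, the quotient homomorphism $\pi\co H(M) \to H(M)/H_0(M) = \mcg(M)$ is surjective and therefore induces a surjection $\bar\pi\co H(M)^{\mathrm{ab}} \to \mcg(M)^{\mathrm{ab}}$ at the level of abelianizations. A direct computation shows that the kernel of $\bar\pi$ is the image in $H(M)^{\mathrm{ab}}$ of the product $H_0(M) \cdot [H(M), H(M)]$. By the previous step, this product equals $[H(M),H(M)]$, so the kernel of $\bar\pi$ is trivial; hence $\bar\pi$ is an isomorphism, establishing the first claim.

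For countability, the openness of $[H(M), H(M)]$ makes the identity a clopen point in the quotient $H(M)^{\mathrm{ab}}$, so $H(M)^{\mathrm{ab}}$ is a discrete topological group. Since $H(M)$ is Polish, hence separable, and the continuous image of a separable space is separable, the quotient $H(M)^{\mathrm{ab}}$ is a separable discrete space, and any such space is countable. Combined with the isomorphism $H(M)^{\mathrm{ab}} \cong \mcg(M)^{\mathrm{ab}}$, this completes the argument. The main difficulty has been absorbed into Theorem~\ref{thm:commutator}; the corollary itself is essentially a formal consequence of the openness of the commutator subgroup in a separable topological group.
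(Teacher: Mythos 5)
Your proof is correct and follows essentially the same approach as the paper: use the openness of the commutator subgroup (Theorem~\ref{thm:commutator}) to conclude it is clopen, hence contains the connected component \( H_0(M) \), which gives the isomorphism of abelianizations, and then deduce countability from discreteness plus separability. The paper phrases the first half slightly differently---it observes that the quotient map \( q\co H(M) \to H(M)^{\mathrm{ab}} \) is continuous to a discrete group, hence kills \( H_0(M) \), and factors through \( \mcg(M) \)---but this is the same argument as your explicit kernel computation for \( \bar\pi \).
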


\begin{proof}
Let \( q \co H(M) \to H(M)^\text{ab} \) be the canonical homomorphism. 
Equip \( H(M)^{\text{ab}} \) with the quotient topology, so that \( q \) is open and continuous.
Moreover, since \( [H(M), H(M)] \) is open and \( q \) is open, \( H(M)^{\text{ab}} \) is discrete. 
As the continuous image of a separable space,  \( H(M)^{\text{ab}} \) is separable, and hence countable (as it is separable and discrete).
Now since \( q \) is a continuous homomorphism to a discrete space, the connected component of the identity is in the kernel of \( q \); in particular, \( q \) factors through \( \mcg(M) \). 
\end{proof}


\section{Topology of surfaces}
\label{appendix:topology}

This section has been written with the goal of providing enough background so that the chapter is accessible to non-experts, especially  graduate students.
In particular, we provide a thorough overview of the necessary background in the topology of surfaces, while assuming the reader is familiar with the basics of algebraic topology, namely the concepts of connected sums, orientation, and Euler characteristic.

In the background of the following discussion is the fact that every 2-manifold admits a triangulation (see \cite{ThomassenJordan}), which allows one to appeal to the theory of piecewise-linear manifolds, and in particular the structure of regular neighborhoods (see \cite{BryantPiecewise} for a survey of the relevant theory). 
We will not explicitly appeal to these facts, but they are required if one wants to prove the statements given below.

Most readers are likely familiar with the classification of compact 2-manifolds, but there is a classification for all 2-manifold in terms of certain 0-dimensional data. 
The majority of this section is dedicated to stating and understanding this classification, which is an essential ingredient in the proof of every theorem highlighted in the previous section. 
We will often work with subsurfaces, and so we will focus on presenting the classification of surfaces with compact boundary. 
The standard reference is \cite{RichardsClassification}. 

The classification theory relies on the notion of a topological end. 
For the sake of variation, we give a slightly different (and more concise) definition than we did in an earlier volume of this series \cite{AramayonaVlamis}. 
There are many equivalent definitions one can give, all of which have (dis)advantages. 
We state the definition in terms of surfaces, but it can be adapted to a much broader class of spaces (see \cite{FernandezEnds} for a concise and detailed discussion of end spaces and the associated Freudenthal compactification). 

Given two compact subsets \( K \) and \( K' \) of a surface \( S \) such that \( K \subset K' \) there exists a projection \( f_{K,K'} \co \pi_0(S \ssm K') \to \pi_0(S \ssm K) \) mapping a component \( U \) of \( S \ssm K' \) to the component of \( S\ssm K \) containing \( U \). 

\begin{Def}[Space of ends\index{space of ends}\index{2-manifold!end space of}]
Let \( S \) be a surface, and let \( \mathcal K \) be the directed partially ordered set of compact subsets of \( S \) (ordered by inclusion). 
The \emph{space of ends}, or \emph{end space}, of \( S \), denoted \( \Ends(S) \), is the space whose underlying set is the inverse limit of the inverse system \( (\{\pi_0(S\ssm K)\}_{K \in \mathcal K}, \{f_{K,K'}\}_{K\subseteq K' \in \mathcal K}) \), and whose topology is the limit topology, that is, the coarsest topology for which the natural projection \( \Ends(S) \to \pi_0(S \ssm K) \) is continuous for each \( K \in \mathcal K \). 
An \emph{end}\index{end}\index{2-manifold!end of} of \( S \) is an element in \( \Ends(S) \).
\end{Def}

This definition is hard to digest (we will clarify shortly), but it is useful for establishing formal properties.
For instance, we readily see that the space of ends is a topological invariant, and in particular, every homeomorphism \( f \co S \to S \) induces a homeomorphism \( f \co \Ends(S) \to \Ends(S) \).
A \emph{subsurface}\index{subsurface} of \( S \) is a closed subset of \( S \) that is a surface when equipped with the subspace topology.
Every surface admits an exhaustion by compact subsurfaces, that is, if \( S \) is a surface with compact boundary, there exists a sequence \( \{ \Sigma_n \}_{n\in \bn} \) of compact subsurfaces such that \( \Sigma_n \) is contained in the interior of \( \Sigma_{n+1} \) and such that \( S = \bigcup_{n\in\bn} \Sigma_n \). 
As the sequence \( \{ \Sigma_n\}_{n\in\bn} \) is cofinal in the poset \( \mathcal K \), the inverse limit taken over this compact exhaustion---instead of all of \( \mathcal K \)---is still \( \Ends(S) \).
Appealing to a standard fact of inverse limits, this says that \( \Ends(S) \) embeds into \( \prod_{n\in\bn} \pi_0(S \ssm \Sigma_n) \) as a closed subset.
Moreover, as \( \prod_{n\in\bn} \pi_0(S \ssm K) \) is a countable product of finite discrete spaces, it is Hausdorff, second countable, zero-dimensional, and compact (by Tychonoff's theorem). 
All of these properties are inherited by closed subsets, establishing the following:

\begin{Prop}
The space of ends of a surface is Hausdorff, second countable, zero-dimensional, and compact\footnote{In other words, it is a second countable Stone space.}.
In particular, it is homeomorphic to a closed subset of the Cantor set \( 2^\bn \).
\qed
\end{Prop}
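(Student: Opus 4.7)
The plan is to follow the roadmap sketched in the paragraph preceding the proposition: produce a compact exhaustion, identify the end space with a countable inverse limit of finite discrete spaces, and then transfer the desired topological properties from the ambient product.

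First I would fix a compact exhaustion \( \{\Sigma_n\}_{n\in\bn} \) of \( S \) by compact subsurfaces with \( \Sigma_n \subset \mathrm{int}(\Sigma_{n+1}) \) and \( S = \bigcup_n \Sigma_n \) (whose existence is standard for second-countable manifolds). The critical preliminary point is that \( \pi_0(S \ssm \Sigma_n) \) is \emph{finite} for every \( n \). The cleanest justification is to replace \( \Sigma_n \) by a regular neighborhood (or simplicial neighborhood with respect to a triangulation), so that \( S \ssm \Sigma_n \) has only finitely many components, one corresponding to each boundary circle of \( \Sigma_n \) (of which there are finitely many since \( \Sigma_n \) is a compact surface with boundary). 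I expect this finiteness step to be the main technical obstacle, since in the definition of \( \Ends(S) \) we allow arbitrary compact subsets \( K \), for which \( \pi_0(S \ssm K) \) need not be finite; however, since compact subsurfaces are cofinal in the poset \( \mathcal K \) of compact subsets, the inverse limit is unchanged by restricting to this cofinal subsequence.

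Next I would invoke the standard fact that if \( (X_n, f_{n,n+1}) \) is an inverse system of topological spaces indexed by \( \bn \), then the inverse limit \( \varprojlim X_n \) naturally embeds as a closed subset of the product \( \prod_n X_n \) (it is cut out by the closed conditions \( f_{n,n+1}(x_{n+1}) = x_n \) once each \( X_n \) is Hausdorff). Applying this with \( X_n = \pi_0(S \ssm \Sigma_n) \), which is finite and discrete, we obtain that \( \Ends(S) \) is homeomorphic to a closed subset of the countable product \( \prod_n \pi_0(S \ssm \Sigma_n) \) of finite discrete spaces.

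The product \( \prod_n \pi_0(S \ssm \Sigma_n) \) is Hausdorff (product of Hausdorff), second countable (countable product of second countable), zero-dimensional (products of zero-dimensional spaces are zero-dimensional, with the product basis of clopen cylinders), and compact (Tychonoff, though only the countable case is needed). Each of these four properties passes to closed subspaces, so \( \Ends(S) \) inherits all four. Finally, for the embedding into \( 2^\bn \), I would use that a second-countable Stone space \( E \) admits a countable basis \( \{U_n\}_{n\in\bn} \) of clopen sets; the map \( E \to 2^\bn \) sending \( x \) to \( (\mathbf{1}_{U_n}(x))_{n\in\bn} \) is continuous (each coordinate is continuous since \( U_n \) is clopen) and injective (the \( U_n \) separate points, since \( E \) is Hausdorff and zero-dimensional with this basis), hence a topological embedding by compactness; its image is then a closed subset of \( 2^\bn \). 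This completes the proof.
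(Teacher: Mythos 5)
Your proposal follows essentially the same route as the paper: identify \( \Ends(S) \) with the inverse limit of \( \pi_0(S \ssm \Sigma_n) \) over a compact exhaustion by subsurfaces (using cofinality in \( \mathcal K \)), embed this inverse limit as a closed subset of the countable product of finite discrete spaces, and let Hausdorffness, second countability, zero-dimensionality, and compactness descend to the closed subspace. The extra detail you supply --- why each \( \pi_0(S \ssm \Sigma_n) \) is finite, and the explicit indicator-function embedding of a second-countable Stone space into \( 2^\bn \) --- fills in steps the paper treats as standard, but does not change the argument.
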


Let us consider some basic examples:
\begin{itemize}
\item The end space of a compact 2-manifold is empty.
\item The end space of \( \br^2 \) is a singleton.
\item  \( \mathbb S^1 \times \br \) has two ends. 
\item The end space of \( \mathbb C \ssm \bn \) is the one-point compactification of \( \bn \). 
\item The end space of the manifold obtained by removing a Cantor space from \( \mathbb S^2 \) is homeomorphic to the Cantor space \( 2^\bn \). 
\end{itemize}

Let us now be more concrete and give a description of the clopen subsets of the space of ends as we will represent them throughout the chapter. 
From the definition, given a surface \( S \) and any open subset \( \Omega \) of \( S \)  with compact boundary, there is a corresponding clopen subset \( \widehat \Omega \) of \( \Ends(S) \), which is given as follows: \( \Omega \) defines a point in \( \pi_0(S\ssm \partial \Omega) \), and \( \widehat \Omega \) is the preimage of this point under the natural projection \( \Ends(S) \to \pi_0(S \ssm \partial \Omega) \). 
In fact, every clopen subset of \( \Ends(S) \) is of the form \( \widehat \Omega \) for some open subset \( \Omega \) with compact boundary. 
To simplify language, we say that \( \Omega \) is a \emph{neighborhood}\index{end!neighborhood of} of an end \( e \) if \( e \in \widehat \Omega \).

\begin{Def}
\label{def:freudenthal}
A \emph{Freudenthal subsurface}\index{subsurface!Freudenthal} of a 2-manifold is a subsurface whose boundary is connected and compact. 
\end{Def}

If \( D \) is a Freudenthal subsurface with interior \( \Omega \), then we define \( \widehat D \) to be equal to \( \widehat \Omega \). 
A sequence of Freudenthal subsurfaces \( \{D_n\}_n\in\bn \) satisfying \( D_{n+1} \subset D_n \) and \( \bigcap D_n = \varnothing \) defines an end, and in fact, the space of ends can be defined as equivalence classes of such sequences (c.f., \cite{AramayonaVlamis, RichardsClassification}).
Given the discussion thus far, and using the fact that connected surfaces are path connected, we record the following lemma, which we will use throughout the chapter.

\begin{Lem}
\label{lem:basis}
Let \( S \) be a surface with compact boundary.  
Every clopen subset of \( \sE(S) \) is of the form \( \widehat D \) for some open Freudenthal subsurface \( D \). 
Moreover, given a collection \( \{ \sU_i\}_{i\in I}  \) of pairwise-disjoint clopen subsets of \( \sE(S) \), there exists a collection \( \{D_i\}_{i \in I} \) of pairwise-disjoint Freudenthal subsurfaces of \( S \) such that \( \widehat D_i = \sU_i \) for each \( i \in I \). 
\qed
\end{Lem}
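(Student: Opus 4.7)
The plan is to prove the two assertions in sequence: first build a Freudenthal subsurface realizing a single clopen end set by a surgery argument, then iterate to obtain a disjoint family.

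For the first assertion, given a clopen $\sU \subset \sE(S)$, compactness of $\sU$ together with the inverse-limit description of $\sE(S)$ furnishes a compact subsurface $\Sigma \subset S$ such that $\sU = \bigsqcup_{c \in \mathcal C} \widehat c$ for some subcollection $\mathcal C$ of the finitely many components of $S \ssm \Sigma$. I will preliminarily enlarge $\Sigma$ to be connected by adjoining finitely many arcs in $S$ and thickening, using path-connectedness of $S$. The closed set $D^0 = \bigcup_{c \in \mathcal C} \overline c$ then satisfies $\widehat{D^0} = \sU$, but typically is disconnected with several boundary circles $\beta_1, \ldots, \beta_p$ on $\partial \Sigma$. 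Choosing $p-1$ disjoint arcs in $\mathrm{int}(\Sigma)$ whose endpoints realize a spanning tree on $\{\beta_1, \ldots, \beta_p\}$ (possible by connectedness of $\Sigma$) and absorbing a thin regular neighborhood $N$ of these arcs produces $D = D^0 \cup N$: each arc performs a band sum merging two of the $\beta_j$'s into one circle, so after $p-1$ such operations $D$ is connected with a single boundary component. Since $N$ is compact, $\widehat D = \sU$, and $\mathrm{int}(D)$ is the desired open Freudenthal subsurface.

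For the second assertion, second-countability of $\sE(S)$ forces the nonempty members of $\{\sU_i\}_{i \in I}$ to form a countable subfamily (empty members being realized by $D_i = \varnothing$); enumerate them as $\sU_1, \sU_2, \ldots$ and construct $D_n$ inductively. Given pairwise-disjoint Freudenthal subsurfaces $D_1, \ldots, D_n$ with $\widehat{D_j} = \sU_j$, let $M' = S \ssm (D_1 \cup \cdots \cup D_n)$, an open $2$-manifold. Its end space decomposes as $\sE(M') = (\sE(S) \ssm \bigsqcup_j \widehat{D_j}) \sqcup \{e_1, \ldots, e_n\}$, where $e_j$ is the isolated end of $M'$ corresponding to approaching $\partial D_j$ from within $M'$. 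The clopen set $\sU_{n+1}$, being disjoint from each $\widehat{D_j}$, transfers to a clopen subset of $\sE(M')$ that omits every $e_j$; applying the first assertion inside $M'$ then yields a Freudenthal subsurface $D_{n+1} \subset M'$ with end space $\sU_{n+1}$. Because $\widehat{D_{n+1}}$ contains no $e_j$, the subsurface $D_{n+1}$ stays bounded away from every $\partial D_j$, hence is closed in $S$ while being disjoint from each $D_j$.

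The principal obstacle is the surgery step of the first assertion: converting a possibly disconnected multi-circle boundary into a single circle without altering the end space. The spanning tree of arcs together with band summing resolves this cleanly, provided $\Sigma$ is connected, which motivates the preliminary enlargement. A subtler point in the induction is the necessity of applying the first assertion inside the open $2$-manifold $M'$ rather than the bordered surface $V = S \ssm \bigcup \mathrm{int}(D_j)$: a Freudenthal subsurface of $V$ could share part of its boundary with $\partial V$ and thereby intersect some $D_j$, whereas a Freudenthal subsurface of $M'$ whose end space avoids the $e_j$'s cannot accumulate on $\partial D_j$ and is automatically closed in $S$.
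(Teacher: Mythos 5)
The paper leaves this lemma as an observation (the inline \verb|\qed|, with the hint that path-connectedness of connected surfaces is the key input), and your write-up fills in the details along exactly the lines the paper has in mind: use the inverse-limit/cylinder description of clopen sets to reduce to a finite union of $\widehat c$'s, then use arcs in a connected compact core to band-sum the boundary circles together. The inductive passage to $M' = S \ssm (D_1\cup\cdots\cup D_n)$ in the second part is a clean way to organize the disjointness bookkeeping, and your observation that avoiding the isolated ends $e_j$ forces $D_{n+1}$ to be closed in $S$ is the right point to make.

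Two small remarks. First, the arcs you choose have endpoints on $\partial\Sigma$, so it is their interiors (not the arcs themselves) that lie in $\mathrm{int}(\Sigma)$; a trivial rewording. Second, taking $D_i = \varnothing$ when $\sU_i = \varnothing$ does not quite literally satisfy the conclusion, since in this paper a (Freudenthal) subsurface is by definition a connected, hence nonempty, closed surface; one should either realize each empty $\sU_i$ by a small closed disk (inserting these into your induction, possible for countably many indices) or simply read the lemma, as it is used in practice, with the empty $\sU_i$ discarded. Neither point affects the substance of the argument.
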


An end \( e \) of \( S \) is \emph{planar}\index{end!planar} (resp., \emph{orientable}\index{end!orientable}) if there exists an open subset \( \Omega \) of \( S \) such that \( \partial \Omega \) is compact, \( e \in \widehat \Omega \), and \( \Omega \) is homeomorphic to an open subset of \( \br^2 \) (resp., is an orientable manifold); otherwise, it is \emph{non-planar} (resp., \emph{non-orientable}).
The set of non-orientable ends of \( S \) is denoted \( \Eno(S) \) and the set of non-planar ends is denoted \( \Enp(S) \).
We then have that \( \Enp(S) \) and \( \Eno(S) \) are closed subsets of \( \sE(S) \) and \( \Eno(S) \subset \Enp(S) \). 

\textbf{Notation.}
Given a surface \( S \) with compact boundary, the space of ends of \( S \)  will refer to the triple \( (\sE(S), \Enp(S), \Eno(S)) \). 
Moreover, a homeomorphism between the space of ends of two surfaces \( S \) and \( R \) will refer to a homeomorphism \( \sE(S) \to \sE(R) \) mapping \( \Enp(S) \) onto \( \Enp(R) \) and \( \Eno(S) \) onto \( \Eno(R) \).

Continuing towards the classification, we need the notion of genus. 
The \emph{genus}\index{genus} of a compact surface with \( n \) boundary components (possibly \( n = 0 \)) is defined to be the quantity \( g = (2-\chi-n)/2 \), where \( \chi \) is the Euler characteristic of the surface.  
(In the non-orientable case, this is sometimes referred to as the \emph{reduced genus}\index{genus!reduced}.)

A surface \( S \) is of \emph{finite genus}\index{genus!finite} if there exists a compact subsurface \( \Sigma \) such that every compact subsurface of \( S \) containing \( \Sigma \) has the same genus as \( \Sigma \); we then define the genus of \( S \) to be equal to the genus of \( \Sigma \). 
If \( S \) is not of finite genus, then we say it is of \emph{infinite genus}\index{genus!infinite}.
Observe that if \( S \) is of infinite genus, then \( \Enp \neq \varnothing \).
If \( S \) is not orientable, then it is of \emph{even} (resp., \emph{odd}) non-orientability if \( S \) contains a compact subsurface \( \Sigma \) such that each component of \( S \ssm \Sigma \) is orientable and \( \Sigma \) has integral (resp., half-integral) genus. 
If \( S \) is not orientable, but of neither even nor odd orientability, then we say it is \emph{infinitely non-orientable}; note that \( S \) is infinitely non-orientable if and only if \( \Eno\neq \varnothing \). 
With these definitions, we partition the class of surfaces into four orientability classes: the orientable surfaces, the infinitely non-orientable surfaces, the surfaces of even non-orientablility, and the surfaces of odd non-orientability.

\begin{Thm}[The classification of surfaces with compact boundary\index{2-manifold!classification of}\index{surface!classification of}]
\label{thm:classification}
Let \( S \) and \( R \) be two surfaces with compact boundary such that they have the same number of boundary components, they are of the same orientability class, and they are of the same genus.
Then, \( S \) and \( R \) are homeomorphic if and only if there is a homeomorphism \( \sE(S) \to \sE(R) \) mapping \( \Enp(S) \) onto \( \Enp(R) \) and \( \Eno(S) \) onto \( \Eno(R) \).
\qed 
\end{Thm}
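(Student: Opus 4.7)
The plan is to follow the approach of Richards, adapting it to incorporate the compact boundary and the refined orientability data. The ``only if'' direction is essentially functorial: a homeomorphism induces a bijection on boundary components, preserves Euler characteristic (hence genus), and pushes forward the end space along with the distinguished subsets of non-planar and non-orientable ends. So the work is in the ``if'' direction.

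First I would reduce to the boundaryless case. Since the boundary is compact, both $S$ and $R$ have the same finite number of boundary circles; cap each off with a closed disk to obtain 2-manifolds $\hat S$ and $\hat R$. Capping does not affect genus, orientability class, or the triple $(\sE, \Enp, \Eno)$, so $\hat S$ and $\hat R$ still satisfy the hypotheses of the theorem. Any homeomorphism $\hat S \to \hat R$ carrying capping disks to capping disks restricts to a homeomorphism $S \to R$, so it suffices to prove the theorem assuming $\partial S = \partial R = \varnothing$.

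Next, fix a homeomorphism $\varphi \co (\sE(S), \Enp(S), \Eno(S)) \to (\sE(R), \Enp(R), \Eno(R))$. I would build the desired homeomorphism $S \to R$ by constructing compatible compact exhaustions $\Sigma_1 \subset \Sigma_2 \subset \cdots$ of $S$ and $T_1 \subset T_2 \subset \cdots$ of $R$ together with homeomorphisms $f_n \co \Sigma_n \to T_n$ extending one another. Using Lemma~\ref{lem:basis}, any finite clopen partition of $\sE(S)$ can be realized by the complementary regions of a single compact subsurface with compact boundary, and $\varphi$ transports such partitions between $S$ and $R$. Thus I can inductively choose $\Sigma_n$ and $T_n$ so that $\varphi$ induces a bijection between the components of the complements $S \ssm \Sigma_n$ and $R \ssm T_n$ (and hence between their boundary circles), and so that in paired complementary Freudenthal subsurfaces the genus, the orientability, and the parity/infiniteness of non-orientability agree. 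The matching on \( \Enp \) ensures that infinite genus accumulates to the same ends on both sides; the matching on \( \Eno \) ensures the same for infinite non-orientability; parity of non-orientability in each piece can be arranged by enlarging the exhaustion slightly. With such matched exhaustions, the classification of compact surfaces with boundary supplies homeomorphisms $\Sigma_n \to T_n$, and since any homeomorphism between compact surfaces can be modified to have prescribed behavior on the boundary (up to the existing homeomorphism on $\partial \Sigma_n$), one can arrange $f_{n+1}|_{\Sigma_n} = f_n$. The union $\bigcup_n f_n$ is then a homeomorphism $S \to R$.

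The main obstacle is the bookkeeping in the inductive construction: at each stage, one must refine the two exhaustions so that paired complementary pieces carry the same finite genus and the same orientability data, using only the homeomorphism $\varphi$ on ends. This relies on showing that the relevant ``local'' invariants (finite genus of a Freudenthal piece, its (non-)orientability, the parity in the non-orientable case) can be detected by looking at sufficiently small end neighborhoods and matched across $\varphi$ using the distinguished subsets $\Enp$ and $\Eno$. Once this combinatorial matching is in place, the gluing of homeomorphisms of compact pieces is a standard consequence of the classification of compact surfaces with boundary.
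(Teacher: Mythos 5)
The paper does not prove this theorem: it is stated with a \(\qed\) and the preceding paragraph points to Richards \cite{RichardsClassification} as the standard reference, so there is no in-paper argument to compare against. Your sketch is in essence a reconstruction of Richards's back-and-forth / compatible-exhaustion proof, with the compact-boundary case handled by a preliminary capping reduction, and that is the correct strategy; the combinatorial bookkeeping you flag (matching finite genus, orientability, and non-orientability parity of the complementary Freudenthal pieces across \(\varphi\)) is exactly where the real content of Richards's proof lives, and you have correctly identified what it requires. One small logical gap in the reduction step: the boundaryless classification hands you \emph{some} homeomorphism \(\hat S \to \hat R\), and you assert ``any homeomorphism carrying capping disks to capping disks restricts to \(S \to R\)'' without explaining why such a homeomorphism exists. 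You need to first invoke the fact that any two families of \(b\) disjoint embedded closed disks in a connected 2-manifold are ambiently equivalent (a special case of the change of coordinates principle, Corollary~\ref{cor:coordinates}), and post-compose to move the image of the capping disks of \(\hat S\) onto the capping disks of \(\hat R\); without that, the restriction argument does not go through.
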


There is a corresponding classification of surfaces that allows for non-compact boundary \cite{BrownClassification}, but we will not require it here. 
As a consequence of Richards's proof of the classification of surfaces, we obtain the following theorem:

\begin{Thm}
\label{thm:surjection}
Let \( S \) be a surface with compact boundary.
If \( \vp \co \sE(S) \to \sE(S) \) is a homeomorphism mapping \( \Enp(S) \) onto itself and mapping \( \Eno(S) \) onto itself, then there exists a homeomorphism \( f\co S \to S \) such that \( \hat f = \vp \). 
\qed
\end{Thm}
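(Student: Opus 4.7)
The plan is an inductive construction using two coordinated compact exhaustions of $S$. First I would fix a compact exhaustion $\Sigma_1 \subset \Sigma_2 \subset \cdots$ of $S$ such that each component of $S \ssm \mathrm{int}(\Sigma_n)$ is a Freudenthal subsurface (this is arranged by thickening $\Sigma_n$ with disjoint arcs joining boundary components of $\Sigma_n$ that share a complementary component) and such that $\partial S \subset \Sigma_1$. The components of $S \ssm \mathrm{int}(\Sigma_n)$ induce a finite clopen partition $\sU^{(n)} = \{\sU^{(n)}_1, \ldots, \sU^{(n)}_{k_n}\}$ of $\sE(S)$. Using $\vp$ and Lemma~\ref{lem:basis}, produce pairwise disjoint Freudenthal subsurfaces $D^{(n)}_1, \ldots, D^{(n)}_{k_n}$ of $S$ with $\widehat{D^{(n)}_i} = \vp(\sU^{(n)}_i)$, and set $\Sigma'_n = S \ssm \bigsqcup_i \mathrm{int}(D^{(n)}_i)$. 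Proceeding inductively and enlarging the $D^{(n+1)}_i$ within their clopen sets as needed, I would arrange that $\{\Sigma'_n\}$ is a nested compact exhaustion of $S$ with $\partial S \subset \Sigma'_1$ and with each component of $\Sigma'_n \ssm \mathrm{int}(\Sigma'_{n-1})$ lying in a single $D^{(n-1)}_i$.

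The core step is to produce homeomorphisms $f_n \co \Sigma_n \to \Sigma'_n$ satisfying $f_{n+1}|_{\Sigma_n} = f_n$ and mapping $\partial C$ to $\partial D^{(n)}_i$ for every component $C$ of $S \ssm \mathrm{int}(\Sigma_n)$ corresponding under $\vp$ to $D^{(n)}_i$. Given $f_n$, I would build $f_{n+1}$ by extending across each shell $\Sigma_{n+1} \ssm \mathrm{int}(\Sigma_n)$. After passing to subsequences of the two exhaustions (enlarging $\Sigma_n$ or $\Sigma'_n$ to absorb any finite genus or finite non-orientability hiding in the complement), each paired shell has matching genus, orientability class, and number of boundary components, and its end data vanishes; the compact case of Theorem~\ref{thm:classification} then supplies a homeomorphism of the shells, which I would choose to agree with $f_n|_{\partial \Sigma_n}$ on the inner boundary (possible because the inner-boundary permutation is prescribed and any self-homeomorphism of a compact surface realizing a given permutation of its boundary components exists).

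The main obstacle is the bookkeeping required to ensure that the compact invariants of matched shells agree at each finite stage. Globally, the triple $(\sE(S), \Enp(S), \Eno(S))$ together with the total genus and orientability class of $S$ determines these invariants, but at any single stage arbitrary amounts of genus or non-orientable topology may still sit in the complementary regions; the remedy is to refine the exhaustions (passing to subsequences and enlarging) so that finite discrepancies get absorbed into the compact pieces before applying the classification to each shell. Once the $f_n$ are constructed, set $f = \bigcup_n f_n \co S \to S$: this is a bijection, and a homeomorphism because any compact $K \subset S$ lies in some $\Sigma_n$, on which $f = f_n$ is a homeomorphism onto its image. Finally $\hat f = \vp$, because for any end $e$ a defining nested sequence of Freudenthal neighborhoods contained in the components $C^{(n)}_{i_n}$ is mapped by $f$ to a defining nested sequence contained in the $D^{(n)}_{i_n}$, whose intersection is $\vp(e)$ by construction.
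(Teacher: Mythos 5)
The paper does not give its own proof here: Theorem~\ref{thm:surjection} is stated with a \(\qed\) and attributed directly to Richards's proof of the classification of surfaces \cite{RichardsClassification}. Your sketch reconstructs what that proof is, and it is essentially correct: the inductive shell-by-shell construction with two coordinated exhaustions, matching compact invariants of paired shells by passing to subsequences that absorb stray finite genus and non-orientability, and using the compact classification together with the collar trick to extend across each shell while agreeing on the previously built inner boundary, is exactly Richards's argument. Two small points are worth flagging. First, in the passage where you say you would arrange nestedness and the exhaustion property by ``enlarging the \(D^{(n+1)}_i\) within their clopen sets,'' the word should be \emph{shrinking} (pushing the \(D^{(n+1)}_i\) out toward infinity within the ambient \(D^{(n)}_j\)): to make \(\Sigma'_{n+1}\) larger than \(\Sigma'_n\) and to guarantee \(\bigcup_n \Sigma'_n = S\), the complementary Freudenthal pieces must get smaller at each stage. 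This is also the same move needed to ensure that \(\{\Sigma'_n\}\) actually exhausts \(S\): at stage \(n\) one should additionally insist that each \(D^{(n)}_i\) is disjoint from \(\Sigma_n\) (say), which is always possible since Freudenthal subsurfaces realizing a fixed clopen set of ends can be pushed past any given compact set. Second, when matching the inner boundary circle via a homeomorphism \(\alpha\) that is orientation-reversing, one cannot realize it by a collar supported in an annular neighborhood alone; one must compose with a homeomorphism of the shell reversing the orientation of that boundary circle (which always exists, whether or not the shell is orientable) and then correct by a collar. Neither issue is a conceptual gap, and your identification of the shell-invariant matching as the main obstacle is accurate.
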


An important application of the classification of surfaces is the change of coordinates principle.  
Before, stating this corollary, we need to understand more about 1-submanifolds of 2-manifolds. 
The open annulus is the manifold \( \mathbb S^1 \times \br \) and the open M\"obius band is the manifold obtained from \( [-1,1] \times (0, 1) \) by identifying \( (-1,y) \) and \( (1,-y) \) for all \( y \in (0,1) \). 
In the former case, we call \( \mathbb S^1 \times \{0\} \) the core curve, and in the latter case, we call the image of \( [-1,1]\times\{0\} \) in the quotient space the core curve. 

The standard reference for the following discussion regarding closed curves is \cite{EpsteinCurves}.
A \emph{simple closed curve} in a 2-manifold is a compact 1-submanifold, or equivalently, the image of an embedding of the circle \( \mathbb S^1 \). 
Every simple closed curve \( a \) admits an open neighborhood \( A \) such that there exists a homeomorphism mapping \( A \) to either an open annulus  or an open M\"obius band and that sends \( a \) to the core curve in either case; in the former case, the simple closed curve is said to be \emph{two sided}\index{simple closed curve!two-sided} and in the latter is said to be \emph{one sided}\index{simple closed curve!one-sided}. 
An important consequence of the above fact is that the closure of any complementary component of a subsurface is a subsurface\footnote{This is one of the ways in which 2-manifolds are special.  The analogous statement in higher dimensions is false, as can be seen by the existence of the Alexander horned sphere in \( \br^3 \).}.

\begin{Cor}[Change of coordinates principle\index{change of coordinates principle}]
\label{cor:coordinates}
Let \( S \) be a surface with compact boundary, and let \( \Sigma \) be a subsurface of \( S \) with compact boundary. 
If \( \Sigma' \) is a subsurface of \( S \) homeomorphic to \( \Sigma \) such that the 
 closures of \( S \ssm \Sigma \) and \( S \ssm \Sigma' \) are homeomorphic, then there exists a homeomorphism \( S \to S \) mapping  \( \Sigma \) onto \( \Sigma' \).  
\qed
\end{Cor}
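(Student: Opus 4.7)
The plan is a gluing argument. By hypothesis, fix homeomorphisms \( h\co \Sigma \to \Sigma' \) and \( k\co \overline{S \ssm \Sigma} \to \overline{S \ssm \Sigma'} \). Let \( F \) denote the common frontier \( \Sigma \cap \overline{S \ssm \Sigma} \) in \( S \), and similarly \( F' = \Sigma' \cap \overline{S \ssm \Sigma'} \); by the discussion preceding the statement, both are compact \(1\)-submanifolds of \( S \), that is, disjoint unions of finitely many circles. Both \( h|_F \) and \( k|_F \) are homeomorphisms \( F \to F' \), and if these restrictions agreed, then \( h \) and \( k \) would glue along \( F \) to give a self-homeomorphism of \( S \) carrying \( \Sigma \) onto \( \Sigma' \), finishing the proof.

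The task is therefore to adjust \( h \) and \( k \) so that \( h|_F = k|_F \). Set \( \psi := (k|_F) \circ (h|_F)^{-1} \in \mathrm{Homeo}(F') \). It suffices to find \( \alpha \in \mathrm{Homeo}(\Sigma') \) and \( \beta \in \mathrm{Homeo}(\overline{S \ssm \Sigma'}) \) satisfying \( (\beta|_{F'})^{-1} \circ (\alpha|_{F'}) = \psi \), since then \( \alpha \circ h \) and \( \beta \circ k \) would agree on \( F \) and glue to the desired self-homeomorphism of \( S \).

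To produce such a factorization, I would decompose \( \psi \) into two parts: a self-map of \( F' \) fixing the component structure (a product of self-homeomorphisms of the individual circles) and a permutation of the components of \( F' \). Any self-homeomorphism of a single circle \( c \subseteq F' \) extends, via a collar neighborhood of \( c \) in \( S \), to a self-homeomorphism of either \( \Sigma' \) or \( \overline{S \ssm \Sigma'} \) supported near \( c \), so pieces of the first type can be absorbed freely. For the permutation part, one appeals to the classification of surfaces (Theorem~\ref{thm:classification}) applied separately to \( \Sigma' \) and to \( \overline{S \ssm \Sigma'} \): a permutation \( \pi \) of the circles of \( F' \) is realized by a self-homeomorphism of \( \Sigma' \) exactly when \( \pi \) preserves the topological data (genus, orientability class, end space) of the components of \( \Sigma' \) cut along \( F' \), and analogously on the other side. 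Since \( \psi \) is built from comparing the boundary-component bijections induced by the two given homeomorphisms \( h \) and \( k \), the permutation it induces is compatible with the topological data of \emph{both} sides simultaneously, and a factorization through the two sides exists.

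The hardest step will be this last realizability argument. When a given transposition of two circles of \( F' \) is realizable on neither \( \Sigma' \) nor \( \overline{S \ssm \Sigma'} \) individually, one must factor it as a product of transpositions that are realizable alternately on the two sides; verifying that the specific permutation arising from \( \psi \) admits such a decomposition is the heart of the argument, and requires careful bookkeeping of how circles of \( F \) and \( F' \) sit within each side, together with a direct appeal to Theorem~\ref{thm:classification} to certify each elementary realization.
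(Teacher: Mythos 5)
Your high-level plan---take abstract homeomorphisms \( h\co\Sigma\to\Sigma' \) and \( k\co\overline{S\ssm\Sigma}\to\overline{S\ssm\Sigma'} \), adjust them to agree on the common frontier \( F\to F' \), and glue---is a reasonable way to deduce the corollary from the classification theorem, and is presumably what the paper's unproved assertion has in mind. But there is a genuine gap, and also a misdiagnosis of where the real difficulty sits. The gap is the sentence ``Any self-homeomorphism of a single circle \( c\subseteq F' \) extends, via a collar neighborhood of \( c \) in \( S \), to a self-homeomorphism of either \( \Sigma' \) or \( \overline{S\ssm\Sigma'} \) supported near \( c \).'' This is false: a homeomorphism of the collar annulus \( c\times[0,1] \) that restricts to the identity on \( c\times\{1\} \) is forced to be orientation-preserving, hence restricts to an \emph{orientation-preserving} homeomorphism of \( c\times\{0\} \). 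So only orientation-preserving circle homeomorphisms can be absorbed by a compactly supported collar move, and you must first arrange that the pointwise discrepancy \( (k|_c)^{-1}\circ(h|_c) \) is orientation-preserving on every circle \( c \). When \( S \) is orientable this follows by fixing an orientation on \( S \), orienting the four pieces compatibly, and choosing \( h \) and \( k \) orientation-preserving: the boundary orientations induced on a frontier circle from the two sides are opposite, and likewise at the image circle, so the discrepancy is automatically orientation-preserving. When \( S \) is non-orientable there is no global orientation to appeal to, and the needed correction is precisely the boundary slide/double slide of Section~\ref{sec:Anderson}, which reverses the orientation of a single two-sided boundary circle using a nearby M\"obius band; the paper confronts this exact issue in the proof of Theorem~\ref{thm:3}, and your argument needs the same repair.

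The step you call ``the heart of the argument''---decomposing the component permutation \( \psi \) alternately into transpositions realizable on the two sides---is not in fact the hard part, and the alternating-transposition bookkeeping you sketch (but do not carry out) is unnecessary. In this chapter a ``surface'' is connected by convention, so \( \Sigma' \) is connected, and a connected surface with finitely many compact boundary circles realizes the \emph{full} symmetric group on those circles via self-homeomorphisms; in particular your ``realized on neither side individually'' scenario never arises. One may simply choose \( \alpha \) realizing all of \( \psi \) at the level of components and take \( \beta \) trivial there, reducing the problem to the pointwise matching on each circle---which is exactly where the orientation issue above must then be resolved. (You should also note that \( h \) and \( k \) can be chosen with \( h(F)=F' \) and \( k(F)=F' \) rather than mixing interior frontier circles with circles of \( \partial S \); this is vacuous when \( \partial S=\varnothing \), the paper's main case, but deserves a sentence in general.)
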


Let us finish this section with describing how one constructs a 2-manifold with prescribed orientability, genus, and end space (the construction is due to Richards \cite{RichardsClassification}).  
First, let us recall some basic spaces and the basics of compact surfaces.  
Let \( \mathbb S^2 \) denote the 2-sphere. 
The torus is the space \( \mathbb S^1 \times \mathbb S^1 \) and the projective plane is the space obtained by identifying antipodal points on the 2-sphere. 
Note that the genus of the 2-sphere is 0, the genus of the torus is one, and the genus of the projective plane is one-half. 

Every compact orientable (resp., non-orientable) surface is homeomorphic to a surface obtained by taking the connected sum of the 2-sphere with finitely many tori (resp., projective planes) and removing the interiors of finitely many pairwise-disjoint closed 2-disks. 
In particular, a compact surface not homeomorphic to \( \mathbb S^2 \) is of genus zero if and only if it is planar, that is, homeomorphic to a compact subsurface of \( \mathbb R^2 \).
Using the Jordan-Schoenflies theorem (see \cite{ThomassenJordan}) and the fact that every subsurface  admits an exhaustion by compact subsurfaces, we can deduce that for a surface \( S \) with compact boundary, \( \Enp(S) \) is empty if and only if \( S \) has finite genus.

Now, let \( \sE'' \subset \sE' \subset \sE \) be a nested triple of closed subsets of the Cantor set.
Embed \( \sE \) into \( \mathbb S^2 \), and identify \( \sE \) with this embedding. 
From the above discussion, the prescribed genus can be finite if and only if \( \sE' = \varnothing \). 
So, if the prescribed genus is finite, call it \( g \), then we let \( M \) be the 2-manifold obtained by taking the connected sum of \( \mathbb S^2 \ssm \sE \) with  \( g \)  tori in the orientable case or \( 2g \) projective planes otherwise.
Now suppose the prescribed genus is infinite.  
In this case, we have that \( \sE' \) is nonempty.
First suppose that \( \sE'' = \varnothing \).
Choose a sequence \( \{D'_n\}_{n\in \bn} \) of pairwise-disjoint closed disks in the complement of \( \sE \) such that the \( D_n' \) accumulate onto \( \sE' \).
Let \( M \) be the 2-manifold obtained as follows: for each \( n \in \bn \), perform a connected sum of \( \mathbb S^2 \ssm \sE \) with a torus using the disk \( D_n' \), and then connect sum with a finite number of projective planes to have the desired non-orientable genus. 
Finally, if \( \sE'' \) is not empty, choose a sequence \( \{D'_n\}_{n\in \bn} \) of pairwise-disjoint closed disks in the complement of \( \sE \) such that the \( D_n' \) accumulate onto \( \sE' \) and a sequence \( \{D''_n\}_{n\in\bn} \) of pairwise-disjoint closed disks contained in the complement of the union of \( \sE \) with the \( D_n' \) such that the \( D_n'' \) accumulate onto \( \sE'' \).
Let \( M \) be the 2-manifold obtained as follows: for each \( n \in \bn \), perform a connected sum of \( \mathbb S^2 \ssm \sE \) with a torus using the disk \( D_n' \), and with a projective plane using the disk \( D_n'' \).
Then, \( M \) is the desired 2-manifold. 

\section{Stable sets}
\label{sec:stable-sets}

The goal of this section is to introduce the notion of stable subsets of end spaces as defined by Mann--Rafi \cite[Definition~4.14]{MannLarge} and to establish a sequence of lemmas and propositions that have appeared in various forms in the literature.
The main idea behind a stable set is to capture key characteristics of the Cantor space \( 2^\bn \) in a more general setting.
Recall that Brouwer's theorem says that \( 2^\bn \) is the unique---up to homeomorphism---nonempty second-countable zero-dimensional perfect  compact Hausdorff topological space (see \cite[Theorem~7.4]{KechrisClassical} for a reference). 
As a consequence, once we have definitions in place, this will imply that every clopen subset of \( 2^\bn \) is stable. 

\begin{Rem*}
Though we work in the setting of 2-manifolds in this section, we will not use the topology of the 2-manifold itself, and hence the results in this section are results about  second-countable Stone spaces.
\end{Rem*}

\subsection{Definitions, notations, and conventions}

Given a 2-manifold \( M \), let \( \sE = \sE(M) \) denote its space of ends, let \( \Enp \) denote the subset of \( \sE \) consisting of non-planar ends, and let \( \Eno \) denote the subset of \( \sE \) consisting of non-orientable ends (see Section~\ref{appendix:topology} for more details and an introduction the space of ends).
Let \( \Homeo_M(\sE) \) denote the group consisting of homeomorphisms \( \sE \to \sE \) mapping \( \Enp \) onto \( \Enp \) and \( \Eno \) onto \( \Eno \) (see Theorem~\ref{thm:surjection}).

It follows from Richards's proof of the classification of surfaces \cite{RichardsClassification} that the canonical homomorphism \( \Homeo(M) \to \Homeo_M(\sE) \) is surjective. 

To not overburden the reader with notation, if we say two subset \( \sU \) and \( \sU' \) of \( \sE \) are homeomorphic, we mean there there is a homeomorphism \( \sU \to \sU' \) that sends  \( \sU\cap \Enp \) onto \( \sU' \cap \Enp \) and \( \sU\cap \Eno \) onto \( \sU' \cap \Eno \).

\begin{Def}[Stability]
\label{def:stable}
Let \( M \) be a 2-manifold.
An end \( e \) of \( M \) is \emph{stable}\index{end!stable} if it admits a neighborhood basis in \( \sE \) consisting of pairwise-homeomorphic clopen sets, and such a basis is called a \emph{stable basis}\index{stable basis} of \( e \). 
A clopen subset  \( \sU \) of \( \sE(M) \) is a \emph{stable neighborhood}\index{end!stable neighborhood of} of a stable end if the end admits a stable neighborhood basis containing \( \sU \).
A clopen subset of \( \sE(M) \) is \emph{stable}\index{stable subset} if there exists an end for which it is a stable neighborhood. 
Two stable ends are \emph{of the same type} if they admit homeomorphic stable neighborhoods. 
\end{Def}

Given a stable clopen subset \( \sU \) of \( \sE \), let \( \mathcal S(\sU) \) denote the elements of \( \sU \) for which \( \sU \) is stable neighborhood. 
Note that any two elements of \( \mathcal S(\sU) \) are necessarily of the same type. 

Now that we have the definition of a stable set, we reiterate that as an application of Brouwer's theorem, every clopen subset of \( 2^\bn \) is stable.
In particular, every point in \( 2^\bn \) is stable, and given any clopen subset \( \sU \subset 2^\bn \), \( \mathcal S(\sU) = \sU \). 
For another example, consider \( \bar \bn = \bn \cup \{\infty\} \), the one-point compactification of \( \bn \) and the end space of \( \bc\ssm \bn \).
Every clopen neighborhood of \( \infty \) is stable. 
More generally, given a compact Hausdorff space, the stable sets are exactly the clopen subsets of Cantor--Bendixson degree one.
This is an application of the classification of countable compact Hausdorff spaces \cite{CountableClassification}: the Cantor--Bendixson degree and rank of a such space determine the space up to homeomorphism.
More concretely, if \( X \) is compact countable Hausdorff space, then it is homeomorphic to the ordinal space of the form \( \omega^\alpha \cdot n + 1 \), where \( \omega \) is the first countable ordinal, \( \alpha+1 \) is the Cantor--Bendixson rank of \( X \), and \( n  \) is the Cantor--Bendixson degree of \( X \).

\subsection{Structure of stable sets}

We proceed to establish the basic topological structure of stable sets. 
As mentioned earlier, the following lemmas, or versions of them, have appeared in several places.
We will therefore avoid giving attributions, but we make the exception of observing that they all stem from the original introduction of stable sets by Mann--Rafi \cite[Section~4]{MannLarge} and their original contributions to their structure.  
We begin with a lemma that explains the extent to which stable ends of the same type are in fact the same. 
The proof is an example of a standard type of argument referred to as a back-and-forth argument. 

\begin{Prop}
\label{prop:well-defined}
Let \( M \) be a 2-manifold and let \( \mu_1 \) and \( \mu_2 \)  be a stable ends of \( M \) of the same type.
If, for \( i \in \{1,2\} \), \( \sU_i \) is a clopen neighborhood of \( \mu_i \) in \( \sE(M) \) contained in a stable neighborhood of \( \mu_i \), then there exists a homeomorphism \( \sU_1 \to \sU_2 \) sending \( \mu_1 \) to \( \mu_2 \). 
\end{Prop}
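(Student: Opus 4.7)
The plan is a back-and-forth construction that builds the desired homeomorphism as a direct limit of partial clopen homeomorphisms, with the limit sending $\mu_1$ to $\mu_2$ by continuous extension.

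To set up, I would fix, for each $i \in \{1,2\}$, a nested stable neighborhood basis $W_i^{(0)} \supseteq W_i^{(1)} \supseteq \cdots$ of $\mu_i$ whose first member $W_i^{(0)}$ is a stable neighborhood of $\mu_i$ containing $\sU_i$. Since $\sU_i$ is a neighborhood of $\mu_i$ and $\bigcap_n W_i^{(n)} = \{\mu_i\}$, after reindexing I may assume $W_i^{(1)} \subseteq \sU_i \subseteq W_i^{(0)}$. By definition of a stable basis, all $W_i^{(n)}$ for fixed $i$ are pairwise homeomorphic, and the same-type hypothesis upgrades this so that every $W_1^{(n)}$ is homeomorphic to every $W_2^{(m)}$. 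I would then fix countable clopen bases $\{U_n\}$, $\{V_n\}$ for the topologies of $\sU_1$ and $\sU_2$.

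The back-and-forth would recursively produce partial clopen homeomorphisms $f_k \co A_k \to B_k$ with $\mu_1 \notin A_k \subseteq \sU_1$, $\mu_2 \notin B_k \subseteq \sU_2$, and $f_{k+1}|_{A_k} = f_k$, while maintaining as an invariant that the complements $C_k := \sU_1 \ssm A_k$ and $D_k := \sU_2 \ssm B_k$ are clopen neighborhoods of $\mu_1$ and $\mu_2$, each sandwiched between two pairwise-homeomorphic stable basis members of the common type. At odd steps, I would enlarge $A_k$ so that it contains $U_k \ssm \{\mu_1\}$; at even steps, I would symmetrically enlarge $B_k$. Taking the direct limit yields a homeomorphism $\sU_1 \ssm \{\mu_1\} \to \sU_2 \ssm \{\mu_2\}$, which extends continuously to $\sU_1 \to \sU_2$ by $\mu_1 \mapsto \mu_2$, since the $C_k$ and $D_k$ form neighborhood bases of $\mu_1$ and $\mu_2$.

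The hard part is the extension step: given the invariant, to incorporate a clopen piece $P \subseteq C_k$ into the domain, I must produce a homeomorphic clopen piece $Q \subseteq D_k$ such that $C_{k+1} := C_k \ssm P$ and $D_{k+1} := D_k \ssm Q$ continue to satisfy the sandwich invariant. This reduces to an \emph{absorption principle} for stable basis members: if $W \supseteq W'$ are consecutive members of a stable basis of an end $\mu$ and $K \subseteq W \ssm W'$ is clopen, then $W \ssm K$ is still sandwiched between $W'$ and $W$, and moreover any prescribed clopen subset of the shell $W \ssm W'$ can be matched, via the fixed homeomorphism between $W$ and $W'$, to a clopen subset on the other side. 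Carefully propagating this absorption while simultaneously driving $C_k \to \{\mu_1\}$ and $D_k \to \{\mu_2\}$ is the core technical burden of the argument.
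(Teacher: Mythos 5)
Your high-level strategy is the same as the paper's: a back-and-forth construction along nested stable bases, producing a homeomorphism $\sU_1 \ssm \{\mu_1\} \to \sU_2 \ssm \{\mu_2\}$ and then extending by $\mu_1 \mapsto \mu_2$. However, the proposal has a genuine gap, which you flag yourself: the ``absorption principle'' at the heart of the induction is stated informally and never proved, and as phrased it does not actually isolate the difficulty. Knowing that $W_1^{(n)} \cong W_2^{(m)}$ for all $n,m$ is not by itself enough. The step you must carry out is: given a clopen $P$ in the shell $C_k \ssm W_1^{(m+1)}$, produce a clopen $Q \subset D_k$ homeomorphic to $P$ with the additional property that $Q$ is disjoint from some stable basis member $W_2^{(r)}$ of $\mu_2$ sitting inside $D_k$, so the invariant (that $D_{k+1}$ still contains a stable neighborhood of $\mu_2$) survives. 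This requires knowing that every stable basis member of $\mu_2$ contains a copy of $P$ that misses a smaller basis member, and here there is a real case split depending on whether $\mathcal S$ of the relevant stable neighborhood is a singleton. If $\mathcal S(W_2^{(r)}) = \{\mu_2\}$, you cannot find a copy of $W_1^{(n)}$ (a stable neighborhood of the common type) disjoint from $\mu_2$ inside $W_2^{(r)}$; any embedding of such a set must hit $\mu_2$, and you instead must exploit that only a proper shell of $W_1^{(n)}$, not all of it, needs to be placed. The paper handles exactly this distinction, and your write-up does not.

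There are also two smaller wrinkles. First, the plan to enlarge $A_k$ so that it ``contains $U_k \ssm \{\mu_1\}$'' runs into trouble because $U_k \ssm \{\mu_1\}$ need not be clopen when $\mu_1 \in U_k$, so it cannot directly be absorbed into a clopen partial domain in one step; the cleaner move (which the paper makes implicitly) is to tile $\sU_1 \ssm \{\mu_1\}$ by the shells $\sV_n \ssm \sV_{n+1}$ of the chosen basis, which automatically guarantees exhaustion and continuity at $\mu_1$. Second, the invariant ``$C_k$ is sandwiched between two pairwise-homeomorphic stable basis members'' is weaker than what you actually need to propagate; you need $C_k$ to \emph{contain} a stable basis member, and you need control over the homeomorphism type of the shell being removed in order to find its mate on the other side. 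In short, the skeleton is right, but the load-bearing lemma is asserted rather than established, and its correct formulation involves a case analysis your proposal does not anticipate.
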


\begin{proof}
Let \( \sV \) and \( \sW \) be stable clopen neighborhoods of \( \mu_1 \) and \( \mu_2 \), respectively, such that \( \sU_1 \subset \sV \) and \( \sU_2 \subset \sW \). 
Let  \( \{  \sV_n \}_{n\in\bn} \) and \( \{  \sW_n \}_{n\in\bn} \) be neighborhood bases of \( \mu_1\) and \( \mu_2 \), respectively, satisfying \( \sV_1 = \sU_1 \),  \( \sW_1 = \sU_2  \),  and, for \( n\in\bn \),  \( \sV_n \subset \sV \) and \( \sW_n \subset \sW \). 
Since \( \mu_1 \) and \( \mu_2 \) are of the same type, every element of \( \mathcal S(\sW) \) admits a stable neighborhood basis consisting of sets homeomorphic to \( \sV \); in particular, there is a homeomorphic copy of \( \sV_m \) contained in  \( \sW_n \) for all \( n, m \in \bn \).
Similarly, every element of \( \mathcal S(\sV) \) admits a stable neighborhood basis consisting of sets homeomorphic to \( \sW \), and there is a homeomorphic copy of \( \sW_m \) contained in \( \sV_n \) for all \( n, m \in \bn \). 

We claim there exists a homeomorphism \( f_1 \) from \( \sV_1 \ssm \sV_2 \) onto an open subset of \( \sW_1 \ssm \{\mu_2\} \).
To see this, let \( \sW_1' \) be a clopen stable neighborhood of \( \mu_2 \) contained in \( \sW_1 \) and homeomorphic to \( \sW \). 
First suppose that \( \mathcal S(\sW_1') \) contains an element \( \mu_2' \) distinct from \( \mu_2 \).
Then, we can choose a clopen stable neighborhood \( \sW_1'' \) of \( \mu_2 \) homeomorphic to \( \sW \) and that does not contain  \( \mu_2 \). 
Then, \( \sW_1'' \) contains an open subset homeomorphic to \( \sV_1 \), and we can choose a homeomorphism \( f_1 \) mapping \( \sV_1 \) onto an open subset of \( \sW_1'' \).
Then, the restriction of \( f_1 \) to \( \sV_1\ssm \sV_2 \) yields the desired map. 
Now, if \( \mathcal S(\sW_1') = \{\mu_2\} \), then let \( f_1 \) be any homeomorphism of \( \sV_1 \) onto an open subset of \( \sW_1' \).
Then, we must have that \( f_1(\mu_1) = \mu_2 \), and hence restricting \( f_1 \) to \( \sV_1 \ssm \sV_2 \) yields the desired map.

Similarly, there exists a homeomorphism \( g_1 \) from \( (\sW_1\ssm \sW_2)\ssm \mathrm{image}(f_1) \) onto an open subset of \( \sV_2 \ssm \{\mu_1\} \). 
Let \( \sV_1' = (\sV_1\ssm \sV_2) \cup \mathrm{image}(g_1) \) and \( \sW_1' = ( \sW_1 \ssm \sW_2) \cup \mathrm{image}(f_1) \), then \( h_1 \co \sV_1' \to \sW_1' \) defined by \( h_1 = f_1 \sqcup g_1^{-1} \) (that is, \( h_1(x) = f_1(x) \) if \( x \in V_1 \) and \( h_1(x) = g_1^{-1}(x) \) if \( x \in \mathrm{image}(g_1) \)) is a homeomorphism. 

Following the same process as above, we recursively construct the homeomorphism \( f_n \) from \( ( \sV_n \ssm \sV_{n+1}) \ssm (\sV_1' \cup \cdots \cup \sV'_{n-1}) \) onto an open subset of \( ( \sW_n \ssm \sW_{n+1}) \ssm (\{\mu_2\} \cup  \sW_1' \cup \cdots \cup \sW_{n-1}') \) and then choose a homeomorphism \( g_n \) of \( (\sW_n \ssm \sW_{n+1}) \ssm ( \sW_1' \cup \cdots \cup \sW_{n-1}' \cup \mathrm{image}(f_n)) \) onto an open subset of 
\( \sV_{n+1} \ssm (\{\mu_1\} \cup \sV_1' \cup \cdots \cup \sV'_{n-1}) \).
Let 
\[
\sV_n' = (( \sV_n \ssm \sV_{n+1}) \ssm (\sV_1' \cup \cdots \cup \sV'_{n-1})) \cup \mathrm{image}(g_n)
\]
and
\[
\sW_n' = (( \sW_n \ssm \sW_{n+1}) \ssm (\sW_1' \cup \cdots \cup \sW'_{n-1})) \cup \mathrm{image}(f_n).
\]
Then, \( h_n \co \sV_n' \to \sW_n' \) defined by \( h_n = f_n \sqcup g_n^{-1} \) is a homeomorphism.  
Observe that \( \sU_1 \ssm \{\mu_1\} = \bigsqcup_{n\in\bn} \sV_n' \) and \( \sU_2\ssm\{\mu_2\} = \bigsqcup_{n\in\bn} \sW_n' \), which allows us to define \( h \co \sU_1 \to \sU_2 \) by \( h|_{\sV_n'} = h_n \) and \( h(\mu_1) = \mu_2 \).
It is readily checked that \( h \) is the desired homeomorphism. 
\end{proof}

Setting \( \mu_2 = \mu_1 \) in Proposition~\ref{prop:well-defined}, we obtain the following corollary.

\begin{Cor}
\label{cor:well-defined}
Let \( M \) be a 2-manifold.
Any two stable neighborhoods of a stable end are homeomorphic. 
\qed
\end{Cor}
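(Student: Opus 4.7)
The plan is to invoke Proposition~\ref{prop:well-defined} in the degenerate case where the two stable ends coincide. Let $\mu$ be a stable end of $M$ and let $\sU_1, \sU_2$ be two stable neighborhoods of $\mu$. By Definition~\ref{def:stable}, each $\sU_i$ is a member of some stable neighborhood basis of $\mu$, so in particular $\sU_i$ is itself a stable neighborhood of $\mu$, and hence is trivially contained in a stable neighborhood of $\mu$ (namely itself). Moreover, $\mu$ is trivially of the same type as itself.

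Setting $\mu_1 = \mu_2 = \mu$ in Proposition~\ref{prop:well-defined}, the hypotheses are satisfied with the given $\sU_1$ and $\sU_2$, and the proposition produces a homeomorphism $\sU_1 \to \sU_2$ sending $\mu$ to $\mu$. In particular $\sU_1$ and $\sU_2$ are homeomorphic, which is the claim.

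There is no substantive obstacle: all the actual work—the back-and-forth construction that matches off complementary annular pieces $\sV_n \ssm \sV_{n+1}$ with $\sW_n \ssm \sW_{n+1}$—has already been carried out inside Proposition~\ref{prop:well-defined}. The only thing one must verify at the level of the corollary is the (trivial) containment hypothesis, which is built directly into the definition of a stable neighborhood. Consequently, the proof is a one-line appeal to the preceding proposition.
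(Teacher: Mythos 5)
Your proof is correct and is exactly the paper's argument: the paper also obtains Corollary~\ref{cor:well-defined} by setting \( \mu_2 = \mu_1 \) in Proposition~\ref{prop:well-defined} and observing that a stable neighborhood is trivially contained in a stable neighborhood (namely itself). Nothing further is needed.
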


An important property---established in the next proposition---of a stable set is that its corresponding subset of stable ends is either a singleton or perfect, and in particular, it is compact. 
This also tells us that \( 2^\bn \) and \( \bar \bn \), despite being special examples, nonetheless are the right examples for considering the structure of stable sets in general.  

\begin{Prop}
\label{prop:dichotomy}
Let \( M \) be a 2-manifold and let \( \sU \) be a stable clopen subset of \( \sE(M) \). 
Then, \( \mathcal S(\sU) \) is either a singleton or a perfect set. 
\end{Prop}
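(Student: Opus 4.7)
The plan is to establish the dichotomy in two stages: first, that \( \mathcal S(\sU) \) has no isolated points whenever \( |\mathcal S(\sU)| \geq 2 \); and second, that \( \mathcal S(\sU) \) is closed in \( \sE(M) \). Combined, these two facts give that \( \mathcal S(\sU) \) is either a singleton or a perfect subset of \( \sE(M) \). Both steps are driven by Proposition~\ref{prop:well-defined}, which allows one to transport stability data from one end to another.

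For the first stage, I fix \( \mu, \mu' \in \mathcal S(\sU) \) with \( \mu \neq \mu' \), and let \( \sV \subseteq \sU \) be any clopen neighborhood of \( \mu \). Since \( \sU \) appears in some stable basis of \( \mu \), I select a stable neighborhood \( \sV' \subseteq \sV \) of \( \mu \) with \( \sV' \cong \sU \). Proposition~\ref{prop:well-defined}, applied with \( \mu_1 = \mu_2 = \mu \), \( \sU_1 = \sU \), \( \sU_2 = \sV' \), yields a homeomorphism \( h \co \sU \to \sV' \) fixing \( \mu \); then \( h(\mu') \) lies in \( \sV' \ssm \{\mu\} \). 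To conclude \( h(\mu') \in \mathcal S(\sU) \), I take a stable basis \( \{\sU_n\}_{n \in \bn} \) of \( \mu' \) containing \( \sU \) and observe that \( \{h(\sU_n)\}_{n \in \bn} \) is a neighborhood basis of \( h(\mu') \) in \( \sU \) whose elements are pairwise homeomorphic and each homeomorphic to \( \sU \); adjoining \( \sU \) itself then produces a stable basis of \( h(\mu') \) containing \( \sU \). Hence \( \mu \) is not isolated in \( \mathcal S(\sU) \).

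For closedness, I let \( e \in \overline{\mathcal S(\sU)} \) and choose a decreasing clopen neighborhood basis \( \sU = \sW_0 \supseteq \sW_1 \supseteq \cdots \) of \( e \) in \( \sU \); such a basis exists because \( \sU \), as a clopen subset of \( \sE(M) \), is a second-countable Stone space. For each \( n \), the set \( \sW_n \) is a neighborhood of \( e \) and therefore contains a point \( e_n \in \mathcal S(\sU) \). Since \( \sU \) is a stable neighborhood of \( e_n \) and \( \sW_n \subseteq \sU \) is a clopen neighborhood of \( e_n \), Proposition~\ref{prop:well-defined} forces \( \sW_n \cong \sU \). Thus \( \{\sW_n\}_{n \in \bn} \) is a stable basis of \( e \) containing \( \sU \), and \( e \in \mathcal S(\sU) \).

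The main subtlety is in the first stage: the property of belonging to \( \mathcal S(\sU) \) is extrinsic---it depends on \( \sU \) itself---so the push-forward basis of \( h(\mu') \) naturally lives inside \( \sV' \) rather than containing \( \sU \), and one must explicitly adjoin \( \sU \) back to this basis to witness stability with \( \sU \) as a stable neighborhood. Once this bookkeeping is granted, both stages reduce to careful applications of Proposition~\ref{prop:well-defined} together with the existence of countable clopen bases in second-countable Stone spaces.
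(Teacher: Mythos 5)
Your proof is correct and follows essentially the same two-stage strategy as the paper: first ruling out isolated points in \( \mathcal S(\sU) \) when it has at least two elements, then showing \( \mathcal S(\sU) \) is closed, in both cases via Proposition~\ref{prop:well-defined}. Your stage-one argument is slightly more explicit than the paper's—you construct the homeomorphism \( h \co \sU \to \sV' \) and push \( \mu' \) forward, then verify \( h(\mu') \in \mathcal S(\sU) \) by adjoining \( \sU \) to the pushed-forward basis, whereas the paper simply invokes Corollary~\ref{cor:well-defined} to say \( \mathcal S(\sU_n) \) contains another end and leaves the step \( \mathcal S(\sU_n) \subseteq \mathcal S(\sU) \) implicit—but the underlying mechanism is the same.
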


\begin{proof}
Suppose \( \mathcal S(\sU) \) contains at least two ends.
We first show that no element of \( \mathcal S(\sU) \) is isolated.
Let \( \mu \in \mathcal S(\sU) \). 
Fix a stable neighborhood basis \( \{\sU_n\}_{n\in\bn} \) of \( \mu \). 
Then, as \( \sU_n \) is homeomorphic to \( \sU \) by Corollary~\ref{cor:well-defined},  \( \mathcal S(\sU_n) \) contains an end distinct from from \( \mu \) and hence \( \mu \) is not isolated in \( \mathcal S(\sU) \).

We now argue that \( \mathcal S(\sU) \) is closed. 
Let \( e \) be in the closure of \( \mathcal S(\sU) \).
As \( \sU \) is clopen, it must be that \( \sU \) is a clopen neighborhood of \( e \).
Let \( \sV \) be any clopen neighborhood of \( e \) contained in \( \sU \). 
Then, there exists \( \mu \in \mathcal S(\sU) \cap \sV \), which implies, via Proposition~\ref{prop:well-defined}, that \( \sV \) is homeomorphic to \( \sU \).
Therefore, \( e \) has a stable neighborhood basis consisting of sets homeomorphic to and contained in \( \sU \); hence, \( e \in \mathcal S(\sU) \) and  \( \mathcal S(\sU) \) is closed. 
\end{proof}

The next sequence of lemmas and propositions is meant to capture properties of \( 2^\bn \) and \( \bar \bn \) that hold more generally for stable sets. 
We begin by assuming \( \mathcal S(\sU) \) is perfect, and so \( 2^\bn \) is our motivating example in this case. 

\begin{Lem}
\label{lem:decomposition}
Let \( M \) be a 2-manifold and let \( \sU \subset \sE(M) \) be a clopen stable set such that \( \mathcal S(\sU) \) is perfect.
Given any \( e \in \mathcal S(\sU) \), there exists a sequence \( \{\sU_n\}_{n\in\bn} \) of pairwise-disjoint open sets homeomorphic to \( \sU \) such that \( \sU \ssm \{e\} = \bigcup_{n\in\bn} \sU_n \). 
\end{Lem}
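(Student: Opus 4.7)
The plan is to construct a decreasing sequence $\sU = \sV_0 \supset \sV_1 \supset \sV_2 \supset \cdots$ of stable clopen neighborhoods of $e$ satisfying $\bigcap_n \sV_n = \{e\}$ and the key property that each annular difference $\sU_n := \sV_n \ssm \sV_{n+1}$ contains a stable end. Once this is in hand, Proposition~\ref{prop:well-defined} will identify each $\sU_n$ with $\sU$, and the equality $\sU \ssm \{e\} = \bigsqcup_n \sU_n$ follows from $\bigcap_n \sV_n = \{e\}$ by a routine level-set argument.

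I would build the $\sV_n$ inductively, starting with $\sV_0 = \sU$. Given a stable clopen neighborhood $\sV_n$ of $e$, I first pick a stable end $\mu_n \in \sV_n$ distinct from $e$. Such a $\mu_n$ exists because $\mathcal{S}(\sV_n)$ is perfect: Corollary~\ref{cor:well-defined} produces a homeomorphism $\sV_n \to \sU$, and since both stability and the status of being a stable neighborhood are preserved by homeomorphisms between clopen subsets of $\sE(M)$, this map identifies $\mathcal{S}(\sV_n)$ with $\mathcal{S}(\sU)$, so Proposition~\ref{prop:dichotomy} yields perfectness. Using that $\sE(M)$ is Hausdorff and zero-dimensional, I separate $\mu_n$ and $e$ by disjoint clopen subsets of $\sV_n$, and then replace the piece containing $e$ by a stable clopen neighborhood $\sV_{n+1}$ of $e$ inside it (possible because $e$ admits a stable basis, any sufficiently small member of which will do). This guarantees $\mu_n \in \sU_n$. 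To arrange $\bigcap_n \sV_n = \{e\}$, I would additionally require $\sV_{n+1}$ to lie in the $(n+1)$-st member of a fixed countable neighborhood basis of $e$ in $\sU$, which is possible by second countability.

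It then remains to show $\sU_n \cong \sU$, for which I would apply Proposition~\ref{prop:well-defined} with $(\mu_1, \mu_2, \sU_1, \sU_2) = (\mu_n, e, \sU_n, \sU)$. The subtle hypothesis to verify is that $\sU_n$ lies in a stable neighborhood of $\mu_n$, and I would show $\sV_n$ itself is such a neighborhood: choose a stable basis $\{W_k\}$ of $\mu_n$ with $W_k \subset \sV_n$, apply Proposition~\ref{prop:well-defined} to each pair $(W_k, \sV_n)$ (whose hypotheses are clear) to obtain $W_k \cong \sV_n$, and conclude that $\{\sV_n\}\cup\{W_k\}$ is again a stable basis of $\mu_n$, one now containing $\sV_n$. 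The main obstacle in the proof is precisely the bookkeeping in the inductive construction—ensuring at every stage that the annulus $\sU_n$ is simultaneously nonempty and contains a stable point, while also forcing $\bigcap_n \sV_n = \{e\}$—and it is here that the perfectness of $\mathcal{S}(\sU)$ is used essentially, supplying at each stage an abundance of stable points from which to choose $\mu_n$.
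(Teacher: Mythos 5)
Your proposal is correct and follows essentially the same approach as the paper: both construct a decreasing sequence of stable clopen neighborhoods \( \sV_n \) of \( e \) inside \( \sU \) shrinking to \( \{e\} \), arranged so that each annular piece \( \sU_n = \sV_n \ssm \sV_{n+1} \) contains a point of \( \mathcal S(\sV_n) \), and then invoke Proposition~\ref{prop:well-defined} to identify each \( \sU_n \) with \( \sU \).

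One small cleanup: the digression in which you argue that \( \sV_n \) is a stable neighborhood of \( \mu_n \) is, as written, circular---applying Proposition~\ref{prop:well-defined} to the pairs \( (W_k, \sV_n) \) already requires knowing that \( \mu_n \) and \( e \) are stable ends of the same type, which is precisely the kind of information about \( \mu_n \)'s stable neighborhoods you are in the middle of establishing. The fix is simply to be explicit that you select \( \mu_n \) from \( \mathcal S(\sV_n) \) (which is exactly what your perfectness argument via Corollary~\ref{cor:well-defined} and Proposition~\ref{prop:dichotomy} supplies). Then \( \sV_n \) is a stable neighborhood of \( \mu_n \) by the definition of \( \mathcal S \), and \( \mu_n \) and \( e \) are automatically of the same type since both lie in \( \mathcal S(\sV_n) \) (per the remark following Definition~\ref{def:stable}), so the detour can be dropped entirely.
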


\begin{proof}
Fix a stable neighborhood basis \( \{\sV_n\}_{n\in\bn} \) of \( e \) contained in \( \sU \) such that \( \sV_{n+1} \subset \sV_n \) and such that \( \mathcal S(\sV_n) \ssm \sV_{n+1} \neq \varnothing \).
Then, \( \sV_n \ssm \sV_{n+1} \) is a clopen neighborhood of an element of \( \mathcal S(\sU) \) and is contained in \( \sU \); hence, \( \sV_n \ssm \sV_{n+1} \) is homeomorphic to \( \sU \) by Corollary~\ref{cor:well-defined}. 
Then, \( \sU_n = \sV_n \ssm \sV_{n+1} \) are the desired sets. 
\end{proof}

\begin{Prop}
\label{prop:disjoint-union}
Let \( M \) be a 2-manifold, let \( \sU \subset \sE(M) \) be a clopen stable set such that \( \mathcal S(\sU) \) is perfect, and let \( X \) be a countable discrete space.
If \( X \) is finite, then \( \sU \times X \) is homeomorphic to \( \sU \); otherwise, when \( X \) is infinite, the one-point compactification of \( \sU \times X \) is homeomorphic to \( \sU \). 
\end{Prop}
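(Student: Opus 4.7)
The strategy is to use Lemma~\ref{lem:decomposition} to produce a decomposition of \( \sU \), and then to use Proposition~\ref{prop:well-defined} to absorb a ``tail'' of the decomposition back into a single copy of \( \sU \). Fix \( e \in \mathcal S(\sU) \) and apply Lemma~\ref{lem:decomposition} to write \( \sU \ssm \{e\} = \bigsqcup_{k \in \bn} \sU_k \) with each \( \sU_k \) clopen in \( \sE(M) \) and homeomorphic to \( \sU \).

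For the finite case, set \( n = |X| \) and define \( \sU' = \sU \ssm (\sU_1 \sqcup \cdots \sqcup \sU_{n-1}) \). Since \( \sU' \) is a clopen subset of \( \sE(M) \) containing \( e \) and contained in the stable neighborhood \( \sU \) of \( e \), Proposition~\ref{prop:well-defined} applied with both stable ends taken to be \( e \) gives \( \sU' \cong \sU \). Therefore \( \sU = \sU_1 \sqcup \cdots \sqcup \sU_{n-1} \sqcup \sU' \) exhibits \( \sU \) as a disjoint union of \( n \) clopen pieces each homeomorphic to \( \sU \), yielding \( \sU \cong \sU \times X \).

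For the infinite case, I would first verify that \( \sU \) is the one-point compactification of \( \sU \ssm \{e\} \), with \( e \) playing the role of the point at infinity. This is a general fact: in a compact Hausdorff space \( Y \) with a non-isolated point \( p \), the open neighborhoods of \( p \) in \( Y \) are exactly the complements of compact subsets of \( Y \ssm \{p\} \), which is the defining property of the one-point compactification. Here \( \sU \) is compact Hausdorff (being clopen in the compact space \( \sE(M) \)) and \( e \) is non-isolated because \( \mathcal S(\sU) \) is perfect. Combining this with the identification \( \sU \ssm \{e\} = \bigsqcup_{k \in \bn} \sU_k \cong \sU \times \bn \cong \sU \times X \) (any countably infinite discrete space is homeomorphic to \( \bn \)), one passes to one-point compactifications to conclude.

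The main step requiring care is the application of Proposition~\ref{prop:well-defined} in the finite case: one must check that \( \sU' \) is a clopen subset of \( \sE(M) \) contained in a stable neighborhood of \( e \), which is automatic since \( \sU' \) is a difference of clopen subsets of \( \sE(M) \) and sits inside \( \sU \) itself. With that verified, the rest is bookkeeping. Throughout, the homeomorphisms respect the \( \Enp \)/\( \Eno \) structure, because those produced by Lemma~\ref{lem:decomposition} and Proposition~\ref{prop:well-defined} do, in accordance with the section's conventions.
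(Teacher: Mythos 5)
Your proof is correct and essentially mirrors the paper's: both cases hinge on Proposition~\ref{prop:well-defined}, and the infinite case uses Lemma~\ref{lem:decomposition} and then identifies \( \sU \) with the one-point compactification of \( \sU \ssm \{e\} \), exactly as the paper does (the paper writes down the homeomorphism explicitly, while your appeal to the characterization of one-point compactifications of compact Hausdorff spaces minus a non-isolated point is a slightly more principled phrasing of the same step). The one place you deviate is the finite case: the paper picks \( n \) distinct points of \( \mathcal S(\sU) \) with pairwise-disjoint stable neighborhoods inside \( \sU \) and applies Proposition~\ref{prop:well-defined} to see their union is homeomorphic to \( \sU \), whereas you use Lemma~\ref{lem:decomposition} to peel off \( n-1 \) disjoint copies and absorb the remainder \( \sU' \) back into \( \sU \); both are direct applications of the same proposition and equally short. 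One small thing to tidy: Lemma~\ref{lem:decomposition} as stated only promises the \( \sU_k \) are \emph{open}, so the assertion that \( \sU' \) is clopen needs the extra one-line observation that each \( \sU_k \), being homeomorphic to the compact set \( \sU \), is itself compact and therefore closed in the Hausdorff space \( \sE(M) \).
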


\begin{proof}
First assume \( X \) is finite.
Let \( n = |X| \) and let \( e_1, \ldots, e_n \in \mathcal S(\sU) \) be distinct points. 
Choose pairwise-disjoint clopen stable neighborhoods \( \sU_1, \ldots, \sU_n \subset \sU \) of \( e_1, \ldots, e_n \), respectively, such that each \( \sU_i \) is  homeomorphic to \( \sU \).
By Proposition~\ref{prop:well-defined},  \( \sU' = \sU_1 \cup \cdots \cup \sU_n \) is homeomorphic to \( \sU \).
The result follows by observing that \( \sU' \) is homeomorphic to \( \sU \times X \).

Now, suppose \( X \) is infinite.
Fix \( e \in \mathcal S(\sU) \).
By Lemma~\ref{lem:decomposition}, there exists a sequence  \( \{\sU_n\}_{n\in\bn} \)  of pairwise-disjoint open sets, each homeomorphic to \( \sU \), and such that \( \sU\ssm\{e\} = \bigcup_{n\in\bn} \sU_n \).
Enumerate the elements of \( X \), so \( X = \{x_n\}_{n\in\bn} \). 
Let \( \sU^* = (\sU \times X) \cup \{\infty\} \) denote the one-point compactification of \( \sU \times X \).
Choose \( \vp \co \sU^* \to \sU \) such that \( \vp(\infty) = e \) and such that \( \vp \) restricted to \( \sU\times \{x_n\} \) is a homeomorphism from \( \sU \times \{x_n\} \) to \( \sU_n \).
Then, \( \vp \) is a homeomorphism. 
\end{proof}

Let us now turn back to stable sets more generally.
The next lemma and proposition provide additional structure for stable sets that will be used to build homeomorphisms of 2-manifolds in the following sections. 

\begin{Lem}
\label{lem:copies}
Let \( M \) be a 2-manifold and let \( \sU \subset \sE(M) \) be a clopen stable set.
If \( \sV \) is a clopen subset of \( \sU \) such that \( \mathcal S(\sU) \ssm \sV \neq \varnothing \), then there exists \( e \in \mathcal S(\sU) \) and  a  stable neighborhood basis \( \{\sU_n\}_{n\in\bn} \) of \( e \) such that \( \sU_n \ssm \sU_{n+1} \) contains an open subset \( \sV_n \) homeomorphic to \( \sV \) for each \( n \in \bn \). 
\end{Lem}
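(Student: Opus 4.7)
The plan is to choose a stable end $e \in \mathcal S(\sU) \ssm \sV$, which exists by hypothesis, and then build a stable neighborhood basis $\{\sU_n\}_{n \in \bn}$ of $e$ recursively, so that each annulus $\sU_n \ssm \sU_{n+1}$ houses a copy of $\sV$ transported in by a canonical homeomorphism. Since $e$ is stable, I would fix at the outset an auxiliary stable neighborhood basis $\{\sW_k\}_{k \in \bn}$ of $e$ with $\sW_{k+1} \subset \sW_k$ and $\bigcap_k \sW_k = \{e\}$; this auxiliary basis will be used only as a supply of arbitrarily small stable neighborhoods of $e$, ensuring the $\sU_n$ shrink down to $e$.

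For the recursion, set $\sU_1 = \sU$. Given that $\sU_n$ has been constructed and is a stable neighborhood of $e$, apply Proposition \ref{prop:well-defined} with $\mu_1 = \mu_2 = e$ to the pair of stable neighborhoods $\sU$ and $\sU_n$; this yields a homeomorphism $\varphi_n \co \sU \to \sU_n$ with $\varphi_n(e) = e$. Define $\sV_n = \varphi_n(\sV)$, a clopen subset of $\sU_n$ homeomorphic to $\sV$. Since $e \notin \sV$ and $\varphi_n(e) = e$, we have $e \notin \sV_n$, so $\sU_n \ssm \sV_n$ is a clopen neighborhood of $e$. Choose $k_n$ large enough that $\sW_{k_n} \subset \sU_n \ssm \sV_n$, which is possible because the $\sW_k$ shrink to $\{e\}$, and set $\sU_{n+1} = \sW_{k_n}$. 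By construction $\sV_n \subset \sU_n \ssm \sU_{n+1}$, delivering the desired copy of $\sV$ in the $n$-th annulus.

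To finish, I would verify that $\{\sU_n\}$ really is a stable neighborhood basis of $e$: each $\sU_n$ is a stable neighborhood of $e$, since $\sU_1 = \sU$ is by hypothesis and, for $n \geq 1$, $\sU_{n+1}$ was drawn from the stable basis $\{\sW_k\}$; the nested family shrinks to $\{e\}$ because $\sU_{n+1} \subset \sW_{k_n}$ and $\bigcap_k \sW_k = \{e\}$; and any two $\sU_n$ are homeomorphic to each other by Corollary \ref{cor:well-defined}, so the collection is pairwise-homeomorphic. The main subtlety, and the reason I draw $\sU_{n+1}$ from the fixed auxiliary basis $\{\sW_k\}$ rather than from an arbitrary clopen neighborhood of $e$ inside $\sU_n \ssm \sV_n$, is that Proposition \ref{prop:well-defined} must remain applicable at the next step; once each $\sU_n$ is guaranteed to be a stable neighborhood of $e$, the whole construction goes through routinely.
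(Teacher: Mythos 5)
Your proof is correct but takes a genuinely different route from the paper's. You fix an end $e \in \mathcal S(\sU) \ssm \sV$ at the outset and, at each stage, apply Proposition~\ref{prop:well-defined} to produce a homeomorphism $\varphi_n \co \sU \to \sU_n$ fixing $e$, letting $\sV_n = \varphi_n(\sV)$; you then shrink to a smaller member of a fixed auxiliary stable basis of $e$. The paper does not fix the end in advance: it selects a fresh point $e_n \in \mathcal S(\sU_n) \ssm \sV_n$ at each step, shrinks to a stable neighborhood $\sU_{n+1}$ of $e_n$ disjoint from $\sV_n$ with diameter tending to zero (the copy of $\sV$ inside $\sU_n$ exists simply because $\sU_n \cong \sU$ by Corollary~\ref{cor:well-defined}), and only at the very end recovers the desired end $e$ as $\bigcap_n \mathcal S(\sU_n)$ by a compactness argument. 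Your version sidesteps that limiting step and in fact establishes something slightly stronger---that \emph{every} $e \in \mathcal S(\sU) \ssm \sV$ admits such a basis, not merely that some $e$ exists---at the price of leaning more heavily and repeatedly on the back-and-forth homeomorphism of Proposition~\ref{prop:well-defined} to control where $\sV$ lands. One small point to tighten: choosing $k_n$ ``large enough that $\sW_{k_n} \subset \sU_n \ssm \sV_n$'' does not by itself force $k_n \to \infty$ (e.g.\ when $\sV = \varnothing$), so you should also insist that $k_n > k_{n-1}$, which costs nothing since the $\sW_k$ are nested; otherwise $\bigcap_n \sU_n$ need not reduce to $\{e\}$ and the family may fail to be a neighborhood basis.
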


\begin{proof}
Fix a metric \( d \) on \( \sE(M) \). 
Let \( e_1 \in \mathcal S(\sU) \ssm \sV \), let \( \sU_1 = \sU \), and let \( \sV_ 1 = \sV \).
As \( \sV_1 \) is closed, we can find an open stable neighborhood \( \sU_2 \) of \( e_1 \) homeomorphic to \( \sU \) that is contained in \( \sU_1 \) and disjoint from \( \sV_1 \). 
By possibly shrinking \( \sU_2 \), we may assume that \( \sV_1 \subset \sU_1 \ssm \sU_2 \) and that the diameter of \( \sU_2 \) is less than \( 1/2 \).
Now, \( \sU_2 \), being homeomorphic to \( \sU \), contains an open set \( \sV_2 \) homeomorphic to \( \sV \) such that there exists \( e_2 \in \mathcal S(\sU_2) \ssm \sV_2 \).
As before, we can now find an open stable neighborhood \( \sU_3 \) of \( e_2 \) of diameter less than \( 1/3 \) that is homeomorphic to \( \sU \) and such that \( \sV_2 \subset \sU_2 \ssm \sU_3 \). 

Continuing in this fashion, we build a sequence of pairwise-homeomorphic nested clopen stable sets \( \{\sU_n\}_{n\in\bn} \)  with diameters limiting to zero and  a sequence of pairwise-disjoint pairwise-homeomorphic open sets \( \{\sV_n\}_{n\in\bn} \) such that \( \sV_n \subset \sU_n \ssm \sU_{n+1} \) and \( \sV_n \) is homeomorphic to \( \sV \). 
Since \( \{\mathcal S(\sU_n)\}_{n\in\bn} \) is a nested sequence of closed subsets of a compact space, it must be that \( \bigcap \mathcal S(\sU_n) \neq \varnothing \); moreover, as the diameter of \( \sU_n \) tends to zero, there exists an end \( e \) such that \( \{e\} = \bigcap \mathcal S(\sU_n) \); hence, \( \{ \sU_n \}_{n\in\bn} \) is a stable neighborhood basis for \( e \). 
\end{proof}

\begin{Prop} 
\label{prop:same}
Let \( M \) be a 2-manifold and let \( \sU \subset \sE(M) \) be a clopen stable set.
If \( \sV \) is a clopen set of \( \sE(M) \) that is homeomorphic to an open subset of \( \sU \ssm \{e\} \) for some \( e \in \mathcal S(\sU) \), then \( \sU \cup \sV \) is homeomoprhic to \( \sU \). 
\end{Prop}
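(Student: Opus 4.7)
The plan is to use a ``Hilbert Hotel''-style shift. First I would make the trivial reduction to the case $\sV \cap \sU = \varnothing$: in general, $\sU \cup \sV = \sU \sqcup (\sV \ssm \sU)$, and $\sV \ssm \sU$ is clopen in $\sV$, hence (via the hypothesized homeomorphism) still homeomorphic to an open subset of $\sU \ssm \{e\}$. So I may replace $\sV$ by $\sV \ssm \sU$ and assume $\sU$ and $\sV$ are disjoint.

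Next I would fix an open subset $\sV' \subseteq \sU \ssm \{e\}$ homeomorphic to $\sV$. Since $\sV$ is clopen in $\sE(M)$ and hence compact, $\sV'$ is compact, so $\sV'$ is in fact a clopen subset of $\sU$ that avoids $e$. Because $e \in \mathcal S(\sU) \ssm \sV'$, Lemma~\ref{lem:copies} applies with $\sV'$ in place of the ``$\sV$'' of that lemma, yielding some $e^\ast \in \mathcal S(\sU)$ with a stable neighborhood basis $\{\sW_n\}_{n \in \bn}$ of $e^\ast$ and open sets $\sV_n \subseteq \sW_n \ssm \sW_{n+1}$ with each $\sV_n$ homeomorphic to $\sV'$, hence to $\sV$. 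The $\sV_n$ are automatically pairwise disjoint (since the $\sW_n$ are nested) and clopen (being compact open subsets of $\sE(M)$), and their only accumulation point in $\sU$ outside $\bigsqcup_n \sV_n$ is $e^\ast$ (since any such accumulation point lies in every $\sW_n$).

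Set $W = \sU \ssm \bigsqcup_{n \in \bn} \sV_n$. Then
\[
\sU \sqcup \sV \;=\; W \;\sqcup\; \sV \;\sqcup\; \bigsqcup_{n \in \bn} \sV_n
\qquad\text{and}\qquad
\sU \;=\; W \;\sqcup\; \bigsqcup_{n \in \bn} \sV_n.
\]
I would define $\phi \co \sU \sqcup \sV \to \sU$ to be the identity on $W$, a chosen homeomorphism $\sV \to \sV_0$, and, for each $n$, a chosen homeomorphism $\sV_n \to \sV_{n+1}$, each respecting $\Enp$ and $\Eno$ (such choices exist because all the pieces are pairwise homeomorphic via $\sV$ in the decomposition-preserving sense). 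By construction $\phi$ is a bijection, and it is a homeomorphism on each clopen piece.

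The only step requiring care is continuity of $\phi$ at $e^\ast$ (the unique accumulation point of $\bigsqcup_n \sV_n$ outside of that union). Given any basic neighborhood $\sW_N$ of $e^\ast$, a direct check shows $\phi^{-1}(\sW_N) \supseteq \sW_N$, since $\phi(\sV_n) = \sV_{n+1} \subseteq \sW_{n+1} \subseteq \sW_N$ already when $n \geq N - 1$, and $\phi$ is the identity on $W$. Thus $\phi$ is continuous at $e^\ast$, and hence a continuous bijection between compact Hausdorff spaces, hence a homeomorphism respecting the decompositions. I expect this continuity check at $e^\ast$ to be the only subtle point; once in place, the proof is complete.
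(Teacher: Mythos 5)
Your proof is correct and follows essentially the same route as the paper's: reduce to $\sU \cap \sV = \varnothing$, invoke Lemma~\ref{lem:copies} to produce a convergent sequence of disjoint copies of $\sV$ inside $\sU$, and build the shift map. You are in fact more careful than the paper in two small respects: you explicitly pass from $\sV$ to its clopen homeomorphic image $\sV' \subset \sU \ssm \{e\}$ before applying Lemma~\ref{lem:copies} (which strictly requires a clopen subset of $\sU$), and you spell out the continuity check at the accumulation point $e^\ast$, which the paper dispatches with ``readily verified.''
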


\begin{proof}
If \( \sV \subset \sU \), then the statement is trivial, so we assume that \( \sV \) is not contained in \( \sU \).
Moreover, since \( \sU \cup \sV = \sU \cup (\sV \ssm \sU) \), we may assume that \( \sU \cap \sV = \varnothing \). 
By Lemma~\ref{lem:copies}, there exists \( e' \in \mathcal S(\sU) \), a stable neighborhood basis \( \{ \sU_n\}_{n\in\bn} \) of \( e' \), and a sequence \( \{\sV_n\}_{n\in \bn} \) of pairwise-disjoint clopen subsets such that \( \sV_n \subset \sU_n \ssm \{e\} \) and \( \sV_n \) is homeomorphic to \(\sV \).

We can now construct a homeomorphism \( \psi \co \sU \cup \sV \to \sU \) as follows: fix a homeomorphism \( \psi_0 \co \sV \to \sV_1 \) and, for \( n \in \bn \), fix a homeomorphism \( \psi_n \co \sV_n \to \sV_{n+1} \).
Define \( \psi \co \sU\cup \sV \to \sU \) by \( \psi|_\sV = \psi_0 \), \( \psi|_{\sV_n} = \psi_n \), and \( \psi(x) = x \) for all \( x \) in the complement of \( \sV \cup \left(\bigcup \sV_n\right) \). 
It is readily verified that \( \psi \) is a homeomorphism. 
\end{proof}

We finish this section with providing an equivalent definition of stability, which is referred to as self-similarity by Mann--Rafi \cite{MannLarge}. 
A clopen subset \( \sU \) of \( \sE(M) \) is \emph{self-similar}\index{self-similar subset} if given open subsets \( \sU_1, \ldots, \sU_k \subset \sE(M) \) such that \( \sU = \sU_1  \cup \cdots \cup \sU_k \), then there exists \( i \in \{1, \ldots, k\} \) such that  \( \sU_i \) contains an open subset homeomorphic to \( \sU \).

\begin{Prop}
\label{prop:equiv_def}
Let \( M \) be a 2-manifold.
Suppose \( \sU \) is a clopen subset of \( \sE(M) \).
Then, the following are equivalent:
\begin{enumerate}[(1)]
\item \( \sU \) is stable.
\item \( \sU \) is self-similar.
\item If \( \sV, \sW \subset \sE(M) \) are open sets such that \( \sU = \sV \cup \sW \), then \( \sV \) or \( \sW \) contains an open subset homeomorphic to \( \sU \).
\end{enumerate}
\end{Prop}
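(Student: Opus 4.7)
The plan is to prove the equivalences via the cycle (2)$\Rightarrow$(3)$\Rightarrow$(1)$\Rightarrow$(3) together with a direct argument (3)$\Rightarrow$(2); the implication (1)$\Rightarrow$(3) is the easy direction, and the real content lies in (3)$\Rightarrow$(1), i.e.\ in manufacturing a stable end from the purely combinatorial covering property.

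First, (2)$\Rightarrow$(3) is immediate, since (3) is the special case $k=2$ of self-similarity. For (3)$\Rightarrow$(2), I would induct on the number $k$ of open sets in a cover. The base $k=2$ is (3). For the inductive step, given $\sU = \sU_1 \cup \cdots \cup \sU_k$, set $\sV = \sU_1$ and $\sW = \sU_2 \cup \cdots \cup \sU_k$ and apply (3): either $\sU_1$ already contains an open copy of $\sU$, or $\sW$ contains an open subset $\sU'$ homeomorphic to $\sU$. In the latter case $\sU'$ is itself a clopen subset (a compact open subset of the Hausdorff space $\sE(M)$) homeomorphic to $\sU$, so it also satisfies (3); writing $\sU' = \bigcup_{i=2}^k (\sU' \cap \sU_i)$ and applying the inductive hypothesis to $\sU'$ yields some $\sU_i$ containing an open copy of $\sU' \cong \sU$.

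For (1)$\Rightarrow$(3), suppose $\sU$ is stable and fix $e \in \mathcal{S}(\sU)$. If $\sU = \sV \cup \sW$ with $\sV,\sW$ open, then without loss of generality $e \in \sV$. Because $e$ admits a stable neighborhood basis contained in $\sU$ (by Corollary~\ref{cor:well-defined}), we may choose a basis element $\sU' \subset \sV$; by Corollary~\ref{cor:well-defined} again, $\sU'$ is homeomorphic to $\sU$, so $\sV$ contains the desired open copy of $\sU$.

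The substantive step is (3)$\Rightarrow$(1). Fix a compatible metric $d$ on $\sE(M)$, and build a nested sequence $\sU = \sU_0 \supset \sU_1 \supset \sU_2 \supset \cdots$ of clopen subsets, each homeomorphic to $\sU$, with $\mathrm{diam}(\sU_n) < 1/n$. Given $\sU_n$, cover it by finitely many clopen sets of diameter less than $1/(n+1)$, which is possible since $\sU_n$ is compact and zero-dimensional. Since $\sU_n \cong \sU$, property (3) passes to $\sU_n$ (a homeomorphism transports an open $2$-cover of $\sU_n$ to one of $\sU$), and hence so does (2) by the induction already established; therefore one of the covering sets contains an open subset $\sU_{n+1}$ homeomorphic to $\sU$, which is automatically clopen because it is compact and open in a Hausdorff space. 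The intersection $\bigcap_n \sU_n$ is a nonempty nested intersection of compacta with diameters tending to zero, hence a single point $\{e\}$, and the containment $\sU_n \subset B(e, 1/n)$ shows $\{\sU_n\}$ is a neighborhood basis of $e$. Since each $\sU_n$ is homeomorphic to $\sU$, this basis is stable and $\sU$ is a stable neighborhood of $e$, establishing (1). The only mild subtlety I expect is verifying in the inductive construction that ``(3) is invariant under homeomorphism of the ambient clopen set,'' which is handled by transporting covers via the homeomorphism and using that $\sU_n$ is clopen in $\sE(M)$.
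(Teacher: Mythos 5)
Your argument is correct and follows essentially the same route as the paper's: (2)$\Leftrightarrow$(3) via the finite induction, and (3)$\Rightarrow$(1) via the nested-clopen-sets-with-shrinking-diameter construction and compactness. You merely make explicit two steps the paper leaves implicit --- the justification that (1) implies the covering property by choosing a stable basis element inside $\sV$ (where the paper just asserts it is ``clear''), and the observation that property (3) transports along homeomorphisms between clopen sets, which the paper uses silently each time it applies self-similarity to the shrinking copies $\sU_n \cong \sU$.
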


\begin{proof}
Let us first show the equivalence of the latter two statements.
Clearly (2) implies (3) by setting \( k = 2 \). 
Conversely, suppose \( \sU = (\sU_1 \cup \cdots \sU_{k-1}) \cup \sU_k \) with \( \sU_1, \ldots, \sU_k \) open subsets of \( \sE(M) \). 
Then, by assumption, \( \sU_k \) or \( \sU_1 \cup \cdots \cup \sU_{k-1} \) must contain an open subset homeomorphic to \( \sU \).
If it is \( \sU_k \), then we are finished.
Otherwise, \( \sU_1 \cup \cdots \cup \sU_{k-1} \) contains an open set \( \sV \) homeomorphic to \( \sU \), and we repeat the above argument with \( \sV = (\sV_1 \cup \cdots \sV_{k-2}) \cup \sV_{k-1} \), where \( \sV_i = \sV \cap \sU_i \). 
Clearly, this process has finitely many steps and ends with an open subset homeomorphic to \( \sU \) and contained in one of the \( \sU_i \). 

We now establish the equivalence of the first two statements.
Clearly (1) implies (2).
Let us finish by proving the converse.
Fix a metric on \( \sE(M) \).
Cover \( \sE(M) \) with finitely many balls of radius 1/2 (of course this can be done as \( \sE(M) \) is compact).  
By assumption, taking the intersection of these balls with \( \sU \), one of these balls contains an open subset \( \sU_1 \) contained in and homeomorphic to \( \sU \); moreover, the diameter of \( \sU_1 \) is at most one. 
Now cover \( \sE(M) \) by balls of radius \( 1/4 \), and again, by taking intersections with \( \sU_1 \) and using the assumption, we obtain an open subset \( \sU_2 \) contained in \( \sU_1 \), homeomorphic to \( \sU \), and of diameter of at most 1/2. 
Continuing in this fashion, we build a sequence of nested open sets \( \{\sU_n\}_{n\in\bn} \) such that \( \sU_n \) is homeomorphic to \( \sU \) and of diameter at most \( 1/n \).
As \( \sU \) is compact, it follows that \( \bigcap_{n\in\bn} \sU_n \) is nonempty and, by construction, its diameter is 0; hence, there is a unique element \( \mu \) in the intersection, and \( \{\sU_n\}_{n\in\bn} \) is a stable neighborhood basis of \( \mu \).
Therefore, \( \sU \) is a stable neighborhood of \( \mu \), and hence stable. 
\end{proof}


\section{Freudenthal subsurfaces and Anderson's method}
\label{sec:Anderson}

The goal of this section is to adapt and generalize the technique used by Anderson in \cite{AndersonAlgebraic} to a general tool for 2-manifolds, which we call \emph{Anderson's Method} (see Proposition~\ref{prop:main2}).  
It is worth noting that Anderson's original presentation is itself in a general setting, which he then applied to several transformation groups (including \( \Homeo_0(\mathbb S^2) \) and \( \mathrm{Homeo}(2^\bn) \)). 
In the literature, there are several other instances of Anderson's ideas being generalized to various settings (for instance, see \cite{EpsteinSimplicity}).
That is to say, the notion of Anderson's method is broader than the presentation given here, which is suited to our purposes.

\subsection{Definitions, notations, conventions}

Given a subset \( X \) of a topological space, we let \( \overline X \) denote its closure, \( X^{\mathrm{o}} \) denote its interior, and \( \partial X = \overline X \ssm X^{\mathrm{o}} \) denote its boundary. 
A family of subsets \( \{ X_n \}_{n\in\bn} \) of a topological space \emph{converges to a point \( x \)} if for every open neighborhood \( U \) of \( x \) there exists \( N \in \bn \) such that \( X_n \subset U \) for all \( n > N \); it is \emph{locally finite} if given any compact subset \( K \) the set \( \{ n \in \bn: X_n \cap K \neq \varnothing \} \) is finite; it is \emph{convergent}\index{convergent family} if it is either locally finite\index{locally finite family} or converges to a point. 

The \emph{support}\index{support} of a homeomorphism \( f \co X \to X \), denoted \( \mathrm{supp}(f) \), is the closure of the set \( \{ x\in X : f(x) \neq x \} \). 
Given a subset \( A \) of  \( X \), we say a homeomorphism \( f \co X \to X \) is \emph{supported on \( A \)} if \( \supp(f) \subset A \).
If \( \{f_n\}_{n\in\bn} \) is a sequence of self-homeomorphisms of a topological space such that \( \{\supp(f_n)\}_{n\in\bn} \) either converges to a point or is locally finite, then the \emph{infinite product}\index{infinite product} \( f= \prod_{n\in \bn} f_n \) exists and is a homeomorphism.
As function composition reads right to left, we read the infinite product right to left as well, e.g., given a point \( x \) and \( k = \max\{ n \in \bn : x\in \supp(f_n)\} \), then \( f(x) = f_k \circ f_{k-1} \circ \cdots \circ f_1(x) \). 

Given a subset \( S \) of a group \( G \), the \emph{normal closure}\index{normal closure} of \( S \) is the subgroup of \( G \) generated by all the conjugates of the elements of \( S \).
An element \( g \) of \( G \) \emph{normally generates}\index{normal generator} \( G \) or is a \emph{normal generator} of \( G \) if the normal closure of \( \{g \} \) is \( G \). 

Given an open subset \( \Omega \) of a 2-manifold \( M \) with compact boundary, define \( \widehat \Omega \) to be the subset of \( \sE(M) \) consisting of ends for which \( \Omega \) is a neighborhood in \( M \) (see Section~\ref{appendix:topology} for more details).  
Given a closed subset \( \Sigma \)  of \( M \) with compact boundary, we set \( \widehat \Sigma = \widehat {\Sigma^{\mathrm{o}}} \). 
Recall that surfaces, and hence subsurfaces, are required to be connected.

\begin{Def}[Types of Freudenthal subsurfaces]
Let \( M \) be a 2-manifold. 
Recall (Definition~\ref{def:freudenthal}) that a \emph{Freudenthal subsurface}\index{subsurface!Freudenthal} of \( M \) is a subsurface with connected compact boundary. 
A Freudenthal subsurface \( \Delta \) is \emph{trim}\index{subsurface!Freudenthal!trim} if 
\begin{enumerate}[(i)]
\item \( \Delta \) is planar whenever \( \widehat \Delta \cap \Enp(M) = \varnothing \), and
\item \( \Delta \) is orientable whenever \( \widehat\Delta \cap \Eno(M) = \varnothing \).
\end{enumerate} 
A  Freudenthal subsurface \( \Delta \) is \emph{stable}\index{subsurface!Freudenthal!stable} if it is trim and either \( \Delta \) is compact or \( \widehat \Delta \) is stable (note that every compact stable Freudenthal subsurface is homeomorphic to the closed 2-disk).
For a stable Freudenthal subsurface \( \Delta \), we define
\[
\mathcal S(\Delta) = \left\{
\begin{tabular}{ll}
\( \Delta^\mathrm{o} \) & if \( \Delta \) is compact\\
\( \mathcal S(\widehat \Delta) \) & otherwise
\end{tabular} \right.
\]
A Freudenthal subsurface \( \Delta \) is \emph{dividing}\index{subsurface!Freudenthal!dividing} if  it stable, \( \mathcal S(\Delta) \) is infinite, and \( M \ssm \Delta \) contains a Freudenthal subsurface homeomorphic to \( \Delta \).
\end{Def}
 
We will often use the fact  that every clopen subset of \( \sE(M) \) is of the form \( \widehat \Delta \) for some trim Freudenthal subsurface \( \Delta \) (see Lemma~\ref{lem:basis}).
This is equivalent to the fact that the interiors of the Freudenthal subsurfaces form a basis for the topology of the Freudenthal compactification of \( M \), a viewpoint not discussed in this chapter, but motivates our naming convention. 

\begin{figure}[t]
\centering
\includegraphics[scale=1]{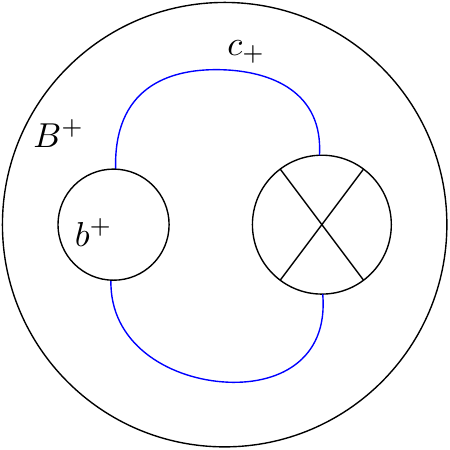}
\caption{The setup for one side of the double slide \( s_{b,c} \), and hence the setup for a boundary slide. The disk with the ``X'' represents a \emph{crosscap}\index{crosscap}, meaning the open disk is removed and the antipodal points of the boundary circle are identified.}
\label{fig:double-slide}
\end{figure}

On several occasions, we will need a locally supported homeomorphism of a non-orientable 2-manifold that allows us to reverse the orientation of a two-sided curve (in particular, the boundary of a Freudenthal subsurface); we introduce the required homeomorphism here.
Let \( B \) be a closed M\"obius band; more specifically, let \( B = [-1,1]^2/\sim \), where \( \sim \) is the equivalence relation generated by \( (1,y) \sim (-1,-y) \). 
Let \( D \subset B \) be the closed disk centered at the origin of radius 1/2.
Then sliding \( D \) along the central curve of \( B \) until it comes back to itself yields an isotopy \( H \co B \times [0,1] \to B \).
In fact, this isotopy can be taken relative to \( \partial B \), and hence the homeomorphism \( H_1 \) of \( B \) onto itself, given by \( H_1(x)=H(x,1) \),  restricts to a homeomorphism of \( B \ssm D^\mathrm o \) onto itself that reverses the orientation of \( \partial D \) while fixing \( \partial B \) pointwise. 
Moreover, the restriction of \( H_1 \) to \( \partial D \) is simply the reflection \( (x,y) \mapsto (x,-y) \). 
The homeomorphism \( H_1 \co B \ssm D^\mathrm o \to B \ssm D^\mathrm o \) is called a \emph{boundary slide}. 
The boundary slide is a standard tool in the study of mapping class groups of non-orientable surfaces (see \cite{ParisMapping}); it is based on the notion of a \emph{crosscap slide}, or \emph{Y-homeomorphism}, introduced by Lickorish \cite{LickorishHomeomorphisms}.

Let \( M \) be a non-orientable 2-manifold, and let \( b \) be a separating simple closed curve in \( M \) such that both components of \( M \ssm b \) are non-orientable and such that neither are homeomorphic to a M\"obius band.
Label the closures of the components of \( M \ssm b \) by \( M_b^+ \) and \( M_b^- \).
Viewing \( M_b^\pm \) as a surface, let \( b^\pm \) denote the boundary of \( M_b^\pm \). 
Let \( c \) be a two-sided simple closed curve in \( M \) such that \( |c \cap b| = 2 \) and such that  \( c_\pm  = c \cap M_b^\pm \)  has a neighborhood \( B_\pm \) in \( M_b^\pm \) admitting a homeomorphism \( p_\pm \co B\ssm D^\mathrm o \to B_\pm \) sending \( \partial D \) to \( b_\pm \) (see Figure~\ref{fig:double-slide}).
The boundary slide \( p_+ \circ H_1 \circ p_+^{-1} \) of \( b \) in \( M_b^+ \) along \( c_+ \) in \( B_+ \) does not extend to a homeomorphism of \( M \) since it switches orientation on one side of \( b \) but not the other; however, if we also perform the boundary slide \( p_- \circ H_1 \circ p_-^{-1} \) of \( b \) in \( M_b^- \) along \( c_- \) in \( P_- \), then we will have switched the orientation on both sides, which allows us to define a homeomorphism \( s_{b,c} \co M \to M \)  by setting \( s_{b,c}|_{B_\pm} = p_\pm \circ H_1 \circ p_\pm^{-1} \) and setting \( s_{b,c} \) to be the identity on the complement of \( B_+\cup B_- \).

\begin{Def}
The homeomorphism \( s_{b,c} \) is called a \emph{double slide}\index{double slide}.
\end{Def}

\subsection{Anderson's Method}

We begin with a baby version of (what we will refer to as) Anderson's Method. 
It is now a common technique used to express local transformations as commutators and can be used in a fairly general setting.

\begin{Def}
Let \( M \) be a 2-manifold and let \( \Sigma \subset M \) be a subsurface. 
A homeomorphism \( \vp\co M\to M \) is a \emph{\(\Sigma\)-translation}\index{translation} if \( \vp^n(\Sigma) \cap \vp^m(\Sigma) = \varnothing \) for any distinct integers \( n \) and \( m \). 
If, in addition, \( \{ \vp^n(\Sigma) \}_{n\in\bn} \) is convergent, then we say that \( \vp \) is a \emph{convergent} \( \Sigma \)-translation\index{translation!convergent}. 
\end{Def}

Recall that the \emph{commutator} of two elements \( g \) and \( h \) in a group is defined to be \( [g,h] = ghg^{-1}h^{-1} \).
The following proposition is motivated by Anderson's proof of \cite[Lemma~1]{AndersonAlgebraic}.  

\begin{Prop}
\label{prop:commutator}
Let \( M \) be a 2-manifold.
Suppose \( \Sigma \) and \( \Delta \) are closed subsets of \( M \) such that \( \Sigma \subset \Delta\) and such that there exists a convergent \( \Sigma \)-translation \( \vp \) supported in \( \Delta \).
If  \( h \in \Homeo(M) \) such that \( \supp(h) \subset \Sigma \), then \( h \) can be expressed as a commutator of two elements of \( \Homeo(M) \), each of which is supported in  \( \Delta \). 
\end{Prop}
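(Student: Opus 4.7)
The plan is to use Anderson's classical trick of defining an infinite product that telescopes into a commutator. Explicitly, I would define
\[
H = \prod_{n=0}^{\infty} \varphi^n h \varphi^{-n},
\]
read right to left. For this product to make sense as a homeomorphism of $M$, the first step is to verify that its factors have convergent supports. The factor $\varphi^n h \varphi^{-n}$ is supported in $\varphi^n(\Sigma)$, and by the definition of a $\Sigma$-translation these sets are pairwise disjoint. By hypothesis the family $\{\varphi^n(\Sigma)\}_{n\in\bn}$ is convergent (either locally finite or converging to a point), so the infinite product exists and is a homeomorphism of $M$ with support contained in $\bigcup_{n\geq 0}\varphi^n(\Sigma)$.

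Next I would observe that $H$ is supported in $\Delta$. Since $\supp(\varphi)\subset\Delta$, the homeomorphism $\varphi$ is the identity on $M\ssm\Delta$; being a homeomorphism, it therefore preserves both $\Delta$ and its complement, and $\varphi^n(\Sigma)\subset\Delta$ for every $n\geq 0$ (using $\Sigma\subset\Delta$). Thus $\supp(H)\subset\Delta$ and also $\supp(\varphi)\subset\Delta$ by assumption.

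The final step is the telescoping identity. Conjugating the infinite product by $\varphi$ gives
\[
\varphi H \varphi^{-1} = \prod_{n=0}^{\infty}\varphi^{n+1} h \varphi^{-(n+1)} = \prod_{n=1}^{\infty}\varphi^n h \varphi^{-n},
\]
so separating the $n=0$ factor from the definition of $H$ yields
\[
H = h \cdot (\varphi H \varphi^{-1}).
\]
Solving for $h$, we obtain $h = H \varphi H^{-1}\varphi^{-1} = [H,\varphi]$, a commutator of two homeomorphisms supported in $\Delta$, as desired.

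The only real subtlety is the justification of the infinite product and of the conjugation identity above. This boils down to the convergence condition on $\{\varphi^n(\Sigma)\}_{n\in\bn}$: in the locally finite case, any compact set meets only finitely many supports so finite-tail truncations agree with the infinite product on that compact set, while in the case where the supports converge to a point $x$, continuity at $x$ needs to be checked separately (both $H$ and its purported inverse $\prod_{n=0}^\infty \varphi^n h^{-1}\varphi^{-n}$ fix $x$, and diameters of supports shrink, giving continuity). Once this is in hand, the algebraic manipulation with $\varphi$ is automatic because conjugation reindexes the product and each factor is supported on the corresponding translate, so no rearrangement issue arises.
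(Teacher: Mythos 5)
Your proposal is correct and follows exactly the paper's approach: both define the same infinite product $\sigma = \prod_{n\geq 0}\varphi^n h\varphi^{-n}$ and express $h$ as the commutator $[\sigma,\varphi]$, with the paper only sketching the telescoping step that you carry out explicitly. The extra care you take with the convergence of the infinite product and the verification that both factors are supported in $\Delta$ is a fine filling-in of the details the paper leaves to the reader.
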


\begin{proof}
By the convergence condition of \( \vp \), we can define the homeomorphism \[ \sigma = \prod_{n=0}^\infty \vp^n h \vp^{-n}.\]
Consider the commutator \( [\sigma,\vp] = \sigma(\vp\sigma^{-1}\vp^{-1}) \). 
Let us give an intuitive description of this commutator: \( \sigma \) is performing \( h \) on \( \vp^n(\Sigma) \) for \( n \geq 0 \),  and similarly, one views \( \vp \sigma^{-1} \vp^{-1} \) as performing \( h^{-1} \) on \( \vp^n(\Sigma) \) for \( n > 0 \), so that \( \vp \sigma^{-1} \vp^{-1} \) restricts to the  identity on \( \Sigma \). 
Leaving the details to the reader, we then see that \( \sigma(\vp\sigma^{-1}\vp^{-1}) \) restricts to the identity outside of \( \Sigma \) and restricts to \( h \) on \( \Sigma \); in particular, \( [\sigma,\vp] = h \), which is the desired result.
\end{proof}

Before getting to Anderson's Method, we will need two lemmas, both of which involve using ambient homeomorphisms to displace Freudenthal subsurfaces. 
It readily follows from the definition of stability and the classification of surfaces that any two non-compact stable Freudenthal subsurfaces  \( \Delta_1 \) and \( \Delta_2 \) of a 2-manifold are abstractly homeomorphic if and only if \( \mathcal S(\Delta_1) \) and \( \mathcal S(\Delta_2) \) contain a stable end of the same type. 
Lemma~\ref{lem:homogeneous} below provides a sufficient condition for when this abstract homeomorphism can be promoted to an ambient homeomorphism.

\begin{Lem}
\label{lem:homogeneous}
Let \( M \) be 2-manifold and let \( \Delta_1, \Delta_2 \subset M \) be abstractly homeomorphic  stable Freudenthal subsurfaces.
If \( \Sigma \) is a subsurface of \( M \) such that \( \partial \Sigma \) is compact,  \( \Delta_{1} \cup  \Delta_{2} \) is contained in the interior of \( \Sigma \) and, for each \( i \in \{1,2\} \), the interior of \( \Sigma \) contains a  Freudenthal subsurface \( \Delta'_i \) disjoint from and homeomorphic to \(  \Delta_{i} \), then there exists \( f \in \Homeo(M) \) with support in \( \Sigma \) such that \( f( \Delta_{1}) =  \Delta_{2} \).
Moreover, if \(  \Delta_{1} \cap  \Delta_{2} = \varnothing \), then \( f \) can be chosen to also satisfy \( f( \Delta_{2}) = \Delta_{1} \).
\end{Lem}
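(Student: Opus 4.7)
The plan is to apply the change of coordinates principle (Corollary~\ref{cor:coordinates}) within $\Sigma$, viewing $\Delta_1$ and $\Delta_2$ as two subsurfaces with compact boundary to be interchanged. The first task will be to verify that $\overline{\Sigma\setminus\Delta_1}$ and $\overline{\Sigma\setminus\Delta_2}$ are homeomorphic as surfaces with compact boundary; via the classification (Theorem~\ref{thm:classification}), this reduces to matching the number of boundary components (both have $|\partial\Sigma|+1$), the end-space triple, the genus, and the orientability class.

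For the end-space comparison, my goal will be to construct a homeomorphism $\sE(\Sigma)\to\sE(\Sigma)$ of triples sending $\widehat{\Delta_1}$ to $\widehat{\Delta_2}$, which then restricts to the required identification of complements. In the case $\Delta_1\cap\Delta_2 = \varnothing$ the clopen subsets $\widehat{\Delta_1},\widehat{\Delta_2}\subset\sE(\Sigma)$ are disjoint, and the hypothesized identification $\Delta_1\cong\Delta_2$ directly yields a swap that is the identity off $\widehat{\Delta_1}\cup\widehat{\Delta_2}$. When the two intersect, I will instead use $\widehat{\Delta'_1}$ and $\widehat{\Delta'_2}$ as reservoirs, extracting via the self-similarity characterization (Proposition~\ref{prop:equiv_def}) a stable clopen copy of $\widehat{\Delta_1}$ disjoint from both $\widehat{\Delta_1}$ and $\widehat{\Delta_2}$ and then composing two disjoint swaps. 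Matching of genus and orientability follows from the presence of $\Delta'_1 \subset \overline{\Sigma\setminus\Delta_1}$ and $\Delta'_2 \subset \overline{\Sigma\setminus\Delta_2}$, each carrying the full invariants of $\Delta_1\cong\Delta_2$, via a short case analysis on whether those invariants are finite or infinite.

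Applying Corollary~\ref{cor:coordinates} with these inputs produces a homeomorphism $h\colon\Sigma\to\Sigma$ with $h(\Delta_1)=\Delta_2$, but in order for the resulting $f$ to be supported in $\Sigma$ we need $h|_{\partial\Sigma}$ to be the identity. I plan to handle this by shrinking inward: choose an auxiliary subsurface $\Sigma^*\subset\Sigma^{\mathrm{o}}$ with compact boundary whose interior contains $\Delta_1\cup\Delta_2\cup\Delta'_1\cup\Delta'_2$, carry out the preceding argument inside $\Sigma^*$, and then use a collar of $\partial\Sigma^*$ inside $\Sigma^*$ (disjoint from the four distinguished subsurfaces) to isotope the resulting self-homeomorphism so that it restricts to the identity on $\partial\Sigma^*$; it then extends by the identity across $M\setminus\Sigma^*$.

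For the \emph{moreover} clause, when $\Delta_1\cap\Delta_2=\varnothing$, I will instead apply the classification argument simultaneously to the pair: the surface $\overline{\Sigma\setminus(\Delta_1\cup\Delta_2)}$ admits a self-homeomorphism swapping the two boundary circles $\partial\Delta_1$ and $\partial\Delta_2$ (and fixing $\partial\Sigma$) by the same case analysis and end-space swap applied symmetrically to $\Delta_1,\Delta_2$, and gluing this to the abstract swap $\Delta_1\leftrightarrow\Delta_2$ yields an involution realizing both $f(\Delta_1)=\Delta_2$ and $f(\Delta_2)=\Delta_1$. The principal obstacle throughout will be the end-space comparison in the non-disjoint case; once that is handled via the stable-set manipulations above, the classification and collar arguments assemble straightforwardly.
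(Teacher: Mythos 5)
Your proposal takes a genuinely different route from the paper. The paper never attempts to verify directly that $\overline{\Sigma \ssm \Delta_1}$ and $\overline{\Sigma \ssm \Delta_2}$ are homeomorphic; instead, it reduces the overlapping case to repeated applications of the \emph{disjoint} case, chaining together ambient moves through intermediate stable Freudenthal subsurfaces (either a stable subsurface sitting inside $\Delta_1 \cap \Delta_2$, when the two share a stable end, or a copy of $\Delta_1$ contained in $\Delta_2 \ssm \Delta_1$ otherwise), each step of which only requires moving one Freudenthal subsurface onto a disjoint one. You, by contrast, want to match the invariants (boundary count, end-space triple, genus, orientability) of the two complements so as to invoke Corollary~\ref{cor:coordinates} in a single stroke, with the nested-collar argument handling the support condition.

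However, there is a concrete gap in your treatment of the overlapping case. You claim to ``extract via the self-similarity characterization (Proposition~\ref{prop:equiv_def}) a stable clopen copy of $\widehat{\Delta_1}$ disjoint from both $\widehat{\Delta_1}$ and $\widehat{\Delta_2}$,'' but no such copy need exist: nothing prevents $\widehat{\Delta_1} \cup \widehat{\Delta_2}$ from being all of $\sE(\Sigma)$. For instance, take $\sE(\Sigma) \cong 2^\bn$, so that every nonempty clopen subset is stable and homeomorphic to $2^\bn$, and let $\widehat{\Delta_1}, \widehat{\Delta_2}$ be two overlapping nonempty proper clopen sets whose union is all of $\sE(\Sigma)$ (e.g.\ $\widehat{\Delta_1} = \{x : x_1 = 0\}$ and $\widehat{\Delta_2} = \{x : x_1 = 1\} \cup \{x : x_1 = 0, x_2 = 0\}$). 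The reservoirs $\widehat{\Delta'_1}, \widehat{\Delta'_2}$ can be taken to be the full (nonempty, clopen, hence homeomorphic) complements, so every hypothesis of the lemma is realizable, yet $\sE(\Sigma) \ssm (\widehat{\Delta_1} \cup \widehat{\Delta_2}) = \varnothing$ and there is simply no room for the clopen copy you want. Proposition~\ref{prop:equiv_def} only tells you that one piece of a partition \emph{of} $\widehat{\Delta'_1}$ contains a copy of it; it does not control which piece, and in examples like the above the copy is forced into $\widehat{\Delta'_1} \cap \widehat{\Delta_2} \subset \widehat{\Delta_2}$. This is exactly why the paper's chaining argument instead passes through a stable set contained \emph{inside} $\widehat{\Delta_1} \cap \widehat{\Delta_2}$ (or inside $\widehat{\Delta_2} \ssm \widehat{\Delta_1}$), where room is guaranteed, rather than trying to find a copy disjoint from the whole union. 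To repair your approach you would need to replace the ``disjoint from both'' extraction by exactly this kind of case split on where the stable ends of $\widehat{\Delta_1}$ and $\widehat{\Delta_2}$ sit---at which point you are essentially reproducing the paper's proof, and the direct end-space comparison buys you nothing. (A smaller issue: the collar trick for making the homeomorphism of $\Sigma^*$ the identity on $\partial\Sigma^*$ needs a word about orientation-reversal on boundary circles, which the paper handles elsewhere with double slides, but this is a routine fix compared to the extraction gap.)
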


\begin{proof}
If \( \Delta_1 \) and \( \Delta_2 \) are disjoint, then the statement immediately follows from the change of coordinates principle (Corollary~\ref{cor:coordinates}).
Now we suppose \(  \Delta_1 \cap  \Delta_2 \neq \varnothing \). 
Fix \( \mu \in \mathcal S( \Delta_2) \).
If \( \mu \in \mathcal S( \Delta_1) \), then there exists a stable Freudenthal subsurface \( \Delta \) such that \( \Delta \subset \Delta_1 \cap \Delta_2 \) and \( \mu \in \mathcal S(\Delta) \). 
It follows  that \(  \Delta \cap ( \Delta'_1 \cup  \Delta'_2) = \varnothing \). 
Appealing to the disjoint case at the beginning of the proof, we see there exist homeomorphisms \( f_1,f_2,f_3, f_4 \in \Homeo(M) \) supported in \( \Sigma \) such that \( f_1(\Delta_1) = \Delta'_1\), \( f_2(\Delta'_1) = \Delta \),  \( f_3(\Delta) = \Delta'_2 \), and \( f_4(\Delta'_2) = \Delta_2 \), and hence, \( f = f_4 \circ f_3 \circ f_2 \circ f_1 \) is supported in \( \Sigma \) and maps \( \Delta_1 \) onto \( \Delta_2 \).
Otherwise, if \( \mu \notin \mathcal S(\Delta_1) \), then, arguing similarly, we can find a Freudenthal subsurface \( \Delta \) abstractly homeomorphic to  \( \Delta_1 \) such that \(  \Delta \subset \Delta_2 \) and such that  \(  \Delta \cap  \Delta_1 =\varnothing \). 
Hence, we can find \( f_1, f_2, f_3 \in \Homeo(M) \) supported in \( \Sigma \) such that \( f_1(\Delta_1) = \Delta \), \( f_2(\Delta) = \Delta'_2 \), and \( f_3(\Delta'_2) = \Delta_2 \), and hence, \( f = f_3 \circ f_2 \circ f_1 \) is supported in \( \Sigma \) and maps \( \Delta_1 \) onto \( \Delta_2 \).
\end{proof}

\begin{Lem}
\label{lem:translation}
Let \( M \) be a 2-manifold, and let \( D \) be a Freudenthal subsurface of \( M \) contained in the interior of a stable Freudenthal subsurface \( \Delta \).
If either \( D \) is compact or \( \mathcal S( \Delta) \ssm \widehat D \neq \varnothing \), then there exists a convergent \( D \)-translation with support contained in the interior of \( \Delta \). 
\end{Lem}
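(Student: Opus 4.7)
The plan is to produce, inside $\Delta^{\mathrm{o}}$, a convergent sequence $\{D_n\}_{n\geq 0}$ of pairwise-disjoint Freudenthal subsurfaces each homeomorphic to $D$, with $D_0=D$, and then build $\varphi$ as a shift along a ``ladder'' of connecting strips. With $\varphi(D_n)=D_{n+1}$ for all $n$, the convergence of $\{D_n\}$ will supply the convergence condition on $\{\varphi^n(D)\}_{n\geq 0}$.

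For the compact $D$ case: if $\Delta$ is also compact then $\Delta$ is a closed $2$-disk, $D$ is a disk, and one places infinitely many pairwise-disjoint disks in $\Delta^{\mathrm o}\ssm D$ converging to a point of $\partial\Delta$; if $\Delta$ is non-compact, the trim condition forces $\Delta$ to have infinite genus (resp., infinite non-orientability) whenever $D$ does, so a locally finite family of disjoint copies of $D$ exists in $\Delta^{\mathrm o}\ssm D$ by the classification of surfaces (Theorem~\ref{thm:classification}). For the non-compact $D$ case, $\widehat D$ is a nonempty clopen subset of $\widehat\Delta$, and the hypothesis $\mathcal S(\Delta)\ssm\widehat D\neq\varnothing$ is exactly what is required to apply Lemma~\ref{lem:copies} with $\sU=\widehat\Delta$ and $\sV=\widehat D$. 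That lemma produces a stable end $e\in\mathcal S(\widehat\Delta)$, a stable neighborhood basis $\{\sU_n\}$ of $e$, and pairwise-disjoint open subsets $\sV_n\subset\sU_n\ssm\sU_{n+1}$ homeomorphic to $\widehat D$; moreover, its proof allows one to take $\sV_1=\widehat D$. Each $\sV_n$ is compact and open in $\sE(M)$, hence clopen, and Lemma~\ref{lem:basis} realizes $\{\sV_n\}$ by pairwise-disjoint Freudenthal subsurfaces of $\Delta^{\mathrm o}$ including $D$ itself. To promote the end-space equivalence $\widehat{D_n}\cong\widehat D$ to the surface homeomorphism $D_n\cong D$, I adjust genus and orientability within each $D_n$ by connected sums with tori or crosscaps in a small disk, justified by the classification of surfaces; the stability of $\Delta$ ensures the required topology is available inside each copy.

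With $\{D_n\}_{n\geq 0}$ in place, I choose pairwise-disjoint arcs $\alpha_n\subset\Delta^{\mathrm o}\ssm\bigcup_m D_m^{\mathrm o}$ joining $\partial D_n$ to $\partial D_{n+1}$ and convergent in the same sense as $\{D_n\}$; these exist by connectedness of $\Delta^{\mathrm o}$ and compactness of the $\partial D_n$. Thickening yields a ``ladder'' $R\subset\Delta^{\mathrm o}$, and the change of coordinates principle (Corollary~\ref{cor:coordinates}), applied inductively on compact rungs $D_n\cup N(\alpha_n)\cup D_{n+1}$, produces a self-homeomorphism of $R$ sending each $D_n$ to $D_{n+1}$; extending by the identity yields $\varphi\in\Homeo(M)$ supported in $\Delta^{\mathrm o}$. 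To secure the two-sided condition $\varphi^n(D)\cap\varphi^m(D)=\varnothing$ for all distinct $n,m\in\mathbb Z$, I prepend one extra disjoint copy $D_{-1}\subset\Delta^{\mathrm o}\ssm R$ linked to $D_0$ by an additional arc, arranged so that $\varphi^{-1}(D_0)=D_{-1}$ lies outside the forward ladder.

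The main obstacle is the non-compact case of step one: ensuring the Freudenthal subsurfaces realizing the $\sV_n$ are homeomorphic to $D$ as surfaces, rather than just having matching end spaces. This hinges on invoking the classification of surfaces together with the stability of $\Delta$ to guarantee the correct genus and non-orientability data is available inside each copy.
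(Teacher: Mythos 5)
Your outline matches the paper's overall strategy in steps (i)--(iii): apply Lemma~\ref{lem:copies} to $\widehat\Delta$ and $\widehat D$, realize the resulting clopen sets by disjoint Freudenthal subsurfaces via Lemma~\ref{lem:basis}, then build a shift carrying $D_n$ to $D_{n+1}$. Where you diverge is the final step. The paper first builds the shift $\Psi$ \emph{on the end space} and then lifts it to $M$ using the surjectivity of $\Homeo(M)\to\Homeo_M(\sE(M))$ (Theorem~\ref{thm:surjection}) together with an ``infinite change of coordinates,'' whereas you construct $\varphi$ directly in $M$ by thickening a system of arcs into a ladder. Both routes are viable; the lifting route sidesteps the gluing bookkeeping along the ladder. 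Note, though, that your phrase ``compact rungs $D_n\cup N(\alpha_n)\cup D_{n+1}$'' is misleading in the noncompact case: when $D$ is noncompact the rungs are noncompact, and the change-of-coordinates argument needs to be set up accordingly (this is one reason the paper prefers to lift from the end space).

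The one genuine flaw in the reasoning is the handling of negative iterates. Prepending a single $D_{-1}$ with $\varphi^{-1}(D_0)=D_{-1}$ controls $\varphi^{-1}(D)$ but says nothing about $\varphi^{-2}(D)=\varphi^{-1}(D_{-1})$, $\varphi^{-3}(D)$, etc., so as written this does not ``secure the two-sided condition.'' Fortunately the worry is moot: if $\varphi$ is a homeomorphism and $\varphi^{a}(D)\cap\varphi^{b}(D)=\varnothing$ for all distinct $a,b\geq 0$ (which your pairwise-disjoint $\{D_n\}_{n\geq 0}$ gives directly), then for arbitrary distinct $n>m$ in $\bz$ one has $\varphi^{n}(D)\cap\varphi^{m}(D)=\varphi^{m}\bigl(\varphi^{n-m}(D)\cap D\bigr)=\varnothing$ since $n-m\geq 1$. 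So the two-sided disjointness is automatic from the one-sided disjointness, the extra $D_{-1}$ is unnecessary, and you should either delete it or index the family by $\bz$ as the paper does. Finally, your ``connected sums with tori or crosscaps'' phrasing for matching genus/orientability of $D_n$ with $D$ should be recast: one cannot perform ambient connected sums, but rather one uses that the trim/stable hypotheses on $\Delta$ force genus (resp.\ nonorientability) to accumulate toward $e$, so the needed topology is already present near each $\sV_n$ and one can shrink or enlarge $D_n$ to match $D$.
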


\begin{proof}
Let us consider the case where \( D \)---and hence \( \Delta \)---is not compact (the case with \( D \) compact is similar).
Apply Lemma~\ref{lem:copies} to \( \widehat \Delta \) and \( \widehat D \) to obtain \( e \in \mathcal S(\Delta) \) and a stable neighborhood basis \( \{\sU_n\}_{n\in\bz} \) of \( e \) such that \( \sU_n \ssm \{e\} \) contains a clopen subset \( \sV_n \) homeomorphic to \( \widehat D \) for each \( n \in \bz \). 
Moreover, we can assume that \( \sU_0 = \widehat \Delta \) and \( \sV_0 = \widehat D \). 

\begin{figure}[t]
\centering
\includegraphics[scale=0.8]{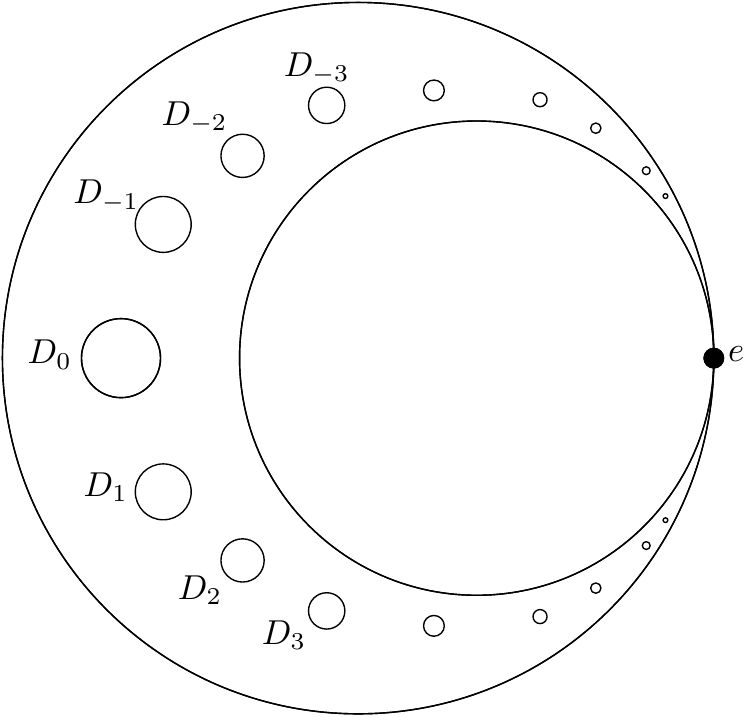}
\caption{The  support of a \( D \)-translation sending \( D_n \) to \( D_{n+1} \) for all \( n \in \bz \), as given by Lemma~\ref{lem:translation}.}
\label{fig:shift}
\end{figure}

For each \( n \in \bz \), fix a homeomorphism \( \psi_n \co \sV_n \to \sV_{n+1} \).
Define \( \Psi \co \sE(M) \to \sE(M) \) by setting \( \Psi \) to be the identity outside of \( \bigcup \sV_n \) and to satsify \( \Psi|_{\sV_n} = \psi_n \); then, \( \Psi \) is a homeomorphism supported on \( \widehat \Delta \). 
Let \( \{D_n\}_{n\in\bz} \) be a sequence of pairwise-disjoint pairwise-homeomorphic Freudenthal subsurfaces such that \(  D_n \subset \Delta \), \( \widehat D_n = \sV_n \), and \( D_0 = D \).
The existence of the \( D_n \) is guaranteed by Lemma~\ref{lem:basis}.
It is now an exercise, using the classification of surfaces (in the manner of an infinite change of coordinates, see Theorem~\ref{thm:classification} and Corollary~\ref{cor:coordinates}) and the fact that the canonical homomorphism \( \Homeo(M) \to \Homeo_M(\sE(M)) \) is surjective (Theorem~\ref{thm:surjection}), to construct a homeomorphism \( \widetilde \Psi \co M \to M \) supported in \( \Delta \) such that \( \widetilde \Psi(D_n) = D_{n+1} \) and such that  \( \widetilde \Psi \) is a lift of \( \Psi \). 
In particular, \( \widetilde \Psi \) is a convergent \( D \)-translation.
(In fact, the \( D \)-translation can be chosen to be a shift map as defined in \cite[Section~2.4]{AbbottInfinite}, which is shown in Figure~\ref{fig:shift}).
\end{proof}

The second part of the following proposition is a version of Anderson's ingenious technique---introduced in \cite[Lemma~1]{AndersonAlgebraic}---suited to our needs.   
The version we give, and its proof, is a direct generalization of Calegari--Chen's version presented in \cite[Lemma~2.10]{CalegariNormal}, which deals with the sphere minus a Cantor set. 

\begin{Prop}[Anderson's Method\index{Anderson's method}]
\label{prop:main2}
Let \( M \) be a 2-manifold, and let \( \Sigma \) and \( \Delta \) be Freudenthal subsurfaces of \( M \) such that \( \Sigma \subset \Delta^\mathrm{o} \), \( \Delta \) is stable, and either \( \Sigma \) is compact or \( \mathcal S(\Delta) \ssm \widehat \Sigma  \neq \varnothing \).
Suppose  \( h \in \Homeo(M) \) such that \( \supp(h) \subset \Sigma \).
Then,
\begin{enumerate}
\item \( h \) can be expressed as a commutator of two elements of \( \Homeo(M) \), each of which is supported in the interior of \( \Delta \), and

\item if \( f \in \Homeo(M) \) such that \( f(\Delta) \cap \Delta = \varnothing \), then \( h \) can be factored as a product of four conjugates of \( f \) and \( f^{-1} \).
\end{enumerate}
\end{Prop}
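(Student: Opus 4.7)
The plan is to derive part~(1) by a direct combination of Lemma~\ref{lem:translation} and Proposition~\ref{prop:commutator}. The hypotheses placed on $\Sigma$ and $\Delta$ in the proposition are exactly those of Lemma~\ref{lem:translation}, so the lemma produces a convergent $\Sigma$-translation $\varphi$ supported in $\Delta^{\mathrm{o}}$. Proposition~\ref{prop:commutator} then writes $h = [\sigma,\varphi]$; tracing the construction $\sigma = \prod_{n\geq 0}\varphi^n h\varphi^{-n}$ shows that $\supp(\sigma)\subset\bigcup_{n\geq 0}\varphi^n(\Sigma)\subset\Delta^{\mathrm{o}}$, so both factors of the commutator are supported in the interior of $\Delta$, as required.

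For part~(2), I would keep the translation $\varphi$ from part~(1) and replace it with the commutator $\tilde\varphi := [\varphi,f] = \varphi f\varphi^{-1}f^{-1}$. The hypothesis $f(\Delta)\cap\Delta=\varnothing$ ensures that $\supp(\varphi)\subset\Delta$ and $\supp(f\varphi^{-1}f^{-1})\subset f(\Delta)$ are disjoint, so the two factors commute and $\tilde\varphi$ acts as $\varphi$ on $\Delta$, as $f\varphi^{-1}f^{-1}$ on $f(\Delta)$, and as the identity elsewhere. A key consequence is that the $\tilde\varphi$-orbit of $\Sigma\subset\Delta$ never leaves $\Delta$---so $\tilde\varphi^n(\Sigma) = \varphi^n(\Sigma)$ for every $n\in\bz$---and therefore $\tilde\varphi$ is itself a convergent $\Sigma$-translation, now supported in the closed set $\Delta\cup f(\Delta)$. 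Applying Proposition~\ref{prop:commutator} with $\tilde\varphi$ and $\Delta\cup f(\Delta)$ in place of $\varphi$ and $\Delta$ produces $h = \tilde\sigma\,\tilde\varphi\,\tilde\sigma^{-1}\,\tilde\varphi^{-1}$.

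To finish, regroup $\tilde\varphi = (\varphi f\varphi^{-1})\cdot f^{-1}$, which displays $\tilde\varphi$ as a product of two conjugates of $f^{\pm 1}$---namely the conjugate of $f$ by $\varphi$ and the trivial conjugate of $f^{-1}$. Conjugating by $\tilde\sigma$ preserves this form, so $\tilde\sigma\tilde\varphi\tilde\sigma^{-1} = ((\tilde\sigma\varphi)f(\tilde\sigma\varphi)^{-1})\cdot(\tilde\sigma f^{-1}\tilde\sigma^{-1})$ is again a product of two conjugates of $f^{\pm 1}$, and similarly $\tilde\varphi^{-1} = f\cdot(\varphi f^{-1}\varphi^{-1})$ is such a product. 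Multiplying these gives $h$ as a product of four conjugates of $f$ and $f^{-1}$. The only subtlety I anticipate is confirming that $\tilde\varphi$ is a genuine convergent $\Sigma$-translation despite carrying extra support on the distant copy $f(\Delta)$; this rests on the containment $\Sigma\subset\Delta$ together with the disjointness $\Delta\cap f(\Delta) = \varnothing$, after which the accounting of conjugates is purely mechanical.
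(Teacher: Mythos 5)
Your part (1) matches the paper's argument exactly. For part (2), however, you take a genuinely different and, in fact, cleaner route. The paper's proof builds an ambient swap $\psi$ of $\Delta$ and $\Delta' = f(\Delta)$ via Lemma~\ref{lem:homogeneous}, then pre-composes with a double slide (in the non-orientable case) and with homeomorphisms supported in $\Delta$ and $\Delta'$ so that $\psi|_\Delta = f|_\Delta$ and $\psi|_{\Delta'} = \varphi\circ f^{-1}|_{\Delta'}$; it then verifies the identity $h = \tau\psi\tau\psi^{-1}$ with $\tau = [\sigma,f]$. You instead bootstrap directly from part (1) by replacing the translation $\varphi$ with the ``twisted'' translation $\tilde\varphi = [\varphi,f]$, observe that $\tilde\varphi^n(\Sigma) = \varphi^n(\Sigma)$ for all $n$ (so $\tilde\varphi$ is again a convergent $\Sigma$-translation, now supported in $\Delta\cup f(\Delta)$), and invoke Proposition~\ref{prop:commutator} once more to get $h = [\tilde\sigma,\tilde\varphi]$. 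The conjugate count then falls out mechanically, since $\tilde\varphi$ and its conjugates and inverse are manifestly products of two conjugates of $f^{\pm1}$ each. This sidesteps the entire $\psi$ construction and therefore also the change-of-coordinates lemma, the double-slide orientation repair, and the ``possibly shrinking $\Delta$'' step that the paper needs; your argument uses only the data ($\varphi$, $f$) already present in the hypotheses. Both proofs land on the advertised bound of four conjugates, and I have checked your verification that $\tilde\varphi^n(\Sigma) = \varphi^n(\Sigma)$ (including for negative $n$, using that the commuting factors $\varphi$ and $f\varphi^{-1}f^{-1}$ have disjoint supports, so $\tilde\varphi^n = \varphi^n\cdot(f\varphi^{-1}f^{-1})^n$); it is sound.
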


\begin{proof}
By Lemma~\ref{lem:translation}, we can choose a convergent \( \Sigma \)-translation \( \vp \) supported in \( \Delta \).
For (1),  we simply apply Proposition~\ref{prop:commutator}. 
Now let us consider (2).
Let \( \Delta' = f(\Delta \)).
By assumption, \( \Delta \cap \Delta' = \varnothing \).  
If \( \Delta \) is not orientable, then using the fact that \( \Delta \) is stable and by possibly shrinking \( \Delta \), we may assume that the complement of \( \Delta \cup \Delta' \) is not orientable as well. 
For \( n \in \bz \), let \( \Sigma_n = \vp^n(\Sigma) \) and \( \Sigma_n' = f(\Sigma_n) \). 
By Lemma~\ref{lem:homogeneous}, there exists a homeomorphism \( \psi \in \Homeo(M) \)  such that \( \psi(\Delta) = \Delta' \) and \( \psi(\Delta') = \Delta \).
By possibly pre-composing \( \psi \) with a double slide with support disjoint from \( \Delta' \), we may assume that \( f^{-1} \circ \psi \) fixes \( \partial \Delta \) pointwise. 
Similarly, we may assume that \( f \circ \psi \) fixes \( \partial \Delta' \) pointwise. 
In particular, by further pre-composing \( \psi \) with homeomorphisms supported in \( \Delta \) and \( \Delta' \), we may  assume that \( \psi|_\Delta = f|_\Delta \) and \( \psi|_{\Delta'} = \vp \circ f^{-1}|_{\Delta'} \). 

Let \( \sigma = \prod_{n=0}^\infty \vp^n h \vp^{-n} \) and let \( \tau = [\sigma,f] = \sigma f \sigma^{-1} f^{-1} \).
We claim  that 
\[ 
h = \tau\psi\tau\psi^{-1} = (\sigma f \sigma^{-1}) f^{-1} (\psi \sigma f \sigma^{-1}\psi^{-1})(\psi f^{-1} \psi^{-1}),
\]
and hence \( h \) is a product of four conjugates of \( f \) and \( f^{-1} \). 
We will provide the intuition and leave the details to the reader. 
We have already noted in the proof of Proposition~\ref{prop:commutator} that we should view \( \sigma \) as performing \( h \) on \( \Delta_n \) for \( n \geq 0 \), and so we view \( f \sigma f^{-1} \) as performing \( h^{-1} \) on \( \Delta_n' \) for \( n \geq 0 \), and hence \( \tau = \sigma (f\sigma^{-1}f^{-1}) \) is performing both \( h \) on \( \Delta_n \) and \( h^{-1} \) on \( \Delta_n' \) for \( n \geq 0 \).
Now, \( \psi \tau \psi^{-1} \) is performing \( h^{-1} \) on \( \Delta_n \) for \( n > 0 \) (and in particular is the identity on \( \Delta_0 \)) and is performing \( h \) on \( \Delta_n' \) for \( n \geq 0 \).
And hence,  \( \tau(\psi \tau\psi^{-1}) =h \). 
\end{proof}

Our main application of Anderson's method will involve dividing Freudenthal subsurfaces. 
Note that by Proposition~\ref{prop:dichotomy}, if \( \Delta \) is a dividing Freudenthal subsurface, then either \( \Delta \) is compact or \( \mathcal S(\Delta) \) is  perfect. 
Given a Freudenthal subsurface \( \Delta \) of a 2-manifold \( M \), let \( \Gamma_\Delta \) denote the normal closure of the subgroup of \( \Homeo(M) \) consisting of homeomorphisms with support in \( \Delta \). 

The following corollary will be our main use of Anderson's method. 
To simplify the statement, we abuse notation by letting \( f \in \Homeo(M) \) denote both itself as well as the element of \( \Homeo(\sE(M)) \) induced by \( f \).

\begin{Cor}
\label{cor:main2}
Let \( M \) be a 2-manifold, and let \( \Delta \subset M \) be a dividing Freudenthal subsurface.
Suppose \( f \in \Homeo(M) \) and \( x \in \mathcal S(\Delta) \) such that \(  f(x) \neq x \).
Then, every element of \( \Homeo(M) \) supported in \( \Delta \)  is a product of four conjugates of \( f \) and \( f^{-1} \); in particular, \( \Gamma_\Delta \) is contained in the group normally generated by \( f \).
\end{Cor}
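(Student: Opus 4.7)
My plan is to reduce to Proposition~\ref{prop:main2}(2) by producing two ingredients: a stable Freudenthal subsurface \( \Delta^* \) with \( \Delta \subset (\Delta^*)^\mathrm{o} \) and \( \mathcal S(\Delta^*) \ssm \widehat\Delta \neq \varnothing \), and a conjugate \( \tilde f = g^{-1} f g \) of \( f \) satisfying \( \tilde f(\Delta^*) \cap \Delta^* = \varnothing \). Applying Proposition~\ref{prop:main2}(2) with ``\( \Sigma \)'' \( = \Delta \), its ``\( \Delta \)'' \( = \Delta^* \), and its ``\( f \)'' \( = \tilde f \) will then express any \( h \in \Homeo(M) \) with \( \supp(h) \subset \Delta \) as a product of four conjugates of \( \tilde f^{\pm 1} \), each of which is itself a conjugate of \( f^{\pm 1} \); the statement about \( \Gamma_\Delta \) follows immediately.

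To construct \( \Delta^* \): since \( \Delta \) is dividing, \( \mathcal S(\Delta) \) is infinite (perfect in the non-compact case by Proposition~\ref{prop:dichotomy}), and iterating the dividing hypothesis yields a sequence \( \Delta = \Delta_0, \Delta_1, \Delta_2, \ldots \) of pairwise-disjoint Freudenthal subsurfaces of \( M \) each homeomorphic to \( \Delta \). I attach \( \Delta_0 \) and \( \Delta_1 \) along a thin band \( N \)---a regular neighborhood of a simple arc from \( \partial\Delta_0 \) to \( \partial\Delta_1 \), chosen to avoid the later \( \Delta_i \)---and set \( \Delta^* = \Delta_0 \cup N \cup \Delta_1 \). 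Then \( \Delta^* \) is a Freudenthal subsurface containing \( \Delta \) in its interior, with end space \( \widehat\Delta \cup \widehat{\Delta_1} \cong \widehat\Delta \) by Proposition~\ref{prop:same}; trimness of \( \Delta \) forces its genus and non-orientability invariants to be either zero or infinite, so the band can be arranged to keep \( \Delta^* \) trim, and Theorem~\ref{thm:classification} then yields \( \Delta^* \cong \Delta \). Finally, \( \mathcal S(\widehat{\Delta_1}) \) is a nonempty subset of \( \mathcal S(\Delta^*) \ssm \widehat\Delta \). When \( \Delta \) is a closed disk I instead take \( \Delta^* \) to be a slight thickening of \( \Delta \); compactness of \( \Sigma = \Delta \) then supplies the corresponding hypothesis of Proposition~\ref{prop:main2}(2) without any end-space condition.

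To construct \( \tilde f \): since \( f(x) \neq x \) as ends of \( M \), a compact-exhaustion argument using the compactness of \( \partial\Delta \), \( f(\partial\Delta) \), and \( f^{-1}(\partial\Delta) \) produces a stable Freudenthal subsurface \( \Delta_x \subset \Delta \) with \( x \in \mathcal S(\Delta_x) \), \( \Delta_x \cong \Delta \), and \( f(\Delta_x) \cap \Delta_x = \varnothing \) in \( M \). The reserved copies \( \Delta_2, \Delta_3, \ldots \) supply, via the same band construction, a Freudenthal subsurface \( \Delta^\sharp \cong \Delta^* \) disjoint from \( \Delta^* \); another of the \( \Delta_i \) supplies a copy of \( \Delta_x \) disjoint from \( \Delta_x \). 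Lemma~\ref{lem:homogeneous}, applied with the surrounding subsurface taken to be \( M \), then produces \( g \in \Homeo(M) \) with \( g(\Delta^*) = \Delta_x \). Setting \( \tilde f = g^{-1} f g \), we obtain \( \tilde f(\Delta^*) = g^{-1}(f(\Delta_x)) \), which is disjoint from \( g^{-1}(\Delta_x) = \Delta^* \) because \( f(\Delta_x) \) and \( \Delta_x \) are disjoint.

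The main obstacle is the bookkeeping required to arrange the band \( N \) so that \( \Delta^* \) is simultaneously trim, abstractly homeomorphic to \( \Delta \), and disjoint from a homeomorphic copy of itself inside \( M \); in the non-orientable case, a double slide may be needed to correct the orientation behavior of the band, in the same spirit as in the proof of Proposition~\ref{prop:main2}(2) itself.
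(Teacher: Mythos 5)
Your proof is correct in outline but takes the opposite route from the paper. Where you \emph{enlarge} $\Delta$ to a bigger copy $\Delta^*$ of itself (so that $\Sigma = \Delta$ sits in the interior of $\Delta^*$) and then conjugate $f$ to a new $\tilde f$ displacing $\Delta^*$, the paper \emph{shrinks}: by continuity it finds a small stable Freudenthal subsurface $\Delta' \subset \Delta$ around $x$ that $f$ itself already displaces, then picks an even smaller $\Delta'' \subset (\Delta')^\mathrm{o}$ and uses Lemma~\ref{lem:homogeneous} to conjugate $h$ (not $f$) to be supported in $\Delta''$, after which Proposition~\ref{prop:main2}(2) applies directly with the \emph{original} $f$. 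The paper's route avoids the band construction entirely, and with it the need to verify that $\Delta^*$ is trim and abstractly homeomorphic to $\Delta$. Your observation that trimness forces the genus and non-orientability invariants of $\Delta$ to be $0$ or $\infty$---so that a boundary connected sum of two copies of $\Delta$ has the same homeomorphism type as $\Delta$---is correct and is the genuinely interesting point that makes your version work, but it is a nontrivial side lemma that the paper sidesteps. One concrete imprecision to repair: you say the reserved copies $\Delta_2, \Delta_3, \ldots$ supply a $\Delta^\sharp \cong \Delta^*$ disjoint from $\Delta^*$, but if, as the most natural iteration produces them, those reserved copies sit inside $\Delta_1^\mathrm{o}$, then they are contained in $\Delta^*$ and $\Delta^\sharp$ cannot be disjoint from $\Delta^*$. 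This is fixable---replace $\Delta_1$ by one of the pairwise-disjoint pieces of a Lemma~\ref{lem:decomposition} decomposition of $\widehat{\Delta_1}$, so that all of $\Delta_0, \Delta_1, \Delta_2, \Delta_3$ are pairwise disjoint \emph{before} attaching any bands---but it should be stated explicitly. The same care is needed in the non-orientable case: rather than correcting orientation with a double slide after the fact, it is cleaner to choose the arc for the band so that $\Delta^*$ comes out orientable when $\widehat{\Delta^*} \cap \Eno(M) = \varnothing$ (which is always possible, since if one arc yields a non-orientable $\Delta^*$, rerouting through a crosscap yields an orientable one).
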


\begin{proof}
By continuity, there exists a stable Freudenthal subsurface \( \Delta' \) contained in \( \Delta \) such that \( x \in \mathcal S(\Delta') \) and \( f(\Delta') \cap \Delta' = \varnothing \).
Moreover, by possibly shrinking \( \Delta' \), we may assume that \( M \ssm (\Delta' \cup f(\Delta')) \) contains a stable Freudenthal subsurface homeomorphic to \( \Delta \). 
Choose any stable Freudehntal subsurface \( \Delta'' \) that is is homeomorphic to \( \Delta' \), contained in the interior of \( \Delta' \), and satisfies \( \mathcal S(\Delta') \ssm \mathcal S(\Delta'') \neq \varnothing \). 
By Lemma~\ref{lem:homogeneous}, any element supported in \( \Delta \) can be conjugated to be supported in \( \Delta'' \).
The result now follows from Anderson's Method. 
\end{proof}

\begin{Cor}
\label{cor:commutator1}
Let \( M \) be a 2-manifold and let \( \Delta \subset M \) be a dividing Freudenthal subsurface. 
Then, every homeomorphism of \( M \) with support in \( \Delta \) can be expressed as the commutator of two elements in \( \Gamma_\Delta \); in particular, \( \Gamma_\Delta \) is uniformly perfect with commutator width one. 
\end{Cor}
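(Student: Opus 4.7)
The plan is to prove Corollary~\ref{cor:commutator1} in two stages: first the explicit commutator claim for homeomorphisms supported in $\Delta$, then the ``in particular'' assertion that $\Gamma_\Delta$ has commutator width one.

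For the first stage, let $h \in \Homeo(M)$ with $\supp(h) \subset \Delta$. I would begin by producing a Freudenthal subsurface $\Delta' \subset \Delta^{\mathrm{o}}$ homeomorphic to $\Delta$ so that Proposition~\ref{prop:main2}(1) applies with $\Sigma = \Delta'$ and the given $\Delta$. In the compact case, $\Delta$ is a closed disk and $\Delta'$ is chosen as a smaller closed disk in its interior. In the non-compact case, $\mathcal{S}(\Delta)$ is perfect by Proposition~\ref{prop:dichotomy}, so Lemma~\ref{lem:decomposition}---combined with the classification of surfaces to match genus and orientability---furnishes a Freudenthal subsurface $\Delta' \subset \Delta^{\mathrm{o}}$ homeomorphic to $\Delta$ with $\mathcal{S}(\Delta) \ssm \widehat{\Delta'} \neq \varnothing$. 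Because $\Delta$ is dividing, $M \ssm \Delta$ contains a copy $\Delta^*$ of $\Delta$; this is the disjoint copy needed to invoke Lemma~\ref{lem:homogeneous}, which produces $c \in \Homeo(M)$ with $c(\Delta) = \Delta'$. Setting $h' := c h c^{-1}$ puts $\supp(h') \subset \Delta'$, so Proposition~\ref{prop:main2}(1) yields $h' = [\sigma, \varphi]$ with $\sigma, \varphi$ supported in $\Delta^{\mathrm{o}}$, hence in $\Gamma_\Delta$. Conjugating back, $h = [c^{-1} \sigma c,\, c^{-1} \varphi c]$, and the normality of $\Gamma_\Delta$ in $\Homeo(M)$ places both commutator factors in $\Gamma_\Delta$.

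For the second stage, let $\eta \in \Gamma_\Delta$. Writing $\eta$ as a product of conjugates of elements supported in $\Delta$, its support lies in a finite union $U = \bigcup_{i=1}^n g_i(\Delta)$ of ambient-homeomorphic translates of $\Delta$. The plan is to construct $g \in \Homeo(M)$ with $g(U) \subset \Delta^{\mathrm{o}}$; then $g \eta g^{-1}$ is supported in $\Delta$, Stage~1 applies to give $g \eta g^{-1} = [a, b]$ with $a, b \in \Gamma_\Delta$, and normality then yields $\eta = [g^{-1} a g,\, g^{-1} b g] \in [\Gamma_\Delta, \Gamma_\Delta]$. To build $g$, I would combine Propositions~\ref{prop:disjoint-union} and~\ref{prop:same} to absorb the clopen set $\widehat{U}$ into $\widehat{\Delta}$ (using perfectness of $\mathcal{S}(\widehat{\Delta})$), the classification of surfaces (Theorem~\ref{thm:classification}) to match genus and orientability class, and Theorem~\ref{thm:surjection} to lift the resulting end-space homeomorphism to an ambient one.

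The main obstacle will be the second stage: the ambient homeomorphism $g$ that compresses the possibly overlapping union $U$ inside $\Delta^{\mathrm{o}}$ requires careful reconciliation of both end-space data and complementary topology so that Corollary~\ref{cor:coordinates} applies. The dividing hypothesis on $\Delta$---producing disjoint copies of $\Delta$ both in $M \ssm \Delta$ and inside $\Delta^{\mathrm{o}}$---is precisely what supplies the room needed for this absorption.
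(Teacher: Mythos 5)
Your Stage~1 is correct, and it is a workable variant of the paper's argument, differing only in which direction the auxiliary Freudenthal subsurface is taken. You shrink: produce $\Delta' \subset \Delta^{\mathrm{o}}$ homeomorphic to $\Delta$, conjugate $h$ into $\Delta'$ via Lemma~\ref{lem:homogeneous}, and apply Proposition~\ref{prop:main2}(1) with $\Sigma = \Delta'$ and ambient region $\Delta$, so that the two commutator factors are supported in $\Delta^{\mathrm{o}}$ and therefore lie in $\Gamma_\Delta$ by definition. The paper instead grows: it fixes a dividing Freudenthal subsurface $\Delta' \supset \Delta$ with $\mathcal S(\Delta) \subsetneq \mathcal S(\Delta')$, applies Proposition~\ref{prop:main2}(1) with $\Sigma = \Delta$ and ambient region $\Delta'$, and only then invokes Lemma~\ref{lem:homogeneous} to see that the two commutator factors, supported in $\Delta'$, are conjugate to homeomorphisms supported in $\Delta$ and hence lie in $\Gamma_\Delta$. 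Both routes work; the paper's avoids the preliminary conjugation of $h$, while yours avoids the final appeal to Lemma~\ref{lem:homogeneous}.

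Stage~2 contains a genuine gap, and the ``main obstacle'' you flag is in fact fatal for the route you propose. Conjugating an arbitrary $\eta \in \Gamma_\Delta$ so that its support lands inside $\Delta$ is not, in general, possible. Take $M = \mathbb S^2$ and $\Delta$ a closed $2$-disk; by Theorem~\ref{thm:fisher}, $\Gamma_\Delta = H_0(\mathbb S^2)$, and a non-trivial, non-antipodal rotation $\eta$ lies in $H_0(\mathbb S^2)$ and has support equal to all of $\mathbb S^2$, so $\supp(g\eta g^{-1}) = g(\supp(\eta)) = \mathbb S^2$ for every $g$ and no conjugate of $\eta$ is supported in a disk. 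Correspondingly the union $U = \bigcup g_i(\Delta)$ already equals $\mathbb S^2$ once it covers $\supp(\eta)$, and no $g$ can map it into $\Delta^{\mathrm{o}}$. The end-space route has the same difficulty: Propositions~\ref{prop:disjoint-union} and~\ref{prop:same} produce an \emph{abstract} homeomorphism $\widehat U \cup \widehat\Delta \cong \widehat\Delta$, but this does not upgrade to an \emph{ambient} homeomorphism of $M$ compressing $U$ into $\Delta^{\mathrm{o}}$---indeed $\widehat U$ may already equal all of $\sE(M)$ while $\widehat\Delta$ is a proper clopen subset. For comparison, the paper does not attempt any such compression: its derivation of the ``in particular'' clause is a single remark that conjugates of commutators are commutators, which exhibits the \emph{normal generators} of $\Gamma_\Delta$ as single commutators, and the commutator-width-one statement for $H_0(\mathbb S^n)$ is attributed elsewhere to Tsuboi.
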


\begin{proof}
Fix a  dividing Freudenthal subsurface \( \Delta' \) such that \(  \Delta \subset \Delta' \) and such that \( \mathcal S( \Delta ) \) is a proper subset of \( \mathcal S(\Delta') \). 
Indeed, this is readily seen to be possible if \( \Delta \) is compact, and if \( \Delta \) is not compact, then it can be deduced from Proposition~\ref{prop:disjoint-union} by taking the union of \( \widehat \Delta \) with a homeomorphic copy of \( \widehat \Delta \) in its complement.
Then, by Anderson's Method, for every \( g \in \Homeo(M) \) with support in \( \Delta \) there exists \( h, f \in \Homeo(M)  \) with support in \( \Delta' \) such that \( g = [h,f] \). 
It follows from Lemma~\ref{lem:homogeneous} that \( h, f \in \Gamma_\Delta \), and as the conjugate of a commutator is a commutator, we can conclude that \( \Gamma_\Delta \) is uniformly perfect with commutator width one.  
\end{proof}


\section{Topology of homeomorphism groups}
\label{sec:homeomorphism-groups}

In this section, we recall basic facts about the compact-open topology and establish classical results about homeomorphism groups of 2-manifolds. 
The aim is not to give the state of the art or the strongest statements but to prove what is necessary for the later sections and to do so in a way that highlights the use of Anderson's method. 
Note also that statements will only be made for 2-manifolds; however, much of what is said can be generalized to higher dimensions utilizing the resolution of the annulus conjecture and the stable homeomorphism conjecture (see \cite{EdwardsSolution} for the relevant history), as well as the fragmentation lemma of Edwards--Kirby discussed below. 
We end the section by establishing the equivalence of the two standard definitions of the mapping class group seen in the literature. 

\subsection{The compact-open topology}
Let us begin with considering the basics of the topology of homeomorphism groups before moving to manifolds. 
Let \( X \) be a locally compact Hausdorff topological space. 
We equip \( \mathrm{Homeo}(X) \) with the \emph{compact-open topology}\index{compact-open topology}, that is,  the topology generated by sets of the form \( U(K, W) = \{ f\in \mathrm{Homeo}(X) : f(K) \subset W \} \), where \( K \subset X \) is compact and \( W \subset X \) is open.
One readily observes that if we require \( W \) to be a precompact open subset, then this does not change the topology.

Given a compact Hausdorff topological space \( X \) with a metric \( d \), define \[ \rho_d, \mu_d \co \mathrm{Homeo}(X) \times \mathrm{Homeo(X)} \to \br \] by \( \rho_d(f,g) = \max\{ d(f(x),g(x)) : x \in X\} \) and \( \mu_d(f,g) = \rho_d(f,g) + \rho_d(f^{-1}, g^{-1}) \).
Observe that \( \rho_d \) is invariant with respect to right multiplication. 
We leave the following lemma as an exercise (one may also refer to \cite{ArensTopology}):

\begin{Lem}
Let \( X \) be a compact Hausdorff topological space.
Then, \( \mathrm{Homeo}(X) \), equipped with the compact-open topology, is a topological group.
Moreover, if \( (X,d) \) is a compact metric space, then both \( \rho_d \) and \( \mu_d \) are metrics on \( \mathrm{Homeo}(X) \) inducing the compact-open topology, and \( \mu_d \) is complete. 
\qed
\end{Lem}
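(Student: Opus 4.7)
The plan is to handle the three assertions in the stated order, using compactness of \(X\) throughout to pass freely between the compact–open formulation and the uniform one.

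First I would verify that \(\mathrm{Homeo}(X)\) is a topological group. For continuity of composition at a pair \((f,g)\) with \(f\circ g \in U(K,W)\), the compact set \(f(g(K))\) sits inside the open set \(W\); by compactness of \(X\) (or just normality plus compactness of \(g(K)\)), I can find an open set \(V\) with \(g(K)\subset V\subset \overline V\subset f^{-1}(W)\), and then \(U(\overline V,W)\cdot U(K,V)\subset U(K,W)\). For inversion, I would use the homeomorphism identity \(h^{-1}\in U(K,W)\iff K\subset h(W)\iff (X\ssm W)\subset h(X\ssm K)\), and then observe that since \(X\ssm W\) is compact and \(X\ssm K\) is open, the set \(U(X\ssm W, X\ssm K)\) is an open neighborhood of \(h\) mapped into \(U(K,W)\) by inversion. (Compactness of \(X\) is essential here to conclude that complements of open sets are compact.)

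Next I would check the metric claims. Symmetry and the triangle inequality for \(\rho_d\) follow immediately by taking suprema of the pointwise distance inequalities, and \(\rho_d(f,g)=0\) forces \(f=g\) by Hausdorffness of \((X,d)\); these properties transfer to \(\mu_d=\rho_d+\rho_d\circ(\mathrm{inv}\times\mathrm{inv})\). To show \(\rho_d\) induces the compact–open topology, I would argue that on a compact space the compact–open topology coincides with the topology of uniform convergence: given a basic \(\rho_d\)-ball \(B_\varepsilon(f)\), cover \(X\) by finitely many open sets \(W_i\) of diameter \(<\varepsilon/2\) such that each \(f^{-1}(W_i)\) is covered by compact sets \(K_{ij}\) with \(f(K_{ij})\) of diameter \(<\varepsilon/2\), and take the intersection of the \(U(K_{ij},W_i)\); conversely, a \(U(K,W)\) neighborhood of \(f\) is contained in any sufficiently small \(\rho_d\)-ball by uniform continuity and compactness of \(K\). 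Since \(\mu_d\ge \rho_d\) is immediate, one inclusion between the \(\mu_d\)- and \(\rho_d\)-topologies is free; the other uses continuity of inversion in the compact-open (equivalently \(\rho_d\)-) topology established in the first paragraph, which guarantees that small \(\rho_d\)-neighborhoods of the identity pull back to small \(\rho_d\)-neighborhoods under inversion, hence sit inside small \(\mu_d\)-balls.

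Finally, for completeness of \(\mu_d\), I would observe that a \(\mu_d\)-Cauchy sequence \(\{f_n\}\) produces two \(\rho_d\)-Cauchy sequences \(\{f_n\}\) and \(\{f_n^{-1}\}\). Each is uniformly Cauchy on the compact metric space \(X\), so uniformly converges to continuous maps \(f,g\co X\to X\). The uniform convergence together with uniform continuity of composition on equicontinuous families gives \(f_n\circ f_n^{-1}\to f\circ g\) uniformly, and since \(f_n\circ f_n^{-1}=\mathrm{id}_X\) we deduce \(f\circ g=\mathrm{id}_X\); symmetrically \(g\circ f=\mathrm{id}_X\). Hence \(f\in\mathrm{Homeo}(X)\) with \(f^{-1}=g\), and \(\mu_d(f_n,f)=\rho_d(f_n,f)+\rho_d(f_n^{-1},g)\to 0\).

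The step I expect to be the most delicate is the completeness argument, specifically the passage from uniform convergence of \(f_n\) and \(f_n^{-1}\) to the conclusion that the limits are mutually inverse. This is precisely where \(\mu_d\) outperforms \(\rho_d\): without controlling the inverses one cannot in general prevent the \(\rho_d\)-limit from being a non-invertible continuous surjection, which is why \(\rho_d\) itself is typically not complete and the symmetrized metric \(\mu_d\) is needed.
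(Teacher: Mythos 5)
The paper does not supply a proof of this lemma: it explicitly leaves it as an exercise and points the reader to Arens. Your blind attempt follows the standard route and is essentially a correct solution. One concrete slip worth fixing: in the continuity-of-inversion step, the displayed chain of equivalences is off. Starting from \( K\subset h(W) \), passing to complements and using bijectivity of \( h \) gives
\[
h(X\ssm W)\subset X\ssm K,\qquad\text{equivalently}\qquad (X\ssm W)\subset h^{-1}(X\ssm K),
\]
not \( (X\ssm W)\subset h(X\ssm K) \) as you wrote. The distinction matters because the correct version is exactly the statement \( h\in U(X\ssm W, X\ssm K) \), which is what you need to see that \( U(X\ssm W, X\ssm K) \) is an open neighborhood of \( h \); the version you wrote instead says \( h^{-1}\in U(X\ssm W, X\ssm K) \), which does not directly describe a neighborhood of \( h \). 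With the sign of the equivalence corrected, your final conclusion (that \( U(X\ssm W, X\ssm K) \) is open, contains \( h \), and is carried into \( U(K,W) \) by inversion) is exactly right. The remainder of the argument is sound: the identification of the compact-open topology with the \( \rho_d \)-uniform topology on a compact space, the observation that \( \mu_d\geq\rho_d \) together with continuity of inversion gives agreement of the \( \mu_d \)- and \( \rho_d \)-topologies, and the completeness argument in which the uniform limits of \( f_n \) and \( f_n^{-1} \) are shown to be mutually inverse. In that last step, invoking ``uniform continuity of composition on equicontinuous families'' is more than you need; uniform continuity of the single limit map \( f \), together with uniform convergence of \( f_n\to f \) and \( f_n^{-1}\to g \), already yields \( f_n\circ f_n^{-1}\to f\circ g \) uniformly.
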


The next two propositions are due to Arens \cite{ArensTopologies}, and we leave the interested  reader to either read Arens's proofs or to treat them as exercises.

\begin{Prop}
Let \( X \) be a locally compact locally connected Hausdorff space.
Then, \( \mathrm{Homeo}(X) \), equipped with the compact-open topology, is a topological group.
\qed
\end{Prop}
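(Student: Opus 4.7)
The proof amounts to checking continuity of composition $\mu\colon (f,g)\mapsto f\circ g$ and inversion $\iota\colon f \mapsto f^{-1}$ in the compact-open topology.

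\emph{Continuity of composition} is the easy half and requires only the local compactness and Hausdorffness of $X$: the classical exponential-law argument for function spaces gives joint continuity of $C(X,X)\times C(X,X)\to C(X,X)$ in the compact-open topology, and restricting to $\mathrm{Homeo}(X)\subset C(X,X)$ yields continuity of $\mu$. Local connectedness plays no role here.

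\emph{Continuity of inversion} at a fixed $f\in \mathrm{Homeo}(X)$ is where local connectedness becomes essential. Given a subbasic neighborhood $U(K,W)$ of $f^{-1}$ with $K\subset X$ compact, $W\subset X$ open, and $f^{-1}(K)\subset W$, the plan is to produce an open $\mathcal{N}\ni f$ forcing $g^{-1}(K)\subset W$, i.e., $K\subset g(W)$, for every $g\in \mathcal{N}$. First, use local compactness together with local connectedness to cover $f^{-1}(K)$ by finitely many connected open $V_1,\dots,V_n$ with each $\overline{V_i}$ compact and contained in $W$; after shrinking, arrange that each $f(\partial V_i)$ is a compact set disjoint from $K$. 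Hausdorffness then produces an open $O_i\supset f(\partial V_i)$ with $O_i\cap K=\varnothing$. For each $i$, pick $y_i\in V_i\cap f^{-1}(K)$ and an open $T_i$ with $f(y_i)\in T_i\subset \overline{T_i}\subset f(V_i)$, and set
\[
\mathcal{N} \;=\; \bigcap_{i=1}^n \Bigl( U(\partial V_i, O_i) \;\cap\; U(\{y_i\}, T_i)\Bigr).
\]

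For $g\in \mathcal{N}$: the condition $g(\partial V_i)\subset O_i$ gives $g(\partial V_i)\cap K=\varnothing$, so $K$ lies inside $X\setminus g(\partial V_i) = g(V_i)\sqcup g(X\setminus \overline{V_i})$. Connectedness of $V_i$ makes $g(V_i)$ a single connected component of $X\setminus g(\partial V_i)$, and the anchoring condition $g(y_i)\in T_i\subset f(V_i)$ forces $g(V_i)$ to meet $f(V_i)$. A connectedness argument applied to the connected set $f(V_i)$ and the partition of $X\setminus g(\partial V_i)$ then gives $K\cap f(V_i)\subset g(V_i)$. Taking unions over $i$ yields $K\subset \bigcup_i g(V_i)\subset g(W)$, as required.

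\emph{The main obstacle} is making this last component-matching step rigorous: showing that for $g\in \mathcal{N}$, points of $K\cap f(V_i)$ cannot migrate into components of $g(X\setminus\overline{V_i})$. This hinges on the $V_i$ being connected, their compact boundaries being controllable by subbasic compact-open conditions, and the anchoring points $y_i$ identifying the correct component of $X\setminus g(\partial V_i)$, all of which are made available precisely by local connectedness. Without this hypothesis the combinatorics of connected components of $X\setminus f(\partial V_i)$ need not be preserved under arbitrarily small perturbations in the compact-open topology, and the argument breaks down.
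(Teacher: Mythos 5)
The paper offers no proof of this proposition at all: it attributes the result to Arens, marks it \qed, and explicitly invites the reader to ``either read Arens's proofs or to treat them as exercises.'' So there is no argument in the paper to compare against; I am evaluating your proposal on its own merits.

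Your outline has the right shape, and you correctly identify that local connectedness is what makes the inversion step work, but the ``component-matching'' sentence---which you yourself flag as the main obstacle---does not actually go through as written, and you never close the gap. You claim that a connectedness argument applied to $f(V_i)$ inside the partition $X \smallsetminus g(\partial V_i) = g(V_i) \sqcup g(X \smallsetminus \overline{V_i})$ yields $K \cap f(V_i) \subset g(V_i)$. But $f(V_i)$ need not lie inside $X \smallsetminus g(\partial V_i)$. Your only control on $g(\partial V_i)$ is $g(\partial V_i) \subset O_i$ with $O_i \cap K = \varnothing$; this keeps $g(\partial V_i)$ off $K$ but not off $f(V_i)$. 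Indeed, since $f(\partial V_i) = \partial f(V_i)$, every open set $O_i$ containing $f(\partial V_i)$ already meets $f(V_i)$, so for $g$ arbitrarily close to $f$ the image $g(\partial V_i)$ will typically slice through $f(V_i) \smallsetminus K$. Once that happens, $f(V_i)$ is no longer a connected subset of the open partition and nothing forces a point of $K \cap f(V_i)$ to lie in the same piece as $g(y_i)$.

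The repair is local and stays entirely within the spirit of what you set up: apply the connectedness argument to the small connected sets $T_i$ rather than to $f(V_i)$, and arrange the cover and the $O_i$ accordingly. Concretely: for each $p \in K$ choose a connected open $V_p \ni f^{-1}(p)$ with $\overline{V_p}$ compact and contained in $W$, then a connected open $T_p \ni p$ with $\overline{T_p}$ compact and contained in $f(V_p)$. Now $f(\partial V_p)$ is compact and disjoint from the closed set $\overline{T_p}$, so in a locally compact Hausdorff space we may pick an open $O_p \supset f(\partial V_p)$ with $O_p \cap \overline{T_p} = \varnothing$; this is the condition on $O_p$ you actually need, not $O_p \cap K = \varnothing$. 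Cover $K$ by finitely many $T_{p_1}, \ldots, T_{p_n}$, write $V_i, T_i, O_i, y_i$ as before, and take $\mathcal{N} = \bigcap_i \bigl( U(\partial V_i, O_i) \cap U(\{y_i\}, T_i) \bigr)$. For $g \in \mathcal{N}$, the set $T_i$ is connected, disjoint from $g(\partial V_i)$ (since $g(\partial V_i) \subset O_i$), and meets $g(V_i)$ at $g(y_i)$; by connectedness $T_i \subset g(V_i)$, and hence $K \subset \bigcup_i T_i \subset \bigcup_i g(V_i) \subset g(W)$, i.e.\ $g^{-1}(K) \subset W$. With that change your argument is correct. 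The continuity-of-composition half is fine as stated.
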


A \emph{compactification}\index{compactification} of a non-compact locally compact locally connected Hausdorff space \( X \) is a compact Hausdorff space \( C \) such that \( X \) can be topologically embedded in \( C \) as a dense open subset. 
Every such \( X \) admits a compactification (e.g., the one-point compactification and the Freudenthal compactification).
In fact, Arens \cite{ArensTopologies} proves that for a locally compact locally connected Hausdorff space, the compact-open topology agrees with the closed-open topology (i.e., the topology defined analogously to the compact-open topology but allowing \( K \) to be closed instead of compact).
This yields the following:

\begin{Prop}
\label{prop:compactification}
Let \( X \) be a non-compact locally compact locally connected Hausdorff space,  let \( C \) be a compactification of \( X \), and let \( \Homeo(C,X) = \{ f \in \mathrm{Homeo}(C) : f(X) = X \} \).
If every homeomorphism of \( X \) extends continuously to a homeomorphism of \( C \), then \( \mathrm{Homeo}(X) \) and \( \mathrm{Homeo}(C,X) \) are  isomorphic as topological groups, where \( \mathrm{Homeo}(X) \) and \( \mathrm{Homeo}(C) \) are equipped with their respective compact-open topologies, and \( \mathrm{Homeo}(C,X) \) is equipped with the corresponding subspace topology. 
\qed
\end{Prop}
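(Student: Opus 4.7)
The plan is to define $\Phi \co \Homeo(C, X) \to \mathrm{Homeo}(X)$ by $\Phi(f) = f|_X$ and show it is an isomorphism of topological groups. That $\Phi$ is a group homomorphism is immediate. Surjectivity is exactly the extension hypothesis, after noting that the extension of any $f \in \mathrm{Homeo}(X)$ must preserve $X$ because the extension of $f^{-1}$ also preserves $X$ and the two extensions are mutual inverses. Injectivity follows from density of $X$ in $C$, since two continuous self-maps of $C$ that agree on the dense subset $X$ must coincide. For continuity of $\Phi$: because $X$ is open in $C$, every compact $K \subset X$ is compact in $C$ and every open $W \subset X$ is open in $C$, so the subbasic open set $U_X(K, W) \subset \mathrm{Homeo}(X)$ pulls back under $\Phi$ to $U_C(K, W) \cap \Homeo(C, X)$, which is open in the subspace topology.

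For continuity of $\Phi^{-1}$---equivalently, openness of $\Phi$---I would invoke Arens's theorem, quoted just before the proposition, that the compact-open and closed-open topologies on $\mathrm{Homeo}(X)$ coincide. Given a subbasic open set $U_C(K, W) \cap \Homeo(C, X)$ with $K \subset C$ compact and $W \subset C$ open, decompose $K = K_X \sqcup K_\infty$ with $K_X = K \cap X$ (closed in $X$) and $K_\infty = K \setminus X$ (closed in $C$, hence compact), and similarly write $W_X = W \cap X$ and $W_\infty = W \cap (C \setminus X)$. Because every $f \in \Homeo(C, X)$ preserves the partition $C = X \sqcup (C \setminus X)$, the condition $f(K) \subset W$ splits as (i) $f(K_X) \subset W_X$ and (ii) $f(K_\infty) \subset W_\infty$. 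Condition (i) translates to the closed-open subbasic condition $g(K_X) \subset W_X$ on $g = \Phi(f)$, which is open in the compact-open topology on $\mathrm{Homeo}(X)$ by Arens's theorem. For condition (ii), I plan to use normality of $C$ together with continuity of the extension: for each $p \in K_\infty$, choose a closed neighborhood $L_p$ of $p$ in $C$ with $\tilde g(L_p) \subset W$, extract a finite subcover $L_{p_1}, \ldots, L_{p_n}$ of $K_\infty$, and reformulate each condition $\tilde{g}'(L_{p_i}) \subset W$ on extensions $\tilde{g}'$ of $g'$ near $g$ as a closed-open condition $g'(L_{p_i} \cap X) \subset W_i'$ for some open $W_i' \subset X$ whose closure in $C$ sits inside $W$.

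The main obstacle lies in this last step: arranging the neighborhoods $L_{p_i}$ and open sets $W_i' \subset X$ so that the closed-open condition $g'(L_{p_i} \cap X) \subset W_i'$ on $\mathrm{Homeo}(X)$ genuinely forces the extension $\tilde{g}'$ to send $L_{p_i}$ into $W$. This requires combining normality and local compactness of $C$ (to produce the $L_{p_i}$ and $W_i'$), density of $X$ in $C$ (so that the behavior of $\tilde{g}'$ on $L_{p_i} \cap (C \setminus X)$ is forced by its behavior on $L_{p_i} \cap X$), and Arens's equivalence (so that the resulting conditions on $g'$ are open in the compact-open topology on $\mathrm{Homeo}(X)$). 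Once this translation is in place, $\Phi(U_C(K,W) \cap \Homeo(C,X))$ is realized as a finite intersection of subbasic open sets of the closed-open topology on $\mathrm{Homeo}(X)$ and hence is open, completing the proof.
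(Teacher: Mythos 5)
Your setup and the easy half are correct: $\Phi(f)=f|_X$ is a bijective group homomorphism (surjectivity from the extension hypothesis together with functoriality of extensions, injectivity from density of $X$), and $\Phi$ is continuous because compacts and opens of $X$ are compacts and opens of $C$, so $\Phi^{-1}\bigl(U_X(K,W)\bigr)=U_C(K,W)\cap \Homeo(C,X)$. Since the paper leaves this proposition unproved (it is stated as a consequence of Arens's theorem that the compact-open and closed-open topologies on $\mathrm{Homeo}(X)$ coincide), the only question is whether your argument for the hard direction closes.

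It does not, and you say so yourself: you leave the reformulation of ``$\tilde g'(L_{p_i})\subset W$'' as a closed-open condition on $g'$ as a ``plan'' with an acknowledged ``main obstacle.'' That is a genuine gap. It is, however, a fillable one, and the split $K=K_X\sqcup K_\infty$ is an unnecessary detour that makes it look harder than it is; one can handle all of $K$ at once. Fix $f_0\in U_C(K,W)\cap\Homeo(C,X)$. Since $C$ is compact Hausdorff, hence normal, choose open sets $V,V'\subset C$ with $K\subset V$, $f_0(\overline V)\subset V'$, and $\overline{V'}\subset W$. Put $A=\overline V\cap X$ (closed in $X$) and $W_0=V'\cap X$ (open in $X$). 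The decisive point you were missing is this: because $X$ is open and dense in $C$ and $V$ is open, $V\cap X$ is dense in $\overline V$, so the closure of $A$ in $C$ equals $\overline V$, which contains $K$. Now if $g\in\mathrm{Homeo}(X)$ satisfies $g(A)\subset W_0$, its extension $\tilde g$ satisfies $\tilde g(K)\subset\tilde g(\overline A)\subset\overline{\tilde g(A)}\subset\overline{V'}\subset W$, while $\Phi(f_0)=f_0|_X\in U_X(A,W_0)$ by construction. Thus $U_X(A,W_0)$ is a closed-open (hence, by Arens, compact-open) neighborhood of $\Phi(f_0)$ contained in $\Phi\bigl(U_C(K,W)\cap\Homeo(C,X)\bigr)$, giving openness of $\Phi$. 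Note also that your phrasing ``$\Phi(U_C(K,W)\cap\Homeo(C,X))$ is realized as a finite intersection of subbasic open sets'' overstates what one gets: the neighborhood $U_X(A,W_0)$ depends on the point $f_0$, so what the argument produces is a basic open neighborhood inside the image for each of its points, not a global description of the image as a finite intersection.
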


Even though we will not tend to work with metrics on homeomorphism groups, it will arise on several occasions, and so we record an important fact that comes out of the above discussion.

\begin{Cor}
\label{cor:homeo_compactification}
Let \( X \) be a locally compact locally connected Hausdorff space, and let \( C \) be a compactification of \( X \) such that every homeomorphism of \( X \) extends continuously to a homeomorphism of \( C \). 
Suppose \( C \) supports a metric \( d \), and define \( \hat \rho_d, \hat \mu_d \co \mathrm{Homeo}(X)^2 \to \br \) given by \( \hat \rho_d(f,g) = \rho_d(\hat f, \hat g) \) and  \( \hat\mu_d(f,g) = \mu_d(\hat f, \hat g) \), where \( \hat f \) and \( \hat g \) are the extensions of \( f \) and \( g \), respectively, to \( C \).
Then, the metric topologies on \( \mathrm{Homeo(X)} \) given by \( \hat \rho_d \) and \( \hat \mu_d \) agree and are equal to the compact-open topology, and moreover, \( \hat \rho_d \) is right-invariant and \( \hat \mu_d \) is complete. 
\qed
\end{Cor}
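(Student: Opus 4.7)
The plan is to deduce this corollary directly from Proposition~\ref{prop:compactification} together with the Lemma on homeomorphism groups of compact metric spaces. By Proposition~\ref{prop:compactification}, the extension map \( f \mapsto \hat f \) is an isomorphism of topological groups from \( \Homeo(X) \) (with its compact-open topology) onto \( \Homeo(C,X) \) (with the subspace topology inherited from \( \Homeo(C) \), which carries the compact-open topology). Under this identification, \( \hat\rho_d \) and \( \hat\mu_d \) are literally the restrictions of \( \rho_d \) and \( \mu_d \) to \( \Homeo(C,X)\times\Homeo(C,X) \). The Lemma asserts that both \( \rho_d \) and \( \mu_d \) are metrics on \( \Homeo(C) \) inducing the compact-open topology, so their restrictions induce the subspace topology on \( \Homeo(C,X) \); transporting this through the isomorphism shows that \( \hat\rho_d \) and \( \hat\mu_d \) are metrics on \( \Homeo(X) \) that induce the compact-open topology.

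Right-invariance of \( \hat\rho_d \) is immediate from right-invariance of \( \rho_d \): for any \( f,g,h \in \Homeo(X) \), the extension respects composition, so \( \widehat{fh}=\hat f\hat h \) and \( \widehat{gh}=\hat g\hat h \), and hence \( \hat\rho_d(fh,gh)=\rho_d(\hat f\hat h,\hat g\hat h)=\rho_d(\hat f,\hat g)=\hat\rho_d(f,g) \).

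The only point that requires an argument is the completeness of \( \hat\mu_d \). Since \( \mu_d \) is complete on \( \Homeo(C) \), it suffices to check that \( \Homeo(C,X) \) is closed in \( (\Homeo(C),\mu_d) \). Suppose \( \{f_n\}\subset \Homeo(C,X) \) converges to some \( f\in\Homeo(C) \); then \( f_n\to f \) and \( f_n^{-1}\to f^{-1} \) uniformly on \( C \). The set \( C\ssm X \) is closed in \( C \) because \( X \) is open. For any \( y\in C\ssm X \) we have \( f_n(y)\in C\ssm X \) for every \( n \), and \( f_n(y)\to f(y) \); closedness of \( C\ssm X \) forces \( f(y)\in C\ssm X \), so \( f(C\ssm X)\subseteq C\ssm X \). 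The identical argument applied to \( f_n^{-1} \) yields \( f^{-1}(C\ssm X)\subseteq C\ssm X \), and combining these gives \( f(C\ssm X)=C\ssm X \), equivalently \( f(X)=X \). Thus \( f\in\Homeo(C,X) \), completing the proof.

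I expect the only genuinely non-bookkeeping step to be the closedness argument above; everything else is transport of structure through the isomorphism of Proposition~\ref{prop:compactification}. The closedness argument in turn hinges on the single observation that \( C\ssm X \) is closed in \( C \), which is built into the definition of a compactification.
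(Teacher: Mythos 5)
Your proposal is correct and is exactly the argument the paper leaves implicit (the corollary is stated with a \( \qed \) and presented as following from the preceding discussion). Transporting \( \rho_d \) and \( \mu_d \) through the isomorphism of Proposition~\ref{prop:compactification} handles the topology and right-invariance, and your observation that \( \Homeo(C,X) \) is \( \mu_d \)-closed in \( \Homeo(C) \)---using that \( C\ssm X \) is closed together with uniform convergence of both \( f_n \) and \( f_n^{-1} \)---is precisely the step needed for completeness.
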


The key takeaway of the above corollary is the following:  
\begin{quote} \textit{a complete metric on a homeomorphism group giving rise to the compact-open topology comes from a metric on a compactification of the underlying space, rather than from the space itself.} 
\end{quote}

To finish our quick recap of the basics, recall that a \emph{Polish group} is a completely metrizable  separable topological group. 
It is an exercise to see that if \( X \) is a separable locally compact locally connected Hausdorff space, then \( \mathrm{Homeo}(X) \) equipped with the compact-open topology is also separable, yielding:

\begin{Prop}
The homeomorphism group of a separable locally compact locally connected metrizable space is a Polish group when equipped with the compact-open topology. 
\qed
\end{Prop}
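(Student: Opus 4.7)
The plan is to reduce to the compact case by passing to a well-chosen compactification. Let \( C = X \cup \{\infty\} \) denote the one-point compactification of \( X \). Since \( X \) is separable and metrizable, it is second countable, so \( C \) is a metrizable compact Hausdorff space, hence a compactification of \( X \). Every homeomorphism of \( X \) is proper (sending compact sets to compact sets), and therefore extends canonically to a homeomorphism of \( C \) fixing \( \infty \). By Proposition~\ref{prop:compactification}, the compact-open topology identifies \( \mathrm{Homeo}(X) \) as a topological group with
\[ \mathrm{Homeo}(C,X) = \{ f \in \mathrm{Homeo}(C) : f(\infty) = \infty\} \subset \mathrm{Homeo}(C). \]

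It therefore suffices to show that \( \mathrm{Homeo}(C, X) \) is Polish. Fixing a metric \( d \) on \( C \), the first lemma of the section guarantees that \( \mathrm{Homeo}(C) \) is a topological group whose compact-open topology is induced by the complete metric \( \mu_d \); in particular, \( \mathrm{Homeo}(C) \) is completely metrizable. For separability, observe that since \( C \) is a compact metric space, the space \( \mathcal{C}(C,C) \) of continuous self-maps, equipped with the compact-open topology (equivalently, with the topology of uniform convergence), is a separable metric space; hence \( \mathrm{Homeo}(C) \), as a subspace, is separable. Thus \( \mathrm{Homeo}(C) \) is Polish.

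Next, I would argue that \( \mathrm{Homeo}(C, X) \) is a closed subgroup of \( \mathrm{Homeo}(C) \). Indeed, the evaluation map \( \mathrm{Homeo}(C) \to C \), \( f \mapsto f(\infty) \), is continuous in the compact-open topology, so \( \mathrm{Homeo}(C, X) \), being the preimage of the singleton \( \{\infty\} \), is closed. Any closed subgroup of a Polish group is Polish (it inherits both separability and a complete metric), and so \( \mathrm{Homeo}(C, X) \) is Polish. Transporting this along the topological group isomorphism of Proposition~\ref{prop:compactification} then gives that \( \mathrm{Homeo}(X) \) is Polish.

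I do not anticipate a substantive obstacle, as the proof is an assembly of the preceding results. The only mildly delicate point is confirming that the one-point compactification of \( X \) is metrizable, which is the standard fact that metrizability of the one-point compactification of a locally compact Hausdorff space is equivalent to second countability; if one prefers to avoid this, the Freudenthal compactification (which is metrizable under our hypotheses and to which every self-homeomorphism of \( X \) extends) can be used in place of the one-point compactification throughout, with no change to the structure of the argument.
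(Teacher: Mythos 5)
Your proof is correct and follows essentially the same route the paper sets up: pass to a compactification, invoke the earlier material identifying \( \mathrm{Homeo}(X) \) with a subgroup of \( \mathrm{Homeo}(C) \) (resp.\ producing the complete metric \( \hat\mu_d \)), and supply the separability argument that the paper explicitly leaves as an exercise. Your variant packages completeness via the clean observation that \( \mathrm{Homeo}(C,X) \) is a closed (hence Polish) subgroup of the Polish group \( \mathrm{Homeo}(C) \), rather than citing the complete metric \( \hat\mu_d \) from Corollary~\ref{cor:homeo_compactification} directly, but these are two ways of saying the same thing.

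One small gap to close: Proposition~\ref{prop:compactification} is stated for \emph{non-compact} \( X \), and the one-point compactification construction degenerates when \( X \) is already compact. You should note that if \( X \) is compact the conclusion is immediate from the first lemma of the section (\( \mu_d \) is a complete metric inducing the compact-open topology, and separability of \( \mathrm{Homeo}(X) \subset \mathcal{C}(X,X) \) follows from separability of the latter), and then run your argument only for non-compact \( X \). Everything else — metrizability of the one-point compactification via second countability, properness of homeomorphisms guaranteeing the extension, continuity of the evaluation map \( f \mapsto f(\infty) \), and closedness of the stabilizer of \( \infty \) — is correct.
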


For the remainder of the chapter, we will always consider homeomorphism groups (resp., their subgroups) equipped with the compact-open topology (resp.,~the corresponding subgroup topology).

\subsection{Homeomorphism groups of 2-manifolds}

In this subsection, we establish various topological and algebraic properties of homeomorphism groups.
The majority of the results are classical and highlight the usage of Anderson's method. 
In addition, we will establish several factorization results that are essential for arguments throughout the chapter.

In what follows, we will regularly apply isotopies, and so we record two basic facts.

\begin{Thm}[Alexander's Trick]
Let \( \mathbb D^n = \{ x \in \br^n : |\, x\, |\leq 1 \} \) be the standard \( n \)-disk. 
Any homeomorphism \( f \co \mathbb D^n \to \mathbb D^n \) restricting to the identity on \( \partial \mathbb D^n \) is isotopic to the identity via an isotopy that fixes \( \partial \mathbb D^n \) pointwise at each stage. 
Moreover, if \( f \) fixes the origin, then the isotopy can be chosen to fix the origin at each stage. 
\end{Thm}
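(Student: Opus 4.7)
The plan is to construct the isotopy explicitly using the classical ``coning'' trick: since $f$ is the identity on $\partial \mathbb D^n$, we can interpolate between $f$ and the identity by rescaling $f$ onto smaller and smaller concentric disks, and extending by the identity on the annular complement. As the rescaling radius shrinks to zero, the rescaled copy of $f$ is squeezed into a neighborhood of the origin and continuously collapses to the constant map at $0$, which by construction matches the identity on the rest of the disk.

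Concretely, I would define $H \co \mathbb D^n \times [0,1] \to \mathbb D^n$ by
\[
H(x,t) = \begin{cases} t\,f(x/t) & \text{if } 0 < t \leq 1 \text{ and } |x| \leq t, \\ x & \text{if } 0 < t \leq 1 \text{ and } |x| \geq t, \\ x & \text{if } t = 0. \end{cases}
\]
Then $H(x,1) = f(x)$ and $H(x,0) = x$. The two branches for $t>0$ agree on the sphere $|x|=t$ because $f$ is the identity on $\partial \mathbb D^n$, so $t f(x/t) = t \cdot (x/t) = x$ when $|x/t|=1$. For each fixed $t \in (0,1]$, the map $H(\cdot,t)$ is a homeomorphism: on $t\mathbb D^n$ it is the conjugate of $f$ by the scaling $x \mapsto x/t$, and on the complementary annulus it is the identity, with the two pieces matching on the common boundary sphere.

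The only nontrivial step is verifying joint continuity of $H$ at points $(x_0,0)$. Away from the origin this is easy: for $(x,t)$ sufficiently close to $(x_0,0)$ we have $|x|>t$, so $H(x,t)=x$, which is continuous. At $(0,0)$ we use the estimate $|H(x,t)| \leq \max\{t,|x|\}$, which holds because $f$ takes $\mathbb D^n$ into itself and so $|tf(x/t)| \leq t$ on the inner region. Thus $H(x,t) \to 0 = H(0,0)$ as $(x,t) \to (0,0)$. The boundary $\partial \mathbb D^n$ is fixed pointwise at every stage because $|x|=1 \geq t$ puts us in the identity branch for all $t \in [0,1]$. Finally, if $f(0)=0$ then for $t>0$ we have $H(0,t) = tf(0) = 0$, and $H(0,0)=0$, so the origin is fixed throughout the isotopy.

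There is no real obstacle; the argument is entirely formal once the explicit formula is written down, and the only subtle point is the continuity check at $t = 0$, which is handled by the uniform bound $|H(x,t)|\leq \max\{t,|x|\}$.
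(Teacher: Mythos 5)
Your proof is correct and uses the same explicit coning formula as the paper; you simply spell out the continuity check at $t=0$ and the boundary/origin verifications in more detail than the paper, which leaves them implicit.
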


\begin{proof}
The function \( H \co \mathbb D^n \times [0,1] \to \mathbb D^n \) given by 
\[
H(x,t) = \left\{
\begin{array}{ll}
t f(x/t) & |\,x\, |< t \\
x & |\,x\, |\geq t
\end{array} \right.
\]
is the desired isotopy. 
\end{proof}

We forgo the proof of the next proposition, but the interested reader should see \cite[Lemmas~2.4~\&~2.5]{EpsteinCurves}.
\begin{Prop}
\label{prop:curves}
Let \( \iota \co \mathbb S^1 \to \mathbb S^1 \times \br \) be the inclusion map \( \iota(x) = (x,0) \).
If \( f \co \mathbb S^1 \to \mathbb S^1 \times \br \) is homotopic to \( \iota \), then \( f \) is isotopic to \( \iota \) via an ambient isotopy of \( \mathbb S^1 \times \br \) that is fixed outside of a compact subset. 
\qed
\end{Prop}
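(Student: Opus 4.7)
The plan is to produce the required compactly supported ambient isotopy in two stages: first, construct a compactly supported homeomorphism $h$ of $\mathbb S^1 \times \br$ with $h \circ \iota = f$; then, exhibit an isotopy from $h$ to the identity through compactly supported homeomorphisms. Applying this isotopy to $\iota$ then yields the desired ambient isotopy between $\iota$ and $f$.

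For the first stage, note that $f$ is implicitly an embedding (this is clear from context), so $C := f(\mathbb S^1)$ is a simple closed curve, necessarily non-contractible since $f \simeq \iota$. Because $\mathbb S^1 \times \br$ has genus zero, $C$ is separating and divides the annulus into two half-open annuli. Choose $N$ large enough that $C \cup \iota(\mathbb S^1) \subset \mathbb S^1 \times (-N, N)$. The classification of compact surfaces (Theorem~\ref{thm:classification}) applied to $\mathbb S^1 \times [-N-1, N+1]$, together with the change of coordinates principle (Corollary~\ref{cor:coordinates}), yields a homeomorphism of $\mathbb S^1 \times \br$ supported in that compact annulus sending $C$ onto $\iota(\mathbb S^1)$. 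Post-composing with a homeomorphism supported in a collar of $\iota(\mathbb S^1)$ to fix the parametrization (with orientation compatibility ensured by the homotopy hypothesis) produces a compactly supported $h$ with $h \circ \iota = f$.

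For the second stage, lift $h$ to the universal cover $\pi \co \br^2 \to \mathbb S^1 \times \br$. The homotopy from $\iota$ to $f$ lifts uniquely starting from $\tilde \iota(x) = (x,0)$ and ends at a distinguished lift $\tilde f$ of $f$; let $\tilde h$ be the unique lift of $h$ satisfying $\tilde h \circ \tilde \iota = \tilde f$. Since $h$ is compactly supported, $\tilde h$ acts on the two components of $\br \times (\br \ssm [-N-1, N+1])$ as deck translations $T^{n_\pm}$, where $T$ generates the deck group. By pre-composing $h$ with compactly supported Dehn twists along curves parallel to $\iota(\mathbb S^1)$ but disjoint from it (which preserve $h \circ \iota = f$ since they fix $\iota(\mathbb S^1)$ pointwise), we can arrange $n_+ = n_- = 0$, so $\tilde h$ is compactly supported in $\br^2$. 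Alexander's trick applied to a large disk containing $\mathrm{supp}(\tilde h)$ gives a compactly supported isotopy $\tilde h_t$ from $\tilde h$ to the identity; by uniqueness of the canonical lift, this family is equivariant under the deck action and descends to the required isotopy in $\mathbb S^1 \times \br$.

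The main obstacle is the Dehn-twist ambiguity: a homeomorphism $h$ satisfying $h \circ \iota = f$ is determined only up to Dehn twists in the two half-annuli, and a generic such $h$ from the change-of-coordinates construction will not lift to a compactly supported map on the universal cover. The homotopy hypothesis---as opposed to mere isotopy of unparametrized curves---is what pins down the canonical lift $\tilde h$ and permits the adjustment $n_\pm \to 0$ via Dehn twists that do not disturb the relation $h \circ \iota = f$.
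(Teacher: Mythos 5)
The paper itself forgoes a proof and cites Epstein's Lemmas~2.4 and 2.5, which carry out an arc-straightening (Alexander-method) argument; so your write-up is the only actual proof on the table. Your overall strategy --- build a compactly supported \( h \) with \( h\circ\iota=f \), normalize the twisting by pre-composing with Dehn twists about curves parallel to but disjoint from the core, then isotope \( h \) to the identity with compact support --- is sound, and the first stage plus the Dehn-twist normalization are essentially fine (modulo the routine point that the change-of-coordinates homeomorphism of \( \mathbb S^1\times[-N-1,N+1] \) must be adjusted to be the identity on the boundary circles before it can be extended by the identity).

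The genuine gap is in the final step. After arranging \( n_+=n_-=0 \), the lift \( \tilde h \) is \emph{not} compactly supported in \( \br^2 \): it commutes with the deck translation \( T \), so its support is a \( T \)-invariant subset of the strip \( \br\times[-N-1,N+1] \), hence unbounded in the \( \br \)-direction unless \( \tilde h \) is the identity. There is therefore no ``large disk containing \( \mathrm{supp}(\tilde h) \)'' to which Alexander's trick applies. Moreover, the coning isotopy \( \tilde h_t(x)=t\,\tilde h(x/t) \) does not commute with \( T \) (one checks \( \tilde h_t(x+e_1)\neq \tilde h_t(x)+e_1 \) in general), so it is not equivariant and does not descend; ``uniqueness of the canonical lift'' is not available because \( \tilde h_t \) is not produced as a lift of any downstairs map. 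What you actually need at this point is: a homeomorphism of the compact annulus \( \mathbb S^1\times[-N-1,N+1] \) that is the identity on the boundary and has vanishing twisting number is isotopic to the identity rel boundary. The standard proof fixes an essential arc \( \alpha=\{pt\}\times[-N-1,N+1] \), uses the vanishing of the twisting to see that \( h(\alpha) \) is homotopic rel endpoints to \( \alpha \), straightens \( h(\alpha) \) back to \( \alpha \) by an ambient isotopy rel boundary, and then applies the Alexander trick to the disk obtained by cutting along \( \alpha \). That arc-straightening step is the real analytic content here, and it is precisely what the cited lemmas of Epstein supply; your sketch replaces it with a step that does not work as stated.
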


Motivated by Proposition~\ref{prop:curves}, we introduce a class of open subsets of homeomorphism groups of 2-manifolds. 
Let \( M \) be a 2-manifold, let \( \gamma \) be a 2-sided simple closed curve in \( M \), and let \( A \) be an open annular neighborhood of \( \gamma \). 
Fix an embedding \( \iota \co \mathbb S^1 \to A \) such that the image of \( \iota \) is \( \gamma \). 
Let \( U^+(\gamma, A) \) denote the subset of \( H(M) \) consisting of elements \( f \in U(\gamma, A) \)  such that \( f \circ \iota \) is isotopic to \( \iota \) via an isotopy fixed outside of \( A \), and note that \( U^+(\gamma, A) \) does not depend on the choice of \( \iota \).

\begin{Lem}
\label{lem:opengamma}
Let \( M \) be a 2-manifold, let \( \gamma \) be a 2-sided simple closed curve in \( M \), and let \( A \) be an open annular neighborhood of \( \gamma \). 
Then, \( U^+(\gamma, A) \) is open in \( H(M) \). 
\end{Lem}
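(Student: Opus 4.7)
The plan is to show that $U^+(\gamma, A)$ is a union of compact-open neighborhoods, by proving that the ``isotopy class in $A$'' of $f \circ \iota$ is locally constant in $f$ on the open set $U(\gamma, A)$. First, observe that $U(\gamma, A)$ is already open (it is a standard subbasic set), so given $f \in U^+(\gamma, A)$ it suffices to find a neighborhood of $f$ inside $U(\gamma, A)$ that lies in $U^+(\gamma, A)$.

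Fix $f \in U^+(\gamma,A)$ and equip $A$ with a metric compatible with its topology (it is an open annulus, hence metrizable). Since $A$ is a 2-manifold, in particular an ANR, a standard fact gives $\delta>0$ such that any two continuous maps $\phi,\psi\co \mathbb S^1 \to A$ satisfying $d(\phi(x),\psi(x))<\delta$ for every $x \in \mathbb S^1$ are homotopic in $A$ via a small homotopy (the standard argument uses that $A$ is locally contractible together with compactness of $\mathbb S^1$). Now cover the compact set $f(\gamma)$ by finitely many open sets $V_1,\dots,V_n \subset A$, each of diameter less than $\delta$, and choose compact sets $K_1,\dots,K_n \subset \gamma$ with $K_i \subset f^{-1}(V_i)$ and $\gamma = \bigcup_i K_i$. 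The intersection
\[
\mathcal N = U(\gamma,A) \cap \bigcap_{i=1}^{n} U(K_i, V_i)
\]
is an open neighborhood of $f$ in $H(M)$, and for any $g \in \mathcal N$ and any $x \in \gamma$, picking $i$ with $x \in K_i$ gives $f(x), g(x) \in V_i$, hence $d(f(x),g(x))<\delta$.

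By the choice of $\delta$, for every $g \in \mathcal N$ the map $g\circ \iota$ is homotopic to $f \circ \iota$ inside $A$. Since $f \in U^+(\gamma,A)$, there is already an isotopy of $M$, supported in $A$, carrying $f \circ \iota$ to $\iota$; restricting to $\gamma$ gives a homotopy of $f \circ \iota$ to $\iota$ in $A$. Concatenating shows that $g \circ \iota$ is homotopic to $\iota$ in $A$. Identifying $A$ with $\mathbb S^1 \times \mathbb R$ (via an orientation of the tubular neighborhood), Proposition~\ref{prop:curves} upgrades this homotopy to an ambient isotopy of $A$ supported in a compact subset of $A$; extending by the identity yields an ambient isotopy of $M$ supported in $A$, so $g \in U^+(\gamma,A)$. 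Hence $\mathcal N \subset U^+(\gamma,A)$ and $U^+(\gamma, A)$ is open.

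The step that requires the most care is the ANR-style argument that uniformly close maps into $A$ are homotopic in $A$: one must verify that the small homotopy produced from local contractibility stays inside $A$ (not merely in $M$), which is what justifies combining with the isotopy in $A$ provided by membership of $f$ in $U^+(\gamma,A)$. Once this and the passage from the compact-open topology to uniform $\delta$-closeness on $\gamma$ are in hand, Proposition~\ref{prop:curves} handles the remaining upgrade from homotopy to ambient isotopy with the desired support condition.
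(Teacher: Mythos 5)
Your proof is correct in outline but takes a somewhat different route from the paper's. The paper equips $H(M)$ with the complete metric $\rho_d$ induced from a metric on a compactification of $M$ (Corollary~\ref{cor:homeo_compactification}) and argues directly with an $\ep$-ball around $f$, giving only a sketch: for small $\ep$, the curve $g(\gamma)$ cannot bound a disk in $A$, and it cannot have reversed orientation because there is ``not enough room to turn $\gamma$ around.'' You instead work directly with subbasic compact-open sets $U(K_i,V_i)$ and a local-contractibility argument, which replaces the paper's orientation heuristic by the observation that it is the \emph{parametrized} maps $f\circ\iota$ and $g\circ\iota$ (not just their images) that are uniformly close, so a small homotopy between them is automatic and orientation is preserved. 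Your version is more detailed and avoids the compactification machinery; the paper's is shorter. Both then pass through Proposition~\ref{prop:curves} in the same way.

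One imprecision, which you partly flag yourself: the statement that there is a single $\delta>0$ so that \emph{any} two maps $\phi,\psi\co\mathbb S^1\to A$ with $d(\phi(x),\psi(x))<\delta$ for all $x$ are homotopic in $A$ is false as written, because $A$ is non-compact. For instance, take $A=\br^2\ssm\{0\}$ with the Euclidean metric: a circle of radius $\delta/4$ about the origin and the constant map at one of its points are $\delta/2$-close but not homotopic in $A$. The uniform $\delta$ you want exists only after restricting attention to maps whose images lie in a fixed compact subset of $A$. The fix is routine and fits your outline: choose a compact neighborhood $K$ of $f(\gamma)$ in $A$, take $\delta$ so that maps into $K$ that are $\delta$-close are homotopic in $A$, and choose the $V_i$ inside $K$; then for $g\in\mathcal N$ the image $g(\gamma)\subset\bigcup V_i\subset K$, and the small homotopy is produced inside $A$ as needed. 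With that adjustment the proof goes through.
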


\begin{proof}[Sketch of proof]
Let \( C \) be a compactification of \( M \), with \( C = M \) if \( M \) is compact. 
Let \( d \) be a metric on \( C \) and let \( \rho_d \) be the corresponding metric on \( H(M) \), as defined in Corollary~\ref{cor:homeo_compactification}. 
Fix an orientation on \( \gamma \), and fix \( f \in U^+(\gamma, A) \).
Given a positive real number \( \ep \), let \( U_\ep \) be the \( \ep \)-ball (with respect to \( \rho_d \)) in \( H(M) \) centered at \( f \). 
We can readily choose \( \ep \) small enough to guarantee that \( g(\gamma) \) does not bound a disk in \( A \) for any \( g \in U_\ep \). 
And, by choosing \( \ep \) much smaller than the \( d \)-diameter of \( \gamma \), we can guarantee that \( g(\gamma) \) is homotopic to \( \gamma \) in \( A \).
Indeed, if \( g(\gamma) \) was in the \( \ep \)-neighborhood of \( \gamma \) and had the opposite orientation, then \( g \) would have to move some point of \( \gamma \) a distance more than \( \ep \) (that is to say, there is not a enough room to turn \( \gamma \) around). 
Hence, \( U_\ep \) is an open neighborhood of \( f \) contained in \( U^+(\gamma, A) \), and therefore, \( U^+(\gamma, A) \) is open. 
\end{proof}

An important tool used by Mann \cite{MannAutomatic1, MannAutomatic2} in her arguments establishing the automatic continuity of various homeomorphism groups is the Fragmentation Lemma\index{Fragmentation Lemma} \cite[Proposition~2.3]{MannAutomatic1}, which is deduced from the  work of Edwards--Kirby (namely the proof of \cite[Corollary~1.3]{EdwardsDeformations}).   
The Fragmentation Lemma holds in all dimensions, and its proof is nontrivial.
Let us state the Fragmentation Lemma, though we only use it once, as we present and sketch a proof of a much weaker result (Proposition~\ref{prop:fragmentation_original}) that is enough for our purposes in most cases.

\begin{Thm}[Fragmentation Lemma]
\label{thm:fragmentation}
Let \( M \) be a compact manifold.
Given any finite open cover \( \{V_1, \ldots, V_m\} \) of \( M \), there exists a neighborhood \( U \) of the identity in \( H_0(M) \) such that each \( g \in U \) can be factored as a composition \( g = g_1g_2\ldots g_m \) with \( \supp(g_i) \subset V_i \) and \( g_i \in H_0(M) \) for \( i \in \{1, \ldots, m\} \). 
\qed
\end{Thm}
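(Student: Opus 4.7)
My plan is to deduce the Fragmentation Lemma from an extension-type theorem of Edwards--Kirby, which I will treat as a black box: for any compact $K \subset V \subset M$ with $V$ open in a compact manifold $M$, there is a neighborhood $U$ of the identity in $H_0(M)$ and a continuous map $s \co U \to H_0(M)$ satisfying $s(\mathrm{id}) = \mathrm{id}$, $\supp(s(g)) \subset V$, and $s(g)|_K = g|_K$ for every $g \in U$. Granting this, the Fragmentation Lemma follows by peeling off one cover element at a time.

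Before running the induction, I would fix a nested pair of shrinkings of $\{V_1, \ldots, V_m\}$: compact sets $K_i \subset L_i^\mathrm{o} \subset L_i \subset V_i$ chosen so that $\{K_i^\mathrm{o}\}$ still covers $M$. Such a refinement exists by a routine partition-of-unity argument on the compact manifold $M$. The role of the $L_i$ is to provide a buffer zone in which later factors can safely live without disturbing chunks already cleared by earlier factors.

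The induction is on $j \in \{0, \ldots, m\}$ with the claim: there is a neighborhood $U_j$ of the identity and continuous maps $g \mapsto g_i(g) \in H_0(M)$ for $1 \leq i \leq j$ and $g \mapsto h_j(g) \in H_0(M)$ such that $g = g_1 \cdots g_j h_j$, $\supp(g_i) \subset V_i$, and $h_j$ is the identity on $K_1 \cup \cdots \cup K_j$. The base case $j = 0$ is trivial with $h_0 = g$. For the inductive step, I apply the Edwards--Kirby theorem to the compact set $K_{j+1} \setminus (L_1 \cup \cdots \cup L_j)$ inside the open submanifold $V_{j+1} \setminus (K_1 \cup \cdots \cup K_j)$, yielding $g_{j+1}$ whose support is disjoint from the previously cleared chunks yet which matches $h_j$ on the new chunk (after shrinking $U_j$ so that $h_j$ lies in the relevant Edwards--Kirby neighborhood). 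Setting $h_{j+1} := g_{j+1}^{-1} h_j$ then produces a residual that is the identity on all of $K_1 \cup \cdots \cup K_{j+1}$. After $m$ steps the residual is the identity on $\bigcup K_i \supset \bigcup K_i^\mathrm{o} = M$, and the desired factorization emerges.

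The main obstacle is twofold. On the bookkeeping side, the chunk $K_{j+1}$ typically overlaps previously cleared chunks; the whole point of the buffer sets $L_i$ is to isolate the ``genuinely new'' part of $K_{j+1}$ inside an open submanifold avoiding the previous $K_i$'s, so that the Edwards--Kirby extension preserves the inductive identity condition rather than destroying it. Arranging the shrinkings so that the pieces $K_{j+1} \setminus (L_1 \cup \cdots \cup L_j)$ still jointly exhaust $M$ as $j$ varies requires a touch of care but is elementary. The real obstacle is the Edwards--Kirby extension theorem itself, whose proof rests on the Kirby torus trick, local handle-by-handle deformations, and (outside dimension two) the stable homeomorphism conjecture; any honest treatment of the Fragmentation Lemma ultimately reduces to this deep result, and the inductive peeling above is merely the elementary wrapper around it.
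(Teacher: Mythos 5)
The paper does not actually prove this statement: it is recorded with a \( \qed \) and a pointer to Mann's \cite[Proposition~2.3]{MannAutomatic1}, itself extracted from the proof of \cite[Corollary~1.3]{EdwardsDeformations}; the chapter explicitly defers the (nontrivial) proof and only sketches the weaker two-dimensional substitute, Proposition~\ref{prop:fragmentation_original}. Your overall strategy---a local Edwards--Kirby extension theorem taken as a black box, wrapped in an induction that peels off one element of the cover at a time---is indeed how the result is obtained in the literature, so the route is the right one.

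However, the induction as you have set it up does not close, and the gap sits exactly where you wave it off as ``a touch of care.'' With the non-relative black box you state (support in \( V \), agreement with \( g \) on \( K \), and nothing else), the residual \( h_{j+1}=g_{j+1}^{-1}h_j \) is the identity only on \( \bigl(\bigcup_{i\le j}K_i\bigr)\cup\bigl(K_{j+1}\ssm\bigcup_{i\le j}L_i\bigr) \); it need not be the identity on \( K_{j+1}\cap(L_i\ssm K_i) \), so the inductive claim that \( h_{j+1} \) is the identity on \( K_1\cup\cdots\cup K_{j+1} \) fails. The alternative you offer---choosing the shrinkings so that the pieces \( K_{j+1}\ssm(L_1\cup\cdots\cup L_j) \) jointly exhaust \( M \)---is impossible whenever the buffers are nontrivial: any point of \( L_1^{\mathrm{o}}\ssm K_1 \) lies in \( L_1 \) but not in \( K_1 \), hence in none of the pieces; and if you collapse the buffer by taking \( L_1=K_1 \), then the compact set needed at step two, \( K_2\ssm K_1^{\mathrm{o}} \), meets \( \partial K_1\subset K_1 \) and so is not contained in the open set \( V_2\ssm K_1 \) in which the extension must be supported. (Also, as written, \( K_{j+1}\ssm(L_1\cup\cdots\cup L_j) \) is not compact; you mean to subtract the interiors.) The missing ingredient is the \emph{relative} form of the Edwards--Kirby deformation theorem: the extension can be chosen to agree with \( h_j \) on all of \( K_{j+1} \) while remaining the identity on (a slight shrinking of) a prescribed closed set where \( h_j \) is already the identity. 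With that stronger black box the induction closes immediately; without it, the elementary wrapper genuinely does not suffice.
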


Let us now give a weakened version of the fragmentation lemma in dimension two. 

\begin{Prop}
\label{prop:fragmentation_original}
Let \( M \) be a compact 2-manifold, and let \( X \subset M \) be finite. 
Then, there exists a finite collection of closed embedded 2-disks \( \Delta_1, \ldots, \Delta_k \) in \( M \) and an open neighborhood \( U \) of the identity in \( H(M) \) such that 
\begin{itemize}
\item \( \{\Delta_1^\mathrm{o}, \ldots, \Delta_k^\mathrm{o}\} \) is an open cover of \( M \),
\item  \( |X \cap \Delta_i|\leq 1  \) for each \( i \in \{1, \ldots, k\} \), and 
\item every \( g \in U \) can be factored as a composition \( g = g_1g_2\ldots g_k \) with \( \supp(g_i) \subset \Delta^\mathrm{o}_i \) for \( i\in \{1, \ldots, k\} \). 
\end{itemize}
\end{Prop}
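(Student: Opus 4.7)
The plan is to first set up a well-nested cover of $M$ by small disks, then define the neighborhood $U$ via a metric on $M$, and finally factorize each $g\in U$ by inducting disk-by-disk using the Schoenflies theorem and Alexander's trick.

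First, to build the cover: every point of $M$ admits a closed disk neighborhood, so by finiteness of $X$ and compactness of $M$, I can extract finitely many closed 2-disks $\Delta_1,\dots,\Delta_k$ with interiors covering $M$ and with $|X\cap\Delta_i|\leq 1$. Repeatedly applying the shrinking lemma (valid since $M$ is compact Hausdorff, hence normal), I would produce, for each $i$, nested closed sub-disks
\[
\Delta_i^{(3)}\subset(\Delta_i^{(2)})^\mathrm{o}\subset\Delta_i^{(2)}\subset(\Delta_i^{(1)})^\mathrm{o}\subset\Delta_i^{(1)}\subset\Delta_i^\mathrm{o}
\]
so that the interiors $\{(\Delta_i^{(3)})^\mathrm{o}\}$ still cover $M$. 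Fix a metric $d$ on $M$ and let $\rho_d$ be the corresponding metric on $H(M)$ (Corollary~\ref{cor:homeo_compactification}). Taking $U$ to be a sufficiently small $\rho_d$-ball around the identity ensures, for each $g\in U$ and each $i$, both that $g(\Delta_i^{(3)})\subset(\Delta_i^{(2)})^\mathrm{o}$ and that $g(\partial\Delta_i^{(3)})$ is ambient-isotopic to $\partial\Delta_i^{(3)}$ through an isotopy supported in $(\Delta_i^{(1)})^\mathrm{o}\ssm\Delta_i^{(3)}$ (via Proposition~\ref{prop:curves}).

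Second, to factorize: set $h_0:=g\in U$ and iterate as follows. Given $h_{i-1}$ still satisfying the smallness conditions on $\Delta_i,\dots,\Delta_k$, apply Proposition~\ref{prop:curves} inside the annular region $(\Delta_i^{(1)})^\mathrm{o}\ssm\Delta_i^{(3)}$ to produce $\psi_i\in H(M)$ supported in $(\Delta_i^{(1)})^\mathrm{o}$ that sends $h_{i-1}(\partial\Delta_i^{(3)})$ back to $\partial\Delta_i^{(3)}$. The composition $\psi_i\circ h_{i-1}$ then preserves $\Delta_i^{(3)}$, so Alexander's trick (applied on the collar $\Delta_i^{(1)}\ssm(\Delta_i^{(3)})^\mathrm{o}$, identified via Schoenflies with a standard annulus) lets me extend its restriction to $\Delta_i^{(3)}$ to a self-homeomorphism of $\Delta_i^{(1)}$ that is the identity on $\partial\Delta_i^{(1)}$. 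Composing this extension with $\psi_i^{-1}$ gives $g_i\in H(M)$ supported in $\Delta_i^\mathrm{o}$ with $g_i|_{\Delta_i^{(3)}}=h_{i-1}|_{\Delta_i^{(3)}}$. Setting $h_i:=g_i^{-1}h_{i-1}$, the new map $h_i$ is the identity on $(\Delta_i^{(3)})^\mathrm{o}$; after $k$ steps, $h_k$ is the identity on $\bigcup_i(\Delta_i^{(3)})^\mathrm{o}=M$, producing the factorization $g=g_1\cdots g_k$.

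The hard part will be maintaining the inductive hypotheses across the $k$ iterations: (a) $h_i$ should remain the identity on the previously trivialized pieces $\bigcup_{j<i}(\Delta_j^{(3)})^\mathrm{o}$, and (b) $h_i$ should still be close enough to the identity that the required smallness conditions hold on each $\Delta_j^{(3)}$ with $j>i$. Point (a) is the genuine subtlety, since the extension produced by Alexander's trick on the annular collar of $\Delta_i^{(1)}$ need not \emph{a priori} respect previously-fixed sub-disks that happen to lie in $\Delta_i^\mathrm{o}$. I would address this by refining the Alexander conification so that $g_i$ agrees with $h_{i-1}$ on the larger saturated region $\Delta_i^{(3)}\cup\bigl(\bigcup_{j<i}(\Delta_j^{(3)})^\mathrm{o}\cap\Delta_i^\mathrm{o}\bigr)$, using that $h_{i-1}$ is already the identity on the latter portion and restricting the conification to the residual planar region (a union of sub-disks of $\Delta_i^{(1)}$), where Alexander's trick remains available. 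Point (b) is a standard continuity-of-composition argument, provided the initial $\ep$ defining $U$ is taken small enough at the outset to absorb the (at most $k$-fold) amplification that occurs through the iterated left-multiplications by $g_i^{-1}$.
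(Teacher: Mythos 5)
Your proposal follows essentially the same route as the paper's (itself only sketched) argument: cover \( M \) by finitely many closed disks each meeting \( X \) in at most one point, shrink them while keeping a cover, define \( U \) by requiring each shrunk boundary circle to land in a prescribed annulus isotopically trivially, then correct one disk at a time via an ambient isotopy (Proposition~\ref{prop:curves}) followed by Alexander's trick, and recurse. You have also correctly isolated the two conditions that must survive the recursion, which the paper compresses into ``proceed recursively.'' The weak link is your resolution of point (b). The corrected map \( h_i=g_i^{-1}h_{i-1} \) is \emph{not} metrically close to the identity: on the collar \( \Delta_i^{(1)}\ssm\Delta_i^{(3)} \), the homeomorphism \( g_i \) is assembled from the isotopy of Proposition~\ref{prop:curves} and an Alexander cone, neither of which comes with any displacement bound in terms of \( \ep \) (the ambient isotopy returning \( h_{i-1}(\partial\Delta_i^{(3)}) \) to \( \partial\Delta_i^{(3)} \) may move points the full diameter of \( \Delta_i^{(1)} \)). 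So the ``shrink \( \ep \) to absorb a \( k \)-fold amplification'' argument does not close: for \( j>i \), the curve \( h_i(\partial\Delta_j^{(3)})=g_i^{-1}\bigl(h_{i-1}(\partial\Delta_j^{(3)})\bigr) \) can be thrown anywhere inside \( \Delta_i^{\mathrm{o}} \), in particular out of the annulus needed for step \( j \). The invariant to propagate is not metric smallness but membership in the sets \( U^+(\partial\Delta_j^{(3)},A_j) \) for the unprocessed indices, and preserving that requires controlling how \( \supp(g_i) \) meets the remaining circles \( \partial\Delta_j^{(3)} \); this is what the paper's insistence on disks with simple (bigon) intersection patterns, together with the choice \( U=\bigcap_i U^+(\partial\Delta_i',A_i) \), is engineered for. (Obtaining genuine metric control on the correcting homeomorphisms is essentially local contractibility of \( \Homeo(M) \), i.e., the Edwards--Kirby machinery this proposition is meant to sidestep.) Your fix for point (a) --- conifying only on the residual sub-disks where \( h_{i-1} \) is not already the identity --- is reasonable and compatible with the paper's setup.
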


\begin{proof}[Sketch of the proof]
Let us assume that \( X \) is empty.
The idea is as follows: cover the 2-manifold with finitely many closed 2-disks \( \Delta_1, \ldots, \Delta_k \) with simple intersection patterns (i.e., the intersection of any two disks is either empty or a bigon). Slightly shrink each disk so that the resulting disks, labelled \( \Delta_1', \ldots, \Delta_k' \), continue to cover the surface.  
Then, letting \( A_i \subset \Delta_i^\mathrm{o} \) be an open annular neighborhood of \( \partial \Delta_i' \), define \( U = \bigcap_{i=1}^k U^+(\partial \Delta_i', A_i) \). 
Given \( g \in U \), there is an isotopy supported in \( A_1 \) sending \( g(\gamma_1) \) back to \( \gamma_1 \). 
We can then apply Alexander's trick to get a homeomorphism \( g_1 \) supported on \( \Delta_1 \) such that \( g_1^{-1} \circ g |_{\Delta_1'} \) is the identity. 
Proceed recursively.
If \( X \) is nonempty, then the same argument holds with the extra care of choosing the \( \Delta_i \) to satisfy the additional hypotheses. 
\end{proof}

We can now prove a theorem of Fisher \cite{FisherGroup} regarding compact 2-manifolds, which generalizes the work of Anderson \cite{AndersonAlgebraic} on the 2-sphere. 
Also, we will require one additional fact: two self-homeomorphisms of a manifold \( M \) are isotopic if and only if there is a path between them in \( \mathrm{Homeo}(M) \) (see Fox \cite{FoxTopologies}).

Given a closed 2-disk \( \Delta \) in a 2-manifold \( M \), recall that \( \Gamma_\Delta \) denotes the subgroup of \( \Homeo(M) \) normally generated by homeomorphisms supported in \( \Delta \). 

\begin{Thm}
\label{thm:fisher}
Let \( M \) be a 2-manifold, and let \( \Delta \subset M \) an embedded copy of the closed 2-disk.
Then,  \( \Gamma_\Delta \) is contained in every nontrivial normal subgroup of \( H(M) \) and is a simple group.
Moreover, if \( M \) is of finite type, then
\begin{enumerate}
\item \( H_0(M) \) is open and path connected, and
\item every element of \( H_0(M) \) can be factored as a composition of homeomorphisms each of which is supported in a stable Freudenthal subsurface.
\end{enumerate}
In particular, if \( M \) is compact, then \( \Gamma_\Delta = H_0(M) \) and \( H_0(M) \) is a simple group.
\end{Thm}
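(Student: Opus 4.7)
The starting observation is that every embedded closed 2-disk $D$ in $M$ is a dividing Freudenthal subsurface: it is compact (hence trivially stable with $\mathcal{S}(D)=D^{\mathrm o}$), $\mathcal{S}(D)$ is infinite, and $M$ always contains a disjoint disk. For the first assertion, let $N\trianglelefteq H(M)$ be nontrivial, pick nontrivial $f\in N$, and let $D_0$ be a small disk about a point moved by $f$. Corollary~\ref{cor:main2} applied with $\Delta=D_0$ expresses every $H(M)$-element supported in $D_0$ as a product of four conjugates of $f^{\pm 1}$, placing all such elements in $N$. Since any disk is the image of $D_0$ under some element of $H(M)$ by change of coordinates and $N$ is normal, every disk-supported homeomorphism lies in $N$. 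But $\Gamma_\Delta$ is generated by disk-supported homeomorphisms---every conjugate of a $\Delta$-supported map is disk-supported---so $\Gamma_\Delta\subseteq N$.

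For simplicity of $\Gamma_\Delta$, the plan is to rerun the previous argument with a nontrivial normal $N'\trianglelefteq\Gamma_\Delta$ in place of $N$, and to verify that every conjugating element appearing in Anderson's method lies in $\Gamma_\Delta$. Fix nontrivial $g\in N'$ and a disk $D$ disjoint from $g(D)$ about a point moved by $g$. In the factorization of Proposition~\ref{prop:main2}(2), the translation $\sigma$ is supported in a disk so lies in $\Gamma_\Delta$, while the swap $\psi$ of $D$ and $g(D)$ can be chosen supported in a regular neighborhood of $D\cup\gamma\cup g(D)$ for an arc $\gamma$ joining the two disks; such a regular neighborhood deformation retracts to a contractible graph in the surface and is therefore itself a disk, so $\psi\in\Gamma_\Delta$. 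All four conjugates of $g^{\pm 1}$ thus lie in $N'$, placing every $D$-supported homeomorphism in $N'$. The same regular-neighborhood trick shows any disk can be moved onto $D$ by a $\Gamma_\Delta$-element, so every disk-supported homeomorphism is in $N'$, forcing $N'=\Gamma_\Delta$.

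Now assume $M$ is finite type. Let $\bar M$ be its end compactification; via Proposition~\ref{prop:compactification}, $H(M)$ is identified with the subgroup of $H(\bar M)$ preserving the finite end set $X$. Proposition~\ref{prop:fragmentation_original} provides disks $\Delta_1,\ldots,\Delta_k$ in $\bar M$ meeting $X$ in at most one point and a neighborhood $U$ of the identity in $H(\bar M)$ such that each $g\in U$ factors as $g_1\cdots g_k$ with $g_i$ supported in $\Delta_i^{\mathrm o}$; I would arrange the factorization so that each $g_i$ pointwise fixes $X\cap\Delta_i$. Alexander's trick, in its pointed form when a factor meets $X$, then shows each $g_i$ is isotopic to the identity through $X$-preserving homeomorphisms, so $g_i\in H_0(M)$ and $U\cap H(M)\subseteq H_0(M)$, which gives that $H_0(M)$ is open. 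Since $H_0(M)$ is open and connected, it equals the open subgroup generated by $U\cap H(M)$, and this simultaneously delivers claim (2)---each factor is supported in a disk or once-punctured disk, both stable Freudenthal subsurfaces in $M$---together with path-connectedness by concatenating the Alexander-trick isotopies. When $M$ is compact, the only stable Freudenthal subsurfaces are closed 2-disks, so the factorization yields $H_0(M)\subseteq\Gamma_\Delta$; the reverse inclusion is immediate from Alexander's trick, and simplicity of $H_0(M)=\Gamma_\Delta$ follows from the second assertion.

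The chief delicacy is the verification in the simplicity step that the swap $\psi$ in Anderson's method lies in $\Gamma_\Delta$. This rests on the purely dimensional fact that a regular neighborhood of two disjoint disks joined by an arc in a surface is a disk, which keeps the entire Anderson apparatus inside the subgroup generated by disk-supported maps. This is exactly the feature that would fail in higher dimensions, or if disks were replaced by more complicated stable Freudenthal subsurfaces.
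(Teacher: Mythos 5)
Your proposal is correct and follows essentially the same route as the paper: Anderson's method (via Corollary~\ref{cor:main2}) forces $\Gamma_\Delta$ into every nontrivial normal subgroup, the fragmentation argument of Proposition~\ref{prop:fragmentation_original} combined with Alexander's trick handles the finite-type claims, and the compact case is deduced by observing that stable Freudenthal subsurfaces in a compact $M$ are disks. The one detail you make explicit that the paper leaves implicit is worth flagging: to upgrade from ``the $H(M)$-normal closure of every nontrivial $g\in\Gamma_\Delta$ equals $\Gamma_\Delta$'' to actual simplicity of $\Gamma_\Delta$, one must check that the conjugators $\sigma$ and $\psi$ produced by Proposition~\ref{prop:main2} can themselves be taken in $\Gamma_\Delta$; your regular-neighborhood observation (a neighborhood of two disjoint disks joined by an arc is a disk) is exactly the surface-specific fact that makes this work, and the paper's sentence ``in particular, every nontrivial element of $\Gamma_\Delta$ normally generates $\Gamma_\Delta$'' is silently relying on it.
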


\begin{proof}
Every nontrivial element of \( H(M) \) must move some point of \( M \), and hence, some conjugate must move some point in \( \Delta^\mathrm{o} = \mathcal S(\Delta) \); in particular, by Anderson's Method (Proposition~\ref{prop:main2}), we see that \( \Gamma_\Delta \) is contained in the normal subgroup generated by every nontrivial element of \( H(M) \), and hence \( \Gamma_\Delta \) is contained in every nontrivial normal subgroup of \( H(M) \). 
In particular, every nontrivial element of \( \Gamma_\Delta \) normally generates \( \Gamma_\Delta \), and hence \( \Gamma_\Delta \) is a simple group.

For the remainder of the proof, assume that \( M \) is of finite type. 
Then, there exists a compact 2-manifold \( N \) and a finite subset \( X \subset N \) such that \( M \) is homeomorphic to \( N \ssm X \) (if \( M \) is compact, then \( N = M \) and \( X = \varnothing \)).
Then, by Corollary~\ref{cor:homeo_compactification}, the set \( \Homeo(N,X) = \{ f\in\Homeo(N) : f(X) = X\} \), equipped with the subspace topology, is isomorphic to \( \Homeo(M) \) as a topological group.

Proposition~\ref{prop:fragmentation_original} yields closed 2-disks \( \Delta_1, \ldots, \Delta_k \) embedded in \( N \) and an open neighborhood of the identity \( U \) in \( \Homeo(N) \) such that every \( f \in U \) can be factored as \( f=f_1 \circ \cdots \circ f_k \) with \( \supp(f_j) \subset \Delta_j \). 
Let \( U' = U \cap \Homeo(N,X) \) and let \( f \in U'  \).
Then, \( f= f_1 \circ \cdots \circ f_k \) with \( \supp(f_j) \subset \Delta_j \).
For each \( j \in \{1, \ldots, k\} \),  \( \Delta_j \) contains at most one element of \( X \), and hence it must be that \( f_j \in \Homeo(N,X) \). 
We can apply Alexander's trick to see that each \( f_j \) is isotopic to the identity relative to \( X \), and hence that \( f \) is isotopic to the identity relative to \( X \).  
It follows that \( U' \) is contained the path component of the identity in \( \Homeo(N,X) \); hence, the path component of the identity is open in \( \Homeo(N,X) \) (since it is a union of translates of \( U' \)).
Every open subgroup of a topological group is also closed, so the path component of the identity is a clopen subset of \( \Homeo_0(N,X) \), and hence equal to \( \Homeo_0(N,X) \).
Therefore, \( \Homeo_0(M) \) is open and path connected, establishing (1).

Now, observe that \( \Delta_j \ssm X \) is a stable Freudenthal subsurface of \( N\ssm X \).
Since \( U' \) is an open neighborhood of the identity in the connected topological group \( \Homeo_0(N,X) \), it must generate \( \Homeo_0(N,X) \) (a standard exercise); hence, every element of \( \Homeo_0(M) \) is a product of homeomorphisms each of which is supported in a stable Freudenthal subsurface, establishing (2).
In particular, if \( M \) is compact, then every Freudenthal subsurface is a disk, and hence \( \Homeo_0(M) = \Gamma_\Delta \). 
\end{proof}

Before continuing, we note that, for a compact 2-manifold \( M \), Hamstrom--Dyer \cite{HamstromRegular} appear to be the first to prove that \( \Homeo_0(M) \) is path connected; in fact, they prove the stronger fact that \( \Homeo_0(M) \) is locally contractible. 

For a compact 2-manifold \( M \), it follows from the simplicity of \( \Homeo_0(M) \) (Theorem~\ref{thm:fisher}) that \( \Homeo_0(M) \) is perfect.
For a non-compact finite-type 2-manifold, simplicity of \( \Homeo_0(M) \) does not hold since \( \Gamma_\Delta \) is a proper normal subgroup; however, we will need to know that \( \Homeo_0(M) \) is perfect for any finite-type 2-manifold.
This is a result of McDuff \cite[Corollary~1.3]{McDuffLattice}, and we will prove it below.
We note that McDuff's work relies on a preprint of Ling that appears to never have been published, but the relevant result appears in Ling's thesis \cite{LingAlgebraic}.
The proof of Proposition~\ref{prop:annulus_perfect} below is motivated by an argument in \cite{LingAlgebraic}.

We will use the term \emph{standard open (resp., closed)  annulus} to refer to a subset of \( \mathbb S^1 \times \br \) of the form \( \mathbb S^1 \times I \) for some precompact open (resp., closed) interval \( I \subset \br \). 

\begin{Prop}
\label{prop:annulus_perfect}
If \( f\in \Homeo_0(\mathbb S^1 \times \br) \), then there exist \( g_1, g_2, h_1, h_2 \in \Homeo_0(\mathbb S^1 \times \br) \) such that \( f = [g_1,h_1][g_2,h_2] \).
Moreover, if \( f \) restricts to the identity on a set of the form \( \mathbb S^1 \times [t, \infty) \) for some \( t \in \br 
\), then there exists \( t' > t \) such that the \( g_i \) and \( h_i \) can be chosen to restrict to the identity on \( \mathbb S^1 \times [t',\infty) \). 
\end{Prop}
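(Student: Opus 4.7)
The plan is to prove the moreover clause first via Anderson's method (Proposition~\ref{prop:commutator}), and then deduce the general statement by fragmenting $f$ along an essential simple closed curve.

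For the moreover, suppose $f \in \Homeo_0(\mathbb S^1 \times \br)$ is the identity on $\mathbb S^1 \times [t,\infty)$, and fix $t' > t$. I would construct a homeomorphism $\vp \in \Homeo_0(\mathbb S^1 \times \br)$ of the form $\vp(x,s) = (x,\psi(s))$, where $\psi\co \br \to \br$ is the identity on $[t',\infty)$ and shifts points in $(-\infty,t]$ to the left by a fixed amount $N$; linear interpolation to the identity shows $\vp \in \Homeo_0$, with support contained in $\mathbb S^1 \times (-\infty, t']$. When $\supp f$ is compactly contained in $\mathbb S^1 \times [a,t]$ with $N > t-a$, the translates $\{\vp^n(\supp f)\}_{n\geq 0}$ form a locally finite family of pairwise disjoint compact sets, and Anderson's identity directly yields $f = [\sigma,\vp]$ with $\sigma = \prod_{n\geq 0} \vp^n f \vp^{-n} \in \Homeo_0$ and both factors supported on $\mathbb S^1\times(-\infty,t']$; pairing with a trivial second commutator gives the required presentation. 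When $\supp f$ is non-compact, I would first fragment $f$ into a locally finite product $f = \prod_n f_n$ of compactly supported pieces in $\Homeo_0$ via Proposition~\ref{prop:fragmentation_original} applied along an exhaustion of $\mathbb S^1\times(-\infty,t]$ by compact annuli, and then partition the fragments into two subfamilies, each admitting a common shift under which its supports are translate-disjoint; applying Anderson to each subfamily expresses $f$ as a product of two commutators in $\Homeo_0$ with all four factors identity on $\mathbb S^1 \times [t',\infty)$.

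For the general case, let $\gamma_0 = \mathbb S^1 \times \{0\}$. Because $f\in \Homeo_0$, the curve $f(\gamma_0)$ is isotopic to $\gamma_0$, and Proposition~\ref{prop:curves} supplies a compactly supported $\alpha \in \Homeo_0$ with $\alpha \circ f(\gamma_0) = \gamma_0$. A further compactly supported $\beta \in \Homeo_0$, supported in a tubular annular neighborhood of $\gamma_0$, makes $g := \beta\alpha f$ fix $\gamma_0$ pointwise. Being orientation-preserving and in $\Homeo_0$, $g$ preserves each of the two complementary half-annuli setwise and splits as $g = g_+ \cdot g_-$ with $g_\pm \in \Homeo_0$ supported on $\mathbb S^1\times[0,\infty)$ and $\mathbb S^1\times(-\infty,0]$ respectively. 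Each $g_\pm$ is identity on the opposite end, so by the moreover each is a single commutator in $\Homeo_0$. Since $\alpha$ and $\beta$ are compactly supported, they are themselves single commutators by Anderson, and by arranging their placement inside the supports of $g_+$ and $g_-$ (using the freedom in the choice of the auxiliary shifts), one can combine everything into exactly two commutators expressing $f$.

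The chief obstacle I anticipate is the non-compactly supported case in the moreover: Anderson's method requires $\{\vp^n(\supp f)\}$ to be pairwise disjoint, which cannot hold when $\supp f$ is a non-compact strip, so the fragmentation-and-grouping step has to be executed carefully to ensure the fragments lie in $\Homeo_0$ and the total commutator count stays bounded by two. A secondary bookkeeping subtlety is the absorption of the correction terms $\alpha$ and $\beta$ in the general case without inflating the commutator count beyond two.
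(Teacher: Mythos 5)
Your approach differs from the paper's, and while the broad outline is reasonable, several steps contain genuine gaps that would need to be filled before this could count as a proof.

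The paper's proof runs the other way around: it first decomposes an \emph{arbitrary} $f \in \Homeo_0(\mathbb S^1\times\br)$ as $f = f_1^{-1}\circ f_2$ where each $f_i$ is supported on a locally finite union of pairwise-disjoint standard closed annuli accumulating at \emph{both} ends of the cylinder, by choosing curves $c_n = \mathbb S^1\times\{\pm n_j\}$ converging to $\pm\infty$, enclosing each $c_n\cup f(c_n)$ in an annulus $A_n$ with disjoint closures, and returning $f(c_n)$ to $c_n$. Each $f_i$ is then exhibited as a \emph{single} commutator via a two-ended version of Anderson's trick using shifts $\tau_+$ and $\tau_-$ with disjoint supports going toward $+\infty$ and $-\infty$ respectively, and the ``moreover'' falls out of this construction for free because one can start the curves $c_n$ past $t$ and observe that for $n_j > t$ the correction $h_n$ can be taken trivial. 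Your plan instead tries to prove the ``moreover'' first by a one-sided shift and then reduce the general case to it by normalizing along the single curve $\gamma_0$.

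The gaps, in decreasing order of severity. First, in the non-compact ``moreover'' case you invoke Proposition~\ref{prop:fragmentation_original}, but that proposition is stated for a \emph{compact} $2$-manifold and, more importantly, only fragments homeomorphisms that lie in a fixed small neighborhood $U$ of the identity; your $f$ is arbitrary, so there is no direct way to produce the locally finite factorization $f = \prod_n f_n$ from it. Moreover, a fragmentation along an exhaustion generically yields pieces with \emph{overlapping} supports, so the ``partition into two subfamilies, each translate-disjoint'' step needs a real argument, not just a gesture; this is exactly what the paper supplies by choosing the $A_n$ to have pairwise-disjoint closures. Second, you assert that each $g_\pm$ is ``a single commutator by the moreover,'' but the ``moreover'' only gives a product of \emph{two} commutators. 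Third, the two-commutator decompositions of $g_+$ and $g_-$ do not have disjoint supports as stated: if $g_+$ is the identity on $(-\infty,0]$, the ``moreover'' only guarantees the existence of \emph{some} $s' < 0$ past which the commutator factors are the identity, so they leak into the negative side; symmetrically for $g_-$. Unless you prove a sharper, ``for all $t'>t$'' version of the moreover and split $g$ across an entire annular region where it is the identity, you cannot merge $g_+g_-$ into two commutators by the disjoint-support trick. Fourth, $\alpha^{-1}\beta^{-1}$ is compactly supported and hence one more commutator by Anderson; even if the $g_\pm$ obstacle were resolved you would be at three commutators, and the ``bookkeeping subtlety'' of merging this into the two commutators for $g_+g_-$ is precisely where the count fails --- its support sits near $\gamma_0$, i.e.\ in the region where both families of commutator factors already live. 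This is not a cosmetic issue but the central difficulty, and it is what the paper's simultaneous-curves decomposition is designed to avoid.

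Your compactly-supported ``moreover'' case is correct and does give a single commutator; that part is sound. But the rest of the argument would need substantial reworking to close the gaps above, and the most natural way to do so is to abandon the reduction to $\gamma_0$ and instead adopt the paper's strategy of normalizing along a locally finite bi-infinite family of curves simultaneously, which produces the two-piece decomposition with controlled disjoint supports in one step.
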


\begin{proof}

The second statement of the proposition will be a consequence of the constructions in the proof, and we will not mention it explicitly. 
Let us begin by supposing that there exists a locally finite family \( \{A_n\}_{n\in\bz} \) of pairwise-disjoint standard closed annuli such that \( f \) is supported on \( \bigcup_{n\in\bz} A_n \). 
Let \( f_+ \) be the homeomorphism that agrees with \( f \) on \( \bigcup_{n\geq 0} A_n \) and is the identity outside of this union. 
Choose \( \tau_+ \in  H_0(\mathbb S^1 \times \br) \) such that \( \tau_+(A_{n}) = A_{n+1} \) for \( n \geq 0 \) and such that \( \tau_+|_{A_n} \) is the identity for \( n < 0 \) (for instance, \( \tau_+ \) can be chosen to be an appropriate conjugate of the homeomorphism  given by \( (z,t) \mapsto (z, 2^t-1) \) for \( t \geq 0 \) and that restricts to the identity on the negative reals).

Using the fact that, for each \( x \in \mathbb S^1 \times \br \), the set \( \{ k \in \bn : (\tau_+^k \circ f_+\circ \tau_+^{-k})(x) \neq x\} \) is finite, together with the fact that \( \{A_n\}_{n\in\bn} \) is locally finite, we can define the homeomorphism \[ F_+ = \prod_{n=0}^\infty \left( \tau_+^n \circ f \circ \tau_+^{-n} \right). \] 
A similar argument as in the proof of Proposition~\ref{prop:commutator} shows that \( f_+ = [F_+,\tau_+] \). 
Defining \( f_- \), \( \tau_- \) and \( F_- \) analogously, we can write \( f_- = [F_-, \tau_-] \) and  \( f = f_-\circ f_+ \).
Moreover, since the support of \( \tau_\pm \) and the support of \( F_\pm \) are disjoint from the supports of \( F_\mp \) and \( \tau_\mp \),  we can write \[ f = [F_-, \tau_-]\circ[F_+, \tau_+] = [F_-\circ F_+, \tau_-\circ \tau_+]. \]

Now, suppose \( f \) is arbitrary. 
We will decompose \( f \) into a product of two homeomorphisms, each of which is supported in a locally finite union of pairwise-disjoint standard closed annuli (the proof follows the proof of Le Roux--Mann's \cite[Lemma~4.4]{LeRouxStrong}). 
Let \( c_0 = \mathbb S^1 \times \{0\} \) and choose a standard open annulus \( A_0 \) in \( \mathbb S^1 \times \br \) such that \( c_0 \cup f(c_0) \subset A_0 \). 
We can then find \( n_1 \in \bn \) such that \( c_{\pm1} = \mathbb S^1 \times \{\pm n_1\} \) satisfies  \( (c_{\pm1} \cup f(c_{\pm1})) \cap \overline A_0 = \varnothing \) and such that \( n_1 \geq 1 \).
Choose  standard open annuli \( A_{\pm1} \) such that \( \overline A_{\pm1} \cap \overline A_0 = \varnothing \) and \( c_{\pm1} \cup f(c_{\pm1}) \subset A_{\pm1} \). 
Continuing in this fashion, we construct a locally finite sequence of pairwise-disjoint curves \( \{c_n\}_{n\in\bz} \) and a locally finite family of  standard  open annuli \( \{A_n\}_{n\in\bz} \) with pairwise-disjoint closures such that \( c_n \cup f(c_n) \subset A_n \). 

For each \( n \in \bz \), there is an ambient isotopy \( \Phi_n \) of \( \mathbb S^1 \times \br \) fixed on the complement of \( A_n \) such that \( \Phi_n(f(c_n),0)=f(c_n) \), \( \Phi_n(f(c_n),1) = c_n \), and letting \( h_n(x) = \Phi_n(x,1) \), we have \( h_n \circ f \) restricts to the identity on a standard closed annular neighborhood \( C_n \subset A_n \) of \( c_n \).
Let \( f_1 = \prod_{n\in\bz} h_n \).
Then, \( f_1 \) (as well as \( f_1^{-1} \)) is supported on \( \bigcup_{n\in\bz}  A_n \). 
Let \( f_2 = f_1\circ f \). 
Then, \( f_2 \) is supported on the complement of \( \bigcup_{n\in\bz} C_n \), and hence, both  \( f_1 \) and \( f_2 \) are supported on the union of a locally finite family of pairwise-disjoint standard closed annuli.
Therefore, \( f  = f_1^{-1}\circ f_2 \) can be written as the product of two commutators.  
\end{proof}

\begin{Cor}
\label{cor:unif}
\( \Homeo_0(\bS^2) \), \( \Homeo_0( \br^2 ) \), and  \( \Homeo_0(\bS^1 \times \br) \) are uniformly perfect. 
\end{Cor}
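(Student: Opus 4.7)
The cases of $\bS^1 \times \br$ and $\bS^2$ follow quickly from what has already been established. For $\bS^1 \times \br$, Proposition~\ref{prop:annulus_perfect} directly writes every element as a product of two commutators. For $\bS^2$, any closed embedded 2-disk $\Delta$ is a dividing Freudenthal subsurface---its complement is an open disk, which contains smaller closed disks homeomorphic to $\Delta$---so Theorem~\ref{thm:fisher} applied to the compact 2-manifold $\bS^2$ gives $\Homeo_0(\bS^2) = \Gamma_\Delta$, and Corollary~\ref{cor:commutator1} then yields commutator width one.

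The real work is for $\br^2$, and the plan is to decompose an arbitrary $f \in \Homeo_0(\br^2)$ into three pieces, each of which can be controlled. First I will fix a closed disk $D \subset \br^2$ and a larger closed disk $D'$ whose interior contains $D \cup f(D)$; by the Schoenflies theorem applied inside $D'$ together with Proposition~\ref{prop:curves} to straighten along $\partial D$, I can produce $\phi \in \Homeo_0(\br^2)$ supported in $D'$ such that $g := \phi \circ f$ preserves $D$ and restricts to the identity on $\partial D$. Setting $g_1$ to be $g$ on $D$ and the identity elsewhere, and $g_2$ to be $g$ on $\br^2 \ssm D^{\mathrm o}$ and the identity on $D$, yields $f = \phi^{-1} \circ g_1 \circ g_2$. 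Since $D$ and $D'$ are dividing Freudenthal subsurfaces of $\br^2$, both $\phi$ and $g_1$ are single commutators by Corollary~\ref{cor:commutator1}. To handle $g_2$, which is supported in the half-open annular region $\br^2 \ssm D^{\mathrm o} \cong \bS^1 \times [0, \infty)$, I will isolate its Dehn twist number: since the mapping class group of $\bS^1 \times [0, \infty)$ rel boundary is generated by a single Dehn twist, there is $k \in \bz$ such that $g_2 \circ T^{-k}$ is isotopic to the identity rel $\partial D$, where $T$ is a Dehn twist around a circle in $\br^2 \ssm D$ parallel to $\partial D$, supported in an annular neighborhood contained in a disk $D'' \subset \br^2 \ssm D$. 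Under the identification $\br^2 \ssm \{0\} \cong \bS^1 \times \br$, extending $g_2 \circ T^{-k}$ by the identity on $\bS^1 \times (-\infty, 0)$ yields an element of $\Homeo_0(\bS^1 \times \br)$ identity on a half-line, so the second statement of Proposition~\ref{prop:annulus_perfect} (with the roles of $\pm \infty$ swapped by $s \mapsto -s$) writes it as a product of two commutators whose factors are identity near $s = -\infty$ and hence extend to elements of $\Homeo_0(\br^2)$ by adding the point $\{0\}$ back. Finally $T^k$ is supported in the disk $D''$, giving one more commutator by Corollary~\ref{cor:commutator1}, so $f$ is a product of at most five commutators.

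The main obstacle is the Dehn twist discrepancy between the identity components: a Dehn twist around a circle bounding a disk lies in $\Homeo_0(\br^2)$ because one can interpolate through rotations of the enclosed disk, but it is not in $\Homeo_0(\bS^1 \times \br)$ since there is no disk on either side to rotate. Consequently, a homeomorphism in $\Homeo_0(\br^2)$ supported on the end of $\br^2$ need not correspond to an element of $\Homeo_0(\bS^1 \times \br)$, so Proposition~\ref{prop:annulus_perfect} does not apply directly; subtracting off the appropriate Dehn twist power before making the identification, and handling that twist separately via Corollary~\ref{cor:commutator1} applied to a disk, is what bridges the gap between the two groups.
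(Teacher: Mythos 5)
Your argument is correct as stated, and your handling of $\bS^1 \times \br$ is identical to the paper's, while for $\bS^2$ you give a slightly nicer derivation (via Theorem~\ref{thm:fisher} and Corollary~\ref{cor:commutator1}) than the paper's terse ``a similar argument works.'' But the ``Dehn twist discrepancy'' you identify for $\br^2$ is not a real obstacle, and it is worth understanding why: a Dehn twist $T$ about the core of the open annulus $\bS^1 \times \br$ \emph{does} lie in $\Homeo_0(\bS^1 \times \br)$. Explicitly, writing $T(\theta, r) = (\theta + 2\pi\alpha(r), r)$ with $\alpha$ a standard twist profile ($\alpha \equiv 0$ near $-\infty$, $\alpha \equiv 1$ near $+\infty$), the family $T_s(\theta, r) = (\theta + 2\pi s\,\alpha(r), r)$ for $s \in [0,1]$ is a path in $\Homeo(\bS^1 \times \br)$ from the identity to $T$. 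The obstruction in the compact annulus---that the isotopy must fix both boundary circles pointwise---vanishes here because there is no boundary. The same isotopy fixes $\bS^1 \times \{0\}$ pointwise, so the mapping class group of $\bS^1 \times [0,\infty)$ rel boundary is likewise trivial, not $\bz$; it is $\bz$ only in the compactly supported category, which is not the one relevant to $\Homeo_0$.

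Consequently the orientation- and end-preserving mapping class group of $\bS^1 \times \br$ is trivial, so any orientation-preserving homeomorphism of $\br^2$ fixing the origin already restricts to an element of $\Homeo_0(\bS^1 \times \br)$. This lets one skip the twist-number extraction entirely, and it is exactly how the paper proceeds: fix a disk $\Delta$ containing $0$ and $f(0)$, choose $g \in \Homeo_0(\br^2)$ supported in $\Delta^{\mathrm o}$ with $g(f(0)) = 0$; then $g$ is a single commutator by Corollary~\ref{cor:commutator1}, and $g \circ f$ fixes $0$, restricts to $\Homeo_0(\bS^1 \times \br)$, and is a product of two commutators by Proposition~\ref{prop:annulus_perfect}, giving commutator width at most three. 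Your three-piece decomposition and the detour through $T^k$ still yield a uniform bound (at most five), since in effect $k = 0$ always, so the proposal is not broken---but the extra machinery is solving a problem that isn't there.
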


\begin{proof}
The case of \( \Homeo_0(\bS^1 \times \br) \) is immediate from Proposition~\ref{prop:annulus_perfect}.
Suppose \( f \in \Homeo_0(\br^2) \).
Then, there exists an embedded closed 2-disk \( \Delta \) containing \( 0 \) and \( f(0) \). 
Fix \( g \in \Homeo_0(\br^2) \) such that \( g(f(0))= 0 \) and \( \supp(g) \subset \Delta^\mathrm{o} \).
By Corollary~\ref{cor:commutator1}, \( g \) can be expressed as a commutator.  
Now we can view \( g\circ f \) as a self-homeomorphism of \( \br^2 \ssm\{0\} \), which is of course homeomorphic to \( \mathbb S^1 \times \br \).
Hence, we may express \( g \circ f \) as a product of two commutators, which allows us to express \( f \) as a product of three commutators.
Therefore, \( \Homeo_0(\br^2) \) is uniformly perfect. 
A similar argument works for \( \Homeo_0(\bS^2) \). 
\end{proof}

In fact, Tsuboi \cite{TsuboiHomeomorphism} has shown that the commutator width of \( H_0(\mathbb S^n) \) is one; building on Tsuboi's work, Bhat and the author \cite{BhatCommutator} recently showed that \( H_0(\br^n) \) also has commutator width one. 
In the next theorem, we show that \( H_0(M) \) is perfect when \( M \) is finite-type.  
We note that Bowden--Hensel--Webb \cite{BowdenQuasi-morphisms} have recently  shown that if \( M \) is a compact manifold of genus at least one, then \( H_0(M) \) is not uniformly perfect, which answered a long-standing question.

\begin{Thm}
\label{thm:finite-perfect}
Let \( M \) be a finite-type 2-manifold.
Then, \( \Homeo_0(M) \) is perfect.
\end{Thm}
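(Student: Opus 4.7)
The plan is to use Theorem~\ref{thm:fisher}(2) to write any $f \in \Homeo_0(M)$ as a finite product of homeomorphisms $g_1,\ldots,g_k$, each supported in a stable Freudenthal subsurface of $M$, and then to express each $g_i$ as a product of commutators in $\Homeo_0(M)$. Since $M$ is finite-type, its end space is finite and discrete with every end planar and orientable, so every stable Freudenthal subsurface of $M$ is either a closed disk or a half-cylinder $\bS^1 \times [0,\infty)$ around a single end. This reduces the problem to two cases.

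For $g_i$ supported in a compact disk $\Delta$, I would take a slightly larger disk $\Delta'\supset \Delta$ with $\Delta \subset (\Delta')^{\mathrm{o}}$ and apply Proposition~\ref{prop:main2}(1) with $\Sigma=\Delta$: the compactness of $\Sigma$ makes the hypothesis trivial, yielding $g_i = [\sigma,\varphi]$ with both $\sigma,\varphi$ supported in $(\Delta')^{\mathrm{o}}$, and Alexander's trick immediately places these in $\Homeo_0(M)$.

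For $g_i$ supported in a half-cylinder $\Delta \cong \bS^1 \times [0,\infty)$ around an end $e$, I would identify $\Delta^{\mathrm{o}}$ with $\bS^1 \times (0,\infty) \subset \bS^1 \times \br$ and extend $g_i$ by the identity on $\bS^1 \times (-\infty,0]$ to obtain $\tilde g_i \in \Homeo(\bS^1 \times \br)$. Since $\tilde g_i$ is the identity on a neighborhood of one end, it preserves orientation and fixes both ends; combined with the fact that Dehn twists are isotopic to the identity in the open cylinder, this places $\tilde g_i \in \Homeo_0(\bS^1 \times \br)$. I would then run the proof of Proposition~\ref{prop:annulus_perfect} with $c_n = \bS^1 \times \{n\}$ and thin $A_n$ for $n \leq 0$: because $\tilde g_i$ is trivial there, the isotopies $h_n$ for $n \leq 0$ can be taken trivial, forcing both pieces $f_1$ and $f_2$ in the decomposition $\tilde g_i = f_1^{-1} f_2$ to be supported on locally finite disjoint closed annuli inside $\bS^1 \times (0,\infty)$. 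The first part of that same proof then exhibits each $f_j$ as a single commutator $[F_+,\tau_+]$ with all four elements supported in $\bS^1 \times (0,\infty)$. Transferring back to $M$ by extending by the identity yields homeomorphisms of $M$ supported in $\Delta$; on the compactification $N \supset M$, each extension is supported in the disk $\Delta \cup \{e\}$ and fixes $e$, so Alexander's trick (with fixed origin) places them in $\Homeo_0(M)$.

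The main subtlety will be verifying that the infinite products defining the $F_+$'s lie in $\Homeo_0(M)$ rather than merely in $\Homeo(M)$. The finite partial products lie in $\Homeo_0(M)$, the infinite product converges in the compact-open topology by the local finiteness of the supports, and $\Homeo_0(M)$ is closed in $\Homeo(M)$ as the identity component; hence the limit lies in $\Homeo_0(M)$. Combining the two cases over the finitely many factors $g_i$ then expresses $f$ as a product of commutators in $\Homeo_0(M)$, proving perfectness.
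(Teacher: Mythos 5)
Your proposal is correct and follows essentially the same route as the paper's proof: decompose $f$ via Theorem~\ref{thm:fisher}(2) into factors supported in (interiors of) stable Freudenthal subsurfaces of a finite-type 2-manifold, which are exactly the disks and half-cylinders around isolated ends, then apply Anderson's method (Proposition~\ref{prop:main2}) to the disk factors and Proposition~\ref{prop:annulus_perfect} to the half-cylinder factors. The extra verification you supply---that the commutator witnesses land in $\Homeo_0(M)$ rather than merely $\Homeo(M)$, handled via Alexander's trick and closedness of the identity component under limits of the infinite products---addresses a point the paper leaves implicit and is carried out correctly.
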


\begin{proof}
Let \( f \in \Homeo_0(M) \).
By Theorem~\ref{thm:fisher}, there exists \( f_1,\ldots, f_k \in \Homeo_0(M) \) such that each \( f_i \) is supported in the interior of a stable Freudenthal subsurface \( \Delta_i \) and \( f = f_1 \cdots f_k \).
In particular, \( \Delta_i^\mathrm{o} \) is homeomorphic to either the open 2-disk or \( \mathbb S^1 \times \br \). 
By Anderson's Method in the former case or Proposition~\ref{prop:annulus_perfect} in the latter, we have that \( f_i \) is a product of commutators of homeomorphisms with support in \( \Delta_i^\mathrm{o} \). 
Hence, \( f \) can be expressed as a product of commutators and \( \Homeo_0(M) \) is perfect.
\end{proof}

From the above results regarding finite-type 2-manifolds, we record the following lemma:

\begin{Lem}
\label{lem:finite-commutator}
Let \( M \) be a 2-manifold and let \( N \) be an open finite-type submanifold of \( M \).
Then, there exists an open neighborhood \( U_N \) of the identity in \( \Homeo(M) \) such that every element of \( U_N \) that is supported in \( N^\mathrm{o} \) is contained in \( [\Homeo(M), \Homeo(M)] \). 
\end{Lem}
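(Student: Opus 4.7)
The plan is to fragment \(f|_N\) into local pieces supported in closed disks or half-open annuli of \(N\), then to express each piece as a product of commutators in \(\Homeo(M)\), handling disk pieces by Anderson's Method (Proposition~\ref{prop:main2}(1)) and annulus pieces by Proposition~\ref{prop:annulus_perfect}, taking care that every commutator factor extends by the identity to an element of \(\Homeo(M)\).

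To construct \(U_N\), observe that since \(N\) is finite type, Theorem~\ref{thm:fisher}(1) implies \(\Homeo_0(N)\) is open in \(\Homeo(N)\); choose a basic open neighborhood \(V = \bigcap_{i=1}^{r}\{g\in \Homeo(N) : g(K_i)\subseteq W_i\}\) of the identity contained in \(\Homeo_0(N)\), with each \(K_i\subseteq N\) compact and each \(W_i\subseteq N\) open. Because \(N\) is open in \(M\), each \(K_i\) is compact in \(M\) and each \(W_i\) is open in \(M\); hence \(U_N:=\bigcap_{i=1}^{r}\{g\in \Homeo(M):g(K_i)\subseteq W_i\}\) is an open neighborhood of the identity in \(\Homeo(M)\), and whenever \(f\in U_N\) satisfies \(\supp(f)\subseteq N\), the restriction \(f|_N\) lies in \(V\subseteq \Homeo_0(N)\). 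Following the proof of Theorem~\ref{thm:fisher}(2), Proposition~\ref{prop:fragmentation_original} applied to a finite-type compactification \(\bar N\) of \(N\) (with puncture set \(X\)) factors such an \(f|_N = f_1\circ\cdots\circ f_k\) in \(\Homeo_0(N)\), where each \(f_i\) is supported in \(\Delta_i^\circ\) for a subsurface \(\Delta_i\subseteq N\) that is either a closed \(2\)-disk (when the corresponding disk in \(\bar N\) misses \(X\)) or homeomorphic to \(\mathbb{S}^1\times[0,\infty)\) (when it contains a single point of \(X\)); Alexander's trick, combined with \(\supp(f_i)\subseteq \Delta_i^\circ\), further shows that \(f_i\in \Homeo_0(\Delta_i)\) is the identity in a neighborhood of \(\partial\Delta_i\).

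Each \(f_i\) extends by the identity to a homeomorphism of \(M\), still denoted by \(f_i\). For a disk piece, enclose \(\Delta_i\) in a slightly larger closed disk \(\Delta_i'\subseteq M\); Anderson's Method (Proposition~\ref{prop:main2}(1)) with \(\Sigma = \Delta_i\) (compact) and \(\Delta = \Delta_i'\) (a compact stable Freudenthal subsurface of \(M\)) then writes \(f_i\) as a commutator of elements of \(\Homeo(M)\) supported in \((\Delta_i')^\circ\). For a half-open annulus piece, parametrize \(\Delta_i^\circ\cong \mathbb{S}^1\times\br\) so that \(\partial\Delta_i\) corresponds to the \(-\infty\) end; since \(f_i\) is the identity on a neighborhood of \(\partial\Delta_i\), Proposition~\ref{prop:annulus_perfect} (applied after reflecting the \(\br\)-coordinate so that the identity region is at the \(+\infty\) end) together with its ``moreover'' clause expresses \(f_i\) as a product of two commutators whose factors remain the identity in a neighborhood of \(\partial\Delta_i\) and hence extend by the identity to elements of \(\Homeo(M)\). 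Combining, \(f\in [\Homeo(M),\Homeo(M)]\).

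The main obstacle is the annulus case: the commutator factors supplied by Proposition~\ref{prop:annulus_perfect} must extend to \(\Homeo(M)\), not merely to self-homeomorphisms of \(\Delta_i^\circ\). This is exactly what the ``moreover'' clause delivers, by controlling supports near the compact boundary \(\partial\Delta_i\) so that extension by the identity on \(M\ssm \Delta_i\) is continuous.
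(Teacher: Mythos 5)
Your proof is correct and follows essentially the same route as the paper: construct \(U_N\) from the openness of \(\Homeo_0(N)\) (Theorem~\ref{thm:fisher}), fragment into pieces supported in disks or punctured-disk annuli, express disk pieces as commutators via Anderson's method and annulus pieces via Proposition~\ref{prop:annulus_perfect}, and check that the factors extend by the identity to \(\Homeo(M)\). The only difference is organizational: the paper cites Theorem~\ref{thm:finite-perfect} (and ``the construction'' in its proof, which is exactly the fragmentation-plus-Anderson/annulus argument you spell out) and asserts the support control, whereas you unpack that construction inline and verify the extendability of each factor explicitly.
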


\begin{proof}
As \( N \) is finite-type, by Theorem~\ref{thm:fisher},  \( \Homeo_0(N) \) is open in \( H(N) \); in particular, there is a finite collection of compact sets \( K_1, \ldots, K_m \) of \( N \) admitting  precompact open neighborhoods \( W_1, \ldots, W_m \) in \( N \) such that \( \bigcap_{i=1}^k U(K_i,W_i) \) is contained in \( \Homeo_0(N) \). 
Now, viewing the sets \( U(K_i, W_i) \) in \( \Homeo(M) \), we let \( U_N = \bigcap_{i=1}^k U(K_i,W_i) \subset \Homeo(M) \). 
If \( f \in U_N \) such that \( \supp(f) \subset N^\mathrm{o} \), then \( f \) restricts to the identity on an open neighborhood of \( \partial N \); therefore, viewing \( f \) as a homeomorphism of \( N \) and applying Theorem~\ref{thm:finite-perfect},  there exists \( g, h \in \Homeo_0(N) \) such that \( [g,h] = f \). 
Now,  Proposition~\ref{prop:annulus_perfect} (and the construction in Theorem~\ref{thm:finite-perfect}) guarantees that \( g \) and \( h \) can be chose such that \( \supp(g), \supp(h) \subset N^\mathrm{o} \), and can therefore be extended by the identity to \( M \), implying \( f \in [\Homeo(M), \Homeo(M)] \). 
\end{proof}

To finish the subsection, we record a factorization result for non-compact 2-manifolds. 

\begin{Prop}
\label{prop:fragmentation2}
Let \( M \) be a non-compact 2-manifold.
Suppose \( N,  \Omega_1, \ldots, \Omega_k \) is an open cover of \( M \) consisting of connected sets satisfying the following conditions:
\begin{enumerate}[(i)]
\item the closure of \( N \) is a finite-type subsurface of \( M \),
\item  \( \overline \Omega_i \) is a Freudenthal subsurface for \( i \in \{1, \ldots, k\} \),
\item \( \overline \Omega_i \cap \overline \Omega_j = \varnothing \) for distinct \( i,j \in \{1, \ldots, k\} \), and
\item for each \( i \in \{1, \ldots, k\} \),  there exists a simple closed curve \( b_i \) contained in \( N \) such that \( \Omega_i \cap N \) is an open annular neighborhood of \( b_i \).
\end{enumerate}
Then, there exists an open neighborhood \( U \) of the identity in \( \Homeo(M) \) such that every element of \( U \) can be factored as \( \prod_{i=0}^k g_i \), where  \( \supp(g_i) \subset \Omega_i \) for \( i \in \{1, \ldots, k\} \), \( \supp(g_0) \subset N \), and \( g_0 \in [\Homeo(M), \Homeo(M)] \).
\end{Prop}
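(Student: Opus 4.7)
The plan is to construct, for each $i \in \{1,\ldots,k\}$, a homeomorphism $g_i \in \Homeo(M)$ supported in $\Omega_i$ that agrees with $f$ on the ``deep'' part $\overline{\Omega_i}\setminus N$; then $g_0 := g_k^{-1}\cdots g_1^{-1}f$ will automatically be the identity on an open neighborhood of $M\setminus N$, hence supported in $N$, and for $f$ sufficiently close to the identity, $g_0$ will lie in the neighborhood $U_N$ from Lemma~\ref{lem:finite-commutator} and thus in $[\Homeo(M),\Homeo(M)]$. This is the non-compact analog of the proof of Proposition~\ref{prop:fragmentation_original}, with the annular overlaps $\Omega_i\cap N$ serving as the ``seams'' across which to cut and paste.

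For each $i$, choose nested open annular neighborhoods $A_i'' \subset A_i'$ of $b_i$ with $\overline{A_i'}\subset\Omega_i\cap N$. Let $U_N$ be the open neighborhood of the identity supplied by Lemma~\ref{lem:finite-commutator} applied to $N$, and set
\[
U \;=\; U_N \cap \bigcap_{i=1}^k \bigl( U^+(b_i, A_i'') \cap U(\overline{A_i''}, A_i') \bigr),
\]
which is an open neighborhood of the identity in $\Homeo(M)$ by Lemma~\ref{lem:opengamma}. Given $f\in U$, Proposition~\ref{prop:curves} applied inside each annulus $A_i''$ yields $\psi_i\in\Homeo(M)$ with $\supp(\psi_i)\subset A_i''$ and $\psi_i(f(b_i))=b_i$; since the $A_i''$ lie in the pairwise disjoint $\Omega_i$'s, the composition $\phi := \psi_k\circ\cdots\circ\psi_1\circ f$ satisfies $\phi(b_i)=b_i$ for every $i$.

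Now, for each $i$, let $\Omega_i^{\mathrm{deep}}$ denote the component of $\Omega_i\setminus b_i$ containing $\Omega_i\setminus N$, and let $\Omega_i^{\mathrm{col}}$ denote the other component (a half-open annular collar contained in $\Omega_i\cap N$ and bounded by $b_i$ on one side and opening toward $\partial\Omega_i$ on the other). For $f$ close to the identity, $\phi$ preserves these two components. Define $g_i\co M\to M$ by setting $g_i = \phi$ on $\overline{\Omega_i^{\mathrm{deep}}}$, $g_i=\mathrm{id}$ on the complement of $\Omega_i$, and on $\Omega_i^{\mathrm{col}}$ by interpolating $\phi|_{b_i}$ (close to $\mathrm{id}_{b_i}$, hence isotopic to it) smoothly to the identity on a curve near $\partial\Omega_i$ using a parameterized version of Alexander's trick on the half-open annulus. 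The resulting $g_i$ is a homeomorphism of $M$ with $\supp(g_i)\subset\Omega_i$. Because the $\Omega_i$ have pairwise disjoint closures, the supports of the $g_i$'s are disjoint; setting $g_0:=g_k^{-1}\cdots g_1^{-1}f$, one checks that on each $\Omega_j\setminus N$ the only non-identity factor is $g_j$, which equals $\phi=f$ there (since $\psi_i$ is supported in $A_i''\subset N$ and, for $f$ close to the identity, $f(x)\notin A_i''$ for $x\in\Omega_j\setminus N$). Hence $g_0=\mathrm{id}$ on an open neighborhood of $M\setminus N$, giving $\supp(g_0)\subset N$. Shrinking $U$ if necessary so that $g_0 \in U_N$, Lemma~\ref{lem:finite-commutator} puts $g_0\in [\Homeo(M),\Homeo(M)]$.

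The main obstacle is the construction of the interpolating $g_i$ on the collar $\Omega_i^{\mathrm{col}}$: we must produce a genuine homeomorphism of a half-open annulus that equals $\phi|_{b_i}$ on $b_i$ and smoothly transitions to the identity near $\partial\Omega_i$. This hinges on $\phi|_{b_i}$, being close to the identity, being isotopic to $\mathrm{id}_{b_i}$ through self-homeomorphisms of $b_i$ (guaranteed by $f\in U^+(b_i,A_i'')$ together with Lemma~\ref{lem:opengamma}) and on realizing this isotopy as a collar homeomorphism via Alexander's trick. A secondary technicality is showing that the map $f\mapsto g_0$ can be arranged to be continuous at $\mathrm{id}$, so that $g_0$ indeed lands in the prescribed $U_N$ for $f$ in a sufficiently small neighborhood of $\mathrm{id}$; this can be achieved by choosing the isotopies $\psi_i$ and the interpolations in a canonical way that depends continuously on $f$.
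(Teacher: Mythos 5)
Your overall plan is the right one and closely mirrors the paper's: use the open sets $U^+(b_i,\cdot)$ from Lemma~\ref{lem:opengamma} to control the $b_i$'s, peel off the pieces supported in the $\Omega_i$'s, and argue the residue $g_0$ lands in $U_N$ so that Lemma~\ref{lem:finite-commutator} applies. However, the two ``obstacles'' you flag at the end are genuine gaps that you do not close, and the paper's proof is organized precisely to avoid them.

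First, you construct each $g_i$ by an explicit interpolation (Alexander trick on the collar) between $\phi|_{b_i}$ and the identity. This is not wrong, but it is where your later argument gets stuck (see below), and it is unnecessary. The paper instead observes that, since $f\in U^+(b_i,R_i)$, one can find a single homeomorphism $g$ supported in $\bigcup_i R_i$ so that $g\circ f$ restricts to the \emph{identity} on a closed annular neighborhood of each $b_i$ --- not merely preserving $b_i$ as a set. Once $g\circ f$ is the identity near each $b_i$, the decomposition $g\circ f = g_0' g_1\cdots g_k$ is immediate by cutting along the $b_i$'s; no interpolation or continuity-in-$f$ is needed, and the $g_i$ are literally restrictions of $g\circ f$.

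Second, and more seriously, your verification that $g_0\in U_N$ (``Shrinking $U$ if necessary so that $g_0\in U_N$'' and ``a secondary technicality is showing that the map $f\mapsto g_0$ can be arranged to be continuous at $\mathrm{id}$'') is circular as stated. The element $g_0 = g_k^{-1}\cdots g_1^{-1} f$ depends on the non-canonical choices of the $\psi_i$'s and the collar interpolations; there is no a priori continuous assignment $f\mapsto g_0$, and in particular no reason the interpolating maps cannot throw points of $K_j$ out of $W_j$. The paper avoids this with an algebraic argument that requires no continuity of any construction: write $U_N=\bigcap_j U(K_j,W_j)$, choose the annuli $R_i$ disjoint from each $\overline{W_j}$, and compute directly. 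Since $g_0 = f\circ(g_1\cdots g_k)^{-1}$ and each $g_i$ is the restriction of $g\circ f$ to the deep side of $b_i$ (extended by the identity), for $x\in K_j$ one has either $(g_1\cdots g_k)^{-1}(x)=x$ (if $x$ is on the shallow side of every $b_i$), giving $g_0(x)=f(x)\in W_j$, or $(g_1\cdots g_k)^{-1}(x) = (g\circ f)^{-1}(x) = f^{-1}(g^{-1}(x)) = f^{-1}(x)$ (since $g$ is supported in $\bigcup R_i$, disjoint from $K_j\subset W_j$), giving $g_0(x)=x\in K_j\subset W_j$. Thus $g_0(K_j)\subset W_j$ for every $j$, for any admissible choices, and $g_0\in U_N$ automatically. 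To repair your argument you would need to additionally impose that the $A_i''$ are disjoint from every $\overline{W_j}$ and that the interpolation is supported inside $A_i''$, and then carry out essentially this same computation in place of the continuity claim.
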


\begin{proof}
Let \( U_N \) be the neighborhood of the identity in \( \Homeo(M) \) given by Lemma~\ref{lem:finite-commutator}.
By shrinking \( U_N \), we may assume that there exist compact subsets \( K_1, \ldots, K_m \) of \( M \) contained in \( N \) admitting relatively compact open neighborhoods \( W_1, \ldots, W_m \), also contained in \( N \), such that \( U_N = \bigcap_{i=1}^m U(K_i,W_i) \). 
For each \( i \in \{1, \ldots, k\} \), let \( R_i \) be an open annular neighborhood of \( b_i \) whose closure is contained in \( \Omega_i \cap N \) and is disjoint from \( \overline W_i \) for \( i \in \{1, \ldots,m\} \).

Let
\[
U = U_N \cap \left[ \bigcap_{i=1}^k U^+(b_i, R_i) \right].
\]
Note that \( U \) is nonempty as it contains the identity. 
Fix \( f \in U \).
By the definition of \( U \), we have  \( f(b_i) \subset R_i \) and \( f(b_i) \) is isotopic to \( b_i \) for each \( i \in \{1, \ldots, k\} \) via an isotopy supported in \( R_i \).
In particular, there exists \( g \in \Homeo(M) \) supported in \( R_1 \cup \cdots \cup R_k \) such that \( g\circ f \) restricts to the identity on an open annular neighborhood of \( b_i \) for all \( i \in \{1, \ldots, k\} \).   
Therefore, we can write \(  g \circ f = g_0'g_1g_2\cdots g_k  \), where  \( g_i \) is supported on \( \Omega_i \) for \( i \in \{1, \ldots, k\} \) and \( g_0' \) is supported on  \( N \).

Since \( g \) is supported in \( R_1 \cup \cdots \cup R_k \), it follows that \( g_0 = g^{-1}\circ g_0' \) is supported in \( N \) and restricts to the identity on an open neighborhood of \( \partial N \).
Moreover, \( g_0|_{K_i} = f|_{K_i} \), which implies \( g_0 \in U_N \) and hence \( g_0 \in [\Homeo(M), \Homeo(M)] \).
Therefore, \( f = \prod_{i=0}^k g_i \) is the desired decomposition.  
\end{proof}

\subsection{Defining the mapping class group}

Given a 2-manifold \( M \), we define the \emph{mappping class group}\index{mapping class group}\footnote{The mapping class group is sometimes referred to as the \emph{homeotopy group} in the literature.} of \( M \), denoted \( \mcg(M) \), to be the group \( \Homeo(M) / \Homeo_0(M) \). 
In the literature, the mapping class group is often defined to be the group of isotopy classes of self-homeomorphisms of \( M \), which agrees with the group \( \pi_0(\Homeo(M)) \). 
We finish this section by proving that these definitions agree.

We have already seen in Theorem~\ref{thm:fisher} that if \( M \) is of finite type, then \( \Homeo_0(M) \) is path connected, which implies that \( \mcg(M) = \pi_0(\Homeo(M)) \). 
It is therefore left to check the infinite-type case, for which we require the following proposition, established by Hern\'andez--Morales--Valdez \cite{HernandezAlexander} in the orientable case and Hern\'andez--Hidber \cite{HernandezFirst} in the non-orientable case (also see the work of Shapiro \cite{ShapiroAlexander} for a more general result).

\begin{Prop}
\label{prop:identity}
If a self-homeomorphism of an infinite-type 2-manifold fixes the isotopy class of every simple closed curve, then it is isotopic to the identity.
\qed
\end{Prop}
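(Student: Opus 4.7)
The plan is to reduce to the classical finite-type Alexander method via an exhaustion argument. First I would fix a principal exhaustion \( \{\Sigma_n\}_{n \in \bn} \) of \( M \) by compact finite-type subsurfaces satisfying \( \Sigma_n \subset \Sigma_{n+1}^{\mathrm{o}} \) and chosen so that every component of \( \partial \Sigma_n \) is an essential two-sided simple closed curve in \( M \), no two boundary components of \( \Sigma_n \) are isotopic in \( M \), and no component of the closure of \( M \ssm \Sigma_n \) is an annulus or a disk. Such an exhaustion exists since \( M \) is of infinite type. By hypothesis, \( f \) preserves the isotopy class of each boundary component \( b \) of \( \partial \Sigma_n \), so using Proposition~\ref{prop:curves} applied to an annular neighborhood of \( b \) (and the openness provided by Lemma~\ref{lem:opengamma}), followed by Alexander's trick on the annulus, I can produce a compactly supported isotopy from \( f \) to a homeomorphism that fixes \( \partial \Sigma_n \) pointwise.

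Next, I appeal to the classical finite-type Alexander method: a self-homeomorphism of a compact, sufficiently complex surface \( \Sigma \) that fixes \( \partial \Sigma \) pointwise and preserves the isotopy class of every essential simple closed curve in \( \Sigma \) is isotopic, relative to \( \partial \Sigma \), to the identity. By the choice of exhaustion, every essential simple closed curve in \( \Sigma_n \) has its \( M \)-isotopy class agreeing with its \( \Sigma_n \)-isotopy class, so the hypothesis transfers. Iterating this step, I recursively construct a sequence of homeomorphisms \( f_n \), each isotopic to \( f \), such that \( f_n|_{\Sigma_n} = \mathrm{id} \) and such that the isotopy from \( f_{n-1} \) to \( f_n \) is supported in \( M \ssm \Sigma_{n-1}^{\mathrm{o}} \). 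Because the family of supports is locally finite, the infinite concatenation of these isotopies is a well-defined continuous isotopy from \( f \) to a limit homeomorphism \( f_\infty \) that restricts to the identity on every \( \Sigma_n \); hence \( f_\infty = \mathrm{id}_M \) and \( f \) is isotopic to the identity.

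The main obstacle is the step arranging that \( f \) fixes each component of \( \partial \Sigma_n \) pointwise: preserving the isotopy class of a two-sided curve \( b \) does not specify an orientation of \( b \), so a priori the restriction \( f|_b \) might reverse the orientation. Resolving this requires either a topological argument distinguishing the two sides of \( b \) (e.g., when the two complementary pieces of \( b \) in \( M \) are non-homeomorphic, which can be arranged by refining the exhaustion) or, in the non-orientable case, composing with a double slide as introduced in Section~\ref{sec:Anderson} to swap the sides of \( b \). A secondary technical point is ensuring that the Alexander-method isotopy on each \( \Sigma_n \) is realized relative to \( \partial \Sigma_n \), so that extending by the identity on \( M \ssm \Sigma_n \) yields a well-defined ambient isotopy of \( M \).
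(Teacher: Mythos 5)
Your plan---exhaust by compact finite-type subsurfaces, isotope \( f \) to fix each \( \partial\Sigma_n \) pointwise, apply the finite-type Alexander method on \( \Sigma_n \), and take a direct limit---has the same overall shape as the arguments cited in the text (Hern\'andez--Morales--Valdez, Hern\'andez--Hidber) and as the sketch given after the proposition. The central step, however, has a genuine gap. The claim you rely on---that a homeomorphism of a compact surface \( \Sigma \) fixing \( \partial\Sigma \) pointwise and preserving the isotopy class of every essential simple closed curve in \( \Sigma \) is isotopic rel \( \partial\Sigma \) to the identity---is false whenever \( \partial\Sigma \neq \varnothing \): a Dehn twist about a curve parallel to a boundary component preserves the free isotopy class of every simple closed curve in \( \Sigma \) (any such curve can be isotoped off the twisting annulus), yet represents a nontrivial element of \( \mcg(\Sigma, \partial\Sigma) \). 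This is precisely why the finite-type Alexander method (see \cite[Section~2.3]{FarbPrimer}) is formulated for collections containing both curves \emph{and} arcs. Your justification that ``the hypothesis transfers'' only addresses curves interior to \( \Sigma_n \), which is not enough to invoke it.

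To repair this you must also control arcs. An essential arc \( \alpha \) in \( \Sigma_n \) with endpoints on \( \partial\Sigma_n \) can be completed to a simple closed curve \( \gamma_\alpha \) in \( M \) by running through the noncompact complementary pieces, and preservation of the isotopy class of \( \gamma_\alpha \) (together with the already-arranged fact that \( f|_{\partial\Sigma_n} = \mathrm{id} \)) constrains the isotopy class of the arc \( f(\alpha) \). It is exactly this constraint that rules out boundary twists: such a twist changes the class of \( \gamma_\alpha \) in \( M \), so it is excluded by the global hypothesis---but your argument, as written, never invokes curves of \( M \) crossing \( \partial\Sigma_n \). The sketch in the text sidesteps the difficulty by choosing a filling system of simple closed curves in \( M \) itself whose complementary regions are disks, so that the final step is the genuine Alexander trick applied to disks rather than a black-box ``Alexander method'' for a subsurface with boundary.
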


The idea of the proof is as follows: fix a collection of simple closed curves that fill\footnote{A set of simple closed curves on a 2-manifold \emph{fill} if the complementary components of their union are either disks or once-punctured disks.}  the 2-manifold and whose intersection pattern has ``simple combinatorics''.  
Then, given some homeomorphism fixing the isotopy class of every simple closed curve, recursively perform isotopies---with increasing support---mapping the image of an increasing finite subset of curves  in the collection back to itself.
One does this in a such a way that the direct limit of the isotopies can be taken to obtain an isotopy returning the image of each curve to its original position simultaneously.  
It is then possible to apply the Alexander Trick simultaneously to each disk in the complement of the collection of curves. 
This is referred to as the Alexander Method (see \cite[Section~2.3]{FarbPrimer} for a discussion in the finite-type case).

It is worth noting that Proposition~\ref{prop:identity} is false for a small number of finite-type 2-manifolds: for example, the hyperelliptic involution of the compact orientable genus two 2-manifold fixes the isotopy class of every simple closed curve. 

\begin{Thm}
\label{thm:mcg_defined}
Let \( M \) be a 2-manifold.
Then, \( \Homeo_0(M) \) is path connected. 
\end{Thm}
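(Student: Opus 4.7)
The plan is to reduce the claim to Proposition~\ref{prop:identity}. The finite-type case is already contained in Theorem~\ref{thm:fisher}, so I assume \( M \) is of infinite type. Given \( f \in \Homeo_0(M) \), my goal is to show that \( f \) fixes the isotopy class of every simple closed curve in \( M \); Proposition~\ref{prop:identity} will then produce an isotopy from \( f \) to the identity, and the theorem of Fox cited just before Theorem~\ref{thm:fisher} will upgrade this to a path in \( \Homeo(M) \) from \( f \) to the identity. Since this path is forced to lie in the identity component, \( \Homeo_0(M) \) will be path connected.

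For each simple closed curve \( c \subset M \), set
\[ S_c = \{\, g \in \Homeo(M) : g(c) \text{ is isotopic to } c\,\}. \]
A direct check shows that \( S_c \) is a subgroup: pre- and post-composing an ambient isotopy witnessing \( h(c) \sim c \) with any homeomorphism produces an ambient isotopy witnessing \( gh(c) \sim g(c) \sim c \), and similarly for inverses. The main step is to verify that \( S_c \) is open. Given \( g \in S_c \), I would choose an open tubular neighborhood \( N \) of \( g(c) \)---an annulus when \( c \) is two-sided and a M\"obius band when \( c \) is one-sided---and consider the open set \( U(c, N) = \{ h \in \Homeo(M) : h(c) \subset N \} \), which contains \( g \). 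For \( h \in U(c, N) \), the curve \( h(c) \) lies in \( N \) and has the same sidedness in \( M \) as \( c \) (since \( h \) is a global homeomorphism), so classifying essential simple closed curves inside \( N \) up to isotopy pins down \( h(c) \) as isotopic to the core of \( N \)---hence to \( g(c) \), and hence to \( c \); the only other case is the degenerate one where \( c \) bounds a disk in \( M \), in which case \( h(c) \) does too and any two inessential curves in \( M \) are isotopic.

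Open subgroups of topological groups are automatically clopen, so \( \Homeo_0(M) \subset S_c \) for every simple closed curve \( c \). Every \( f \in \Homeo_0(M) \) therefore fixes the isotopy class of every simple closed curve, and Proposition~\ref{prop:identity} together with Fox's theorem completes the proof.

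The step I expect to be most delicate is the one-sided case of the openness argument. Lemma~\ref{lem:opengamma} produces the off-the-shelf tool \( U^+(\gamma, A) \) only for two-sided curves, so for one-sided \( c \) I must instead appeal directly to the classification of simple closed curves inside a M\"obius band and invoke the fact that the sidedness of \( h(c) \) is inherited from that of \( c \) through the global homeomorphism \( h \) (which rules out the possibility of \( h(c) \) being isotopic to \( \partial N \) within \( N \)). This is elementary but is the one piece of the argument that does not come off the shelf from the earlier results in the excerpt.
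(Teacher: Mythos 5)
Your proof is correct and takes essentially the same approach as the paper: after disposing of the finite-type case via Theorem~\ref{thm:fisher}, you reduce to Proposition~\ref{prop:identity} by showing that every element of \( \Homeo_0(M) \) preserves the isotopy class of each simple closed curve \( c \), using the local classification of simple closed curves in an annular or M\"obius band neighborhood together with Fox's equivalence of isotopy and path-connectedness in \( \Homeo(M) \). The only difference is packaging: you phrase the topological-group step as ``the stabilizer \( S_c \) is an open subgroup, hence clopen, hence contains \( \Homeo_0(M) \),'' while the paper observes that the open identity-neighborhood \( U(c,A_c)\cap\Homeo_0(M) \) consists of isotopy-class-preserving maps and generates the connected group \( \Homeo_0(M) \)---two formulations of the same one-line fact (and note the paper's proof already uses the plain \( U(c,A_c) \) rather than \( U^+ \), so your worry about Lemma~\ref{lem:opengamma} applying only to two-sided curves is not an obstacle for either version).
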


\begin{proof}
The finite-type case is handled in Theorem~\ref{thm:fisher}, so we may assume that \( M \) is of infinite-type. 
Let \( c \) be a simple closed curve on \( M \).
If \( c \) is two-sided, let \( A_c \) be an annular neighborhood of \( c \); otherwise, let \( A_c \) be a M\"obius band embedded in \( M \) with \( c \) as its core curve. 
Let \( U_c = U(c, A_c) \cap \Homeo_0(M) \).
Note that \( U_c \) contains the identity and that every element of \( U_c \) fixes the isotopy class of \( c \). 

Fix \( f \in \Homeo_0(M) \).
Then, as every open neighborhood of the identity in a connected topological group generates the group, we can write \( f = f_1f_2\cdots f_k \) with each \( f_j \in U_c \). 
Therefore, \( f \) must fix the isotopy class of \( c \). 
As \( c \) was arbitrary, it follows that every homeomorphism in \( \Homeo_0(M) \) preserves the isotopy class of every simple closed curve in \( M \); hence, by Proposition~\ref{prop:identity}, \( f \) is isotopic to the identity, and the result follows. 
\end{proof}

\begin{Cor}
Let \( M \) be a 2-manifold.
Then, \( \mcg(M) = \pi_0(\Homeo(M)) \). 
\qed
\end{Cor}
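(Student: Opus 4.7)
The plan is to reduce the corollary to Theorem~\ref{thm:mcg_defined} together with standard facts about topological groups. Recall that $\mcg(M)$ has been defined as the quotient $\Homeo(M)/\Homeo_0(M)$, where $\Homeo_0(M)$ is the connected component of the identity, whereas $\pi_0(\Homeo(M))$ denotes the set of path components. The two notions coincide exactly when the path component of the identity equals the connected component of the identity, and when this common subgroup is normal so that the quotient set inherits a group structure.

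First I would observe that in any topological group $G$, the path component $P_0$ of the identity is a normal subgroup: it is a subgroup because concatenation of paths from $1$ to $g$ and from $1$ to $h$ produces a path from $1$ to $gh$, and it is normal because conjugation by any $x \in G$ is a continuous self-map fixing $1$ and hence preserving $P_0$. Moreover, for each $h \in G$ the left-translation map $g \mapsto hg$ is a homeomorphism, so the path component of $h$ is exactly $hP_0$; thus the path components of $G$ are precisely the cosets of $P_0$, and $\pi_0(G) = G/P_0$ as groups.

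Next, I would apply this to $G = \Homeo(M)$. By Theorem~\ref{thm:mcg_defined}, $\Homeo_0(M)$ is path connected, so $\Homeo_0(M) \subseteq P_0$. On the other hand $P_0$ is contained in the connected component of the identity, which by definition is $\Homeo_0(M)$, so $P_0 = \Homeo_0(M)$. Combining with the preceding paragraph yields
\[
\pi_0(\Homeo(M)) \;=\; \Homeo(M)/P_0 \;=\; \Homeo(M)/\Homeo_0(M) \;=\; \mcg(M),
\]
as desired.

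There is essentially no obstacle here beyond invoking Theorem~\ref{thm:mcg_defined}; the work of the corollary has already been done in that theorem and in the general topological-group lemma that path components of a topological group are cosets of the identity's path component. The only minor subtlety worth stating explicitly in the write-up is why the resulting bijection is a group isomorphism, and this is immediate from the fact that both quotients are formed with respect to the same normal subgroup of $\Homeo(M)$.
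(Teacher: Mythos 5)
Your proposal is correct and matches the paper's intent exactly: the corollary is stated with an immediate $\qed$ precisely because it reduces to Theorem~\ref{thm:mcg_defined} via the standard topological-group fact that the path components of a topological group are the cosets of the path component of the identity, which you have spelled out. Your argument that $P_0 = \Homeo_0(M)$ (path-connectedness of $\Homeo_0(M)$ giving one inclusion and connectedness of $P_0$ the other) is the intended justification, and your remark about why the resulting bijection is an isomorphism of groups is a worthwhile point to make explicit.
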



\section{Equivalent notions of self-similarity}
\label{sec:self-similar}

As noted in the introduction, the definition of a self-similar 2-manifold given differs from the original definition in \cite{MalesteinSelf}. 
The main purpose of this section is to establish the equivalence of the two definitions for non-compact 2-manifolds.
Along the way, we will introduce the definition of maximally stable ends, half-spaces, and uniquely self-similar 2-manifolds.

Let us recall the definition of self-similarity given previously and show the equivalence with the definition given by Malestein--Tao. 
First recall that a subset \( Y \) of a topological space \( X \) is \emph{displaceable} if there exists a homeomorphism \( f \co X \to X \) such that \( f(Y) \cap Y = \varnothing \).
A 2-manifold \( M \) is \emph{self-similar} if every proper compact subset of \( M \) is displaceable and every separating simple closed curve has a complementary component homeomorphic to \( M \) with a point removed.

\begin{Def}
\label{def:maximally-stable}
An end \( \mu \) of a 2-manifold \( M \) is \emph{maximally stable}\index{end!maximally stable|textbf} if \( \sE(M) \) is a stable neighborhood of \( \mu \). 
\end{Def}

Note that the existence of a maximally stable end is equivalent to \( \sE(M) \) being stable. 
The following proposition establishes the equivalence of the definition of self-similar given here and the one given in \cite{MalesteinSelf} for non-compact 2-manifolds.

\begin{Prop}
\label{prop:noncompactdef}
Let \( M \) be a non-compact 2-manifold. 
Then, \( M \) is self-similar if and only if every compact subset of \( M \) is displaceable and \( M \) has a maximally stable end. 
\end{Prop}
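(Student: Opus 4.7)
The plan is to relate the second condition in the definition of self-similarity---``for every separating simple closed curve $c$, some component of $M \ssm c$ is homeomorphic to $M \ssm \{p\}$''---to the stability of $\sE(M)$, via the bijection between clopen decompositions of $\sE(M)$ and separating simple closed curves (Lemma~\ref{lem:basis}), combined with Proposition~\ref{prop:equiv_def}. Throughout I will use the classification of surfaces (Theorem~\ref{thm:classification}): for a component $\Omega_i$ of $M \ssm c$, whose closure has boundary $c$, we have $\sE(\Omega_i) = \widehat{\Omega_i} \sqcup \{e_c\}$ and $\sE(M \ssm \{p\}) = \sE(M) \sqcup \{e_p\}$, where $e_c$ and $e_p$ are new isolated planar orientable ends.

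For the forward direction, the displaceability conditions agree since every compact subset of a non-compact $M$ is proper. To produce a maximally stable end, I would verify via Proposition~\ref{prop:equiv_def} that $\sE(M)$ is self-similar: given an open decomposition $\sE(M) = \sV \cup \sW$, first refine to a clopen partition $\sE(M) = \sU \sqcup \sU'$ with $\sU \subset \sV$, then use Lemma~\ref{lem:basis} to realize this partition as $\widehat{\Omega_1} \sqcup \widehat{\Omega_2}$ for the components of $M \ssm c$, where $c$ is a suitable separating simple closed curve. Self-similarity gives some $\Omega_i \cong M \ssm \{p\}$, hence a marked homeomorphism $\widehat{\Omega_i} \sqcup \{e_c\} \cong \sE(M) \sqcup \{e_p\}$. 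Since $e_c$ and $e_p$ are both isolated planar orientable ends, composing with the transposition of $e_p$ with the image of $e_c$---a valid marked self-homeomorphism of $\sE(M) \sqcup \{e_p\}$---upgrades this to a homeomorphism sending $e_c$ to $e_p$, whose restriction yields $\widehat{\Omega_i} \cong \sE(M)$. Hence $\sV$ or $\sW$ contains a clopen subset homeomorphic to $\sE(M)$.

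For the reverse direction, given a separating $c$ with components $\Omega_1, \Omega_2$, after relabeling so that the maximally stable end $\mu$ lies in $\widehat{\Omega_1}$, I would apply Proposition~\ref{prop:well-defined} to the pair $\widehat{\Omega_1} \subset \sE(M)$---both clopen neighborhoods of $\mu$ contained in the stable neighborhood $\sE(M)$---to obtain a marked homeomorphism $\widehat{\Omega_1} \cong \sE(M)$. Adjoining isolated planar orientable ends gives $\sE(\Omega_1) \cong \sE(M \ssm \{p\})$ as marked spaces, so by Theorem~\ref{thm:classification} it remains to match the reduced genus and orientability class of $\Omega_1$ with those of $M$.

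The main obstacle is this genus-and-orientability matching, and I would resolve it by leveraging the displaceability hypothesis through iterated displacement: given a displaceable compact subsurface $\Sigma \subset M$, recursively defining $K_0 = \Sigma$ and $K_{n+1} = K_n \cup f_n(K_n)$ where $f_n$ displaces $K_n$ produces arbitrarily many pairwise-disjoint copies of $\Sigma$ in $M$. If $\Sigma$ has positive reduced genus, this forces $M$ to have infinite reduced genus; if $\Sigma$ is a M\"obius band, the disjoint family must accumulate to a non-orientable end, so $\Eno(M) \neq \varnothing$. Contrapositively, displaceability of every compact subset forces $M$ to be either planar or of infinite genus, and any non-orientable such $M$ is infinitely non-orientable. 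In the planar case $\Omega_1$ is automatically planar and orientable; in the infinite-genus case, since the marked homeomorphism $\widehat{\Omega_1} \cong \sE(M)$ preserves $\Enp(M)$ and $\Eno(M)$, $\Omega_1$ inherits non-planar ends (hence has infinite genus) and, when $M$ is infinitely non-orientable, non-orientable ends as well. Either way, the orientability classes of $\Omega_1$ and $M$ agree, and Theorem~\ref{thm:classification} yields $\Omega_1 \cong M \ssm \{p\}$.
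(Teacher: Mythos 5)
Your proposal is correct and follows the same structural route as the paper's argument---Proposition~\ref{prop:well-defined} and the classification of surfaces for the ``if'' direction, Proposition~\ref{prop:equiv_def} with Lemma~\ref{lem:basis} for the ``only if'' direction. What you add, and what is genuinely needed, is the genus-and-orientability matching via iterated displacement together with the end-swap observation for the auxiliary isolated ends $e_c$ and $e_p$; the paper compresses these into bare assertions (``$\Delta$ is a stable Freudenthal subsurface,'' ``$\sU_i$ is homeomorphic to $\sE(M)$'') without showing how the displaceability hypothesis enters, and it must enter, since a marked homeomorphism $\widehat\Delta \cong \sE(M)$ alone does not determine the genus or orientability class of $\Delta$ (the once-punctured torus has a maximally stable end but is not self-similar). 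One small tightening: for the M\"obius-band case, instead of arguing accumulation to a non-orientable end---which requires interleaving the displacements with a compact exhaustion to guarantee local finiteness---you can argue directly that even or odd non-orientability gives a compact core $\Sigma$ with orientable complement, and displacing a M\"obius band off $\Sigma$ embeds it in an orientable subsurface, a contradiction.
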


\begin{proof}
First suppose that \( M \) has a maximally stable end \( \mu \) and every compact subset of \( M \) is displaceable. 
Let \( c \) be a separating simple closed curve in \( M \).
Then, there is a component of \( M \ssm c \) whose closure \( \Delta \) satisfies \( \mu \in \widehat\Delta \). 
Then, by Proposition~\ref{prop:well-defined}, \( \widehat \Delta \) is homeomorphic to \( \sE(M) \), and \( \Delta \) is a stable Freudenthal subsurface. 
By the classification of surfaces, \( \Delta \) is homeomorphic to \( M \) with an open disk removed, and hence \( M \) is self-similar. 

Conversely, suppose that \( M \) is self-similar. 
Let \( \sU_1 \) and \( \sU_2 \) be clopen subsets of \( \sE(M) \) such that \( \sE(M) = \sU_1 \cup \sU_2 \). 
We can then find a separating simple closed curve \( c \) such that \( \sU_i = \widehat \Omega_i \), where \( \Omega_1 \) and \( \Omega_2 \) are the complement components of \( M \ssm c \).
The self-similarity of \( M \) guarantees that at least one of \( \sU_1 \) and \( \sU_2 \) is homeomorphic to \( \sE(M) \). 
In particular, by Proposition~\ref{prop:equiv_def}, \( \sE(M) \) is stable, and hence contains a maximally stable end \( \mu \).
\end{proof}

\begin{Def}
\label{def:half-space}
A \emph{half-space}\index{half-space|textbf} of a 2-manifold \( M \) is a dividing Freudenthal subsurface \( H \) such that the closure of \( M \ssm H \) is homeomorphic to \( H \).
\end{Def} 

Recall a self-similar 2-manifold is \emph{perfectly self-similar} if  \( M \# M \) is homeomorphic to \( M \).

\begin{Prop}
\label{prop:half-space}
A 2-manifold is perfectly self-similar if and only if it contains a half-space.
\end{Prop}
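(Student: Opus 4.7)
My plan is to establish both implications using the classification of surfaces together with the stable-set machinery of Section~\ref{sec:stable-sets}. The core observation is that a half-space $H \subset M$ induces a symmetric decomposition $M = H \cup_c H'$ with $H' = \overline{M \ssm H} \cong H$, and by the classification the two sides are abstractly homeomorphic to $M \ssm D$ (for an open disk $D$) precisely when their genus and orientability class match those of $M$; this matching is forced by the trim condition on $H$ together with the gluing identity $g_M = 2 g_H$.

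For the forward direction, I first handle the compact case: the only compact self-similar 2-manifold is $\bS^2$, and any closed $2$-disk is a half-space. For non-compact $M$ with $M \# M \cong M$, the connect-sum circle $c$ yields two sides each abstractly homeomorphic to $M \ssm D$; let $H$ be the closure of one and I verify it is a half-space. Stability of $\widehat H$ follows from $\widehat H \cong \sE(M)$ (removing an open disk preserves the end space) together with the stability of $\sE(M)$ (Proposition~\ref{prop:noncompactdef}), while trim is immediate from the classification. The subtle point is infinitude of $\mathcal S(\widehat H)$: the decomposition forces $\sE(M) \cong \widehat H \sqcup \widehat H$, so $|\mathcal S(\sE(M))| = 2|\mathcal S(\sE(M))|$, and combined with the dichotomy of Proposition~\ref{prop:dichotomy} and the existence of a maximally stable end, this yields $\mathcal S(\widehat H)$ perfect. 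The dividing condition is witnessed by a boundary-parallel simple closed curve $c' \subset (H')^{\mathrm{o}}$ whose far side inside $H'$ (the complement of an open collar of $c$) is a Freudenthal subsurface homeomorphic to $H$ and disjoint from $H$.

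For the reverse direction, I assume $H \subset M$ is a half-space. Proposition~\ref{prop:disjoint-union} applied to $\widehat H$ (stable with $\mathcal S(\widehat H)$ perfect) gives $\sE(M) = \widehat H \sqcup \widehat{H'} \cong \widehat H \sqcup \widehat H \cong \widehat H$, so $\sE(M)$ is stable and $M$ has a maximally stable end. The trim hypothesis on $H$ dichotomizes $g_H \in \{0, \infty\}$ and forces $H$ to be either orientable or infinitely non-orientable; combined with $g_M = 2 g_H$, this gives $g_M = g_H$ and matching orientability classes, so by the classification $H \cong M \ssm D$ and hence $M \cong M \# M$. To complete self-similarity via Proposition~\ref{prop:noncompactdef}, I show every proper compact $K \subsetneq M$ is displaceable: I choose a clopen stable neighborhood $\widehat \Omega$ of a maximally stable end $\mu$ whose corresponding subsurface $\overline \Omega$ is disjoint from $K$ and satisfies $\widehat \Omega \cong \sE(M)$; then $\Sigma_1 := M \ssm \Omega$ is a Freudenthal subsurface containing $K$ with $\widehat{\Sigma_1} \cong \sE(M)$ (still containing a maximally stable end), hence $\Sigma_1 \cong H$, and inside $\Omega$ I pick a smaller Freudenthal subsurface $\Sigma_2 \cong H$ disjoint from $\Sigma_1$. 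Taking $(\Delta_1, \Delta_2) = (\Sigma_1, \Sigma_2)$ and the auxiliary disjoint copies $(\Delta_1', \Delta_2') = (\Sigma_2, \Sigma_1)$, Lemma~\ref{lem:homogeneous} with $\Sigma = M$ produces an $f \in \Homeo(M)$ exchanging $\Sigma_1$ and $\Sigma_2$, which displaces $K$. The main obstacle throughout is the delicate bookkeeping of invariants (end space, genus, orientability) needed to invoke the classification cleanly---in particular, the cardinality argument forcing $\mathcal S(\widehat H)$ perfect in the forward direction and the trim-driven dichotomy $g_H \in \{0, \infty\}$ in the reverse direction.
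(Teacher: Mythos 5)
Your proof is correct, and its skeleton---the decomposition $M = H \cup_c H'$ with $H' = \overline{M \ssm H} \cong H$, Proposition~\ref{prop:disjoint-union} to identify $\sE(M)$ with $\widehat H$, the dichotomy of Proposition~\ref{prop:dichotomy} to force $\mathcal S(\widehat H)$ perfect, and the classification of surfaces to upgrade the end-space identification to $H \cong M \ssm D$---is the same as the paper's. Two differences are worth recording. First, in the direction ``perfectly self-similar $\Rightarrow$ half-space,'' the paper establishes stability of $\widehat H$ by directly displacing the compact set $\partial \Delta \cup \partial H$ and invoking the self-similarity criterion of Proposition~\ref{prop:equiv_def}, whereas you obtain a maximally stable end from Proposition~\ref{prop:noncompactdef} and transport stability through the abstract homeomorphism $\widehat H \cong \sE(M)$; both routes are valid. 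Second, and more substantively, in the direction ``half-space $\Rightarrow$ perfectly self-similar'' the paper's proof stops after showing $M \cong M \# M$ and does not verify that $M$ is self-similar (displaceability of proper compact subsets and the curve condition), which the definition of ``perfectly self-similar'' requires; your displacement argument via Lemma~\ref{lem:homogeneous}---exchanging $\Sigma_1 = M \ssm \Omega$ with a disjoint copy $\Sigma_2 \subset \Omega$, after checking that both are homeomorphic to $H$---fills exactly this gap, so your writeup is in this respect more careful than the paper's. The only loose end is that your reverse direction should, like the paper, dispose of compact $M$ separately (a compact half-space is a closed disk, forcing $M \cong \bS^2$), since there $\widehat H = \varnothing$ and the stable-set machinery degenerates; this is cosmetic.
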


\begin{proof}
Let \( M \) be a 2-manifold.
If \( M \) is compact, then \( M \) contains a half-space if and only if \( M \) is homeomorphic to the 2-sphere.
So we may now assume that \( M \) is not compact.

First, let us assume that \( M \) contains a half-space \( H \). 
Let \( H' \) be the closure of \( M \ssm H \); by assumption, \( H' \) is a half-space.
In particular, \( \widehat H \) and \( \widehat H' \) are disjoint and homeomorphic. 
By Proposition~\ref{prop:disjoint-union}, \( \sE(M) = \widehat H \cup \widehat H' \) is homeomorphic to \( \widehat H \).
By classification of surfaces, we have that \( H \) and \( H' \) are both homeomorphic to \( M \) with an open disk removed, and hence, \( M \) is homeomorphic to \( M \# M \).

Conversely, suppose that \( M \) is perfectly self-similar. 
Then, as \( M \) is homeomorphic to \( M \# M \), there exists a subsurface \( H \) in \( M \) that is homeomorphic to \( M \) with an open disk removed and such that the closure of \( M \ssm H \) is homeomorphic to \( H \). 
Note that \( H \) is a trim Freudenthal subsurface. 
Let us argue that \( H \) is stable. 
Let \( \sU_1 \) and \( \sU_2 \) be a clopen subsets of \( \sE(M) \) such that \( \sE(M) = \sU_1 \cup \sU_2 \). 
Let \( \Delta \) be a Freudenthal subsurface such that \( \widehat \Delta = \sU_1 \). 
Then, \( K = \partial \Delta  \cup \partial H \) is a compact subset of \( M \), and hence, by assumption, there exists \( f \in \Homeo(M) \) such that \( f(K) \cap K = \varnothing \). 
It follows that either \( f(H) \subset \Delta \), \( f(H) \subset M \ssm \Delta \), \( \Delta \subset f(H) \), or \( M \ssm \Delta \subset f(H) \); in any case, \( \sU_1 \) or \( \sU_2 \) contains an open set homeomorphic to \( \widehat H \) and hence homeomorphic to \( \sE(M) \). 
By Proposition~\ref{prop:equiv_def}, \( \sE(M) \) is stable, and as \( \widehat H \) is homeomorphic to \( \sE(M) \), we see that \( H \) is a stable Freudenthal subsurface. 
To finish, observe that \( \sE(M) \), and hence \( \widehat H \), is homeomorphic \( \widehat H \times \{1,2\} \).
This shows that \( |\widehat H| > 1 \), and so by Proposition~\ref{prop:dichotomy}, \( \mathcal S(H) \) is infinite and \( H \) is dividing.
In particular, \( H \) is a half-space.
\end{proof}

Let us now exhibit a basic dichotomy of self-similar 2-manifolds. 

\begin{Lem}
\label{lem:uniformlyinfinite}
A self-similar 2-manifold is perfectly self-similar if and only if the set of maximally stable ends is perfect.
\end{Lem}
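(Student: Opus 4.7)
The plan is to handle compact and noncompact cases separately. If \( M \) is compact self-similar, then \( M \cong \bS^2 \) (every other compact 2-manifold admits non-displaceable compact subsets), which is perfectly self-similar and has empty end space, which is vacuously perfect. Assume now that \( M \) is noncompact; by Proposition~\ref{prop:noncompactdef}, \( \sE(M) \) is stable, and by Proposition~\ref{prop:dichotomy}, \( \mathcal S(\sE(M)) \) is either a singleton or a nonempty perfect set, and is perfect precisely when it is infinite.

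For the forward direction, suppose \( M \) is perfectly self-similar. By Proposition~\ref{prop:half-space}, \( M \) contains a half-space \( H \); setting \( H' = \overline{M \ssm H} \), one has \( \sE(M) = \widehat H \sqcup \widehat{H'} \) and \( \widehat H \cong \widehat{H'} \). Since \( H \) is dividing, \( \mathcal S(\widehat H) \) is infinite, hence perfect. Proposition~\ref{prop:disjoint-union} gives \( \widehat H \sqcup \widehat H \cong \widehat H \), and consequently \( \sE(M) \cong \widehat H \). Thus any stable basis of a point \( \mu \in \mathcal S(\widehat H) \) consisting of sets homeomorphic to \( \widehat H \) is also a stable basis of sets homeomorphic to \( \sE(M) \), giving \( \mathcal S(\widehat H) \subseteq \mathcal S(\sE(M)) \). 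Hence \( \mathcal S(\sE(M)) \) is infinite, and therefore perfect.

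For the reverse direction, suppose \( \mathcal S(\sE(M)) \) is perfect, and fix distinct maximally stable ends \( \mu_1, \mu_2 \). The key step is the following claim: every clopen subset \( \sW \) of \( \sE(M) \) containing a maximally stable end \( \mu \) is homeomorphic to \( \sE(M) \). To prove it, choose a stable-basis neighborhood \( V \subseteq \sW \) of \( \mu \), so that \( V \cong \sE(M) \) by Corollary~\ref{cor:well-defined}, and set \( \sV' = \sW \ssm V \). Since \( \mathcal S(V) \) is perfect, Lemma~\ref{lem:decomposition} writes \( V \ssm \{\mu\} = \bigsqcup_{n\in\bn} W_n \) with each \( W_n \cong V \cong \sE(M) \); hence the clopen set \( \sV' \) embeds (via any homeomorphism \( \sE(M) \to W_1 \)) as an open subset of \( V \ssm \{\mu\} \), and Proposition~\ref{prop:same} yields \( \sW = V \cup \sV' \cong V \cong \sE(M) \).

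To finish, apply Lemma~\ref{lem:basis} to choose a Freudenthal subsurface \( H \) with \( \mu_1 \in \widehat H \) and \( \mu_2 \notin \widehat H \); then \( c = \partial H \) is a separating simple closed curve and \( H' = \overline{M \ssm H} \) is a Freudenthal subsurface with \( \widehat{H'} = \sE(M) \ssm \widehat H \). Both \( \widehat H \) and \( \widehat{H'} \) contain a maximally stable end, so by the claim both are homeomorphic to \( \sE(M) \) as triples \( (\sE, \Enp, \Eno) \). Displaceability of compact subsets forces any self-similar 2-manifold to have genus \( 0 \) or infinite and to be orientable or infinitely non-orientable, since finite positive orientable or non-orientable genus would be carried by a non-displaceable compact subsurface. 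Consequently \( H \) and \( H' \) share the genus and orientability class of \( M \)---in the infinite case because \( \widehat H \) and \( \widehat{H'} \) inherit the full \( (\Enp, \Eno) \)-structure of \( \sE(M) \). By Theorem~\ref{thm:classification}, \( H \cong H' \), so \( H \) is a half-space and \( M \) is perfectly self-similar by Proposition~\ref{prop:half-space}. The main obstacle is the key claim above, which is precisely where the perfectness hypothesis on \( \mathcal S(\sE(M)) \) is essentially used.
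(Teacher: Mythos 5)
Your proof is correct and follows essentially the same route as the paper: the forward direction uses a half-space (Proposition~\ref{prop:half-space}) to produce multiple maximally stable ends and then Proposition~\ref{prop:dichotomy}; the reverse direction uses the structure theory of stable sets from Section~\ref{sec:stable-sets} together with the classification of surfaces to exhibit a half-space. The paper's own proof is terser (it skips the compact case, which you treat correctly, and invokes Proposition~\ref{prop:well-defined} directly where you route through Lemma~\ref{lem:decomposition} and Proposition~\ref{prop:same}), but both arguments amount to showing \( \widehat H \) and \( \widehat{H'} \) are homeomorphic to \( \sE(M) \) and then applying Theorem~\ref{thm:classification}. One small gloss: to invoke Proposition~\ref{prop:half-space} you need \( H \) to be a \emph{dividing} Freudenthal subsurface, not merely to satisfy \( H \cong \overline{M\ssm H} \); this does follow from your key claim (since \( \widehat H \cong \sE(M) \) forces \( \mathcal S(\widehat H) \) to be perfect, and a shrunken homeomorphic copy of \( H' \) inside \( (H')^{\mathrm o} \) supplies the required disjoint copy), but it should be stated---or, more simply, once you know \( H \cong H' \cong M \) with an open disk removed, you can conclude \( M \cong M\#M \) directly without passing through the half-space definition at all.
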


\begin{proof}
Let \( M \) be a 2-manifold.
If \( M \) is non-compact and perfectly self-similar, then \( M \) has a maximally stable end by Proposition~\ref{prop:noncompactdef}. 
Then, as \( M \) is homeomorphic to \( M \# M \), it must have at least two maximally stable ends, and hence a perfect set worth by Proposition~\ref{prop:dichotomy}. 
Conversely, using Proposition~\ref{prop:well-defined} in the non-compact case, we see that \( M \) admits a half-space and hence is perfectly self-similar by Proposition~\ref{prop:half-space}.
\end{proof}

By Proposition~\ref{prop:dichotomy}, we know that, for a self-similar 2-manifold \( M \), the set of maximally stable ends is either a singleton or a perfect subset of \( \sE(M) \), which motivates the following definition.

\begin{Def}
A 2-manifold is \emph{uniquely self-similar}\index{2-manifold!self-similar!uniquely|textbf} if it is self-similar and has a unique maximally stable end. 
\end{Def}

With this, we end with the following consequence of Lemma~\ref{lem:uniformlyinfinite}, which justifies the definition given in the introduction.

\begin{Prop}
Every self-similar 2-manifold is either uniquely or perfectly self-similar. 
\qed
\end{Prop}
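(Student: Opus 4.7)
The plan is to reduce the statement directly to Lemma~\ref{lem:uniformlyinfinite} and Proposition~\ref{prop:dichotomy}. Let \( M \) be a self-similar 2-manifold. First I would dispose of the compact case: as noted in the introduction, the only compact self-similar 2-manifold is \( \mathbb{S}^2 \), and since \( \mathbb{S}^2 \# \mathbb{S}^2 \cong \mathbb{S}^2 \), the sphere is perfectly self-similar (one can also see this directly via Proposition~\ref{prop:half-space}, as any embedded closed 2-disk in \( \mathbb{S}^2 \) is a half-space).

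For the non-compact case, Proposition~\ref{prop:noncompactdef} guarantees that \( M \) has a maximally stable end, so \( \sE(M) \) is itself a stable clopen subset of \( \sE(M) \), and by Definition~\ref{def:maximally-stable} the set of maximally stable ends of \( M \) is precisely \( \mathcal{S}(\sE(M)) \). Applying Proposition~\ref{prop:dichotomy} with \( \sU = \sE(M) \), the set \( \mathcal{S}(\sE(M)) \) is either a singleton or a perfect set. In the former case, \( M \) is uniquely self-similar by definition. In the latter case, the ``if'' direction of Lemma~\ref{lem:uniformlyinfinite} yields that \( M \) is perfectly self-similar. These two alternatives exhaust the possibilities, proving the proposition.

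The main (mild) obstacle is simply the bookkeeping: recognizing that the dichotomy produced by Proposition~\ref{prop:dichotomy} — singleton versus perfect — matches the definitional split between uniquely self-similar and (via Lemma~\ref{lem:uniformlyinfinite}) perfectly self-similar, together with correctly handling the compact case where \( \sE(M) \) is empty. No new ideas beyond those already established in the section are needed.
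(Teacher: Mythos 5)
Your proof is correct and follows the same chain of reasoning that the paper leaves implicit behind its \( \square \): Proposition~\ref{prop:noncompactdef} guarantees a maximally stable end, Proposition~\ref{prop:dichotomy} gives the singleton/perfect dichotomy for \( \mathcal{S}(\sE(M)) \), the singleton case is the definition of uniquely self-similar, and Lemma~\ref{lem:uniformlyinfinite} handles the perfect case. Your explicit treatment of the compact case (\( \mathbb{S}^2 \) being perfectly self-similar, so the dichotomy still exhausts all possibilities) is a reasonable supplement that the paper glosses over.
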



\section{Normal generation and purity}
\label{sec:normal}

The goal of this section is to prove a strengthened version of a theorem of Malestein--Tao \cite[Theorem~A]{MalesteinSelf}.

\begin{Thm}
\label{thm:main2}
Let \( M \) be a perfectly self-similar 2-manifold, and let \( H = H(M) \). 
\begin{enumerate}[(1)]
\item \( H \) is uniformly perfect and its commutator width is at most two. 
\item
If \( g \in H \) displaces a half-space of \( M \), then every element of \( H \)  can be expressed as a product of  at most eight conjugates of \( g \) and \( g^{-1} \). In particular, \( H \) is normally generated by \( g \).
\item
If \( n \in \bn \) such that \( n \geq 2 \), then every element of \( H \) can be expressed as the product of at most eight elements of order \( n \).
\end{enumerate}
\end{Thm}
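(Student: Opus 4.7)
The overall plan unifies all three parts via Anderson's Method and the structure of perfectly self-similar 2-manifolds. Since $M$ is perfectly self-similar, it contains a half-space whose complementary surface is also a half-space (Proposition~\ref{prop:half-space}); in particular, $M$ is covered by dividing Freudenthal subsurfaces to which Anderson's Method (Corollaries~\ref{cor:main2} and~\ref{cor:commutator1}) applies, and both sides carry perfect sets of maximally stable ends (Proposition~\ref{prop:dichotomy}).

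The common engine for parts (1) and (2) is a splitting lemma I would establish first: for any $f \in H(M)$ there exist a proper half-space $E$ of $M$ and a separating simple closed curve $c \subset E^\mathrm{o}$ bounding a half-space $D_c \subset E$, together with a factorization $f = \phi^{-1} k_1 k_2$ in which $\mathrm{supp}(\phi), \mathrm{supp}(k_1) \subset E$ and $\mathrm{supp}(k_2) \subset \overline{M \setminus D_c}$. Using the displaceability of the compact set $c \cup f(c)$ together with the perfectness of the set of maximally stable ends, one locates a proper half-space $E$ containing $c \cup f(c)$ in its interior. A fragmentation-style argument inside $E$ analogous to Proposition~\ref{prop:fragmentation2} then supplies $\phi$ supported in $E$ with $\phi(f(c)) = c$ pointwise, so that $\phi f$ preserves $c$ setwise and splits as $k_1 k_2$ with $k_i$ supported on opposite sides of $c$. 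The combined support of $\phi^{-1} k_1$ lies in the dividing half-space $E$.

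For part (1), Corollary~\ref{cor:commutator1} expresses each of $\phi^{-1} k_1$ and $k_2$ as a single commutator, so $f$ is a product of at most two commutators. For part (2), given $g$ displacing a half-space, the change of coordinates principle (Corollary~\ref{cor:coordinates}) produces conjugates $g_1, g_2$ of $g$ with $g_1$ displacing $E$ and $g_2$ displacing $\overline{M \setminus D_c}$. Corollary~\ref{cor:main2} then writes $\phi^{-1} k_1$ as a product of four conjugates of $g_1$ and $g_1^{-1}$, and $k_2$ as a product of four conjugates of $g_2$ and $g_2^{-1}$; since conjugates of $g_i$ are themselves conjugates of $g$, this gives $f$ as a product of at most eight conjugates of $g$ and $g^{-1}$.

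For part (3), for each $n \geq 2$ I would construct $\tau_n \in H(M)$ of order $n$ that displaces a half-space. Perfect self-similarity gives that $M$ is homeomorphic to the $n$-fold connected sum $M \# \cdots \# M$; realize this sum via a planar base (a 2-sphere with $n$ symmetrically placed disks removed) with a copy of $M$ minus an open disk glued to each boundary circle, and let $\tau_n$ be the equivariant extension of the $2\pi/n$-rotation of the base. Then $\tau_n$ has order $n$ and cyclically permutes the $n$ attached half-spaces, hence displaces each. Applying part (2) with $g = \tau_n$ writes every $f \in H(M)$ as a product of at most eight conjugates of $\tau_n^{\pm 1}$, all of which have order $n$. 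The principal obstacle is the splitting lemma: in the infinite-type setting $f$ can act nontrivially on the end space, so choosing $c$ and $E$ compatibly with $f$---while keeping $\mathrm{supp}(\phi)$ inside a single proper half-space to ensure the absorption $\phi^{-1} k_1$ lies in one dividing subsurface---requires a careful combination of displaceability with the richness of the perfect set of maximally stable ends.
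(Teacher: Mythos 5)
Your outline is essentially the paper's own argument: the ``splitting lemma'' you propose is precisely the paper's Theorem~\ref{thm:3} (every element of \(H(M)\) factors as a product of two homeomorphisms each supported in a half-space); parts (1) and (2) then follow from Corollaries~\ref{cor:commutator1} and~\ref{cor:main2} as you describe (your use of change of coordinates to produce conjugates \(g_1, g_2\) of \(g\) displacing each of the two half-spaces is a harmless rewording of the paper's step); and your construction of the order-\(n\) rotation on a symmetrically punctured sphere with \(n\) half-spaces glued in is the paper's construction for part (3) verbatim.

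The one place your sketch actually falls short is where you flag it, and the failure mode is more specific than you suggest. Placing the compact set \(c \cup f(c)\) in the interior of a proper half-space \(E\) with \(D_c \subset E\) does \emph{not} suffice to produce \(\phi\) supported in \(E\) with \(\phi(f(c)) = c\). A homeomorphism supported in \(E\) acts as the identity on \(\sE(M) \ssm \widehat{E}\), so it can carry \(f(c)\) to \(c\) only if the end-partition cut out by \(f(c)\) already agrees with that of \(c\) outside \(\widehat{E}\); since \(\widehat{D_c} \subset \widehat{E}\), this forces \(\widehat{f(D_c)} \subset \widehat{E}\) (or the symmetric condition for the other side of \(f(c)\)). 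Because \(\hat{f}\) may move \(\widehat{D_c}\) anywhere, a curve \(c\) chosen before inspecting \(f\) will in general violate this. The paper's fix, in the proof of Theorem~\ref{thm:3}, is to first run \(D_1\) down a shrinking nested sequence of half-spaces (i.e.\ a stable neighborhood basis of a maximally stable end) until \(D_1\), \(h(D_1)\), and a disjoint third half-space \(D_2\) all fit inside the interior of a bigger half-space \(D_3\); only then can Lemma~\ref{lem:homogeneous} supply a correcting homeomorphism supported in \(D_3\). So the quantifier order in your sketch must be reversed: \(D_c\) is chosen small depending on \(f\), not first and independently. Two smaller omissions: in the non-orientable case \(\phi\) coming from Lemma~\ref{lem:homogeneous} may reverse orientation near \(\partial D_c\), and one must precompose with a double slide before \(\phi f\) can be made to restrict to the identity on a neighborhood of \(c\) and split as \(k_1k_2\); and the splitting of a homeomorphism fixing \(c\) setwise into side-supported factors requires first making it the identity on a collar of \(c\), which is why the paper arranges for \(f_2 f_1 h\) to be literally the identity on all of \(D_1\).
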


Before we prove Theorem~\ref{thm:main2}, we need an additional result.
Recall that, for a half-space \( \Delta \) of \( M \), we define \( \Gamma_\Delta \) to be the subgroup of \( \Homeo(M) \) generated by the conjugates of homeomorphisms of \( M \) supported in \( \Delta \). 
By Lemma~\ref{lem:homogeneous}, if \( \Delta' \) is another half space of \( M \), then \( \Gamma_\Delta = \Gamma_{\Delta'} \). 
The following theorem is a version of \cite[Theorem~3.11]{MalesteinSelf}, which allows for non-orientable 2-manifolds.

\begin{Thm}
\label{thm:3}
Let \( M \) be a perfectly self-similar 2-manifold.
Then, for every \( h \in H(M) \) there exist \( h_1, h_2 \in H(M) \) and half-spaces \( \Delta_1 \) and \( \Delta_2 \) such that \( h = h_1\circ h_2 \).
Moreover, given any half-space \( \Delta \), every element of \( H(M) \) can be factored as the product of two homeomorphisms, each of which is conjugate to a homeomorphism with support in \( \Delta \); in particular, \( H(M) = \Gamma_\Delta \).  
\end{Thm}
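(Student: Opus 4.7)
The strategy is to pre-compose $h$ by a homeomorphism $\phi$ supported in a single half-space, arranged so that $\phi h$ preserves a half-space decomposition $M = K \cup K'$ of $M$; splitting $\phi h$ along the common boundary $\partial K$ will then yield a factorization of $h$ as a product of two half-space-supported homeomorphisms.

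First I would fix a half-space $K$ of $M$ with boundary $c := \partial K$ and complementary half-space $K' := \overline{M \setminus K}$, whose existence is guaranteed by Proposition~\ref{prop:half-space}. Given $h \in H(M)$, I would next seek a half-space $\tilde\Delta$ whose interior contains $K \cup h(K)$ and such that $L := \overline{M \setminus \tilde\Delta}$ is itself a half-space. Equivalently, one seeks a half-space $L \subset M$ disjoint from $K \cup h(K)$. Using Lemma~\ref{lem:uniformlyinfinite} (which supplies a perfect set of maximally stable ends), Proposition~\ref{prop:disjoint-union}, and Lemma~\ref{lem:basis}, such an $L$ arises by realizing a stable clopen subset of $\sE(M) \setminus (\widehat{K} \cup \widehat{h(K)})$ as $\widehat L$.

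I would then apply Lemma~\ref{lem:homogeneous} to the abstractly homeomorphic stable Freudenthal subsurfaces $K$ and $h(K)$ inside the ambient subsurface $\tilde\Delta$, whose interior contains disjoint homeomorphic copies of each by perfect self-similarity, producing $\phi \in H(M)$ with $\supp(\phi) \subset \tilde\Delta$ and $\phi(h(K)) = K$. Then $\phi h$ preserves $K$ setwise, so it factors as $\phi h = \alpha \beta$, where $\alpha$ is the restriction of $\phi h$ to $K$ extended by the identity on $K'$, and $\beta$ is the restriction to $K'$ extended by the identity on $K$. Setting $h_1 := \phi^{-1}\alpha$ (supported in $\tilde\Delta$) and $h_2 := \beta$ (supported in $K'$), together with $\Delta_1 := \tilde\Delta$ and $\Delta_2 := K'$, gives $h = h_1 \circ h_2$ as required. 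The ``moreover'' assertion then follows from a final application of Lemma~\ref{lem:homogeneous}: any two half-spaces of $M$ are related by an ambient homeomorphism (they share the same stable end type via Theorem~\ref{thm:classification}), so each $h_i$ is conjugate in $H(M)$ to a homeomorphism with support in the prescribed $\Delta$, and hence $h \in \Gamma_\Delta$.

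The main obstacle I anticipate concerns the existence of the half-space $L$ in the second step: this fails in the degenerate case $\widehat{h(K)} = \widehat{K'}$, where $\sE(M) \setminus (\widehat{K} \cup \widehat{h(K)}) = \varnothing$. To handle this, I would make a preliminary choice of $K$ via Proposition~\ref{prop:disjoint-union}: partition $\sE(M)$ into three pairwise-disjoint stable clopens $\widehat{K_1}, \widehat{K_2}, \widehat{K_3}$ each homeomorphic to $\sE(M)$, so that each $K_i$ is a half-space of $M$. A short pigeonhole argument using the bijectivity of $h$ on $\sE(M)$ shows that $h$ cannot simultaneously satisfy $\widehat{h(K_i)} = \widehat{K_i'}$ for two distinct indices (such conditions would force $h(\widehat{K_j}) \subset \widehat{K_i}$ while also equating $h(\widehat{K_j})$ to $\widehat{K_i} \sqcup \widehat{K_\ell}$, a contradiction); hence at least one $K_i$ is non-degenerate, and the construction proceeds. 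A secondary subtlety, especially in the non-orientable case, is ensuring that $\phi$ matches co-orientations of $h(K)$ and $K$ along $c$, possibly requiring a double slide interposed along an annular neighborhood of $c$ inside $\tilde\Delta$.
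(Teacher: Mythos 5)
Your overall strategy---produce a single homeomorphism $\phi$ with support in a half-space that returns $h(K)$ to $K$, then split $\phi h$ along $\partial K$---is essentially the same reduction the paper makes, just re-packaged: the paper's $D_1, D_3$ correspond to your $K, \tilde\Delta$, its $\Delta_1 = \overline{M \ssm D_1}$ to your $K'$, and its auxiliary map $f_2$ to your boundary-splitting step. The ``moreover'' clause follows from Lemma~\ref{lem:homogeneous} exactly as you say.

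There are, however, two genuine gaps. First, the splitting step is wrong as stated: ``$\phi h$ preserves $K$ setwise, so it factors as $\phi h = \alpha\beta$, where $\alpha$ is the restriction of $\phi h$ to $K$ extended by the identity on $K'$.'' Preserving $K$ setwise only gives $\phi h(\partial K) = \partial K$; unless $\phi h$ restricts to the \emph{identity} map on $\partial K$, the proposed $\alpha$ and $\beta$ are not continuous across $\partial K$. The double-slide/co-orientation adjustment you flag does not fix this---after matching co-orientations, $(\phi h)|_{\partial K}$ is merely an orientation-preserving circle homeomorphism, and you still need to post-compose $\phi$ with a homeomorphism supported in an annular collar of $\partial K$ (inside $\tilde\Delta$) realizing the isotopy from $(\phi h)|_{\partial K}$ to the identity. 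This is precisely what the paper arranges before introducing $f_2$. Second, your construction lives entirely in $\sE(M)$: the three-piece partition, the pigeonhole, and the choice of $L$ all become vacuous when $M = \mathbb{S}^2$, whose end space is empty. Since $\mathbb{S}^2$ is perfectly self-similar (the unique compact example), and Theorem~\ref{thm:3} for $\mathbb{S}^2$ is what feeds Corollary~\ref{cor:simple}, this case cannot be dropped; you would need a separate argument (shrink $K$ until the open set $\mathbb{S}^2 \ssm (K \cup h(K))$ contains a disk), which is what the paper's ``taking a shrinking nested sequence of half-spaces'' accomplishes uniformly in both the compact and non-compact cases. A smaller point in the same vein: the pigeonhole should be run on the perfect set of maximally stable ends rather than on all of $\sE(M)$, so that the leftover clopen set $\sE(M) \ssm (\widehat{K_i} \cup \widehat{h(K_i)})$ is guaranteed to contain a maximally stable end---otherwise the resulting $L$ need not be a half-space, only some Freudenthal subsurface.
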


\begin{proof}
Let \( h \in H(M) \).
Then, by taking a shrinking nested sequence of half-spaces, we are guaranteed to find a half-space \( D_1 \) such that the complement of \( h(D_1) \cup D_1 \) contains a half-space. 
In particular, we can find half-spaces \( D_2 \) and \( D_3 \) such that \(  D_2 \cap ( D_1 \cup h( D_1)) = \varnothing \) and  \(  D_1, h( D_1),  D_2 \subset D_3^\mathrm{o} \) . 
Therefore, by Lemma~\ref{lem:homogeneous}, there exists \( f_1 \in \Homeo(M) \) with support in \( D_3 \) such that \( f_1(h(D_1))=D_1 \).

If \( M \) is orientable, then so is \( f_1 \), and hence we can assume that \( f_1 \circ h \) restricts to the identity on \( \partial D_1 \). 
However, if \( M \) is non-orientable, this is not guaranteed by Lemma~\ref{lem:homogeneous} as \( f_1 \) may be orientation-reversing in an annular neighborhood of \( \partial D_1 \). 
However, if this is case, then as both \( D_1 \) and \( D_2 \) are not orientable, we may find a simple closed curve \( c \) such that the double slide \( s_{\partial D_1, c} \) is supported in \( D_3 \).
Then, replacing \( f_1 \) with \( s_{\partial D_1,c} \circ f_1 \), we may assume that \( f_1 \circ h \) restricts to the identity on \( \partial D_1 \) in both the orientable and non-orientable cases. 

Let \( f_2 \in \Homeo(M) \) such that \( f_2 \) agrees with \( (f_1\circ h)^{-1} \) on \( D_1 \) and is the identity on the complement of \( D_1 \). 
Then,  \( f_2 \) is supported in \(  D_1 \subset D_3 \) and \( f_2\circ f_1 \circ h \) restricts to the identity on \(  D_1 \).
Let \( \Delta_1 \) be the half-space obtained by taking the closure of \( M \ssm D_1 \).
Then, \( h_1 = f_2 \circ f_1 \circ h \) is supported in \( \Delta_1 \).  
Now, note that \( h_2 = (f_2\circ f_1)^{-1} \) is supported in \( D_3 \).
Set \( \Delta_2 = D_3 \).
Then, by construction, \( \partial \Delta_1 \subset \Delta_2^\mathrm{o} \), \( \Delta_1 \cap \Delta_2 \) contains a half-space (namely \( D_2 \)), \( h_i \) is supported in \( \Delta_i \), and \( h = h_1h_2 \). 

To finish, let \( \Delta \) be a half-space. Then, as \( h_i \) is supported in a half-space, it is conjugate to a homeomorphism supported in \( \Delta \) by Lemma~\ref{lem:homogeneous}, and hence \( h \in \Gamma_\Delta \).
\end{proof}

\begin{proof}[Proof of Theorem~\ref{thm:main2}]
Let \( f \in H \) and let \( D \) be a half-space of \( M \) such that \( g(D) \cap D = \varnothing \). 
Then, by Theorem~\ref{thm:3}, \( f  = h_1h_2 \), where \( h_1, h_2 \in \Gamma_D \). 
By Corollary~\ref{cor:commutator1}, each of \( h_1 \) and \( h_2 \) can be expressed as a commutator of two elements in \( \Gamma_D \).
By Proposition~\ref{prop:main2}, each of \( h_1 \) and \( h_2 \) can be expressed as the product of four conjugates of \( g \). 

For the third statement, let \( \widehat\bc \) denote the Riemann sphere and let \( \Sigma_0 \subset \widehat\bc \) be obtained by removing \( n \) pairwise-disjoint open disks from \( \widehat\bc \) such that each of the disks has the same radii and such that the center of each disk is an \( n^{\text{th}} \) root of unity.  
Then the rotation \( z \mapsto e^{2\pi i/n} z \) restricts to an order \( n \) element \( \vp \) of \( \Homeo(\Sigma_0) \). 
For \( i \in \{1, \ldots, n\} \), let \( \Sigma_i \) be a copy of a half-space in \( M \).
We can then construct the surface \( M'  = (\Sigma_0 \sqcup \Sigma_1 \sqcup \cdots \sqcup \Sigma_n) / \sim \), where \( \sim \) identifies  \( \partial \Sigma_i \) with a component of \( \partial \Sigma_0 \) via a homeomorphism of the circle  in such a way that \( \vp \) extends to an order \( n \) homeomorphism of \( M' \).
Observe that \( \sE(M') \) is a disjoint union of \( n \) copies of \( \sE(M) \), and so by Proposition~\ref{prop:disjoint-union}, \( \sE(M') \) is homeomorphic to \( \sE(M) \), and moreover, \( \Sigma_i \), viewed as a Freudenthanl subsurface of \( M' \), is a half-space of \( M' \). 
Therefore, by the classification of surfaces, there exists a homeomorphism \( h \co M \to M' \).
To finish, \( h^{-1} \circ \vp \circ h \) is an order \( n \) element of \( \Homeo(M) \) displacing a half-space of \( M \), and the result follows from (2).
\end{proof}

As a corollary, we obtain a classical result for which the author is not certain how to attribute.
We refer the interested reader to the history discussion in the introduction of \cite{FisherGroup}.
We note that the corollary could have just as easily appeared in Section~\ref{sec:homeomorphism-groups} as a consequence of Theorem~\ref{thm:fisher} and Alexander's trick. 

\begin{Cor}
\( \Homeo(\mathbb S^2) \) is path connected.
\end{Cor}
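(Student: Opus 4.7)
My plan is to string together Theorem~\ref{thm:3} with the compact case of Theorem~\ref{thm:fisher}, which together will identify $H(\bS^2)$ with its path-connected identity component $H_0(\bS^2)$.

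First, I would note that the 2-sphere is perfectly self-similar (as observed in the introduction), so that Theorem~\ref{thm:3} applies. Choosing any embedded closed 2-disk $\Delta \subset \bS^2$, I would appeal to the remark immediately after Definition~\ref{def:half-space} to see that $\Delta$ is a half-space. Applying Theorem~\ref{thm:3} to $\Delta$ then yields $H(\bS^2) = \Gamma_\Delta$.

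Next, because $\bS^2$ is compact (hence of finite type), the last sentence of Theorem~\ref{thm:fisher} gives $\Gamma_\Delta = H_0(\bS^2)$, while part~(1) of the same theorem asserts that $H_0(\bS^2)$ is path connected. Combining these, $H(\bS^2) = H_0(\bS^2)$ is path connected, as desired.

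There is no genuine obstacle here: the statement is a two-line consequence of the machinery already in place. As the paper's own parenthetical remark points out, the corollary could just as well be proved inside Section~\ref{sec:homeomorphism-groups} with Alexander's trick in place of Theorem~\ref{thm:3}: given $f \in H(\bS^2)$, one would first use an ambient isotopy (provided by Jordan--Schoenflies) to arrange that $f$ fixes a chosen separating simple closed curve $c$, then a further isotopy supported in an annular neighborhood of $c$ to fix $c$ pointwise, and finally apply Alexander's trick to each of the two disks cut off by $c$ to produce an explicit isotopy from $f$ to the identity.
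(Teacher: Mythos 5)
Your main argument is correct and arrives where the paper does, but via a slightly different route. The paper identifies $H(\bS^2)$ with $H_0(\bS^2)$ through simplicity: Theorem~\ref{thm:main2} gives simplicity of $H(\bS^2)$, Theorem~\ref{thm:fisher} exhibits $H_0(\bS^2)$ as a nontrivial normal subgroup, so the two must coincide. You instead apply Theorem~\ref{thm:3} directly to get $H(\bS^2) = \Gamma_\Delta$ for a closed disk $\Delta$, and then the compact case of Theorem~\ref{thm:fisher} to get $\Gamma_\Delta = H_0(\bS^2)$. Since the simplicity statement of Theorem~\ref{thm:main2} is itself derived from Theorem~\ref{thm:3} (via Anderson's method and Corollary~\ref{cor:commutator1}), your route is a small shortcut through the same machinery, and it also sidesteps a typo in the paper's proof, which cites Theorem~\ref{thm:main2} rather than Theorem~\ref{thm:fisher} for the path-connectedness of $H_0(\bS^2)$. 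Both approaches then invoke Theorem~\ref{thm:fisher}(1) for that path-connectedness.

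One caution about your alternative Alexander's-trick sketch: after arranging $f(c)=c$, an orientation-preserving $f$ may still swap the two complementary disks of $c$ (a rotation by $\pi$ about an axis lying in the plane of $c$ does exactly this). In that case $f|_c$ reverses orientation on $c$, so it cannot be isotoped to the identity within an annular neighborhood, nor can Alexander's trick be applied disk-by-disk since $f$ does not preserve either disk. The fix is to first precompose with a disk-swapping homeomorphism that is visibly isotopic to the identity (the endpoint of the rotation path, say), reducing to the case where $f$ preserves each disk; then your sketch goes through as written.
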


\begin{proof}
By Theorem~\ref{thm:main2}, \( \Homeo(\bS^2) \) is simple.
But, by Theorem~\ref{thm:fisher}, \( \Homeo_0(\bS^2) \) is a normal  subgroup of \( \Homeo(\bS^2) \), and hence, they must be equal.
Also, by Theorem~\ref{thm:main2}, \( \Homeo_0(\bS^2) \) is path connected, and hence so is \( \Homeo(\bS^2) \).
\end{proof}

We finish the section by proving the corollaries mentioned in Section~\ref{sec:overview}:

\begin{proof}[Proof of the Purity Theorem, Corollary~\ref{cor:countable}]
Let \( N \) be a proper normal subgroup of \( G \) and let \( \Gamma \) be the stabilizer of the maximally stable ends of \( M \). 
By continuity, if \( g \in G \ssm \Gamma \), then there exists a half-plane \( D \) for which \( g(D) \cap D = \varnothing \) (this follows from the fact that a maximally stable end has a neighborhood basis consisting of half-planes), and hence, by Theorem~\ref{thm:main2}, every element of \( G \) is a product of conjugates of \( g \). 
In particular, as \( N \) is a proper normal subgroup, we must have that \( N < \Gamma \). 
Moreover, the set of maximally stable ends of \( M \) is homeomorphic to \( 2^\bn \) (Proposition~\ref{prop:dichotomy}), and the action of \( G \) on this set is transitive (Proposition~\ref{prop:well-defined}). 
This action induces an isomorphism between \( G / \Gamma \) and \( \mathrm{Homeo}(2^\bn) \). 
It follows that \( G / \Gamma \), and hence \( G/N \), is uncountable. 
\end{proof}

\begin{proof}[Proof of Corollary~\ref{cor:torsion}]
The proof is identical to the one given in \cite[Lemma~2.5]{CalegariNormal}.
Every torsion element of \( \mcg(M) \) has a finite-order representative in \( H(M) \) by \cite[Theorem~2]{AftonNielsen}, so it is enough to consider \( H(M) \). 
Every homeomorphism of \( M \) extends to a homeomorphism of the 2-sphere.
This follows from basic properties of end spaces and the associated Freudenthal compactification (we have not discussed the Freudenthal compactification in detail:~for a self-contained and succinct treatment, see \cite[Section~2.1]{FernandezEnds}). 
It is well known that every finite-order orientation-preserving homeomorphism of the 2-sphere is conjugate to a rotation (for instance, this can be deduced from \cite[Theorem~2.8]{EpsteinPointwise}). 
Since a nontrivial rotation has exactly two fixed points, we can conclude that any finite-order element of \( H(M) \) does not act trivially on the (infinite) set of maximally stable ends of \( M \), and hence, by Theorem~\ref{thm:main2}, must normally generate all of \( H(M) \).
\end{proof}


\section{Strong distortion}
\label{sec:distortion}

Recall that a group \( G \) is \emph{strongly distorted} if there exists \( m \in \bn \) and a sequence \( \{w_n\}_{n\in\bn} \subset \bn \)  such that, given any sequence \( \{g_n\}_{n\in\bn} \) in \( G \), there exists a set \( S \subset G \) of cardinality at most \( m \) with \( g_n \in S^{w_n} \) for every \( n \in \bn \). 
Here, we establish the strong distortion property for homeomorphism groups of perfectly self-similar 2-manifolds. 
We use the proof of Calegari--Freedman \cite{CalegariDistortion} for spheres as the outline and adapt where necessary.
The strategy can be adapted to a fairly general setting (for instance, see \cite[Construction~2.3]{LeRouxStrong}) and goes back to the work of Fisher \cite{FisherGroup}.

\begin{Lem}
\label{lem:factor2}
Let \( M \) be a perfectly self-similar 2-manifold. 
If \( D_1 \) and \( D_2 \) are half-spaces such that  \( D_1 \cap D_2 \) contains a half-space and \( \partial D_i \subset D_{3-i}^\mathrm{o} \) for \( i \in \{1,2\} \), then every element of \( H(M) \) can be factored as a product of six homeomorphisms, each of which is supported in either \( D_1 \) or \( D_2 \). 
\end{Lem}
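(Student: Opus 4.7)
Plan:

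The starting point is Theorem~\ref{thm:3}, which factors any $h \in H(M)$ as $h = h_a h_b$ with each $h_i$ supported in some half-space of $M$. It therefore suffices to prove the following local claim: any homeomorphism $f$ supported in a half-space $\Delta$ of $M$ can be written as a product of three homeomorphisms, each supported in $D_1$ or $D_2$. Combined with Theorem~\ref{thm:3}, this yields the desired bound of six.

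Set $C_1 = \overline{M \ssm D_2}$ and $C_2 = \overline{M \ssm D_1}$. By the hypothesis $\partial D_i \subset D_{3-i}^{\mathrm{o}}$, these are disjoint half-spaces of $M$ with $C_i \subset D_i^{\mathrm{o}}$, and $D_1 \cap D_2$ contains a half-space $H_0$ by assumption. The strategy for the local claim is to construct a homeomorphism $\phi$ supported in $D_2$ with $\phi(\Delta) \subset D_1$. Given such a $\phi$, the identity $f = \phi^{-1}(\phi f \phi^{-1})\phi$ expresses $f$ as a product of three pieces: the outer conjugating factors are supported in $D_2$, while the middle factor $\phi f \phi^{-1}$ is supported in $\phi(\Delta) \subset D_1$. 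Since $\phi$ fixes $C_1 \subset D_1$ pointwise, constructing $\phi$ reduces to finding a self-homeomorphism of $D_2$ (fixing $\partial D_2$) that pushes $\Delta \cap D_2$ into $D_1 \cap D_2$. Lemma~\ref{lem:homogeneous}, applied with ambient subsurface $D_2$ and using that $D_2$—being a half-space of the perfectly self-similar manifold $M$—contains many pairwise-disjoint stable Freudenthal subsurfaces of every relevant type, produces the desired $\phi$ sending $\Delta \cap D_2$ into $H_0 \subset D_1 \cap D_2$.

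The main obstacle will be that for arbitrary $\Delta$, the set $\Delta \cap D_2$ may be too ``large'' to fit inside $D_1 \cap D_2$ via a single homeomorphism of $D_2$. This is circumvented by exploiting the flexibility built into the proof of Theorem~\ref{thm:3}: the half-spaces $\Delta_a, \Delta_b$ appearing in that factorization arise from choosing an auxiliary small half-space $D$ with $D \cap h(D) = \varnothing$ together with a larger half-space containing $D \cup h(D)$ and a third disjoint half-space. By selecting this auxiliary $D$ to lie deep inside $C_1$ or $C_2$—after possibly pre-composing $h$ with a single homeomorphism supported in $D_1$ or $D_2$ to enforce the disjointness condition $D \cap h(D) = \varnothing$—one arranges the resulting $\Delta_a, \Delta_b$ to have controlled intersections with $D_2$, so that the Lemma~\ref{lem:homogeneous} step goes through. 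The non-orientable case is handled, as in the proof of Theorem~\ref{thm:3}, by composing with a double slide $s_{b,c}$ whenever a side-swap needs to be corrected along a boundary.
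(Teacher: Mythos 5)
Your high-level strategy matches the paper's: apply Theorem~\ref{thm:3} to write \( h = h_a h_b \) with each factor supported in a half-space, and then express each \( h_i \) as \( \phi^{-1}(\phi h_i \phi^{-1})\phi \), with the conjugating homeomorphism supported in one of \( D_1, D_2 \) and the conjugate supported in the other. However, the construction of the conjugating homeomorphism \( \phi \) is where the real work lies, and your proposal has a genuine gap there.

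You correctly identify the obstacle: for a general half-space \( \Delta \), the set \( \Delta \cap D_2 \) is too large to push into \( D_1 \cap D_2 \) by a homeomorphism of \( D_2 \) fixing \( \partial D_2 \). But the proposed circumvention does not resolve it. In the proof of Theorem~\ref{thm:3}, the half-space \( \Delta_a \) supporting \( h_a \) is always of the form \( \overline{M \ssm E_1} \), where \( E_1 \) is the auxiliary small half-space. Shrinking \( E_1 \) and placing it inside \( C_1 \) or \( C_2 \) makes \( \Delta_a \) \emph{larger}, not smaller: if \( E_1 \subset C_1 = \overline{M \ssm D_2} \), then \( \Delta_a \supset D_2 \), so \( \Delta_a \cap D_2 = D_2 \) is everything; if instead \( E_1 \subset D_1 \cap D_2 \), then \( \Delta_a \cap D_2 = D_2 \ssm E_1^{\mathrm{o}} \), which is still ``most of \( D_2 \)'' and cannot be homeomorphed inside \( D_1 \cap D_2 \subsetneq D_2 \) by anything fixing \( \partial D_2 \) without an argument. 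Moreover, \( \Delta \cap D_2 \) need not be a connected subsurface with a single boundary component, so Lemma~\ref{lem:homogeneous} cannot be invoked as stated.

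The missing ingredient is the ``turn inside out'' construction. The paper first arranges, via an \( f_1 \) supported in \( D_1 \), that \( h_1 \) restricts to the identity on a specifically positioned half-space \( D \subset D_1 \cap D_2 \). It then produces \( f_2 \) supported in \( D_1 \) that maps \( D \) onto a huge half-space \( \Delta \) (the closure of a component of \( M \ssm c \) for a curve \( c \) near \( \partial D_2 \)) and maps the annular region \( T_2 = \overline{(M\ssm D)\cap D_1} \) onto a thin collar \( T_1 \) of \( \partial D_2 \) inside \( D_1 \cap D_2 \). This requires Proposition~\ref{prop:same} (to see \( D \cong \Delta \)) and the change of coordinates principle; it is not a direct application of Lemma~\ref{lem:homogeneous}. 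The upshot is \( f_2(M \ssm D) \subset D_2 \), so conjugating by \( g_1 = f_2 f_1 \) (supported in \( D_1 \)) pushes \( \supp(h_1) \) into \( D_2 \). Your outline stops short of this step, and the ``controlled intersection'' heuristic does not substitute for it.
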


\begin{proof}
Let \( h \in H(M) \). 
By Theorem~\ref{thm:3}, there exists \( h_1, h_2 \in H(M) \) supported in half-planes \( \Delta_1, \Delta_2 \subset M \), respectively, such that \( h = h_1\circ h_2 \).  
Observe that, by  assumption, \( M = D_1^\mathrm{o} \cup D_2^\mathrm{o} \).
Therefore, up to relabelling, we may assume that \( D_1^\mathrm{o} \ssm \Delta_1 \) contains a half-space \( D' \); note that \( h_1 \) restricts to the identity on \( D' \). 
If \( D' \ssm D_2 \) does not contain a half-space, then there exists a half-space \( D \) contained in \( D_1^\mathrm{o} \ssm (D' \cup D_2) \); otherwise, we can choose a half-space \( D \) contained in \( D' \ssm D_2 \). 
In the latter case, set \( f_1 \) to be the identity, and in the former case, using Lemma~\ref{lem:homogeneous}, choose \( f_1 \in H(M) \) supported in \( D_1 \) such that \( f_1(D') = D \).
Then, in either case, \( f_1 \circ h_1 \circ f_1^{-1} \) restricts to the identity on \( D \). 

\begin{figure}
\includegraphics{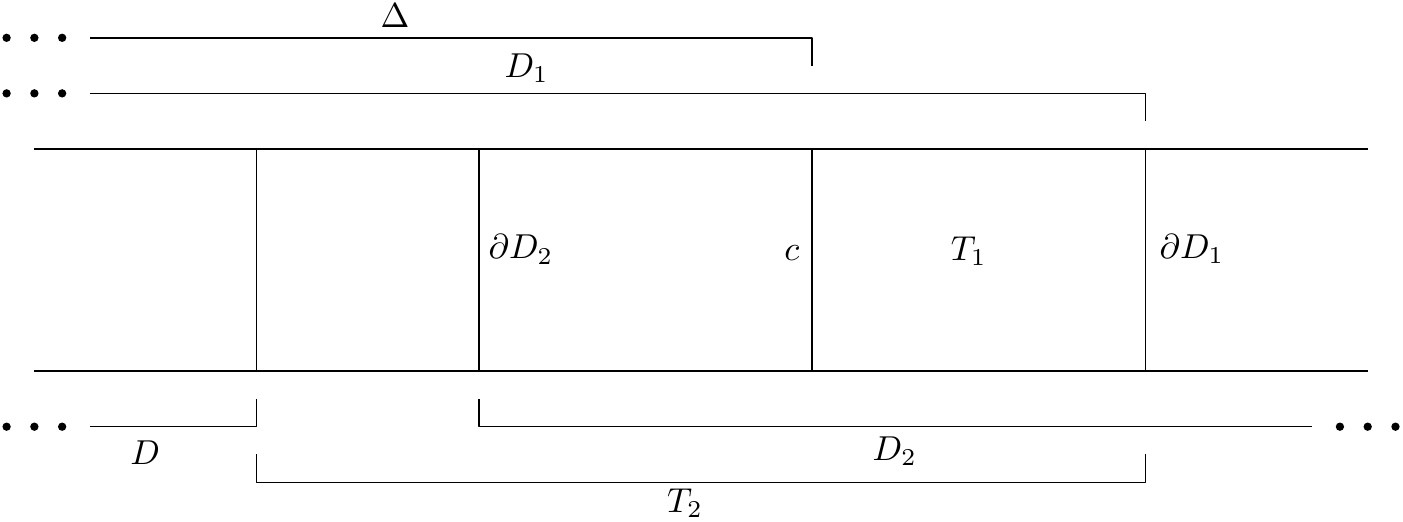}
\caption{A schematic of the subsurfaces in the proof of Lemma~\ref{lem:factor2}.}
\label{fig:distortion}
\end{figure}

Choose a separating simple closed curve \( c \) such that \( c \) is contained in the interior of \( D_1 \cap D_2 \), separates \( \partial D_1 \) from \( \partial D_2 \), and the component of \( (D_1 \cap D_2) \ssm c \) containing \( \partial D_1 \) contains a half-space; let the closure of this component be labelled \( T_1 \).
Let \( T_2 \) denote the closure of \( (M \ssm D) \cap  D_1 \), and let \( \Delta \) denote the closure of component of \( M \ssm c \) containing \( D \).
Then, by Proposition~\ref{prop:same} and the classification of surfaces, we have that \( D \) is homeomorphic to \( \Delta \) and \( T_1 \) is homeomorphic to \( T_2 \) (see Figure~\ref{fig:distortion} for a schematic).

Therefore, by the change of coordinates principle, there exists an ambient homeomorphism \( f_2 \) of \( M \) supported in \( D_1 \) that maps \( D \) onto \( \Delta \) and \( T_2 \) onto \( T_1 \); in particular, \( f_2( M \ssm D ) \subset D_2 \).
Let \( g_1 = f_2 \circ f_1 \), and note that \( \supp(g_1) \subset D_1 \).
Then, as \( \supp(g_1 \circ h_1 \circ g_1^{-1}) = f_2(\supp(f_1\circ h_1 \circ f_1^{-1}))\), \( \supp(f_1 \circ h_1 \circ f_1^{-1}) \subset M \ssm D^\mathrm{o} \), and \( f_2(M\ssm D ) \subset D_2 \), we have \( \supp(g_1 \circ h_1 \circ g_1^{-1})  \subset D_2 \).
Writing
\[
h_1 = g_1^{-1} \circ (g_1 \circ h_1 \circ g_1^{-1}) \circ g_1,
\]
factors \( h_1 \) as a product of three homeomorphisms, each of which is supported in \( D_1 \) or \( D_2 \). 
Proceeding in a similar fashion for \( h_2 \), we can factor \( h \) as desired. 
\end{proof}

\begin{Thm}
\label{thm:distortion}
Let \( M \) be a perfectly self-similar 2-manifold. 
Then, \( H(M) \) is strongly distorted. 
\end{Thm}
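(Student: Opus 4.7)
The plan is to adapt Calegari--Freedman's proof of strong distortion for spheres \cite{CalegariDistortion} (later extended by Le Roux--Mann \cite{LeRouxStrong} to $\Homeo(\br^2)$) to the present setting, using Lemma~\ref{lem:factor2} in place of their fragmentation step. The two key ingredients are (i) self-similarity, which provides disjoint half-spaces together with a shift homeomorphism, and (ii) Lemma~\ref{lem:factor2}, which factors any element of $H(M)$ as a product of a bounded number of homeomorphisms supported in a pair of prescribed half-spaces. Together these let us pack all of the $g_n$ into disjoint half-spaces, bundle them into a few ``infinite product'' elements, and then recover each $g_n$ by a short word in those bundles together with the shift.

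First I would use self-similarity (via iterated application of Proposition~\ref{prop:disjoint-union} and Lemma~\ref{lem:translation}) to build a locally finite family $\{E_n\}_{n\geq 0}$ of pairwise disjoint half-spaces converging to a maximally stable end, and a ``shift'' $\tau \in H(M)$ with $\tau(E_n)=E_{n+1}$ for every $n\geq 0$. Given any sequence $\{g_n\}\subset H(M)$, Lemma~\ref{lem:factor2} (applied to a pair of half-spaces contained in $E_0$) expresses each $g_n$ as a product of at most six homeomorphisms that, after a single fixed conjugation, are all supported in $E_0$. Thus, up to multiplying all word lengths by $6$, we may assume each $g_n$ is itself supported in $E_0$.

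Set $\tilde g_n = \tau^n g_n \tau^{-n}$, which is supported in $E_n$. I would then form infinite products of the form
\begin{equation*}
F_{\mathrm{even}} = \prod_{n\geq 0} \tilde g_{2n}, \qquad F_{\mathrm{odd}} = \prod_{n\geq 0} \tilde g_{2n+1},
\end{equation*}
which converge to elements of $H(M)$ because the supports lie in the locally finite family $\{E_n\}$. Each of $F_{\mathrm{even}}$ and $F_{\mathrm{odd}}$ is a \emph{single} element of $H(M)$ that simultaneously encodes infinitely many of the $g_n$. The goal is to extract an individual $g_n$ as a word of length $O(n)$ in a fixed finite set. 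The even/odd split is used so that, after conjugating $F_\star$ by a suitable power of $\tau$, the term $\tilde g_n$ moves into $E_0$ while the neighbouring encoded terms are pushed into disjoint half-spaces separated from $E_0$; a carefully chosen auxiliary ``eraser'' homeomorphism $\rho$, supported in $M\ssm E_0$, then allows a short commutator to annihilate the residual contributions, leaving precisely $g_n$ on $E_0$. This expresses each $g_n$ as a word of length $O(n)$ in the finite set $\{\tau^{\pm 1}, F_{\mathrm{even}}^{\pm 1}, F_{\mathrm{odd}}^{\pm 1}, \rho^{\pm 1}\}$, which is exactly the strong distortion condition with $m$ a fixed constant and $w_n$ linear in $n$.

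The main obstacle will be the extraction step: one must construct $\rho$ and the accompanying bounded-length commutator word so that it cancels every contribution except the single desired $\tilde g_n$, without letting the residual terms interfere. This is the technical core of the Calegari--Freedman trick, and implementing it in our setting is where Corollary~\ref{cor:main2} (Anderson's method applied to a dividing half-space) is deployed: it lets us realize the erasing commutator as a product of conjugates of $\rho$ with prescribed support, handling uniformly the residual factors that appear after each conjugation by $\tau^{-k}$. Once this extraction is in place, strong distortion of $\mcg(M)$ follows immediately, as strong distortion passes to quotients by closed subgroups.
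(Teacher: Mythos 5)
Your high-level plan correctly identifies the starting point (Lemma~\ref{lem:factor2} to reduce to elements supported in a fixed half-space) and the general shape of the argument (pack the $g_n$ into disjoint half-spaces, bundle them into a few infinite products, recover each $g_n$ by a short word in a fixed finite alphabet).  However, the extraction step, which you yourself flag as ``the technical core,'' has a real gap, and the even/odd split plus the eraser $\rho$ do not repair it.  The problem is this: after conjugating $F_{\mathrm{even}}=\prod_{m\geq 0}\tilde g_{2m}$ by $\tau^{-2n}$, the residual on $M\ssm E_0$ is precisely the remaining $\tilde g_{2m}$'s shifted into new positions.  This residual is a genuinely new homeomorphism that depends on $n$ and on the arbitrary input sequence $\{g_n\}$; there is no single $\rho$ drawn from a \emph{fixed} finite set $S$ and no bounded-length word in $S$ that can annihilate it uniformly.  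Corollary~\ref{cor:main2} (Anderson's method) does not rescue this: it expresses a prescribed element as four conjugates of a displacing map, but the \emph{conjugators} depend on the element being produced, so invoking it here smuggles in unboundedly many new letters.  The classical commutator-cancellation trick (Proposition~\ref{prop:commutator}) only works when the infinite product repeats the \emph{same} homeomorphism at each position, and your one-dimensional $F_{\mathrm{even}}$, $F_{\mathrm{odd}}$ do not have that structure.

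The paper resolves exactly this obstruction by introducing a second dimension.  It identifies $M$ with a model $M'$ carrying a $\bz^2$-indexed family $\{\Delta_{i,j}\}$ of copies of a half-space, together with two commuting shifts: $\vp$ shifting the $i$-index everywhere, and $\psi$ shifting the $j$-index \emph{only on the $i=0$ column}.  The bundled infinite product $f_\ell$ is built so that along column $i$ the \emph{same} factor $h_{i,\ell}$ is repeated at every $j\geq 0$.  Conjugating by $\vp^{-n}$ moves the $n$-th column to $i=0$, and then the commutator $[\,\cdot\,,\psi]$ performs the cancellation: on the $i=0$ column the repeated $h_{n,\ell}$'s telescope against their $j$-shifted inverses, while on the columns $i\neq 0$ the shift $\psi$ is the identity, so those residuals cancel outright.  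This gives $h_{n,\ell} = \eta_{n,\ell}[g_{n,\ell},\psi]\eta_{n,\ell}^{-1}$, a word of length $4n+6$ in the fixed set $\{f_1,\dots,f_6,\vp,\psi,\eta_1,\eta_2\}^{\pm 1}$, with no dependence on the sequence beyond what is already encoded in the $f_\ell$.  To repair your argument you would need to replace the one-parameter family $\{E_n\}$ with this two-parameter family and add the second shift; without it, the plan as written does not close.
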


\begin{proof}
Let \( \{ h_n\}_{n\in\bn} \) be a sequence in \( H(M) \). 
Fix half-spaces \( D_1 \) and \( D_2 \) in \( M \) satisfying the hypotheses of Lemma~\ref{lem:factor2}.
Then, applying Lemma~\ref{lem:factor2}, for each \( n \in \bn \), we can write
\[
h_n = h_{n,1} \circ h_{n,2} \circ \cdots \circ h_{n,6}
\]
where each \( h_{n,\ell} \) is supported in  \( D_1 \) or \( D_2 \). 

Let \( \Delta \) be a half-space in \( M \).
Consider the following 2-manifold \( M' \):
Begin with \( \br^2 \).
Then, for each \( i,j \in \bz \), remove the open ball of radius \( 1/4 \) in \( \br^2 \) centered at \( (i,j) \). 
Now for each boundary component of the resulting surface, glue in a copy of \( \Delta \) in such a way that the standard \( \bz^2 \) action on \( \br^2 \) extends to an action on the resulting manifold, which we call \( M ' \).
Label the copy of \( \Delta \) glued in place of the disk centered at \( (i,j) \) by \( \Delta_{i_j} \). 
The end space of \( M ' \) is homeomorphic to the one-point compactification of \( \Ends(M) \times \bz^2 \), where \( \bz^2 \) is given the discrete topology.
Therefore, by Proposition~\ref{prop:disjoint-union} and the classification of surfaces, \( M ' \) is homeomorphic to \( M \).
Let us identify \( M \) with \( M ' \).

We will now proceed with a modified version of Anderson's method. 
Let \( \vp, \psi \in H(M) \) be homeomorphisms such that
\begin{itemize}
\item \( \vp(\Delta_{i,j}) = \Delta_{i+1,j} \) for all \( i,j \in \bz \),
\item  \( \psi(\Delta_{0,j}) = \Delta_{0,j+1} \) for all \( j \in \bz \), and
\item  \( \psi \) restricts to the identity on \( \Delta_{i,j} \) for all \( j \in \bz \) and for all \( i \in \bz \ssm \{0\} \). 
\end{itemize}

For \( k \in \{1,2\} \), choose \( \eta_k \in H(M) \) such that \( \eta_k(D_k) = \Delta_{0,0} \). 
For \( n \in \bz \) and \( \ell \in \{1, \ldots, 6\} \), set \( \eta_{n,\ell} = \eta_1 \) if \( h_{n,\ell} \) is supported in \( D_1 \); otherwise set \( \eta_{n,\ell} = \eta_2 \). 
For \( \ell \in \{1, \ldots, 6\} \), define \( f_\ell \in H(M) \) by
\[
f_\ell = \prod_{i=0}^\infty \prod_{j=0}^\infty  \left[ (\vp^i \circ \psi^j \circ \eta_{i,\ell}) \circ h_{i,\ell} \circ (\vp^i \circ \psi^j \circ \eta_{i,\ell})^{-1} \right].
\]
Observe that for \( i,j \in \bn\cup\{0\} \), the product \( \vp^i \circ \psi^j \circ \eta_{i,\ell} \) maps one of \( D_1 \) or \( D_2 \) (depending on \( \eta_{i, \ell} \)) onto \( \Delta_{i,j} \). 
Therefore, intuitively, for  \( i \in \bn\cup\{0\} \),  \( f_\ell \) is performing \( h_{i,\ell} \) on  \( \Delta_{i,j} \) for all \( j \in \bn \cup \{0\} \). 
In particular, the support of \( f_\ell \) is \( \bigcup_{i,j \in \bn\cup\{0\}} \Delta_{i,j} \). 
Now, for \( n \in \bn \), consider 
\[
g_{n,\ell} 	=  \vp^{-n} \circ f_\ell \circ \vp^n,
\]
which intuitively translates the support of  \( f_\ell \) so that \( g_{n,\ell} \) is performing \( h_{n,\ell} \) on \( \Delta_{0,j} \) for all \( j \in \bn \cup\{0\} \). 
Observe that \( \psi \circ g_{n,\ell} \circ \psi^{-1} \) is the identity on \( D_{0,0} \) but otherwise agrees with \( g_{n,\ell} \).
Therefore, the same computation as in Propositions~\ref{prop:commutator} yields
\[
h_{n,\ell} = \eta_{n,\ell} \circ [g_{n,\ell}, \psi ] \circ \eta_{n,\ell}^{-1}.
\]
Let \( S \subset H(M) \) be the set consisting of the elements \( f_1, \ldots, f_6, \vp, \psi, \eta_1, \eta_2 \), and their inverses.
Then, we have shown that for each \( n \in \bn \) and \( \ell \in \{1, \ldots, 6 \} \), the element \( h_{n,\ell} \) can be written as a word in \( S \) of length \( 4n + 6 \), and hence \( h_n \) can be written as a word of length \( 24n+36 \). 
In particular, as the sequence \( \{h_n\}_{n\in\bn} \) was arbitrary, we have that \( H(M) \) is strongly distorted, with \( m = 20 \) and \( w_n = 24n+36 \).
\end{proof}


\section{Coarse boundedness}

Up to this point, we have mainly considered perfectly self-similar 2-manifolds, but here (and in the next section) we will consider the larger class of self-similar 2-manifolds. 

Recall that a topological group is \emph{coarsely bounded} if, whenever the group acts continuously by isometries on a metric space, every orbit is bounded. 
Mann--Rafi \cite{MannLarge} proved that the mapping class group of a self-similar 2-manifold is coarsely bounded.  
With only slight modifications to Mann--Rafi's proof, we prove this theorem in the setting of homeomorphism groups.

\begin{Thm}
\label{thm:cb}
If \( M \) is a self-similar 2-manifold, then  \( \Homeo(M) \) and \( \mcg(M) \) are coarsely bounded. 
\end{Thm}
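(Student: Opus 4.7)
The plan is to adapt Mann--Rafi's proof of the mapping class group case \cite[Proposition~3.1]{MannLarge} to the setting of homeomorphism groups. First, coarse boundedness is inherited by Hausdorff quotients by closed subgroups, and since $\Homeo_0(M)$ is closed in $\Homeo(M)$ with $\mcg(M) = \Homeo(M)/\Homeo_0(M)$, it suffices to prove that $\Homeo(M)$ is coarsely bounded. As $\Homeo(M)$ is Polish, by Rosendal's characterization \cite[Proposition~2.7]{RosendalBook} this is equivalent to the following: for every open identity neighborhood $V$, there exist a finite set $F \subset \Homeo(M)$ and $n \in \bn$ with $\Homeo(M) = (FV)^n$. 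Using Lemma~\ref{lem:opengamma} and the structure of the compact-open topology, we may replace $V$ with a smaller neighborhood of the form $\sV_\Sigma := \bigcap_{i=1}^k U^+(c_i, A_i)$, where $c_1, \ldots, c_k$ are the boundary components of a compact finite-type connected subsurface $\Sigma \subset M$ and the $A_i$ are pairwise-disjoint open annular neighborhoods of the $c_i$; every element of $\sV_\Sigma$ preserves the isotopy class of each $c_i$, and hence of $\Sigma$.

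For any $f \in \Homeo(M)$, the image $f(\Sigma)$ is a compact finite-type subsurface of $M$ with $(M, f(\Sigma))$ homeomorphic to $(M, \Sigma)$ as pairs, witnessed by $f^{-1}$. The classification of surfaces (Theorem~\ref{thm:classification}) and the change of coordinates principle (Corollary~\ref{cor:coordinates}) then produce an ambient homeomorphism sending $f(\Sigma)$ to $\Sigma$. The crux of the argument is to show that this homeomorphism may be chosen from a finite set $F_0 \subset \Homeo(M)$, depending only on $\Sigma$ and not on $f$, after precomposing with an element of $\sV_\Sigma$. Granting this, every $f$ factors as $f = v \psi g$ with $v \in \sV_\Sigma$, $\psi \in F_0$, and $g$ in the setwise stabilizer of $\Sigma$ in $\Homeo(M)$; the stabilizer is generated by $\sV_\Sigma$ together with a finite set $F_1$ of representatives accounting for permutations of the boundary components and orientation reversals. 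Setting $F = F_0 \cup F_1 \cup \{e\}$ yields $\Homeo(M) = (FV)^n$ for a uniform constant $n$, proving coarse boundedness.

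The main obstacle is the finiteness claim: showing that, up to the action of $\sV_\Sigma$, the embedding of $f(\Sigma)$ into $M$ realizes only finitely many ambient-homeomorphism types of complement. This is precisely where self-similarity becomes indispensable, and it draws on the stable-set structure theory of Section~\ref{sec:stable-sets}. Specifically, Proposition~\ref{prop:well-defined} makes stable ends of the same type indistinguishable on arbitrarily small neighborhoods, and Proposition~\ref{prop:disjoint-union} identifies any finite (or countable) disjoint union of copies of a stable set with a single stable set. Together, these ensure that the clopen partition of $\Ends(M)$ induced by the complementary components of $f(\Sigma)$ realizes only finitely many homeomorphism types — one for each way of distributing the $k$ clopen pieces among the stable and unstable ends of $M$, a finite combinatorial datum determined by the topological type of $\Sigma$. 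Once a representative is chosen for each type, these representatives together with a finite normalization accounting for the finite-type structure of $\Sigma$ itself yield the required finite set $F_0$.
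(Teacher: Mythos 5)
Your approach diverges substantially from the paper's. You attempt a double-coset style reduction keyed to a \emph{compact} subsurface $\Sigma$ with boundary curves $c_1, \ldots, c_k$, whereas the paper fixes a \emph{maximal Freudenthal subsurface} $\Delta$ (non-compact, homeomorphic to $M$ with an open disk removed, with connected boundary), shrinks $V$ so that \emph{every} homeomorphism supported in $\Delta$ lies in $V$, and then shows directly that $H(M) = (FV)^5$ with $F = \{f, f^{-1}\}$ a single pair produced by Lemma~\ref{lem:displace} via an explicit chain of conjugations.

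The gap is precisely the step you flag as the crux. You claim that, up to the action of $\sV_\Sigma$, the embeddings $f(\Sigma)$ realize only finitely many types, and you attribute this to Propositions~\ref{prop:well-defined} and~\ref{prop:disjoint-union}. Neither proposition gives this, and the claim is false in general. Observe that every $v \in \sV_\Sigma$ fixes each clopen set $\widehat{\Omega_i}$ determined by a complementary component $\Omega_i$ of $\Sigma$: since $v(c_i)$ is isotopic to $c_i$ inside the compact annulus $A_i$, the sets $v(\Omega_i)$ and $\Omega_i$ differ only on a compact subset, so $v(\widehat{\Omega_i}) = \widehat{\Omega_i}$. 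Consequently, in any proposed factorization $f = v\psi g$ with $v \in \sV_\Sigma$ and $g \in \mathrm{Stab}(\Sigma)$, the relative position of the partition of $\sE(M)$ induced by $f(\Sigma)$ against the fixed partition induced by $\Sigma$ is invariant up to the finitely many choices of $\psi$ and a permutation of the pieces. But this relative position takes infinitely many values. Concretely, let $M$ be planar and self-similar with $\sE(M) \cong \omega^2 + 1$, and let $\Sigma$ be an annulus whose boundary curve $c_1$ cuts off a component $\Omega_1$ with $\widehat{\Omega_1} \cong \omega+1$. For each $n \in \bn$, Theorem~\ref{thm:surjection} produces $f_n \in \Homeo(M)$ with $\widehat{\Omega_1} \cap f_n(\widehat{\Omega_1})$ a set of cardinality exactly $n$, and this cardinality is an invariant of the decomposition; so no finite $F_0$ suffices. (A secondary problem: your stabilizer step only asserts that $\mathrm{Stab}(\Sigma)$ is \emph{generated by} $\sV_\Sigma$ together with a finite set, which is not enough for an $(FV)^n$ bound --- one needs a uniformly bounded word length.) The paper sidesteps both difficulties by never enumerating embedding types: its five-step chain of conjugations gives a uniform bound for every $g$ without needing to classify how $g(\Sigma)$ sits in $M$.
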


The coarse boundedness of \( H(M) \) implies the coarse boundedness of \( \mcg(M) \), so we will only work with \( H(M) \).
Before getting to the proof, we require two lemmas and a definition. 

\begin{Lem}
\label{lem:cb}
Let \( G \) be a topological group.
If for any open neighborhood \( V \) of the identity in \( G \) there exists a finite set \( F \subset G \) and \( k \in \bn \) such that \( G = (FV)^k \), then \( G \) is coarsely bounded. \qed
\end{Lem}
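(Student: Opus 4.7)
The plan is to test coarse boundedness by unwinding its definition: assume $G$ acts continuously on a metric space $(X,d)$ by isometries, fix a basepoint $x_0 \in X$, and show that the orbit $G \cdot x_0$ has finite diameter. The hypothesis gives a single uniform factorization scheme, and continuity of the action provides one-step control. The two should combine via the triangle inequality to give $k$-step control.

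Concretely, I would first exploit continuity of the orbit map $g \mapsto g \cdot x_0$ at the identity to extract an open neighborhood $V$ of $e \in G$ with the property that $d(v \cdot x_0, x_0) < 1$ for every $v \in V$. Next, apply the hypothesis to this $V$ to produce a finite set $F \subset G$ and an integer $k \in \bn$ with $G = (FV)^k$. Since $F$ is finite, the constant $R = \max_{f \in F} d(f \cdot x_0, x_0)$ is finite.

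Now for any $g \in G$, write $g = f_1 v_1 f_2 v_2 \cdots f_k v_k$ with $f_i \in F$ and $v_i \in V$, and set $g_i = f_1 v_1 \cdots f_i v_i$ with $g_0 = e$. The telescoping triangle inequality gives
\[
d(g \cdot x_0, x_0) \le \sum_{i=1}^k d(g_i \cdot x_0, g_{i-1} \cdot x_0) = \sum_{i=1}^k d(f_i v_i \cdot x_0, x_0),
\]
where the equality uses that $g_{i-1}$ acts as an isometry. A further application of the triangle inequality and the isometry property bounds each summand by $d(v_i \cdot x_0, x_0) + d(f_i \cdot x_0, x_0) < 1 + R$, so $d(g \cdot x_0, x_0) < k(R+1)$. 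As this bound is independent of $g$, the orbit $G \cdot x_0$ is bounded.

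The argument is essentially routine once one keeps track of the right cancellations, so there is no substantial obstacle; the only subtlety worth flagging is the use of the isometry property to move the ``measurement'' of each factor $f_i v_i$ back to the basepoint, which is exactly what makes the one-step estimate from continuity upgrade to a uniform $k$-step estimate.
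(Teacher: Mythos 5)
Your proof is correct and follows essentially the same strategy as the paper's: take $V$ to be the preimage of the unit ball under the continuous orbit map, apply the hypothesis to get $F$ and $k$, and conclude boundedness. The only difference is that you spell out the telescoping triangle-inequality estimate that the paper leaves implicit, which is a welcome bit of extra detail but not a different argument.
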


\begin{proof}
Let \( X \) be a metric space and suppose \( G \) acts on \( X \) continuously by isometries. 
Fix \( x \in X \) and let \( \vp \co G \to X \) be the orbit map \( g \mapsto g\cdot x \). 
Let \( B \) be the ball of radius 1 centered at \( x \).
Then, \( V = \vp^{-1}(B) \) is an open neighborhood of the identity in \( X \). 
By assumption, there exists a finite set \( F \subset G \) and \( k \in \bn \) such that \( G = (FV)^k \), and hence \( G\cdot x \) is bounded. 
\end{proof}

In the setting of Polish groups, the converse of Lemma~\ref{lem:cb} is true as well (see \cite[Proposition~2.7]{RosendalBook}), but we will not require this fact.

\begin{Def}
A Freudenthal subsurface \( \Delta \) of a 2-manifold \( M \) is \emph{maximal}\index{subsurface!Freudenthal!maximal} if \( \Delta \) is homeomorphic to \( M \) with an open disk removed. 
\end{Def}

Note that there are no half-spaces in a uniquely self-similar 2-manifold.
The above definition is meant to be a weaker condition on a Freudenthal subsurface that is meant to capture some key features of half-spaces. 
In particular, every half-space is maximal. 
Also note that every maximal Freudenthal subsurface in a self-similar 2-manifold is stable.

\begin{Lem}
\label{lem:displace}
Let \( M \) be a self-similar 2-manifold.
Suppose \( \Delta \subset M \) is a maximal Freudenthal subsurface and \( D \subset M \ssm \Delta^\mathrm{o} \) is a Freudenthal subsurface. 
Then,  there exists \( f \in H(M) \) and a maximal Freudenthal subsurface \( \Delta' \) contained in the interior of \( \Delta \) satisfying:
\begin{enumerate}[(i)]
\item \( f(D) \subset \Delta \),
\item \( D \subset f(\Delta) \),
\item \( \supp(f) \subset M \ssm \Delta' \), and
\end{enumerate}
Moreover, if \( Z \subset M \ssm (D \cup \Delta) \) is a Freudenthal subsurface, then \( f \) can be chosen to restrict to the identity on \( Z \).
\end{Lem}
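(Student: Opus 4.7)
The plan is to reduce the lemma to a disk-swap inside a planar disk, exploiting the fact that $\Delta$ being maximal means $\Sigma := M \ssm \Delta^{\mathrm{o}}$ is a closed $2$-disk; consequently every Freudenthal subsurface of $\Sigma$ is a closed disk, so both $D$ and $Z$ are disjoint closed disks inside $\Sigma$.

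The first concrete step is to nest two maximal Freudenthal subsurfaces $\Delta' \subset \Delta_1^{\mathrm{o}} \subset \Delta_1 \subset \Delta^{\mathrm{o}}$, obtained by iteratively pushing the boundary inward in a collar, so that $\Delta \ssm \Delta_1^{\mathrm{o}}$ is a wide annulus inside $\Delta^{\mathrm{o}}$; then pick a closed disk $D' \subset (\Delta \ssm \Delta_1^{\mathrm{o}})^{\mathrm{o}}$ homeomorphic to $D$. Writing $B := M \ssm \Delta_1^{\mathrm{o}}$, this is a closed $2$-disk (since $\Delta_1$ is maximal) containing $D$, $D'$, and $Z$ as three pairwise-disjoint closed disks in its interior.

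The crux is then to produce a homeomorphism $g \colon B \to B$ that is the identity on $\partial B \cup Z$ and interchanges $D$ with $D'$. Extending $g$ by the identity on $\Delta_1$ yields an $f \in H(M)$ with $\supp(f) \subset B \subset M \ssm \Delta'$, which establishes (iii); condition (i) holds because $f(D) = D' \subset \Delta$, and (ii) holds because $D = f(D') \subset f(\Delta)$.

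The main technical point is thus the construction of $g$. I would produce it by choosing two disjoint embedded arcs in $B^{\mathrm{o}} \ssm Z$ joining $\partial D$ to $\partial D'$---these exist because $B^{\mathrm{o}} \ssm Z$ is an open annulus, in which two such arcs can be routed around $Z$ on opposite sides---and then performing a half-twist supported in a regular neighborhood of $D \cup D'$ together with these arcs, thereby swapping $D$ and $D'$ while fixing $\partial B \cup Z$ pointwise. This is a purely planar construction and is justified by the change of coordinates principle (Corollary~\ref{cor:coordinates}) applied to the regular neighborhood. The orientability of $M$ plays no role, since the construction takes place entirely inside the planar disk $B$ and is extended by the identity on the rest of $M$.
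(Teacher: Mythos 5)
Your proof rests on the claim that maximality of \( \Delta \) forces \( M \ssm \Delta^{\mathrm{o}} \) to be a closed \( 2 \)-disk, and this is false. The definition says only that \( \Delta \) is homeomorphic to \( M \) with an open disk removed; it says nothing about the complement. For example, take \( M = \bS^2 \ssm C \) with \( C \) a Cantor set, split \( C = C_1 \sqcup C_2 \) into two clopen pieces, and let \( \Delta \) be a Freudenthal subsurface with \( \widehat\Delta = C_1 \): then \( \Delta \cong M \) minus an open disk (so \( \Delta \) is maximal), but \( M \ssm \Delta^{\mathrm{o}} \) is a non-compact surface carrying the ends \( C_2 \). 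Consequently \( D \) need not be a disk either --- indeed the paper applies this lemma (in Theorem~\ref{thm:cb} and Lemma~\ref{lem:dense2}) with \( D = M \ssm \Delta^{\mathrm{o}} \) itself, a Freudenthal subsurface that can have infinitely many ends and infinite genus. Everything downstream of your first sentence (both \( D \) and \( Z \) being disks, \( B \) being a disk, the planar half-twist) therefore collapses. A secondary problem caused by the same misreading: you fix the "annulus" \( \Delta \ssm \Delta_1^{\mathrm{o}} \) first and then try to place a copy \( D' \) of \( D \) inside it; if \( D \) is non-compact, no copy of \( D \) fits in a compact collar, so the order of choices must be reversed.

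That said, your high-level strategy is the right one and matches the paper's: produce a homeomorphic copy \( D' \) of \( D \) inside \( \Delta \), and swap \( D \) with \( D' \) by a homeomorphism supported near \( D \cup \gamma \cup D' \) for a connecting arc \( \gamma \), avoiding \( Z \) and a maximal \( \Delta' \). The correct execution is: use \( \Delta \cong M \ssm (\text{open disk}) \) to embed \( D' \cong D \) in \( \Delta \) \emph{first}; then choose \( \Delta' \) maximal inside \( \Delta \ssm D' \); then choose \( \gamma \) in \( M \ssm \Delta' \) disjoint from \( Z \); and finally obtain the swap from the change of coordinates principle (Corollary~\ref{cor:coordinates}) applied to a regular neighborhood \( N \) of \( D \cup \gamma \cup D' \), rather than from an explicit planar half-twist. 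Conditions (i)--(iii) and the statement about \( Z \) then follow exactly as you argue.
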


\begin{proof}
As \( \Delta \) is homeomorphic to \( M \) with an open disk removed, there exists \( D' \subset \Delta \) homeomorphic to \( D \). 
Let \( \Delta' \) be a maximal Freudenthal subsurface contained in \( \Delta \ssm D' \).
Then we can find a simple path \( \gamma \) in \( M \ssm \Delta' \) connecting \( \partial D' \) and \( \partial D \) (if \( Z \subset M \ssm (D \cup \Delta) \) is a Freudenthal subsurface, then \( \gamma \) can be chosen to be disjoint from \( Z \)). 
Taking the closure of a small neighborhood of \( D \cup \gamma \cup D' \) yields a Freudenthal subsurface \( N \).
By the change of coordinates principle, there exists a homeomorphism \( f \co M \to M \) supported in \( N \) such that \( f(D) = D' \) and \( f(D') = D \).  
It is readily verified that \( f \) is the desired homeomorphism.  
\end{proof}

\begin{proof}[Proof of Theorem~\ref{thm:cb}]
Let \( H = H(M) \). 
Let \( V \) be an open neighborhood of the identity in \( H \). 
By possibly shrinking \( V \), we are guaranteed to find a maximal Freudenthal set \( \Delta \) such that \( M \ssm \Delta \) is non-orientable if \( M \) is non-orientable and such that every homeomorphism supported in \( \Delta \) is contained in \( V \). 
Let \( \Sigma = M \ssm \Delta^\mathrm{o} \). 
Let \( f \in H \) be the homeomorphism obtained by inputting \( \Delta \) into Lemma~\ref{lem:displace} with \( D = \Sigma \), and set \( F = \{f,f^{-1}\} \). 
We claim that \( H = (FV)^5 \), which implies that \( H \) is coarsely bounded by Lemma~\ref{lem:cb}.

Fix \( g \in H \). 
Note \( M = \Delta \cup  f (\Delta)\), and so \(  g ( \Delta) \cap  \Delta \) or \(  g( \Delta) \cap  f (\Delta) \) must contain a maximal Freudenthal subsurface \( \Delta_1 \). 
By shrinking \( \Delta_1 \), we may assume that if \( \Delta_1 \) intersects \( \Sigma \) (resp., \( f(\Sigma) \)), then \( \Delta_1 \) is contained in \( \Sigma \) (resp., \( f(\Sigma) \)). 
Let \( \vp \) be a \( f(\Sigma) \)-translation supported in \( \Delta \), as guaranteed by Lemma~\ref{lem:translation}, and note that \( \vp \in V \).
Then, up to post-composing \( g \) with \( \vp \circ f \) if \( \Delta_1 \subset \Sigma \), or post-composing \( g \) with \( \vp \) if \( \Delta_1 \subset f(\Sigma) \), we may assume that \( \Delta_1 \) is disjoint from \( \Sigma \cup f(\Sigma) \); in particular,  
\begin{equation}
\label{eq1}
\Delta_1 \subset g ( \Delta) \cap  \Delta \cap  f ( \Delta).
\end{equation}
Under this additional assumption, it is enough to show that \( g \in (FV)^4 \) as \( (\vp \circ f)^{-1} \in FV \). 

Apply Lemma~\ref{lem:displace} to \( \Delta_1 \) and \( f(\Sigma) \subset M \ssm \Delta_1^\mathrm{o} \) with \( Z = \Sigma \) to obtain a homeomorphism \( h \in H \) that restricts to the identity on \( \Sigma \) and satisfies \( h(f(\Sigma)) \subset \Delta_1 \subset g(\Delta) \). 
Note that \( h \in V \) as \( \supp(h) \subset \Delta \).

Apply the complement to the containment \( h(f(\Sigma)) \subset g(\Delta) \) to see that \( g(\Sigma) \subset h(f(\Delta)) \).
Moreover, \( \Sigma \subset f(\Delta) \) and  \( \Sigma \subset h(f(\Delta)) \), as \( \Sigma = h(\Sigma) \).
Hence,
\begin{equation} 
\label{eq2}
\Sigma \cup g(\Sigma) \subset h(f(\Delta)).
\end{equation}
As a consequence of \( h \) being given by Lemma~\ref{lem:displace}, there is a maximal Freudenthal subsurface \( \Delta_2 \) contained in \( \Delta_1 \) and fixed by \( h \).

\begin{figure}[t]
\centering
\includegraphics{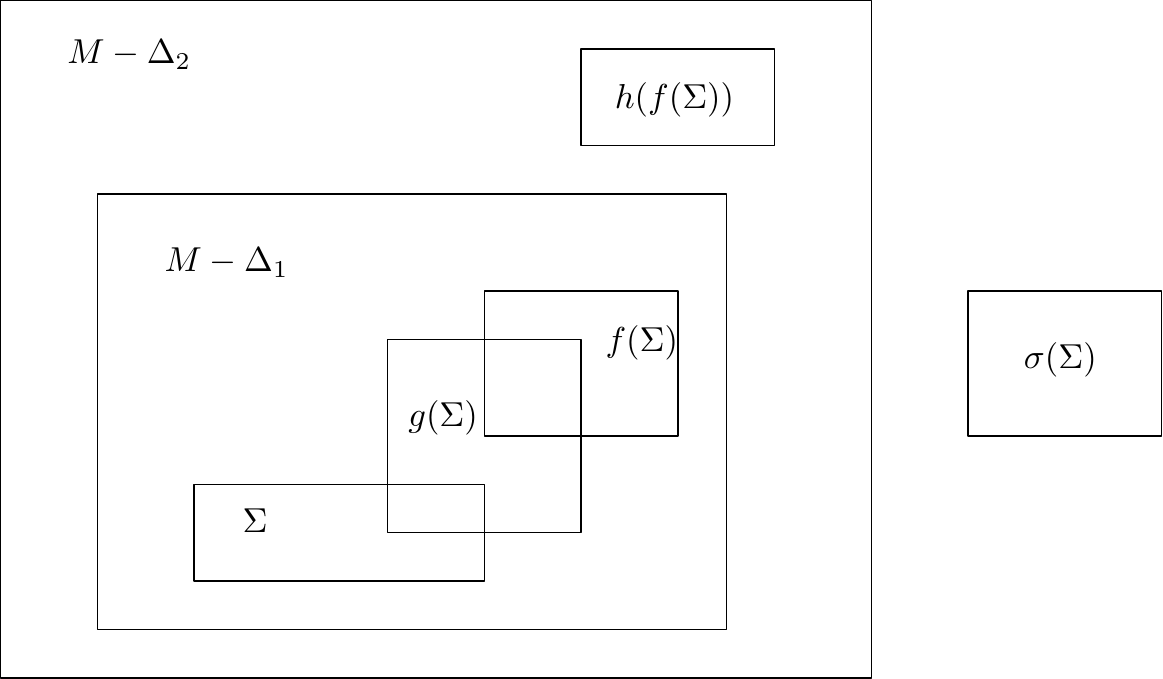}
\caption{A schematic of the sets in the Proof of Theorem~\ref{thm:cb} with \( M = \br^2 \).}
\label{fig:coarse}
\end{figure}

Using that \( h(f(\Sigma)) \) is disjoint from \( \Sigma \), we again apply Lemma~\ref{lem:displace}, this time to \( \Delta_2 \) and \( \Sigma \subset M \ssm \Delta_2^\mathrm{o} \) with \( Z = h(f(\Sigma)) \), to obtain a homeomorphism \( \sigma \co M \to M \) such that \( \sigma(\Sigma) \subset \Delta_2 \) and such that \( \sigma \) restricts to the identity on \( h (f(\Sigma)) \). 
From this, we know that \( h(f(\Sigma)) \) is disjoint from \( \Sigma \), so together with \eqref{eq1}, we have
\begin{equation}
\label{eq3}
 \sigma(\Sigma) \cap \left( h(f(\Sigma)) \cup \Sigma \cup g(\Sigma) \right)=\varnothing .
 \end{equation}
A schematic of all these sets and their intersection relations can be seen in Figure~\ref{fig:coarse}.

Applying the change of coordinates principle twice (first sending \( g(\Sigma) \) to \( \sigma(\Sigma) \) and then \( \sigma(\Sigma) \) to \( \Sigma \)), we find \( v \in H \) such that \( v(g(\Sigma)) = \Sigma \).
Moreover, from \eqref{eq2} and \eqref{eq3}, we have  that \( h(f(\Sigma)) \) is disjoint from \( \Sigma \cup g(\Sigma)\cup \sigma(\Sigma) \), and hence we may assume that  \( v \)  restricts to the identity on \( h(f(\Sigma)) \).
Additionally, after possibly post composing \( v \) with a double slide (to reverse the orientation of a neighborhood of \( \partial \Sigma \)) and  then post composing with a homeomorphism supported in \( \Sigma \), we may assume that \( v\circ g \) restricts to the identity on \( \Sigma \), and hence \( v\circ g \in V \). 
Observe that as \( v \) restricts to the identity on \( h(f(\Sigma)) \), we have  \( (hf)^{-1}v (hf) \) restricts to the identity \(  \Sigma \), and hence \( (hf)^{-1}v(hf) \in V \).
It follows that \( v \in (FV)^3 \) and \( g \in (FV)^4 \), as desired.
\end{proof}

\section{Rokhlin property}
A topological group has the \emph{Rokhlin property} if it contains an element whose conjugacy class is a dense set in the group. 
In this section, we will prove the following, which generalizes \cite[Theorem~4.1]{LanierMapping} and the reverse direction of \cite[Theorem~0.2]{HernandezConjugacy} (these are identical statements proved independently)  from mapping class groups to homeomorphism groups, and also accounts for non-orientable 2-manifolds:

\begin{Thm}
\label{thm:rokhlin}
Let \( M \) be a  self-similar 2-manifold.
If either \( M \) is the 2-sphere or \( M \) is uniquely self-similar, then \( H(M) \) has the Rokhlin property. 
\end{Thm}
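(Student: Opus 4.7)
The 2-sphere case is covered by Theorem~\ref{thm:2sphere}, so I focus on the case where \( M \) is uniquely self-similar. My plan is to establish that the conjugation action is topologically transitive on \( H(M) \): for any two nonempty open subsets \( U_1, U_2 \subset H(M) \), there exist \( g_1 \in U_1 \), \( g_2 \in U_2 \), and \( h \in H(M) \) with \( h g_1 h^{-1} = g_2 \). A standard Baire category argument, applied to the dense open sets \( \{g \in H(M) : h g h^{-1} \in V_n \text{ for some } h \in H(M)\} \), where \( \{V_n\} \) is a countable basis for \( H(M) \), then produces an element of \( H(M) \) with dense conjugacy class, and the Rokhlin property for \( \mcg(M) \) follows from the continuity of the quotient \( H(M) \to \mcg(M) \). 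Using Lemma~\ref{lem:opengamma} and Proposition~\ref{prop:fragmentation2}, each \( U_i \) can be refined to a translate \( f_i \cdot W \), where \( W \) is a neighborhood of the identity controlled by a finite-type ``core'' subsurface \( N \subset M \) whose complement is a disjoint union of maximal Freudenthal subsurfaces.

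Since \( M \) is uniquely self-similar, it has a unique maximally stable end \( \mu \) (Lemma~\ref{lem:uniformlyinfinite}). I would fix a nested neighborhood basis \( \{\Delta_n\}_{n \in \bn} \) of \( \mu \) consisting of trim maximal Freudenthal subsurfaces with \( \Delta_{n+1} \subset \Delta_n^{\mathrm o} \) and, after passing to a subsequence, invoke stability together with the classification of surfaces (Theorem~\ref{thm:classification}) to assume that the pieces \( B_n := \Delta_n \ssm \Delta_{n+1}^{\mathrm o} \) are pairwise homeomorphic relative to their boundaries. The change of coordinates principle (Corollary~\ref{cor:coordinates}) then yields a shift \( \psi \in H(M) \) with \( \psi(\Delta_n) = \Delta_{n+1} \) for every \( n \). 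Given \( f \in H(M) \) and a neighborhood \( W \) as above, I would choose \( n \) large enough that both \( N \) and \( f(N) \) are contained in \( M \ssm \Delta_n \) and, after absorbing the Freudenthal subsurfaces indexing \( W \) into successive \( B_m \)'s via a preliminary change of coordinates, define \( g_f \in f \cdot W \) to agree with \( f \) on \( M \ssm \Delta_n \) and with \( \psi \) on \( \Delta_n \), matching along \( \partial \Delta_n \) via Alexander's trick.

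The crux of the argument---and the step I expect to be the main obstacle---is the construction of a global homeomorphism \( h \in H(M) \) conjugating \( g_{f_1} \) to \( g_{f_2} \). The strategy is a back-and-forth construction analogous to Proposition~\ref{prop:well-defined}: build \( h \) inductively on matching ``blocks'' \( B_n \) of the dynamics of \( g_{f_1} \) and \( g_{f_2} \), using that the \( B_n \) are pairwise homeomorphic and that any two shifts on a neighborhood basis of \( \mu \) are conjugate in \( H(M) \)---a uniqueness that follows from \( \mu \) being the unique maximally stable end, via Corollary~\ref{cor:well-defined} and Lemma~\ref{lem:homogeneous}. Matching the cores of \( g_{f_1} \) and \( g_{f_2} \) on \( M \ssm \Delta_n \) can be arranged by pre- and post-composing \( h \) with homeomorphisms that modify the \( g_{f_i} \) only within their \( W \)-neighborhoods; when \( M \) is non-orientable, the double slides developed in Section~\ref{sec:Anderson} handle any orientation mismatches along the boundaries \( \partial \Delta_n \).
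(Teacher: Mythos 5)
Your reduction to the joint embedding property via the Baire category argument is the right frame, and Theorem~\ref{thm:2sphere} does handle the sphere case. But the mechanism you propose for verifying JEP in the uniquely self-similar case has two serious problems, and the second is fatal as written. First, the definition of \( g_f \) is not a homeomorphism: if \( g_f \) is to agree with \( f \) on \( M \ssm \Delta_n \) and with the shift \( \psi \) on \( \Delta_n \), then bijectivity forces \( f(\Delta_n) = \psi(\Delta_n) = \Delta_{n+1} \), which a generic \( f \) will not satisfy; the Alexander trick lets you isotope a disk map fixing the boundary, but it does not glue two pieces whose boundary restrictions disagree, nor does it resolve the set-theoretic mismatch. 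One could try to first modify \( f \) so that it preserves \( \Delta_n \), but the extra bookkeeping required is not addressed. Second, and more fundamentally, you are trying to exhibit \emph{conjugate} representatives \( g_{f_1} \in U_1 \) and \( g_{f_2} \in U_2 \). This is much stronger than what JEP asks for, and the claimed conjugacy is only asserted via a ``back-and-forth argument analogous to Proposition~\ref{prop:well-defined}.'' That proposition is a back-and-forth for homeomorphisms of zero-dimensional spaces, where one can freely mix and match clopen pieces; for self-homeomorphisms of a 2-manifold, conjugacy is a genuinely dynamical equivalence, and the ``core'' behaviors of \( g_{f_1} \) and \( g_{f_2} \) (fixed-point sets, local dynamics, etc.) need not match even after adjusting within \( W \). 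You have not shown that any intertwiner \( h \) exists, and I do not believe it exists in general.

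The paper's proof sidesteps both issues entirely. Lemma~\ref{lem:dense2} shows the subgroup \( \Gamma \) of homeomorphisms restricting to the identity on some maximal Freudenthal subsurface is dense, so one may take \( f_i \in U_i \cap \Gamma \) supported in a finite-type piece \( \Sigma \). Choosing a maximal Freudenthal subsurface \( \Delta \) far from all the defining data of \( U_1, U_2 \) and a \( g \) carrying \( \Sigma \) into \( \Delta \) (Lemma~\ref{lem:displace}), the single element \( f = f_1 \circ f_2^g \) is checked directly to lie in \emph{both} \( U_1 \) and \( U_2^g \), because \( f_1 \) and \( f_2^g \) have disjoint supports and each is trivial where the other's open condition is tested. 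No conjugacy between two a priori given elements is ever needed, which is exactly what makes the argument go through cleanly. If you want to salvage your approach you would need to prove a genuine conjugacy-classification theorem for ``shift-like'' homeomorphisms of \( M \), which is a far harder problem than the one at hand.
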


We note again that the converse of Theorem~\ref{thm:rokhlin} is true, that is, the 2-sphere is the only perfectly self-similar 2-manifold whose (orientation-preserving) homeomorphism group has the Rokhlin property; we direct the interested reader to \cite{LanierMapping} and \cite{HernandezConjugacy}.

Before beginning to prove Theorem~\ref{thm:rokhlin}, we require the following characterization of the Rokhlin property. 
A topological group \( G \) has the \emph{joint embedding property}\index{joint embedding property}, or \emph{JEP}\index{JEP|see {joint embedding property}}, if given any two nonempty open sets \( U \) and \( V \) in \( G \), there exists \( g \in G \) such that \( U \cap V^g \neq \varnothing \), where \( V^g = \{ gvg^{-1} : v \in V \} \). 
In other words, the action of \( G \) on itself by conjugation is topologically transitive.
We will also use the notation \( v^g = gvg^{-1} \) for \( v,g \in G \). 
The following lemma is a standard result; for completeness, we include a short proof, which is given in \cite[Theorem~2.1]{KechrisTurbulence} and its following remark. 

\begin{Lem}
\label{lem:jep}
A Polish group has the JEP if and only if it has the Rokhlin property.
\end{Lem}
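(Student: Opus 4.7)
The plan is to prove the two implications separately. The forward direction (Rokhlin property implies JEP) is a direct computation, while the converse uses the Baire Category Theorem in a standard Polish dynamics argument.

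For the forward direction, assume \( g \in G \) has dense conjugacy class and let \( U, V \subset G \) be nonempty open sets. Choose conjugates \( u = h_1 g h_1^{-1} \in U \) and \( v = h_2 g h_2^{-1} \in V \). Setting \( k = h_1 h_2^{-1} \), a direct computation gives \( u = k v k^{-1} \in V^k \), and hence \( u \in U \cap V^k \), verifying the JEP.

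For the converse, I would fix a countable basis \( \{U_n\}_{n \in \bn} \) for the Polish group \( G \), and for each \( n \) set
\[
W_n = \bigcup_{h \in G} h U_n h^{-1}.
\]
Since conjugation by a fixed element is a self-homeomorphism of \( G \), each set \( h U_n h^{-1} \) is open, so \( W_n \) is open. To see that \( W_n \) is dense, let \( V \subset G \) be any nonempty open set; by the JEP applied to \( V \) and \( U_n \), there exists \( h \in G \) with \( V \cap h U_n h^{-1} = V \cap U_n^h \neq \varnothing \), giving \( V \cap W_n \neq \varnothing \).

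By the Baire Category Theorem applied to the Polish space \( G \), the intersection \( W = \bigcap_{n \in \bn} W_n \) is a dense \( G_\delta \) subset of \( G \); in particular, it is nonempty. Any \( g \in W \) satisfies, for every \( n \), \( g \in h_n U_n h_n^{-1} \) for some \( h_n \in G \), i.e., \( h_n^{-1} g h_n \in U_n \). Thus the conjugacy class of \( g \) meets every basic open set and is therefore dense in \( G \), establishing the Rokhlin property. The main (minor) obstacle is just keeping the direction of conjugation straight when translating between the JEP statement and the density of the \( W_n \); there is no substantive difficulty beyond standard topological dynamics.
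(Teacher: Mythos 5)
Your proof is correct and follows essentially the same route as the paper: the Rokhlin-implies-JEP direction via the identity $h^{g_1}=(h^{g_2})^{g_1g_2^{-1}}$, and the converse by intersecting the dense open sets $\bigcup_h U_n^h$ over a countable basis and applying the Baire Category Theorem. No issues.
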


\begin{proof}
Let \( G \) be a Polish group and let \( \mathcal B \) be a countable basis for its topology. 
Then, for each \( U \in \mathcal B \), define \( D_U = \bigcup_{g\in G} U^g \).
If \( G \) has the JEP, then \( D_U \) is necessarily a dense open subset of \( G \).
As an application of the Baire Category Theorem, \( D= \bigcap_{U\in \mathcal B} D_U \) is dense, as it is the intersection of countably many dense open subsets. 
Then, the conjugacy class of an any element of \( D \) must be dense in \( G \):
Indeed, let \( h \in D \).  
If \( V \) is an open subset of \( G \), then there exists \( U \in \mathcal B \) such that \( U \subset V \), and, in turn, there exists \( g \in G \) such \( h \in U^g \); hence, there is a conjugate of \( h \) in \( V \). 

Conversely, if \( h \in G \) such that the conjugacy class of \( h \) is dense, then, for any open subsets \( U \) and \( V \) of \( G \), there exists \( g_1, g_2 \in G \) such that \( h^{g_1} \in U \) and \( h^{g_2} \in V \). 
In particular, \( h^{g_1} = (h^{g_2})^{g_3} \in U \cap V^{g_3} \), where \( g_3 = g_1g_2^{-1} \), and \( G \) has the JEP.
\end{proof}

We now proceed towards the proof of Theorem~\ref{thm:rokhlin}. 
The proofs that the (orientation-preserving) homeomorphism groups of the 2-sphere and of uniquely self-similar 2-manifolds have the Rokhlin property are similar in nature, but despite the general theme of giving unified proofs in this note, it is more natural in this case to separate out the 2-sphere.

\begin{Lem}
\label{lem:2sphere}
If \( U \) is an open subset of \( \Homeo(\bS^2) \), then there exists a closed disk \( \Delta \) in \( \bS^2 \) and a homeomorphism \( g \in U \) such that such that \( \supp(g) \subset \bS^2 \ssm \Delta^\mathrm{o} \) and such that  \( h \circ g \in U \) for all \( h \in \Homeo(\bS^2) \) supported in \( \Delta \).  
\end{Lem}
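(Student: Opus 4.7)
The plan is to start with any $f_0 \in U$ and modify $f_0$ on a small disk around a fixed point of $f_0$ to produce $g \in U$ that is the identity on a slightly smaller closed disk $\Delta$. Let $d$ be the spherical metric on $\bS^2$ and $\rho_d(f,g) = \sup_{x\in\bS^2} d(f(x),g(x))$ the corresponding right-invariant metric on $\Homeo(\bS^2)$, which induces the compact-open topology by Corollary~\ref{cor:homeo_compactification}. Choose $\epsilon > 0$ such that the open $\rho_d$-ball of radius $\epsilon$ about $f_0$ lies in $U$.

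Since $f_0$ is orientation-preserving, the Lefschetz fixed point theorem (noting $\chi(\bS^2)=2$) gives a fixed point $p \in \bS^2$ of $f_0$. Using continuity of $f_0$ at $p$, I would choose closed disks $\Delta$ and $N$ with $p \in \Delta^\mathrm{o}$, $\Delta \cup f_0(\Delta) \subset N^\mathrm{o}$, $\mathrm{diam}(\Delta) < \epsilon/4$, and $\mathrm{diam}(N) < \epsilon/2$. Using the change of coordinates principle (Corollary~\ref{cor:coordinates}), I would then construct a homeomorphism $\alpha \in \Homeo(\bS^2)$ with $\supp(\alpha) \subset N$ and $\alpha|_{f_0(\Delta)} = (f_0|_\Delta)^{-1}$. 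Setting $g = \alpha \circ f_0$ gives $g|_\Delta = \mathrm{id}$, and hence $\supp(g) \subset \bS^2 \ssm \Delta^\mathrm{o}$.

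The verifications are then routine. By right-invariance of $\rho_d$ we have $\rho_d(g, f_0) = \rho_d(\alpha, \mathrm{id}) \le \mathrm{diam}(N) < \epsilon/2$, so $g \in U$. Given any $h \in \Homeo(\bS^2)$ supported in $\Delta$, the map $h \circ g$ agrees with $g$ outside $\Delta$, while for $x \in \Delta$ both $g(x) = x$ and $h(g(x)) = h(x)$ lie in $\Delta$; hence $\rho_d(h \circ g, g) \le \mathrm{diam}(\Delta) < \epsilon/4$, whence $\rho_d(h \circ g, f_0) < \epsilon$ and $h \circ g \in U$.

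The main delicate step is the construction of $\alpha$: because $p$ is a fixed point, it lies in both $\Delta$ and $f_0(\Delta)$, so these two disks generally overlap, precluding a direct application of the change of coordinates principle to swap disjoint subsurfaces. The workaround is to prescribe $\alpha|_{f_0(\Delta)} = (f_0|_\Delta)^{-1}$ and then extend over the annulus $N \ssm f_0(\Delta)^\mathrm{o}$ so that $\alpha|_{\partial N} = \mathrm{id}$; this extension exists because both prescribed boundary-circle maps (the identity on $\partial N$ and $(f_0|_\Delta)^{-1}$ on $\partial f_0(\Delta)$) are orientation-preserving, and any orientation-compatible boundary data on an annulus extends to a self-homeomorphism of the annulus.
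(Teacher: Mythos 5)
Your argument is correct and follows essentially the same route as the paper: find a fixed point of a representative $f_0 \in U$ (you invoke Lefschetz, the paper invokes the Hairy Ball Theorem, but these give the same conclusion), post-compose by a small-support homeomorphism to make the result the identity on a small disk $\Delta$ around the fixed point, and then estimate $\rho_d$ to show membership in $U$. Your explicit construction of $\alpha$ via an annulus extension is a worthwhile elaboration of the step the paper leaves implicit (``there exists $f_1 \in \Homeo(\bS^2)$ supported in $B_\epsilon$ such that $f_1 \circ f$ restricts to the identity on $B_\delta$''), and your use of the triangle inequality at the end is a cosmetic variant of the paper's direct computation $f\circ(h\circ g)^{-1} = f_1^{-1}\circ h^{-1}$.
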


\begin{proof}
Fix \( f \in U \).
Let \( d \) be the standard metric on \( \bS^2 \), and let \( \rho_d \) be the metric on \( H(\bS^2) \) defined by \( \rho_d(f_1, f_2) = \max\{d(f_1(x), f_2(x)): x \in \bS^2 \} \) (see Corollary~\ref{cor:homeo_compactification}).
Fix \( \ep \in \br \) such that the \( (2\ep) \)-ball (with respect to \( \rho_d \)) in \( \Homeo(\bS^2) \)  centered at \( f \)  is contained in \( U \). 
As a corollary to the Hairy Ball Theorem, there exists \( z \in \bS^2 \) such that \( f(z) = z \). 
For \( r \in \br \), let \( B_r \) denote the closed ball of radius \( r \) in \( \bS^2 \) centered at \( z \). 
Fix \( \delta \in \br \) such that \( \delta < \ep \) and \( f(B_\delta) \subset B_{\ep}^\mathrm{o} \). 
Then, there exists \( f_1 \in \Homeo(\bS^2) \) supported in \( B_\ep \) such that \( f_1 \circ f \) restricts to the identity on \( B_\delta \). 
Set \( g = f_1 \circ f \) so \( \supp(g) \subset \bS^2 \ssm B_\delta^\mathrm{o} \). 

In particular, as \( f \circ g^{-1} = f_1^{-1} \) is supported in \( B_\ep \), we have that \( \rho_d(f,g)< 2\ep \) and \( g \in U  \). 
Observe that if \( h \in \Homeo(\bS^2) \) is supported in \( B_\delta \), then \( f \circ (h\circ g)^{-1} = f_1^{-1}  \circ h^{-1} \) is supported in \( B_\ep \); hence, \( \rho_d(f, h\circ g) < 2\ep \) and \( h\circ g \in U \). 
To finish, set \( \Delta = B_\delta \). 
\end{proof}

\begin{Thm}
\label{thm:2sphere}
\( \Homeo(\bS^2) \) has the Rokhlin property. 
\end{Thm}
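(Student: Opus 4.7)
The plan is to invoke Lemma~\ref{lem:jep} and establish the joint embedding property for \( \Homeo(\bS^2) \): given any two nonempty open subsets \( U, V \subset \Homeo(\bS^2) \), I must produce \( g \in \Homeo(\bS^2) \) and elements \( u \in U \), \( v \in V \) with \( u = gvg^{-1} \).

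First, I would apply Lemma~\ref{lem:2sphere} separately to \( U \) and to \( V \) to obtain closed disks \( \Delta_U, \Delta_V \subset \bS^2 \) and elements \( g_U \in U \), \( g_V \in V \) such that \( g_U \) (resp., \( g_V \)) restricts to the identity on \( \Delta_U \) (resp., \( \Delta_V \)) and such that post-composing \( g_U \) (resp., \( g_V \)) with any homeomorphism supported in \( \Delta_U \) (resp., \( \Delta_V \)) keeps the resulting map in \( U \) (resp., \( V \)). Setting \( D_U = \bS^2 \ssm \Delta_U^\mathrm{o} \) and \( D_V = \bS^2 \ssm \Delta_V^\mathrm{o} \), this means exactly that \( U \) contains every element of \( \Homeo(\bS^2) \) that agrees with \( g_U \) on \( D_U \), and similarly for \( V \); moreover \( g_U(D_U) = D_U \) and \( g_V(D_V) = D_V \), since each fixes a separating Jordan curve pointwise while pinning the complementary disk \( \Delta_U \) or \( \Delta_V \), respectively.

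The main step is to choose a conjugating homeomorphism \( g \in \Homeo(\bS^2) \), via the change of coordinates principle (Corollary~\ref{cor:coordinates}), so that \( g(D_V) \) is a closed disk contained in \( \Delta_U^\mathrm{o} \). Then \( K := g^{-1}(D_U) \) is a closed disk sitting inside \( \Delta_V^\mathrm{o} \) and disjoint from \( D_V \), and \( A := \Delta_V \ssm K^\mathrm{o} \) is a closed annulus, so that \( \bS^2 = D_V \cup A \cup K \) with pairwise-disjoint interiors. I would then define \( v\co\bS^2 \to \bS^2 \) piecewise by
\[
v|_{D_V} = g_V|_{D_V}, \qquad v|_{A} = \mathrm{id}, \qquad v|_{K} = (g^{-1} \circ g_U \circ g)|_{K}.
\]
On \( \partial D_V \subset \Delta_V \), \( g_V \) is the identity; and for \( y \in \partial K = g^{-1}(\partial D_U) \), the fact that \( g_U \) fixes \( \partial D_U \) gives \( g^{-1} g_U g(y) = y \). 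So the three pieces agree on their shared boundaries, and each piece sends its domain to itself, so \( v \) is a continuous self-bijection of the compact Hausdorff space \( \bS^2 \), hence a homeomorphism.

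Finally, \( v|_{D_V} = g_V|_{D_V} \) immediately gives \( v \in V \), and for \( x \in D_U \) one computes \( (gvg^{-1})(x) = g(v(g^{-1}(x))) = g(g^{-1}(g_U(x))) = g_U(x) \), so \( u := gvg^{-1} \) satisfies \( u|_{D_U} = g_U|_{D_U} \), placing \( u \in U \); thus \( u \in U \cap gVg^{-1} \) and JEP is established. The main obstacle is the design of \( v \): a naive attempt to conjugate \( g_V \) directly into \( U \) would force \( g_V|_{D_V} \) and \( g_U|_{D_U} \) to be conjugate as self-homeomorphisms of a closed disk fixing its boundary, which is not true in general. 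The workaround is to exploit the coset freedom Lemma~\ref{lem:2sphere} gives us inside \( V \) to graft a conjugate copy of \( g_U|_{D_U} \) into \( \Delta_V^\mathrm{o} \) via the piece on \( K \), so that the single homeomorphism \( v \) simultaneously meets both the \( V \)-constraint on \( D_V \) and, after conjugation by \( g \), the \( U \)-constraint on \( D_U \).
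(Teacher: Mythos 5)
Your proof is correct, and it is essentially the same argument as the paper's, repackaged. Unwinding your piecewise definition, $v$ equals $g_V \circ (g^{-1} g_U g)$, a product of two commuting homeomorphisms with disjoint supports; conjugating by $g$ gives $gvg^{-1} = (g g_V g^{-1}) \circ g_U = g_U \circ (g g_V g^{-1})$, which is exactly the single witness element $g_1 \circ g_2^\sigma$ the paper builds directly (with $\sigma = g$, $g_1 = g_U$, $g_2 = g_V$). The only cosmetic differences are that you work on the $V$-side and conjugate at the end rather than constructing the witness in $U_1 \cap U_2^\sigma$ outright, and you establish membership in $U$ and $V$ via the reformulated criterion \emph{agrees with $g_U$ on $D_U$} (a clean and correct consequence of Lemma~\ref{lem:2sphere} using $g_U(D_U) = D_U$) rather than the commuting-supports computation.
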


\begin{proof}
Let \( U_1 \) and \( U_2 \) be open subsets of \( \Homeo(\bS^2) \). 
For \( i \in \{1,2\} \),  let  \( g_i \in U_i \) and \( \Delta_i  \subset \bS^2 \) be obtained by applying  Lemma~\ref{lem:2sphere} to \( U_i \). 
Fix  \( \sigma \in \Homeo(\bS^2) \) such that \( \sigma(\bS^2 \ssm \Delta_2^\mathrm{o}) \subset \Delta_1^\mathrm{o} \). 
Let \( g =   g_1 \circ g_2^\sigma \). 
Note that \( g_1 \) and \( g_2^\sigma \) commute as their  supports have disjoint interiors, so \( g = g_2^\sigma \circ g_1 \). 
It follows that \( g = g_2^\sigma \circ g_1 \in U_1 \) as \( \supp(g_2^\sigma) \subset \Delta_1 \).
Now, if \( h \in \Homeo(\bS^2) \) is supported in \( \sigma(\Delta_2) \), then \( \supp(\sigma^{-1}\circ h \circ \sigma ) \subset \Delta_2 \) and \( (\sigma^{-1}\circ h \circ \sigma)\circ g_2 \in U_2 \); in particular, \( h \circ g_2^\sigma \in U_2^\sigma \). 
Hence, as \( \supp(g_1) \subset \bS^2 \ssm \Delta_1^\mathrm{o} \subset \sigma(\Delta_2) \), we have \( g = g_1 \circ g_2^\sigma \in U_2^\sigma \). 
Therefore, \( U_1 \cap U_2^\sigma \neq \varnothing \) (as it contains \( g \)), implying \( \Homeo(\bS^2) \) has the JEP, and hence the Rokhlin property by Lemma~\ref{lem:jep}.
\end{proof}

Theorem~\ref{thm:2sphere} was first proved by Glasner--Weiss \cite[Theorem~3.4]{GlasnerTopological}. 
Their proof works for all even-dimensional spheres, and we simply note that the proof given above also works in all even dimensions (since the Harry Ball Theorem is true for even-dimensional spheres).
The issue in odd dimensions is that the antipodal map is orientation preserving and admits an open neighborhood in which every element is a fixed point free homeomorphism.  
In particular, the group of orientation-preserving homeomorphisms of an odd-dimensional sphere contains an open set consisting of fixed-point free homeomorphisms and an open set in which every element fixes a point, and since both of these properties are conjugacy invariants, the group cannot have a dense conjugacy class. 

Let us move to the uniquely self-similar case. 
We first show that the (normal) subgroup of \( H(M) \) consisting of homeomorphisms supported on the complement of a maximal Freudenthal subsurface in \( M \) is dense.
This will allow us to readily establish the JEP property.

\begin{Lem}
\label{lem:dense2}
Let \( M \) be a uniquely self-similar 2-manifold.
Let \( \Gamma \) be the subgroup of \( H(M) \) consisting of homeomorphisms that restrict to the identity on a maximal Freudenthal subsurface. 
Then, \( \Gamma \) is dense in \( H(M) \). 
\end{Lem}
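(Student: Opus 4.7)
To show that $\Gamma$ is dense in $H(M)$, it suffices to show that for any $f \in H(M)$ and any basic open neighborhood $\mathcal{U} = U(K,W)$ of $f$---with $K$ a compact subsurface of $M$, $W$ an open set with compact closure, and $f(K) \subset W$---there exists $g \in \Gamma \cap \mathcal{U}$. Basic opens in the compact-open topology are finite intersections of such sets, so this case will suffice.

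The first step is to exploit the unique self-similarity of $M$: the unique maximally stable end $\mu$ admits a nested neighborhood basis of maximal Freudenthal subsurfaces. I would choose a maximal Freudenthal subsurface $\Delta$ disjoint from the compact set $K \cup f(K) \cup \overline W$. Setting $\Sigma = \overline{M \ssm \Delta}$, the problem reduces to finding $h \in H(\Sigma)$ with $h|_{\partial \Sigma} = \mathrm{id}$ and $h(K) \subset W \cap \Sigma$; extending $h$ by the identity on $\Delta$ yields $g \in \Gamma \cap \mathcal{U}$.

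The second step is to construct $h$ using the change of coordinates principle (Corollary~\ref{cor:coordinates}). Since $f(K) \subset W \cap \Sigma$ is a compact subsurface of $\Sigma$ abstractly homeomorphic to $K$ via $f|_K$, it suffices to find an ambient homeomorphism of $\Sigma$ rel $\partial \Sigma$ carrying $K$ onto $f(K)$. By the classification of surfaces, this exists provided the complements $\overline{\Sigma \ssm K}$ and $\overline{\Sigma \ssm f(K)}$ are homeomorphic in a manner compatible with the prescribed boundary identifications ($\partial \Sigma$ fixed pointwise and $\partial K$ sent to $\partial f(K)$ via $f$). The Euler characteristic, orientability, genus, and boundary-component count of these two complements coincide from $K \cong f(K)$.

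The main obstacle is matching the end-space triples of the two complements: the map $f$ may act nontrivially on the non-maximally-stable ends of $M$ in a way that is not compatible with restricting to $\Sigma$, so that $\widehat \Delta$ and $\widehat{f(\Delta)}$ may have different-sized complements in $\widehat{\sE(M)}$ (as happens, for instance, when $M$ is the flute surface and $f$ shifts the punctures). To overcome this, I would first pre-compose $f$ with a compactly supported homeomorphism of $M$---which itself lies in $\Gamma$ since its support is disjoint from some maximal Freudenthal subsurface---chosen to realign the action of $f$ on the non-maximally-stable ends so that the complementary end structures match up. The existence of such a realigning homeomorphism follows from Proposition~\ref{prop:well-defined} and the structural theory of stable sets from Section~\ref{sec:stable-sets}, applied to the stable neighborhood $\widehat \Delta$ of $\mu$, together with a careful choice of $\Delta$ to make the necessary adjustments compactly supported.
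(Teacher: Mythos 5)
Your reduction to basic open sets and your choice of $\Delta$ disjoint from the relevant compacta matches the paper's setup, and you are right to flag the end-space mismatch as a genuine obstruction: if $K$ separates $\Sigma$, the end sets of $\overline{\Sigma\ssm K}$ and $\overline{\Sigma\ssm f(K)}$ need not be homeomorphic, because $f$ can move ends across the frontier between $\widehat\Delta$ and $\widehat\Sigma$ even while fixing the unique maximally stable end. For instance, take $\sE(M)$ to be the one-point compactification of $\bn\times\left(2^\bn\sqcup(\omega+1)\right)$ (a planar uniquely self-similar end space), $\widehat\Delta = \{(n,x): n\geq 2\}\cup\{\infty\}$, and $K$ an annulus in $\Sigma$ cutting off the Cantor block $\{1\}\times 2^\bn$ from $\partial\Sigma$. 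There is $f\in H(M)$ fixing $\infty$ and carrying $\{1\}\times 2^\bn$ onto a proper clopen subset of itself; then the $\partial\Sigma$-side of $\Sigma\ssm K$ has end space $\omega+1$, while the $\partial\Sigma$-side of $\Sigma\ssm f(K)$ has end space $2^\bn\sqcup(\omega+1)$, so no ambient homeomorphism of $\Sigma$ carries $K$ to $f(K)$.

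The proposed repair, however, cannot work as stated. A compactly supported homeomorphism of $M$ induces the identity on $\sE(M)$, so pre-composing $f$ with such a map leaves the induced action on ends completely unchanged; it therefore cannot ``realign the action of $f$ on the non-maximally-stable ends,'' and no choice of $\Delta$ turns the needed adjustment into a compactly supported one, since the adjustment must itself move ends. The paper sidesteps the obstruction by never attempting to carry $K$ onto $f(K)$: it manipulates $\Sigma = \overline{M\ssm\Delta}$ and $f(\Sigma)$ directly. Lemma~\ref{lem:displace} produces $f_1, f_2\in\Gamma$ so that $f^{-1}\circ f_2^{-1}\circ f_1$ preserves $\Sigma$ (after a possible double slide in the non-orientable case), and this composite is then split into a factor supported in $\Sigma$ and a factor supported in $\Delta$. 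The only change of coordinates required is between Freudenthal subsurfaces that are copies of $\Sigma$ and whose complements in $M$ are maximal Freudenthal subsurfaces; by Proposition~\ref{prop:well-defined}, any two clopen neighborhoods of the unique maximally stable end inside the stable set $\sE(M)$ are homeomorphic, so the hypothesis of Corollary~\ref{cor:coordinates} is automatic there---which is precisely what fails for $\overline{\Sigma\ssm K}$ versus $\overline{\Sigma\ssm f(K)}$.
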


\begin{proof}
Let \( H = H(M) \).  
It is enough to show that each set in a basis for \( H \) intersects \( \Gamma \) nontrivially. 
So, fix
\[
U = \bigcap_{i=1}^n U(K_i, W_i),
\]
where \( K_1, \ldots, K_n \subset M \) are compact  and \( W_1, \ldots, W_n \subset M \) are precompact and open. 

Choose a maximal Freudenthal subsurface \( \Delta_1 \) disjoint from \( K_i \) and the closure of \( W_i \) for each \( i \in\{1, \ldots, n\} \). 
Let \( \Sigma = M \ssm \Delta_1^\mathrm{o} \), and let \( f \in U \). 
If \( M \) is non-orientable, then by possibly enlarging \( \Sigma \) (and shrinking \( \Delta_1 \)), we may assume that \( \Sigma \) is not orientable. 
Let \( \Delta_2 \) be a maximal Freudenthal subsurface disjoint from \( \Sigma \cup f(\Sigma) \).
By shrinking \( \Delta_2 \), we may assume that the surface co-bounded by \( \partial \Sigma \) and \( \partial \Delta_2 \) is not orientable if \( M \) is not orientable. 
By Lemma~\ref{lem:displace}, there exists \( f_1 \in \Gamma \) supported in the complement of a maximal Freudenthal subsurface \( \Delta_3 \) such that \( f_1(\Sigma) \subset \Delta_2 \). 
Then, using the fact that \( f_1(\Sigma) \cap f(\Sigma) = \varnothing \), the change of coordinates principle yields a homeomorphism \( f_2 \in \Gamma \) supported in the complement of \( \Delta_3 \) such that \( f_2(f(\Sigma)) = f_1(\Sigma) \).
In particular, \( f^{-1}\circ f_2^{-1}\circ f_1(\Sigma) = \Sigma \).

\begin{figure}[t]
\centering
\includegraphics{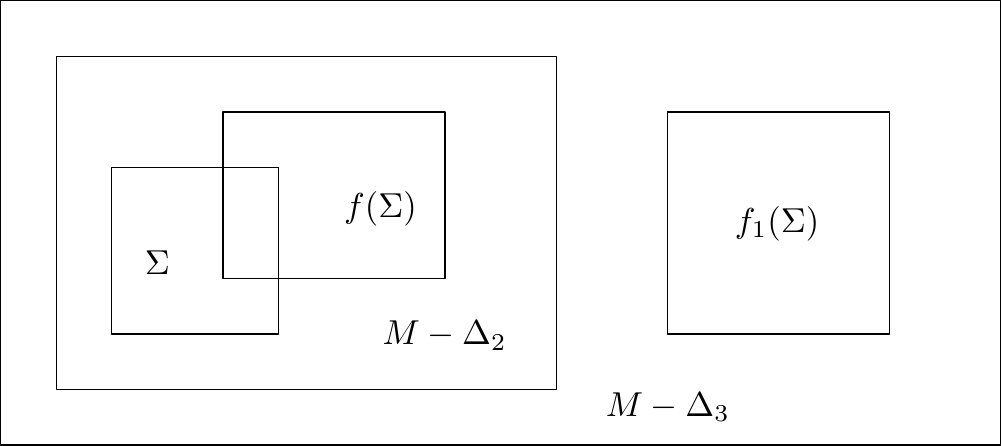}
\caption{A schematic of the subsurfaces in the proof of Lemma~\ref{lem:dense2} with \( M = \br^2 \).}
\end{figure}

If \( M \) is orientable, we may assume that \( f^{-1}\circ f_2^{-1}\circ f_1 \) restricts to the identity on \( \partial \Sigma \) since it preserves orientation; however, if \( M \) is non-orientable, it is possible that \( f^{-1}\circ f_2^{-1}\circ f_1 \) reverses the orientation of \( \partial \Sigma \).
If this is the case, then by construction, there exists a double slide \( s \) supported in \( M \ssm \Delta_3 \) preserving \( \Sigma \) and reversing the orientation of \( \partial \Sigma \) so that \(  f^{-1}\circ f_2^{-1}\circ f_1\circ s(\Sigma) \) restricts to the identity on \( \partial \Sigma \).
Up to replacing \( f_1 \) with \( f_1\circ s \), we can assume that  \( f^{-1}\circ f_2^{-1}\circ f_1 \) restricts to the identity on \( \partial \Sigma \). 
Therefore, there exists \( f_3, f_4 \in H \) such that \( f_3 \) is supported in \( \Sigma \),  \( f_4 \) is supported in \( M \ssm \Sigma^\mathrm{o} \), and \( f^{-1}\circ f_2^{-1}\circ f_1 = f_4 \circ f_3 \). 
It follows that \(    f_2^{-1}\circ f_1 \circ f_3^{-1} = f\circ f_4  \), and as \( f_4 \) restricts to the identity on \( \Sigma \), \( f \circ f_4(K_i) = f(K_i) \subset W_i \) for all \( i \in \{1, \ldots, n\} \); in particular, \( f_1^{-1}\circ f_2 \circ f_3^{-1} = f\circ f_4 \in U \). 
The result follows as \( f_1, f_2, f_3 \in \Gamma \). 
\end{proof}

Observe that in the case \( M \) is one-ended (i.e., \( M \) is homeomorphic to either the plane, the Loch Ness monster, or the non-orientable Loch Ness monster), then Lemma~\ref{lem:dense2} says that the subgroup of compactly supported homeomorphisms is dense in \( H(M) \). 
The case of the Loch Ness monster was established by Patel and the author in \cite{PatelAlgebraic} and was a key motivating example  for the results in \cite{AramayonaFirst} and \cite{LanierMapping}.

\begin{Thm}
If \( M \) is a uniquely self-similar 2-manifold, then \( H(M) \) has the Rokhlin property.
\end{Thm}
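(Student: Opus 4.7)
By Lemma~\ref{lem:jep} it suffices to check the joint embedding property for $H := H(M)$. Given nonempty basic open sets $U_i = \bigcap_{j=1}^{n_i} U(K_{i,j}, W_{i,j})$ for $i \in \{1,2\}$, let $K = \bigcup_{i,j}(K_{i,j} \cup \overline{W_{i,j}})$. By Lemma~\ref{lem:dense2}, the subgroup $\Gamma$ is dense in $H$, so I fix $f_i \in U_i \cap \Gamma$; by the definition of $\Gamma$, the support of each $f_i$ is contained in a closed disk $D^{(i)}$ (the complement of the interior of a maximal Freudenthal subsurface on which $f_i$ restricts to the identity). The plan is to choose $g \in H$ that conjugates $\supp(f_2)$ into a maximal Freudenthal subsurface $\Delta$ disjoint from $K \cup D^{(1)}$, so that $f_1$ and $g f_2 g^{-1}$ commute; then the pair $u := f_1 \cdot (g f_2 g^{-1})$ and $v := (g^{-1} f_1 g)\cdot f_2$ satisfies $g v g^{-1} = u$, reducing the problem to verifying $u \in U_1$ and $v \in U_2$.

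Form $L := K \cup D^{(1)} \cup D^{(2)}$, a compact subset of $M$. Since $M$ is uniquely self-similar, its unique maximally stable end admits a neighborhood basis consisting of maximal Freudenthal subsurfaces; I choose such a $\Delta$ disjoint from $L$ and set $D^* := M \setminus \Delta^{\mathrm{o}}$, a closed disk containing $L$. Next, pick any closed disk $D' \subset \Delta^{\mathrm{o}}$. Because $D^*$ and $D'$ are closed disks in $M$ whose complements are each maximal Freudenthal subsurfaces (and hence homeomorphic to one another), the change-of-coordinates principle (Corollary~\ref{cor:coordinates}) yields $g \in H$ with $g(D^*) = D'$; in the non-orientable case a double slide supported in $\Delta$ can be used to adjust the orientation on $\partial \Delta$ if necessary, as in the proof of Lemma~\ref{lem:dense2}. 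With $g$ in hand, note that $g(\supp f_2) \subset g(D^{(2)}) \subset D' \subset \Delta$ is disjoint from $K \supset K_{1,j}$, so the factor $g f_2 g^{-1}$ fixes $K_{1,j}$ pointwise and $u(K_{1,j}) = f_1(K_{1,j}) \subset W_{1,j}$. Symmetrically, $g(D^*) \cap D^* = \varnothing$ forces $g^{-1}(D^*) \subset M \setminus D^* = \Delta^{\mathrm{o}}$, so $\supp(g^{-1} f_1 g) \subset g^{-1}(D^{(1)}) \subset g^{-1}(D^*)$ is disjoint from $f_2(K_{2,j}) \subset W_{2,j} \subset K \subset D^*$, giving $v(K_{2,j}) = f_2(K_{2,j}) \subset W_{2,j}$, as required.

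The essential nontrivial input is Lemma~\ref{lem:dense2}, which is already in hand; with that density statement, the only real choice is the homeomorphism $g$ sending $D^*$ into the complementary maximal Freudenthal subsurface $\Delta$, which forces $f_1$ and $g f_2 g^{-1}$ to have disjoint supports and makes both membership conditions collapse to direct set-theoretic containments. The only mild technical subtlety is the orientation check in the non-orientable case, which is handled by a double slide exactly as in the proof of Lemma~\ref{lem:dense2}; structurally, the argument is parallel to the 2-sphere proof (Theorem~\ref{thm:2sphere}), with the density of $\Gamma$ playing the role of Lemma~\ref{lem:2sphere}.
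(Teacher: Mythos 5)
Your argument follows the same overall scheme as the paper's: verify the joint embedding property via Lemma~\ref{lem:jep}, use Lemma~\ref{lem:dense2} to realize \( f_i \in U_i \cap \Gamma \), and conjugate the support of one of them into a maximal Freudenthal subsurface disjoint from the compact data and from the support of the other. However, the middle step contains a genuine gap.

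You assert that \( D^{(i)} \)---the complement of the interior of the maximal Freudenthal subsurface on which \( f_i \) is the identity---is a \emph{closed disk}, and likewise that \( D^* = M \setminus \Delta^{\mathrm{o}} \) is a closed disk containing the (claimed compact) set \( L \). Neither claim holds for a general uniquely self-similar \( M \). For the Loch Ness monster surface, \( M \setminus \Delta^{\mathrm{o}} \) is compact with one boundary circle but has positive genus; when \( M \) has more than one end (e.g., the flute surface), \( D^{(i)} \) and \( D^* \) need not even be compact, so \( L \) need not be compact either. Consequently the step ``pick any closed disk \( D' \subset \Delta^{\mathrm{o}} \); because \( D^* \) and \( D' \) are closed disks \ldots the change-of-coordinates principle yields \( g \) with \( g(D^*) = D' \)'' fails: Corollary~\ref{cor:coordinates} requires \( D^* \) and \( D' \) to be abstractly homeomorphic, which a generic closed disk \( D' \) will not be. What you actually need is a copy of the Freudenthal subsurface \( D^* \) inside \( \Delta^{\mathrm{o}} \) whose complement in \( M \) is again a maximal Freudenthal subsurface, and producing such a copy requires an argument; this is exactly the content of Lemma~\ref{lem:displace}, which the paper invokes at this point to obtain \( g \in \Gamma \) with \( g(\Sigma) \subset \Delta \) for \( \Sigma = M \setminus \Delta^{\mathrm{o}} \). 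Replacing your change-of-coordinates step with an appeal to Lemma~\ref{lem:displace} (and dropping the ``closed disk'' claims) repairs the argument; the remaining bookkeeping---\( u := f_1\,(g f_2 g^{-1}) \), \( v := (g^{-1} f_1 g)\, f_2 \), \( g v g^{-1} = u \), and the disjoint-support containment checks---is then the same as the paper's computation showing \( f = f_1 \circ f_2^g \in U_1 \cap U_2^g \).
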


\begin{proof}
Let \( H = H(M) \).
As before, let \( \Gamma \) be the subgroup of \( H \) consisting of homeomorphisms that restrict to the identity on a maximal Freudenthal subsurface.
Let \( U_1 \) and \( U_2 \) be nonempty open subsets of \( H \). 
We must find \( g \in H \) such that \( U_1 \cap U_2^g \neq \varnothing \). 
Since both \( U_1 \) and \( U_2 \) must contain basic open sets, we may assume that for \( i \in \{1, 2\} \)
\[
U_i = \bigcap_{j=1}^{n_i} U(K^i_j, W^i_j),
\]
where each \( K_j^i \) is a compact subset of \( M \) and each \( W_j^i \) is a precompact open subset of \( M \). 
Moreover, by Lemma~\ref{lem:dense2}, there exists \( f_i \in \Gamma \) such that \( f_i \in U_i \). 

Choose a maximal Freudenthal subsurface \( \Delta \) disjoint from the  set
\[ \bigcup_{i=1}^2 \bigcup_{ j=1}^{n_i} \left(\supp(f_i) \cup K_j^i \cup \overline W_j^i\right).
\]  
Let \( \Sigma  = M \ssm \Delta^\mathrm{o} \).
By Lemma~\ref{lem:displace}, there exists \( g \in \Gamma \) such that \( g(\Sigma) \subset \Delta \). 
Now, \( f_1 \) is supported in \( \Sigma \) and \( f_2^g \) is supported in \( g(\Sigma) \); in particular, \( f_1 \) restricts to the identity on \( g(W_k^2) \) and \( f_2^g \) restricts to the identity on \( K_j^1 \) for all \( k \in \{1, \ldots, n_2\} \) and \( j \in \{1, \ldots, n_1\} \). 

Let \( f = f_1\circ f_2^g \).
We claim that \( f \in U_1 \cap U_2^g \). 
First observe that 
\[
f(K_j^1) = (f_1\circ f^g_2)(K_j^1) = f_1(K_j^1) \subset W_j^1
\]
for all \( j \in \{1, \ldots, n_1\} \), and hence \( f \in U_1 \). 
Second, observe that 
\[
f(g(K^2_j)) = (f_1\circ f_2^g)(g(K^2_j)) = f_1( g(f_2(K^2_j))) \subset f_1(g(W_j^2)) = g(W_j^2)
\]
for \( j \in \{1, \ldots, n_2\} \), and hence \( f \in U_2^g \).  
Thus, \( f \in U_1 \cap U_2^g \) and \( H \) has the JEP.
The result follows from Lemma~\ref{lem:jep}.
\end{proof}

\section{Automatic continuity}

Recall, from the introduction (Definition~\ref{def:perfectly_stable}), that a 2-manifold \( M \) is  \emph{perfectly tame} if  it is homeomorphic to the connected sum  of a finite-type 2-manifold and finitely many perfectly self-similar 2-manifolds, each of whose space of ends can be written as \( \mathscr P \cup \mathscr D \), where \( \mathscr P \) is perfect, \( \mathscr D \) is a discrete set consisting entirely of planar ends, and \( \partial \mathscr D = \mathscr P \).

Now that we have the notions of a stable set and a Freudenthal subsurface at hand, we note that the definition of a perfectly tame 2-manifold is made exactly so that the following lemma holds:

\begin{Lem}
\label{lem:def-tame}
If \( M \) is a perfectly tame 2-manifold, then
\begin{enumerate}[(i)]
\item
\( \sE(M) \) has a basis consisting of stable sets, and
\item
if \( \sU \) is a stable clopen subset of \( \sE(M) \), then either \( \mathcal S(\sU) \)  is perfect or  \( \sU \) is a singleton consisting of a single planar end.
\end{enumerate}
\end{Lem}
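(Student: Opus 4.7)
The plan is to exploit the explicit decomposition of a perfectly tame manifold. Writing $M = N \# M_1 \# \cdots \# M_k$ with $N$ finite-type and each $M_i$ perfectly self-similar with $\sE(M_i) = \mathscr P_i \cup \mathscr D_i$, the connected-sum necks are compact and contribute no ends, so $\sE(M) = \sE(N) \sqcup \sE(M_1) \sqcup \cdots \sqcup \sE(M_k)$ as a topological space, with each summand clopen. Since $N$ is finite-type, $\sE(N)$ is a finite discrete set consisting of planar ends, and each $\mathscr D_i$ is a discrete set of planar ends by hypothesis. Consequently, the only non-isolated ends of $\sE(M)$ lie in some $\mathscr P_i$, and each such $\mathscr P_i$ is a Cantor set (it is a nonempty second-countable, zero-dimensional, perfect, compact Hausdorff space, the nonemptiness following from Lemma~\ref{lem:uniformlyinfinite}).

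For (i), I would split on whether $e \in \sE(M)$ is isolated. If $e$ is isolated---that is, $e \in \sE(N) \cup \bigcup_{i=1}^k \mathscr D_i$---then $\{\{e\}\}$ is a trivially stable neighborhood basis. If instead $e \in \mathscr P_i$, I would establish the following homogeneity claim and then take any descending basis of $e$ consisting of clopen subsets of $\sE(M_i)$: every clopen neighborhood $\sV$ of $e$ in $\sE(M_i)$ is homeomorphic to $\sE(M_i)$. The claim will follow by a back-and-forth argument along the lines of the proof of Proposition~\ref{prop:well-defined}, after observing that $\sV$ decomposes as $(\sV \cap \mathscr P_i) \cup (\sV \cap \mathscr D_i)$, with Cantor part $\sV \cap \mathscr P_i$, countable discrete part $\sV \cap \mathscr D_i$, and $\partial(\sV \cap \mathscr D_i) = \sV \cap \mathscr P_i$---the same structure as $\sE(M_i)$---and that any two second-countable Stone spaces of this form are homeomorphic.

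For (ii), Proposition~\ref{prop:dichotomy} gives that $\mathcal S(\sU)$ is either perfect---in which case we are done---or a singleton $\{e\}$. If $e$ is isolated in $\sE(M)$, then every neighborhood basis of $e$ must contain $\{e\}$, so any stable basis witnessing $e \in \mathcal S(\sU)$ consists of singletons, forcing $\sU = \{e\}$; and $e$ is planar by the opening decomposition. If instead $e \in \mathscr P_i$, I would derive a contradiction: by the homogeneity claim, any sufficiently small element of the stable basis of $e$ lies in $\sE(M_i)$ and is homeomorphic to $\sE(M_i)$, and is also homeomorphic to $\sU$ since any two elements of a stable basis are homeomorphic, hence $\sU \cong \sE(M_i)$. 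Then every point $e' \neq e$ in the perfect part of $\sU$ admits a descending basis of clopen neighborhoods of $e'$ contained in $\sU$ and homeomorphic to $\sU$ (the same homogeneity claim applied inside $\sU$), so $e' \in \mathcal S(\sU)$, contradicting $\mathcal S(\sU) = \{e\}$. The main technical obstacle in both parts is the homogeneity claim; the back-and-forth itself is standard, but it must track the Cantor and discrete parts simultaneously while respecting the accumulation relation $\partial \mathscr D_i = \mathscr P_i$.
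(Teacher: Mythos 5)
Your proof hinges on the homogeneity claim---that every clopen neighborhood $\sV$ of a point $e\in\mathscr P_i$ inside $\sE(M_i)$ is homeomorphic to $\sE(M_i)$---and both parts of your argument lean on it, but the claim as stated is false. In this paper ``homeomorphic'' for subsets of the end space means a homeomorphism that simultaneously carries $\Enp$ onto $\Enp$ and $\Eno$ onto $\Eno$, and your back-and-forth tracks only $\mathscr P_i$ and $\mathscr D_i$. Nothing in Definition~\ref{def:perfectly_stable} constrains where $\Enp(M_i)$ sits inside $\mathscr P_i$ beyond being a closed subset; in particular $\Enp(M_i)$ can be a nonempty, nowhere dense, proper Cantor subset of $\mathscr P_i$. (Take $\mathscr P_i\cong 2^\bn\times 2^\bn$ with $\Enp(M_i) = \{(0,0,\dots)\}\times 2^\bn$ and $\mathscr D_i$ a countable dense discrete set of planar ends; the same kind of back-and-forth you invoke verifies that this triple is self-similar with $\mathcal S(\sE(M_i))=\Enp(M_i)$ and invariant under disjoint union with itself, and Richards's construction realizes it as a perfectly self-similar surface.) For such an $M_i$ and any $e\in\mathscr P_i\ssm\Enp(M_i)$, closedness of $\Enp(M_i)$ gives small clopen $\sV\ni e$ with $\sV\cap\Enp=\varnothing$, while $\Enp(M_i)\neq\varnothing$, so no homeomorphism $\sV\to\sE(M_i)$ in the required sense exists. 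The issue is therefore not a bookkeeping detail to be finished inside the back-and-forth; the back-and-forth cannot produce the map at all.

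The lemma is still true, but the argument has to branch on the type of $e$. Your homogeneity claim does hold for $e\in\mathcal S(\sE(M_i))$, and you do not even need a new back-and-forth: since $\sE(M_i)$ is a stable neighborhood of every such $e$, Proposition~\ref{prop:well-defined} applied with $\mu_1=\mu_2=e$, $\sU_1=\sV$, $\sU_2=\sE(M_i)$ already gives $\sV\cong\sE(M_i)$. For $e\in\mathscr P_i\ssm\mathcal S(\sE(M_i))$, small clopen neighborhoods $\sV$ of $e$ need not resemble $\sE(M_i)$; instead one should argue directly that such $\sV$ are self-similar in the sense of Proposition~\ref{prop:equiv_def} (so stable), and that they have nonempty perfect kernel so $\mathcal S(\sV)$ is perfect by Proposition~\ref{prop:dichotomy}. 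It is exactly here that the hypotheses that $\mathscr D_i$ consist of planar ends and accumulate on all of $\mathscr P_i$ come into play, and it is exactly here that the separate handling of $\Enp$ and $\Eno$ is forced.
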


The converse of Lemma~\ref{lem:def-tame} is true, but we will not require this fact. 
Recall that (1) a subset of a group is \emph{countably syndetic} if the group is the union of countably many left translates of the set, and (2) a topological group \( G \) is \emph{Steinhaus} if there exists \( m \in \bn \) such that \( W^m \) contains an open neighborhood of the identity in \( G \) for any symmetric countably syndetic set \( W \) in \( G \).
As noted in the introduction, the definition of a perfectly tame 2-manifold is designed so that Mann's proof in \cite{MannAutomatic2} can be  extended to the class of perfectly tame 2-manifolds; in particular, we will establish the following theorem:

\begin{Thm}
\label{thm:main1}
The homeomorphism group of a perfectly tame 2-manifold is Steinhaus. 
\end{Thm}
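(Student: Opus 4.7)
The plan is to adapt the strategy developed by Mann in \cite{MannAutomatic2}, where the conclusion was established when \( M \) is the 2-sphere with a Cantor set and a finite set removed, replacing Mann's ad hoc setup with the general machinery of stable sets, Freudenthal subsurfaces, and Anderson's method developed in Sections~\ref{sec:stable-sets}--\ref{sec:Anderson}. Fix a symmetric countably syndetic \( W \subset H(M) \); the goal is to exhibit an open neighborhood of the identity contained in \( W^{60} \).

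First, I would write \( M \) as a connected sum \( N \# M_1 \# \cdots \# M_\ell \) with \( N \) finite-type and each \( M_i \) perfectly self-similar with end space \( \mathscr P_i \cup \mathscr D_i \) as in Definition~\ref{def:perfectly_stable}, and realize the decomposition by pairwise-disjoint Freudenthal subsurfaces \( \Omega_1,\ldots,\Omega_\ell \) (each a copy of \( M_i \) minus an open disk) sitting inside a finite-type open neighborhood \( N' \) whose closure has compact complement in \( \bigcup \Omega_i \). By Proposition~\ref{prop:fragmentation2}, there is an open neighborhood \( U \) of the identity in \( H(M) \) whose elements factor as \( g_0 g_1 \cdots g_\ell \), with \( g_0 \in [H(M),H(M)] \) supported on \( N' \) and each \( g_i \) supported on \( \Omega_i \). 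It therefore suffices to bound, uniformly in \( i \), the power of \( W \) required to realize all sufficiently small \( g_i \).

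Second, the finite-type contribution \( g_0 \) is handled by combining Rosendal's Steinhaus theorem for compact 2-manifolds \cite{RosendalAutomatic2} with Lemma~\ref{lem:finite-commutator}: after shrinking \( U \), we may assume \( g_0 \) is a commutator of homeomorphisms supported on a compact subsurface containing \( N' \), so these live in some \( W^{k_0} \). The bulk of the work is the third step, where, for each perfectly self-similar piece \( \Omega_i \), I will show \( W^{k_1} \) contains every homeomorphism supported on a fixed dividing Freudenthal subsurface \( \Delta \subset \Omega_i \) (with \( k_0, k_1 \) absolute constants summing to at most \( 60 \)). Using Lemma~\ref{lem:decomposition} and the hypothesis that \( \mathcal S(\widehat{\Omega_i}) \) is perfect, choose a pairwise-disjoint sequence \( \{\Delta_n\}_{n\in\bn} \) of dividing Freudenthal subsurfaces inside \( \Omega_i \), each homeomorphic to \( \Delta \) and together accumulating onto a single end \( e \in \mathscr P_i \); using Theorem~\ref{thm:surjection} together with Corollary~\ref{cor:coordinates}, choose \( \varphi_n \in H(M) \) with \( \varphi_n(\Delta) = \Delta_n \) and \( \varphi_n \) the identity outside \( \Omega_i \). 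Countable syndeticity of \( W \) and a pigeonhole argument produce \( n \neq m \) with \( \varphi_n^{-1}\varphi_m \in W^2 \); this element displaces \( \Delta_m \) onto \( \Delta_n \), hence displaces a half-space inside \( \Omega_i \). A second pigeonhole-plus-conjugation step lets me arrange such swaps whose conjugators also lie in \( W \). Combining these swaps with Anderson's method (Proposition~\ref{prop:main2} and Corollary~\ref{cor:main2}, giving factorization into four conjugates) expresses every homeomorphism supported on \( \Delta \) as a product of boundedly many elements of \( W \).

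The main obstacle is the presence of the discrete planar ends \( \mathscr D_i \): unlike the purely perfect case treated in \cite{MannAutomatic2}, these rigid ends behave like marked points and must be carried compatibly by the shifting homeomorphisms \( \varphi_n \), so that the swaps \( \varphi_n^{-1}\varphi_m \) both extend to all of \( M \) and have the right support. The engineered condition \( \partial \mathscr D_i = \mathscr P_i \) is exactly what makes this possible: every stable clopen neighborhood of an end of \( \mathscr P_i \) carries the same end-theoretic pattern (a perfect set with a discrete set of planar ends accumulating onto it), so by Proposition~\ref{prop:well-defined} and Proposition~\ref{prop:disjoint-union}, the \( \varphi_n \) can be chosen consistently. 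The technical heart of the proof is bookkeeping through Anderson's method to certify that the resulting products of elements of \( W \) realize an open neighborhood of the identity in each \( \Omega_i \) within a fixed power, and to verify that the total count, combined with the finite-type contribution, does not exceed \( 60 \).
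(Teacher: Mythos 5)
Your overall architecture is right---fragment near the identity into a finite-type piece plus pieces supported on the perfectly self-similar ends, and handle each separately---and it matches the paper's skeleton closely. But two steps have genuine gaps.

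\textbf{The finite-type piece.} You propose to handle \( g_0 \) by invoking Rosendal's Steinhaus theorem for compact 2-manifolds together with Lemma~\ref{lem:finite-commutator}. This does not work as stated: \( W \) is countably syndetic in \( H(M) \), and countable syndeticity does not pass to subgroups, so \( W \cap H_\Sigma \) (where \( H_\Sigma \) is the subgroup of homeomorphisms supported on the compact \( \Sigma \)) need not be countably syndetic in \( H_\Sigma \cong \Homeo(\Sigma) \). Knowing that \( g_0 \) is a commutator of compactly supported homeomorphisms (Lemma~\ref{lem:finite-commutator}) gives you no purchase on \( W \) either, because you have no control on where those commutator entries live relative to \( W \). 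The paper's Lemma~\ref{lem:compact_support} is exactly what plugs this hole, and it is not a black-box invocation of Rosendal: it re-runs the three-colorable-cover fragmentation together with the pigeonhole machinery of Lemmas~\ref{lem:step1}--\ref{lem:8p} on families of disjoint disks inside \( \Sigma \). You cannot skip this.

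\textbf{The self-similar pieces.} Producing a displacing element \( f = \varphi_n^{-1}\varphi_m \in W^2 \) by pigeonholing the shifts is a good instinct, but the next step is where the argument collapses. Anderson's method in the form of Proposition~\ref{prop:main2}(2) and Corollary~\ref{cor:main2} writes an arbitrary \( h \) supported in \( \Delta \) as a product of four conjugates of \( f \) and \( f^{-1} \), but the conjugators are (up to bookkeeping) \( \sigma_h = \prod_{n\geq 0} \varphi^n h \varphi^{-n} \) and a fixed \( \psi \); the element \( \sigma_h \) depends on \( h \), is in general far from the identity, and lies in no controlled power of \( W \). So ``product of four conjugates of an element of \( W^2 \)'' does \emph{not} yield ``element of \( W^{k} \)'' for a uniform \( k \). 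The vague ``second pigeonhole-plus-conjugation step'' is precisely where the real work lives, and the correct mechanism is different from what you describe: one must pigeonhole the infinite products themselves. Concretely, the paper's Lemma~\ref{lem:step1} fixes a piecewise-convergent family \( \{\Delta_i\} \), considers infinite products \( w = \prod_i f_i \) with \( \supp(f_i) \subset \Delta_i \), and pigeonholes which coset \( g_i W \) the product lands in, producing an index \( k \) such that for \emph{any} prescribed \( h \) supported in \( \Delta_k \), some element of \( W^2 \) agrees with \( h \) on \( \Delta_k \) and is supported in \( \overline{\bigcup \Delta_i} \). Lemma~\ref{lem:8p} then iterates this to show \( [a,b]=[w_a,w_b] \in W^8 \) for all \( a,b \) supported in a suitable \( A \). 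It is this commutator form---not the product-of-conjugates form---that gives the uniform bound. Your proposal has neither the agreement lemma nor the commutator reduction, and without them the claimed bound on the power of \( W \) is unsupported.

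A secondary remark: the discussion of \( \mathscr D_i \) and the consistency of the \( \varphi_n \) via Propositions~\ref{prop:well-defined} and \ref{prop:disjoint-union} is the right use of the perfectly tame hypothesis, and parallels the role of Lemma~\ref{lem:def-tame} in the paper; this part of the thinking is sound. But it is a precondition for the argument, not a substitute for the missing pigeonhole machinery.
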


Before beginning we require several lemmas from \cite{MannAutomatic2}.
The first is a standard result about Polish groups:

\begin{Lem}[{\cite[Lemma~3.1]{MannAutomatic2}}]
\label{lem:dense}
Let \( G \) be a Polish group and \( W \subset G \) a symmetric set.
If \( W \) is  countably syndetic, then there exists a neighborhood \( U \) of the identity in \( G \) such that \( W^2 \) is dense in \( U \). 
\end{Lem}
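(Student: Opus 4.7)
The plan is to use the Baire category theorem to extract an open set inside $\overline{W}$, and then exploit the symmetry of $W$ together with the continuity of multiplication to produce the desired neighborhood.

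First, I would use the countable syndetic hypothesis to write $G = \bigcup_{n \in \mathbb{N}} g_n W$ for some sequence $\{g_n\}$ in $G$. Since $G$ is Polish, it is a Baire space, and it is also covered by the closed sets $g_n \overline{W}$. Hence at least one $g_n \overline{W}$ has nonempty interior, which (by translating) implies that $\overline{W}$ itself has nonempty interior. Pick any nonempty open set $V \subset \overline{W}$.

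Next, I would set $U = V V^{-1}$. This is open (as a union of translates of the open set $V$) and contains the identity, since $V$ is nonempty. It remains to show $W^2$ is dense in $U$; equivalently, $U \subset \overline{W^2}$. Given $u = v_1 v_2^{-1} \in U$ with $v_i \in V \subset \overline{W}$, choose nets (or sequences, using metrizability) $a_n \to v_1$ and $b_n \to v_2$ with $a_n, b_n \in W$. Since $W$ is symmetric, $b_n^{-1} \in W$, and by joint continuity of multiplication and inversion, $a_n b_n^{-1} \to v_1 v_2^{-1} = u$. Each $a_n b_n^{-1}$ lies in $W \cdot W = W^2$, so $u \in \overline{W^2}$, as required.

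The argument is essentially standard Baire category plus continuity of the group operations; the only subtle point is the use of symmetry of $W$ when passing from $V V^{-1}$ to $\overline{W}\cdot\overline{W}^{-1} \subset \overline{W^2}$, since without symmetry we would obtain $W \cdot W^{-1}$ rather than $W^2$. No additional structure on $G$ beyond its being Polish (i.e.\ a Baire topological group with continuous operations) is needed, and I do not foresee a serious obstacle.
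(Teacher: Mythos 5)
Your proof is correct and follows essentially the same route as the paper's: both invoke the Baire category theorem to conclude that $\overline{W}$ has nonempty interior (the paper phrases this via non-meagerness of $W$, you phrase it by covering $G$ with the closed sets $g_n\overline{W}$, but these are equivalent), and both then pass from a nonempty open $V\subset\overline{W}$ to the neighborhood $U=VV^{-1}$ of the identity, using symmetry of $W$ and continuity of the group operations to see that $W^2$ is dense in $U$. You spell out the final density step more explicitly than the paper does, but there is no substantive difference.
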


\begin{proof}
A left translate of \( W \) is meager if and only if \( W \) is meager, and hence, as \( G \) cannot be a countable union of meager sets, we must have that \( W \) is not meager.
Therefore, \( W \) is dense is some open subset of \( G \).
It follows that \( W^2 \) is dense in an open subset \( U \) of \( G \), and as \( W \) is symmetric, \( U \) is a neighborhood of the identity. 
\end{proof}

We will split Mann's \cite[Lemma~3.2]{MannAutomatic2} into two separate lemmas.
We say a family of subsets \( \{X_n\}_{n\in\bn} \) in a topological space is \emph{piece-wise convergent}\index{convergent family!piece-wise} if there exists \( k \in \bn \) such that for each \( n \in \bn \) there exist pairwise-disjoint subsets \( Y^n_1, Y^n_2, \ldots, Y^n_k \) such that \( X_n = Y^n_1 \cup Y^n_2 \cup \cdots \cup Y^n_k \) and \( \{Y^n_i\}_{n\in\bn} \) is convergent for every \( i \in \{1, \ldots, k \} \). 
Observe that given  a sequence \( \{f_n\}_{n\in\bn} \) of self-homeomorphisms of a topological space such that \( \{\supp(f_n)\}_{n\in\bn} \) is piece-wise convergent, the infinite product \( \prod_{n\in\bn} f_n \) exists and is a homeomorphism. 

\begin{Lem}
\label{lem:step1}
Let \( M \) be a manifold, and let \( \{\Delta_i\}_{i\in\bn} \) be a piece-wise convergent family of pairwise-disjoint closed subsets of \( M \). 
Suppose \( W \) is a symmetric countably syndetic subset of \( \Homeo(M) \) and  \( \{g_i\}_{i\in\bn} \subset \Homeo(M) \) such that \( \Homeo(M) = \bigcup_{i\in\bn} g_i W \). 
Then, there exists \( k \in \bn \) such that for each \( f \in \Homeo(M) \) with \( \supp(f) \subset \Delta_k \) there exists an element of \( W^2 \) agreeing with \( f \) on \( \Delta_k \) and supported in the closure of \( \bigcup_{i\in\bn} \Delta_i \). 
\end{Lem}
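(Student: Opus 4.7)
My plan is to argue by contradiction, combining the standard infinite-product trick with a pigeonhole on $2^\bn$. Assuming the lemma fails, for each $k \in \bn$ I would pick $f_k \in \Homeo(M)$ with $\supp(f_k) \subset \Delta_k$ such that no element of $W^2$ supported in $\overline{\bigcup_i \Delta_i}$ agrees with $f_k$ on $\Delta_k$. For each $\sigma = (\sigma_i)_{i\in\bn} \in 2^\bn$, I would then set
\[
F_\sigma := \prod_{i\in\bn} f_i^{\sigma_i}.
\]
The family $\{\supp(f_i^{\sigma_i})\}_{i\in\bn}$ inherits piece-wise convergence from $\{\Delta_i\}_{i\in\bn}$ (by intersecting each piece $Y^n_j$ with $\supp(f_n^{\sigma_n})$), so the observation immediately preceding the lemma guarantees that $F_\sigma$ is a well-defined self-homeomorphism of $M$. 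Since the $\Delta_i$ are pairwise disjoint and $\supp(f_i)\subset\Delta_i$, the homeomorphism $f_i$ preserves $\Delta_i$ setwise and is the identity off $\Delta_i$, so $\supp(F_\sigma) \subset \overline{\bigcup_i \Delta_i}$ and $F_\sigma|_{\Delta_i} = f_i^{\sigma_i}$ for every $i$.

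Next, I would use the countable syndeticity of $W$ to fix a family $\{g_j\}_{j\in\bn}$ with $\Homeo(M) = \bigcup_j g_j W$ and, for each $\sigma \in 2^\bn$, choose $n(\sigma) \in \bn$ with $F_\sigma \in g_{n(\sigma)} W$. Since $|2^\bn|$ is uncountable while $|\bn|$ is countable, pigeonhole produces some $j_0$ for which $T := n^{-1}(j_0) \subset 2^\bn$ is uncountable. In particular $|T| \geq 2$, so I can select $\sigma, \sigma' \in T$ disagreeing at some coordinate $k$; after relabeling, I may assume $\sigma_k = 1$ and $\sigma'_k = 0$.

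The concluding step is to check that $F_{\sigma'}^{-1} F_\sigma$ furnishes the desired element of $W^2$. From $F_\sigma, F_{\sigma'} \in g_{j_0} W$ and $W = W^{-1}$, it follows that $F_{\sigma'}^{-1} F_\sigma \in W^{-1} g_{j_0}^{-1} g_{j_0} W = W^2$. On the other hand, pairwise disjointness of the $\Delta_i$ (together with each $f_i$ preserving $\Delta_i$ setwise) yields, by a direct calculation, $F_{\sigma'}^{-1} F_\sigma|_{\Delta_i} = f_i^{\sigma_i - \sigma'_i}$, while $F_{\sigma'}^{-1} F_\sigma$ is the identity off $\bigcup_i \Delta_i$. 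Hence $F_{\sigma'}^{-1} F_\sigma$ is supported in $\overline{\bigcup_i \Delta_i}$ and restricts to $f_k^{1} = f_k$ on $\Delta_k$, contradicting the defining property of $f_k$.

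The only substantive step is the existence and support computation for $F_\sigma$ in the first paragraph, which is precisely what the piece-wise convergence hypothesis was introduced to supply; without it the infinite product need not converge to a homeomorphism. The remainder is a Rosendal--Solecki-style pairing of uncountability of $2^\bn$ against the countable cover of $\Homeo(M)$ by left translates of $W$, and this is where the argument does its real work.
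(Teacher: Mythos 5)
Your argument is correct, but it takes a different route from the paper's. The paper exploits the fact that both the witnesses \( f_i \) and the cosets \( g_iW \) are indexed by the same set \( \bn \): it first proves the stronger intermediate claim that there is a \( k \) such that every \( f \) supported in \( \Delta_k \) has a match \( w_f \in g_kW \) (the matching coset) supported in \( \Delta := \overline{\bigcup_i \Delta_i} \), and for this it needs only one infinite product \( w=\prod_i f_i \): placing \( w \) in some \( g_kW \) immediately contradicts the choice of \( f_k \), since the negated claim was tailored coset-by-coset. The lemma then follows by taking the ``ratio'' \( w_{\mathrm{id}}^{-1}w_f \in W^2 \) inside that single coset. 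Your proof skips the intermediate claim and instead builds the \( 2^\bn \)-indexed family \( F_\sigma = \prod_i f_i^{\sigma_i} \), then applies the Rosendal--Solecki pigeonhole (uncountably many products into countably many cosets) to find two products \( F_\sigma, F_{\sigma'} \) in the same coset whose ratio lies in \( W^2 \) and restricts to \( f_k \) on some \( \Delta_k \). Both routes rest on the same infinite-product device and reach the same conclusion; the paper's is more economical (one product rather than a continuum of them), while yours avoids the index-matching trick and is the more ``generic'' version of the argument as it appears elsewhere in the automatic-continuity literature, so it would generalize without modification if the covering family of cosets were not \( \bn \)-indexed in a way aligned with the \( \Delta_i \).
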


\begin{proof}
Let \( \Delta \) denote the closure of \(  \bigcup_{i\in\bn} \Delta_i \).
We claim that there exists \( k \in \bn \) such that  for each \( f \in \Homeo(M) \) with \( \supp(f) \subset \Delta_k \) there exists \( w_f \in g_kW \) supported in \( \Delta \) such that \( w_f|_{\Delta_k} = f|_{\Delta_k} \). 
If not, then, for each \( i\in \bn \), there exists \( f_i \in \Homeo(M) \) with \( \supp(f_i) \subset \Delta_i \) such that no element of \( g_iW \) supported in \( \Delta \) agrees with \( f \) on \( \Delta_i \). 
Let \( w = \prod_{i\in\bn} f_i \), and note that \( w \) is supported in \( \Delta \).
Then, by the assumption on \( W \), there exists \( k \in \bn \) such that \( w \in g_kW \).
But, \( w \) agrees with \( f_k \) on \( \Delta_k \), a contradiction. 

Now, let \( k \) be the natural number given by the previous paragraph, let  \( f \in \Homeo(M) \) with \( \supp(f) \subset \Delta_k \), and let \( w_\mathrm{id} \) and \( w_f \) be the homeomorphisms obtained above. 
Then, letting \( w = w_\mathrm{id}^{-1}\circ w_f \), we see that \( w \) agrees with \( f \) on \( \Delta_k \) and \( w \in (Wg_k^{-1})(g_kW) = W^2 \). 
\end{proof}

\begin{Lem}
\label{lem:8p}
Let \( M \) be a manifold, and let \( W \subset \Homeo(M) \) be a symmetric countably syndetic set.
Suppose \( \mathcal A \) is a family of closed subsets of \( M \)  with the following property: there exists a piecewise-convergent collection \( \{\Delta_i\}_{i\in\bn} \) of pairwise-disjoint closed sets such that, for each \( i \in \bn \),  there is a piece-wise convergent collection of pairwise-disjoint sets \( \{A_j\}_{j\in\bn} \) such that \( A_j \in \mathcal A \) and \( A_j \subset \Delta_i \) for each \( j \in \bn \). 
Then, there exists \( A \in \mathcal A \) such that \( [a,b] \in W^8\) whenever \( a,b \in \Homeo(M) \) with \( \supp(a), \supp(b) \subset A \).
\end{Lem}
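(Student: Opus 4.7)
The plan is to apply Lemma~\ref{lem:step1} twice, exploiting the nested piecewise-convergent structure of $\mathcal{A}$, and then to finish via a commutator identity. First apply Lemma~\ref{lem:step1} to the outer family $\{\Delta_i\}_{i\in\bn}$, obtaining a good index $k_0 \in \bn$: every $f \in \Homeo(M)$ with $\supp(f) \subset \Delta_{k_0}$ is matched on $\Delta_{k_0}$ by some element of $W^2$ supported in $\overline{\bigcup_i \Delta_i}$. Re-applying Lemma~\ref{lem:step1} to the sub-family indexed by the set of bad indices (and using that restricting the allowed support to a smaller family is a strictly stronger conclusion, so goodness in a sub-family implies goodness in any larger family containing it) yields the strengthening that, in every piecewise-convergent sub-family, the set of good indices is cofinite.

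Inside $\Delta_{k_0}$, the hypothesis supplies a piecewise-convergent family $\{A_j\}_{j\in\bn} \subset \mathcal{A}$. Fix an infinite partition $\bn \ssm \{\ell_0\} = E \sqcup O$ and apply Lemma~\ref{lem:step1} separately to the two piecewise-convergent sub-families $\{A_j\}_{j \in E \cup \{\ell_0\}}$ and $\{A_j\}_{j \in O \cup \{\ell_0\}}$, which share only the element $A_{\ell_0}$. A diagonal selection---leveraging the cofiniteness of good indices just noted---produces a compatible $\ell_0$ that is simultaneously good for both applications. Set $A := A_{\ell_0}$.

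Now let $a, b \in \Homeo(M)$ with $\supp(a), \supp(b) \subset A$. The two applications of Lemma~\ref{lem:step1} produce $u_a, u_b \in W^2$ with $u_a|_A = a|_A$, $u_b|_A = b|_A$, and with supports contained in $\overline{\bigcup_{j \in E \cup \{\ell_0\}} A_j}$ and $\overline{\bigcup_{j \in O \cup \{\ell_0\}} A_j}$ respectively; these supports meet only inside $A$, hence are disjoint outside $A$. Writing $\alpha := u_a a^{-1}$ and $\beta := u_b b^{-1}$, so that $u_a = a\alpha$ and $u_b = b\beta$, both $\alpha$ and $\beta$ are identity on $A$ and supported in disjoint subsets of $M \ssm A$. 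Since $\supp(a) \cup \supp(b) \subset A$, each of $\alpha, \beta$ commutes with both $a$ and $b$ (disjoint supports), and $\alpha$ commutes with $\beta$; a routine rearrangement then gives
\[
[u_a, u_b] = [a\alpha, b\beta] = [a,b]\cdot[\alpha,\beta] = [a,b].
\]
Therefore $[a,b] = [u_a, u_b] \in (W^2)^4 = W^8$.

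The main obstacle will be the compatible choice of $\ell_0$ in the second step: we need one index simultaneously good for both sub-family applications so that the resulting $u_a, u_b$ have disjoint supports outside $A$, which is what makes the commutator identity collapse to $[a,b]$ on the nose rather than merely on $A$. The cofiniteness of good indices inherited from Lemma~\ref{lem:step1} provides the needed flexibility, but extracting a single $\ell_0$ that works for both applications at once requires the diagonal bookkeeping sketched above.
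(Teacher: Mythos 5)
Your cofiniteness observation is correct: if the set $B$ of bad indices for a piecewise-convergent family $\{A_j\}_{j\in\bn}$ were infinite, applying Lemma~\ref{lem:step1} to the sub-family $\{A_j\}_{j\in B}$ would produce some $k\in B$ that is good for the sub-family, hence good for the full family (since the support constraint $\overline{\bigcup_{j\in B}A_j}\subset\overline{\bigcup_{j\in\bn}A_j}$ is only stronger), contradicting $k\in B$. Your commutator bookkeeping in the final paragraph is also fine, granting the commutativity hypotheses.

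The gap is in the selection of $\ell_0$, and I do not think the ``diagonal bookkeeping'' you gesture at can be made to work. Cofiniteness is a statement about a \emph{fixed} piecewise-convergent family: for each fixed $S\subset\bn$, all but finitely many $j\in S$ are good for $\{A_j\}_{j\in S}$. But your two families $\{A_j\}_{j\in E\cup\{\ell_0\}}$ and $\{A_j\}_{j\in O\cup\{\ell_0\}}$ vary with $\ell_0$, and for each candidate $\ell_0$ the finite bad set of its associated family may well contain $\ell_0$ itself; nothing in the cofiniteness argument excludes this for every candidate. Moreover, the only index shared by the two sub-families is $\ell_0$, so $\ell_0$ is the \emph{only} possible simultaneously good index --- you have no slack at all. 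You correctly flag this as the obstacle, but you do not resolve it, and I believe it genuinely resists resolution: the contradiction in Lemma~\ref{lem:step1} is driven by lining up the bad $f_i$'s with a single family $\{\Delta_i\}$ and a single syndetic cover, and the self-referential constraint ``$\ell_0$ good for the $\ell_0$-dependent family'' does not fit that mold.

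The paper sidesteps the issue entirely by exploiting the two-scale hypothesis in a different way, and this is the missing idea. One applies Lemma~\ref{lem:step1} once to the outer family $\{\Delta_i\}$ to get $\Delta=\Delta_k$, and once to a family $\{A_j\}\subset\mathcal A$ inside $\Delta$ to get $A=A_m$. For $a,b$ supported in $A$: since $A\subset\Delta$, $a$ is also supported in $\Delta_k$, so the \emph{outer} application produces $w_a\in W^2$ agreeing with $a$ on \emph{all of} $\Delta_k$ and supported in $\overline{\bigcup_i\Delta_i}$; the inner application produces $w_b\in W^2$ agreeing with $b$ on $A$ and supported in $\overline{\bigcup_j A_j}\subset\Delta_k$. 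The crucial point is that $w_a$ agrees with $a$ on $\Delta_k$, not merely on $A$, so $w_a$ restricts to the identity on $\Delta_k\smallsetminus A$. Hence $\supp(w_a)\cap\supp(w_b)\subset\supp(w_a)\cap\Delta_k\subset A$, and $[a,b]=[w_a,w_b]\in W^8$ follows. No simultaneous choice is required: the two applications of Lemma~\ref{lem:step1} are independent, applied sequentially to the two given scales, and the disjointness of supports outside $A$ comes for free from $w_a$ matching $a$ on the larger set $\Delta_k$. This is precisely the reason the lemma is stated with a nested two-level hypothesis; your proposal uses only the inner scale for both $u_a$ and $u_b$ and so has to manufacture disjointness by hand, which is where it breaks.
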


\begin{proof}
Let \( k \in \bn \) be obtained by applying Lemma~\ref{lem:step1} to the sequence \( \{\Delta_i\}_{i\in\bn} \), and let \( \Delta = \Delta_k \).
Let \( \{A_j\}_{j\in\bn} \) be a convergent collection of pairwise-disjoint sets in \( \mathcal A \), each of which is contained in \( \Delta \). 
Apply Lemma~\ref{lem:step1} to \( \{ A_j\}_{j\in\bn} \) to obtain \( m \in \bn \)  such that for each \( a \in \Homeo(M) \) with \( \supp(a) \subset A_m \) there exists an element of \( W^2 \) agreeing with \( a \) on \(  A_m \).
Let \( A = A_m \). 

Let \( a, b \in \Homeo(M) \) be supported in \( A \). 
As \( A \subset \Delta \), \( a \) is supported in \( \Delta \) and hence there exists \( w_a \in W^2 \) supported in the closure of \( \bigcup_{i\in\bn} \Delta_i \) and whose restriction to \( \Delta \) (and hence to \( A \)) agrees with \( a \). 
Similarly, there exists \( w_b \in W^2 \) supported in the closure of \( \bigcup_{j\in\bn}  A_j \) and whose restriction to \( A \) agrees with \( b \).
As \( \supp(w_a) \cap \supp(w_b) \subset A \), one readily verifies that \( [a,b]=[w_a,w_b] \) and hence \( [a,b]\in W^8 \).  
\end{proof}

The final lemma we require is a tailored version of \cite[Proposition~3.3]{MannAutomatic2}, as \cite[Proposition~3.3]{MannAutomatic2} does not hold in the infinite-genus setting.
We will only offer a sketch of the proof of this lemma as the full details would take us too far afield.
Moreover, after a combinatorial setup, the proof will be identical to a part of the proof of Theorem~\ref{thm:main1}, and hence we postpone the presentation of the sketch of the proof.

\begin{Lem}
\label{lem:compact_support}
Let \( M \) be a 2-manifold, and let \( W \) be a symmetric countably syndetic subset of \( \Homeo(M) \). 
If \( \Sigma \) is a finite-type subsurface of \( M \), then there exists an open subset \( U \) of the identity in \( \Homeo(M) \) such that any element of \( U \) supported in \( \Sigma \) is an element of \( W^{36} \). 
\end{Lem}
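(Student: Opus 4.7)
The plan is to adapt Mann's proof of \cite[Proposition~3.3]{MannAutomatic2} to the general setting of an arbitrary 2-manifold \( M \). The overall strategy is to fragment \( f \in U \) supported in \( \Sigma \) into a controlled product whose individual commutator representations can be packaged into \( W^8 \) via Lemma~\ref{lem:8p}, and then to combine these efficiently to obtain the uniform count \( W^{36} \).

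First, I would produce the fragmentation. Since \( \Sigma \) is finite-type, I enlarge \( \Sigma \) to a finite-type subsurface \( \Sigma' \) with a compact collar in \( M \), and I choose a finite cover of \( \Sigma' \) by open 2-disks together with annular collars along \( \partial\Sigma \). Proposition~\ref{prop:fragmentation_original} combined with Proposition~\ref{prop:fragmentation2} and Lemma~\ref{lem:finite-commutator} yields an open neighborhood \( U \) of the identity in \( \mathrm{Homeo}(M) \) such that any \( f \in U \) with \( \mathrm{supp}(f) \subset \Sigma \) factors as \( f = g_1 \circ \cdots \circ g_n \), where each \( g_i \) is supported in one of the chosen open disks.

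Second, I would express each piece as a commutator. By Corollary~\ref{cor:commutator1} (every open 2-disk in a non-trivial 2-manifold is a dividing Freudenthal subsurface since \( M \) admits a disjoint homeomorphic copy), each \( g_i \) equals a single commutator \( [a_i, b_i] \) with \( a_i, b_i \) supported in a slightly larger disk. To apply Lemma~\ref{lem:8p}, I would construct a family \( \mathcal{A} \) of small closed 2-disks in \( M \) satisfying the piecewise-convergent hypothesis by first packing countably many disjoint small disks inside a coordinate chart converging to a point, and then distributing such packings across a piecewise-convergent collection of disjoint closed subsets of \( M \); the lemma then produces a disk \( A \in \mathcal{A} \) such that every commutator of elements supported in \( A \) lies in \( W^8 \).

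The main obstacle will be obtaining the uniform bound \( W^{36} \) regardless of the topology of \( \Sigma \), since the naive count \( W^{8n} \) depends on the number \( n \) of disks required to cover \( \Sigma' \). To resolve this, I would use an Anderson-style argument in the ambient \( M \): arrange the cover disks (and their enlargements) so that there exists a homeomorphism \( \varphi \in \mathrm{Homeo}(M) \) simultaneously translating copies of a single reference disk \( A \in \mathcal{A} \) in a convergent family, and then rewrite the entire product \( g_1 \circ \cdots \circ g_n \) as a bounded number (at most four) of commutators of elements supported in copies of \( A \), in the spirit of the proof of Proposition~\ref{prop:main2}. By Lemma~\ref{lem:8p} each such commutator lies in \( W^8 \); four such commutators together with a small number of correction factors accounting for conjugation and for the finite-type residue term from Lemma~\ref{lem:finite-commutator} account for the final constant \( 36 \).
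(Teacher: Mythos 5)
Your proposal captures the correct setup (dense $W^2$ in a neighborhood $U_0$, fragmentation into disk-supported pieces, Lemma~\ref{lem:8p} to get a reference disk $A$ with the $W^8$ property), but the mechanism you offer for the uniform bound is not right, and it omits the one combinatorial idea the argument actually hinges on.

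The gap is in your final step. You propose to ``rewrite the entire product $g_1 \circ \cdots \circ g_n$ as a bounded number (at most four) of commutators of elements supported in copies of $A$, in the spirit of Proposition~\ref{prop:main2}.'' But Proposition~\ref{prop:main2} expresses a \emph{single} homeomorphism supported in a fixed Freudenthal subsurface as a commutator; it does not collapse a product of $n$ homeomorphisms with \emph{overlapping} supports into a bounded-length word, and the disks in your cover must overlap since they cover $\Sigma$. Anderson's method cannot be ``run in parallel'' across overlapping pieces. What the paper uses instead is a cover of the ambient compact surface $N$ by disks whose dual graph is $3$-colorable (achieved by covering the vertices, edges, and faces of a triangulation separately). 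Within a single color class the disks are pairwise disjoint, so the corresponding subproduct $f_k$ is a single homeomorphism supported in a disjoint union of disks; the disjointness is exactly what lets the $k$ separate commutator expressions $[\alpha_i,\beta_i]$ be merged into one commutator $\bigl[\prod_i \alpha_i, \prod_i \beta_i\bigr]$, as in the end of the proof of Theorem~\ref{thm:main1}. Conjugating by a $\sigma\in W^2$ (which lies in $U_0$ because it is supported away from the controlling compacta, and $W^2$ is dense in $U_0$) puts that single commutator into $W^8$, giving $f_k\in W^{12}$, and then $g=f_1f_2f_3\in W^{36}$. Your proposal never controls the number of disjoint support groups, so the exponent you would obtain would grow with the number $n$ of cover disks; the $3$-coloring is precisely what fixes this at $3$, and the bookkeeping $36 = 3\cdot(8+4)$ does not match your ``four commutators plus correction factors.'' You also need, and do not state, that the cover is chosen so that \emph{every} homeomorphism supported in a single cover disk automatically lies in $U_0$; this requires the disks to be adapted to the compact sets defining $U_0$, which is part of what the triangulation step arranges.
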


For the proof of Theorem~\ref{thm:main1}, we will follow the proof presented by Mann \cite{MannAutomatic2} to the extent possible.
The main tool we lose is the Fragmentation Lemma as presented in \cite[Proposition~2.3]{MannAutomatic1} (see Theorem~\ref{thm:fragmentation}); however, Lemma~\ref{prop:fragmentation2} will suffice.
Another superficial difference to note is that we will dispense with using an auxillary metric on \( M \) and \( \Homeo(M) \) and instead work directly with the compact-open topology.

\begin{proof}[Proof of Theorem~\ref{thm:main1}]
If \( M \) is of finite type, then the theorem follows directly from Lemma~\ref{lem:compact_support} by setting \( \Sigma = M \), so we may assume that \( M \) is of infinite type. 
Let \( W \) be a countably syndetic subset of \( \Homeo(M) \). 
By Lemma~\ref{lem:dense}, \( W^2 \) is dense in some open neighborhood \( U_0 \) of the identity in \( \Homeo(M) \).
As a neighborhood of the identity, \( U_0 \) contains a set of the form \( U(K_1, V_1) \cap \cdots \cap U(K_r, V_r) \), where \( K_i \subset V_i \),  \( K_i \) is compact, and \( V_i \) is open and precompact for \( i \in \{1, \ldots, r\} \).
Fix a finite-type subsurface \( F \) of \( M \) such that the closure of \( V_1 \cup \cdots \cup V_r \) is contained in the interior of \( F \).
Using Lemma~\ref{lem:def-tame}, we can enlarge \( F \) so that each component of \( M \ssm F^\mathrm{o} \) is a non-compact dividing Freudenthal subsurface.
Note that every homeomorphism with support in the complement of \( F \) fixes each of the \( K_i \) and hence is in \( U_0 \). 

\begin{figure}
\centering
\includegraphics{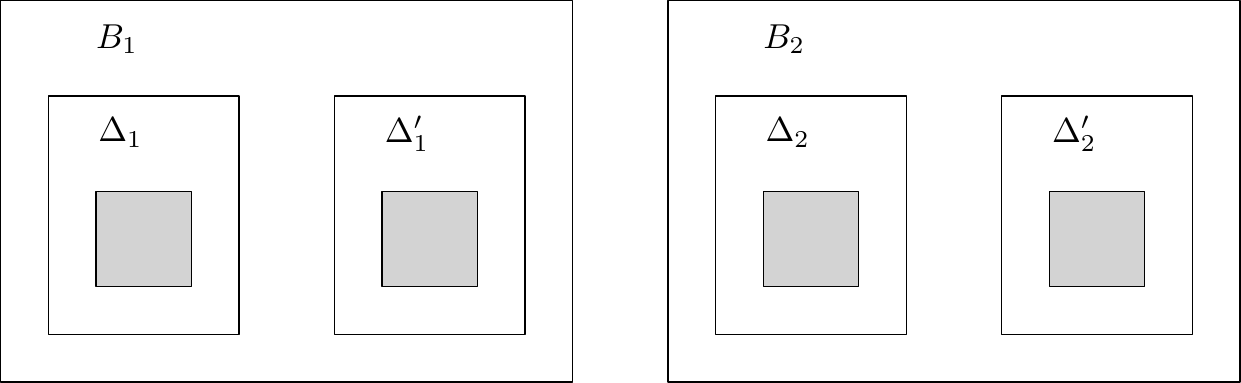}
\caption{A schematic of the subsurface \( N \), in the proof of Theorem~\ref{thm:main1}, realized as the complement of the shaded squares in the manifold \( M = \br^2 \). }
\label{fig:N}
\end{figure}

Let \( B_1, \ldots, B_k \) denote the components of \( M \ssm F^\mathrm{o} \).
For each \( i \in \{1, \ldots, k\} \), choose two disjoint stable Freudenthal subsurfaces \( \Delta_i , \Delta_i' \subset B_i^\mathrm{o} \) each of which is homeomorphic to \( B_i \)  and such that 
\( \widehat \Delta_i \cup \widehat \Delta_i' = \widehat B_i \). 
For each \( i \in \{1, \ldots, k\} \), let \( R_i \) and \( R_i' \) be open annular neighborhoods of \( \partial \Delta_i \) and \( \partial \Delta_i' \), respectively, such that \( \overline R_i \) and \( \overline R_i' \) are disjoint subsurfaces of \( M \), each of which is also disjoint from \( \partial B_i \). 
Let 
\[
N = F \cup \left[\bigcup_{i=1}^k(B_i \ssm (\Delta_i \cup \Delta_i'))\right] \cup \left[\bigcup_{j=1}^k (R_j \cup R_j')\right]
\]
(see Figure~\ref{fig:N}).

Now, let \( U_1 \) be the open neighborhood of the identity in \( \Homeo(M) \) obtained by setting \( \Sigma = \overline N \) in Lemma~\ref{lem:compact_support}, and let  \( U_2 \) be the open neighborhood of the identity obtained by applying Lemma~\ref{prop:fragmentation2} with the open cover \( \{N, \Delta_1^\mathrm{o}, \ldots, \Delta_k^\mathrm{o}, (\Delta_1')^\mathrm{o}, \ldots, (\Delta_k')^\mathrm{o}\} \).
Set
\(
U = U_1 \cap  U_2 .
\)
We now claim that \( U \subset W^{50} \). 

Fix \( f \in U \). 
Then, since \( f \in U_2 \), we have
\[
f = g_0\prod_{i=1}^k (g_ig_i'),
\] 
where \( \supp(g_0) \subset N, \supp(g_i) \subset \Delta_i^\mathrm{o} \), and \( \supp(g_i') \subset (\Delta_i')^\mathrm{o} \). 
It follows that \( (g_0)|_{K_j} = f|K_j \) for all \( j \in \{1, \ldots, r\} \), and hence \( g_0 \in U_1 \). 
Therefore, \( g_0 \in W^{36} \). 

Let \( \Delta = \bigcup \Delta_i \) and \( \Delta' = \bigcup \Delta_i' \), and let \( g = \prod_{i=1}^k g_i \) and \( g' = \prod_{i=1}^k g_i' \), so that \( \supp(g) \subset \Delta^\mathrm{o} \) and \( \supp(g') \subset (\Delta')^\mathrm{o} \). 
Then, \( f = g_0gg' \), and we  claim that \( g, g' \in W^{12} \), and hence \( U \subset W^{50} \). 
We will prove the claim for \( g \) (the case for \( g' \) is identical). 
 
Let \( \mathcal A \) be defined as follows: \( A \in \mathcal A \) if and only if \( A = A_1 \cup \cdots \cup A_k \) with \( A_i \subset \Delta_i^\mathrm{o} \) a stable Freudenthal subsurface satisfying \( \mathcal S(A_i) \cap \mathcal S(\Delta_i) \neq \varnothing \).
Next, choose a stable Freudenthal subsurface  \( D_i \) contained in \( \Delta_i^\mathrm{o} \) such that \( \mathcal S(D_i) \) is a proper subset of \( \mathcal S(\Delta_i) \). 
By Lemma~\ref{lem:translation}, there exists a convergent \( D_i \)-translation \( \vp_i \) supported in \( \Delta_i \). 
Again using Lemma~\ref{lem:translation} and the fact that \( \mathcal S(D_i) \) is perfect, we have that, for each \( n \in \bn \), the set \( \bigcup_{i=1}^k \vp_i^n(D_i) \) contains a piece-wise convergent collection of pairwise-disjoint sets \( \{A_j\}_{j\in\bn} \) contained in \( \mathcal A \).
Therefore, \( \mathcal A \) satisfies the conditions of Lemma~\ref{lem:8p}, and hence there exists \( A \in \mathcal A \) such that the commutator of any two homeomorphisms with support in \( A \) is contained in \( W^8 \).

\begin{figure}
\centering
\includegraphics[scale=0.75]{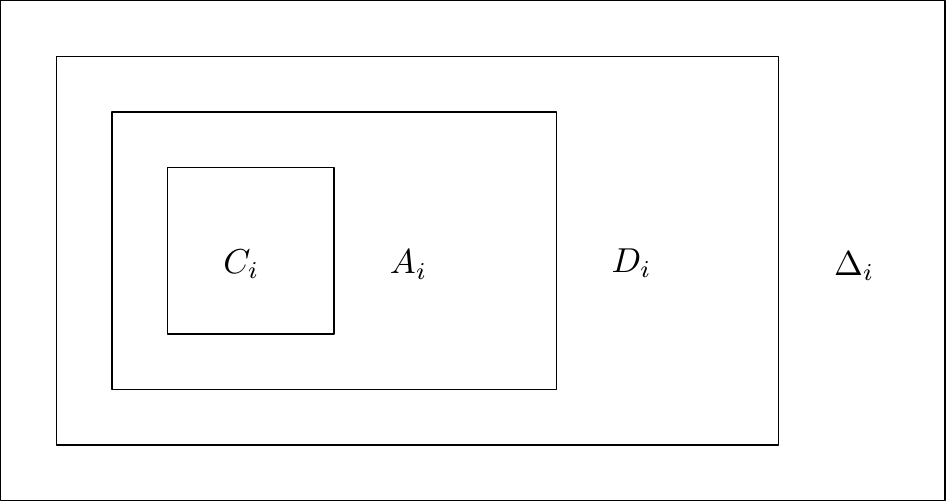}
\caption{The containment relations of the nested collection of stable Freudenthal subsurfaces.}
\end{figure}

For each \( i \in \{1,\ldots, k\} \), let \( A_i = A \cap \Delta_i \), so that \( A_i \subset D_i \). 
Now, let \( C_i \subset A_i^\mathrm{o} \) be a stable Freudenthal subsurface such that \( \mathcal S(C_i) \) is a proper subset of \( \mathcal S (A_i) \). 
Using that \( \mathcal S(C_i) \) is perfect and Lemma~\ref{lem:homogeneous}, there exists \( \sigma_i \in H(M) \) supported in \( B_i \) such that \( \sigma_i(\Delta_i) \subset C_i \) and \( \mathcal S(C_i) \ssm \sigma_i(\Delta_i) \neq \varnothing \). 
Let \( \sigma = \sigma_1 \circ \cdots \circ \sigma_k \). 
Observe that \( \sigma \in U_0 \), and as \( W^2 \) is dense in \( U_0 \), we may assume \( \sigma \in W^2 \).

Since \( \sigma \circ g_i \circ \sigma^{-1} \) is supported in \( C_i \), we can apply Anderson's Method (Proposition~\ref{prop:main2}), to express \( g_i \) as \( [\alpha_i, \beta_i] \) with \( \supp(\alpha_i), \supp(\beta_i) \subset A_i \). 
Now since the \( A_i \) are pairwise disjoint, we have that
\[
\sigma g \sigma^{-1} =  \prod_{i=1}^k \left(\sigma g_i \sigma^{-1} \right)  = \prod_{i=1}^k [\alpha_i,\beta_i] = \left[\prod_{i=1}^k \alpha_i, \prod_{i=1}^k \beta_i\right] = [\alpha, \beta],
\]
where \( \alpha = \prod_{i=1}^k \alpha_i \) and \( \beta = \prod_{i=1}^k \beta_i \) are supported in \( A \).
Hence,  \( \sigma\circ g \circ \sigma^{-1} \in W^8 \), and therefore \( g \in W^{12} \).
\end{proof} 

\begin{proof}[Sketch of proof of Lemma~\ref{lem:compact_support}]
The sketch presented here is an adaptation of \cite[Section~3]{MannAutomatic1} to work in an infinite-type setting. 
By Lemma~\ref{lem:dense}, there exists an open neighborhood \( U_0 \) of the identity in \( \Homeo(M) \) such that \( W^2 \) is dense in \( U_0 \). 
By shrinking \( U_0 \), we may assume that \( U_0 = U(K_1^0, V_1^0) \cap \cdots U(K_q^0, V_q^0) \) with \( K_i^0 \subset V_i^0 \), \( K_i^0 \) compact, and \( V_i^0 \) open and precompact.
Choose an open subset \( F \) of \( M \) such that \( \overline F \) is a finite-type subsurface of \( M \), each component of \( \partial F \) is separating, \( \Sigma \subset F \),  and \( \overline V_i^0 \subset F \) for each \( i \in\{1, \ldots, q\} \).

We can realize \( F \) as  \( N \ssm X \) for some compact 2-manifold \( N \) and some finite subset \( X \) of \( N \). 
Moreover, by Proposition~\ref{prop:compactification}, \( \Homeo(N,X) = \{ f \in \Homeo(N) : f(X)=X \} \) is topologically isomorphic to \( \Homeo(F) \). 
Now, triangulate \( N \) such that each element of \( X \) is a vertex of some triangle and such that if a triangle intersects \( K_i^0 \) nontrivially then it is contained in \( V_i^0 \).
Cover \( N \) by a finite collection of open disks \( D_1, \ldots, D_n \) such that if \( D_j \) intersects \( K_i^0 \) nontrivially then \( D_j \subset V_i^0 \) and such that the dual graph of the cover (i.e., the graph whose vertices correspond to each open set in the cover and where an edge denotes nontrivial intersection) is 3-colorable.  
Such a cover can be found by first covering the vertices of the triangulation by disjoint disks,  then covering the edges in an alternating fashion, and finishing with a single disk for each 2-cell, see Figure~\ref{fig:triangle}. 
Moreover, we may assume that if \( x \in X \) is disjoint from the closure of \( \Sigma \) in \( N \), then the disk containing \( x \) is disjoint from \( \Sigma \). 

\begin{figure}
\centering
\includegraphics[scale=2]{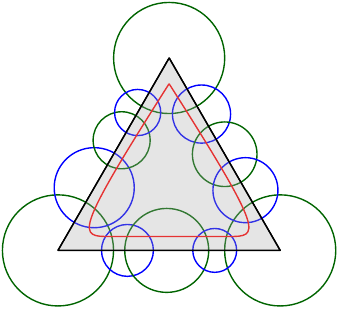}
\caption{A 3-colored open cover of a triangle by disks.}
\label{fig:triangle}
\end{figure}

Now, choose open disks \( B_1, \ldots, B_n \) such that \( \overline B_i \subset D_i \) and \( \{B_1, \ldots, B_n\} \) remains an open cover of \( M \).
Note that this new cover is also 3-colorable. 
By the Fragmentation Lemma, Theorem~\ref{thm:fragmentation}, or arguing as in Proposition~\ref{prop:fragmentation_original}, there exists an open neighborhood \( U_1' \) of the identity in \( \Homeo(N) \) such that every element of \( U_1' \) can be factored as \( g_1\circ g_2 \circ\cdots\circ g_n \) with \( \supp(g_i) \subset B_i \). 
Since \( |g_i \cap X| \leq 1 \), we can conclude that if \( g \in U_1' \cap \Homeo(N,X) \), then \( g= g_1 \circ \cdots g_n \) with \( \supp(g_i) \subset B_i \) and \( g_i \in \Homeo(N,X) \). 

Let \( U_1'' \) be the open neighborhood of the identity in \( \Homeo(F) \) corresponding to the open set \( U_1' \cap \Homeo(N,X) \) in \( \Homeo(N,X) \).
By shrinking \( U_1'' \), we may write \( U_1'' = U(K_1, V_1) \cap \cdots U(K_r, V_r) \) with \( K_i \subset V_i \), \( K_i \) compact, and \( V_i \) open and precompact. 
Viewing \( F \subset M \), each \( K_i \)  remains compact in \( M \) and each \( V_i \) remains open and precompact in \( M \).
We can then view \( U_1 = U(K_1, V_1) \cap \cdots U(K_r, V_r) \) as an open neighborhood of the identity in \( \Homeo(M) \). 

Set \( U = U_0 \cap U_1 \). 
Let \( g \in U \) such that \( \supp(g) \subset \Sigma \subset F \). 
Replacing \( D_i \) and \( B_i \) with \( D_i \ssm X \) and \( B_i \ssm X \), respectively, and viewing these sets in \( M \), we may write \( g = g_1 \circ \cdots \circ g_n \) with \( \supp(g_i) \subset B_i \). 
Note that by the choice of cover, we can ignore that fact that the closure of \( F \) has boundary, as \( g \) restricts to the identity near \( \partial F \).

There are two cases to consider: (1) \( B_i \) is an open disk and (2) \( B_i \) is an annulus. 
For the remainder of the argument, we will assume that only the first case occurs and direct the interested reader to \cite[Section~4]{MannAutomatic2} for the details of the second case. 
Using the 3-colorability of the cover, we can write \( g = f_1 \circ f_2 \circ f_3 \) where each \( f_k \) is a product of homeomorphisms supported on a disjoint union of the \( B_i \)'s.
Proceeding identically as in the last three paragraphs of the proof of Theorem~\ref{thm:main1} and using the fact that every homeomorphism of \( M \) supported in \( D_i \) is an element of \( U_0 \), we see that \( f_k \in W^{12} \).
Therefore, \( g \in W^{36} \) as desired. 
\end{proof}


\section{Commutator subgroups}

In this section we prove a slight generalization of a result in \cite{FieldStable}.
We note that the class that appears in the statement of the theorem appears differently than in \cite{FieldStable}, but they are in fact equivalent as shown in \cite[Lemma~A.1]{FieldStable}.
The proof uses similar constructions that appears in the proof of Theorem~\ref{thm:main1}.

\begin{Thm}
\label{thm:commutator}
Let \( M \) be a 2-manifold obtained by taking the connected sum of a finite-type 2-manifold and finitely many perfectly self-similar 2-manifolds.
Then, \( [\Homeo(M),\Homeo(M)] \) is open in \( \Homeo(M) \).
\end{Thm}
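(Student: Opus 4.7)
The plan is to find an open neighborhood of the identity in $\Homeo(M)$ contained in $[\Homeo(M), \Homeo(M)]$. First, I would write $M = F \# P_1 \# \cdots \# P_k$ with $F$ finite-type and each $P_i$ perfectly self-similar, and choose a finite-type subsurface $N_0 \subset M$ whose complementary components close up to Freudenthal subsurfaces $\Omega_1, \ldots, \Omega_k$ of $M$, with each $\Omega_i$ abstractly homeomorphic to $P_i$ minus an open disk.

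The key construction is to subdivide each $\Omega_i$ into two disjoint dividing Freudenthal subsurfaces of $M$. Since $P_i$ is perfectly self-similar, $\widehat{\Omega_i}$ is stable with $\mathcal{S}(\widehat{\Omega_i})$ perfect, so Proposition~\ref{prop:disjoint-union} yields a decomposition $\widehat{\Omega_i} = \sU_i^{(1)} \sqcup \sU_i^{(2)}$ into two clopen subsets each homeomorphic to $\widehat{\Omega_i}$ (preserving non-planar and non-orientable ends). By Lemma~\ref{lem:basis} applied to $\Omega_i$, I realize $\sU_i^{(j)} = \widehat{\Delta_i^{(j)}}$ for disjoint Freudenthal subsurfaces $\Delta_i^{(1)}, \Delta_i^{(2)} \subset \Omega_i^\circ$; by choosing the separating curves $\partial \Delta_i^{(j)}$ so that each $\Delta_i^{(j)}$ matches the genus and orientability class of $\Omega_i$, Theorem~\ref{thm:classification} gives $\Delta_i^{(j)} \cong \Omega_i$. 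Each $\Delta_i^{(j)}$ is then dividing in $M$: it is stable with $\mathcal{S}$ perfect, and $\Delta_i^{(3-j)} \subset M \ssm \Delta_i^{(j)}$ supplies a homeomorphic copy. The leftover region $R_i = \overline{\Omega_i \ssm (\Delta_i^{(1)} \cup \Delta_i^{(2)})}$ has empty end space in $M$, and hence is compact.

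The core of the argument will be to apply Proposition~\ref{prop:fragmentation2} to the open cover $\{N,\, (\Delta_i^{(j)})^\circ : 1 \leq i \leq k,\ j \in \{1,2\}\}$ of $M$, where $N = N_0 \cup \bigcup_i R_i$ together with thin open annular collars of each $\partial \Delta_i^{(j)}$ pushed into $\Delta_i^{(j)}$. Then $\overline{N}$ is finite-type, the $\Delta_i^{(j)}$ are pairwise disjoint Freudenthal subsurfaces, and each $(\Delta_i^{(j)})^\circ \cap N$ is an open annular neighborhood of a curve lying in $N$. Proposition~\ref{prop:fragmentation2} therefore yields an open neighborhood $U$ of the identity such that every $f \in U$ factors as
\[
f = g_0 \cdot \prod_{i=1}^{k} \prod_{j=1}^{2} g_i^{(j)},
\]
with $g_0 \in [\Homeo(M), \Homeo(M)]$ and $\supp(g_i^{(j)}) \subset (\Delta_i^{(j)})^\circ$. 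Since each $\Delta_i^{(j)}$ is dividing in $M$, Corollary~\ref{cor:commutator1} expresses $g_i^{(j)}$ as a single commutator of two elements of $\Gamma_{\Delta_i^{(j)}} \subset \Homeo(M)$. Hence $f \in [\Homeo(M), \Homeo(M)]$ for every $f \in U$, proving openness of the commutator subgroup.

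The main technical obstacle is the subdivision step: one must simultaneously ensure that $\Delta_i^{(1)}$ and $\Delta_i^{(2)}$ are disjoint, contained in $\Omega_i^\circ$, abstractly homeomorphic to $\Omega_i$ (hence dividing in $M$), partitioning the ends of $\Omega_i$, and with $R_i$ compact. Perfect self-similarity of $P_i$ is precisely what furnishes the splitting of the end space via Proposition~\ref{prop:disjoint-union}, and the resulting distribution of non-planar ends across both pieces is what guarantees $R_i$ has no non-planar ends, hence finite genus.
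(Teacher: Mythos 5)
Your proposal is correct and follows essentially the same route as the paper: decompose the complement of a finite-type subsurface into perfectly self-similar pieces, split each such piece into two disjoint dividing Freudenthal subsurfaces exhausting its ends (leaving a compact leftover), apply Proposition~\ref{prop:fragmentation2} to the resulting cover, and express each off-core factor as a commutator via Anderson's method. The only cosmetic difference is that you invoke Corollary~\ref{cor:commutator1} where the paper cites Proposition~\ref{prop:main2} directly, but the former is an immediate consequence of the latter and the argument is otherwise the same.
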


\begin{proof}
Let us write \( M = F \# M_1 \# \cdots \# M_k \), where \( F \) is a finite-type 2-manifold and \( M_j \) is a perfectly self-similar 2-manifold for \( j \in \{1, \ldots, k\} \). 
We can then find a finite-type subsurface \( \Sigma \) of \( M \) such that \( \Sigma \) is homeomorphic to \( F \) with \( k \) pairwise-disjoint open disks removed and such that \( M \ssm \Sigma \) has \( k \) components whose closures, labelled \( B_1, \ldots, B_k \), are pairwise disjoint and  such that \( B_j \) is homeomorphic to \( M_j \) with an open disk removed. 

We can then find disjoint dividing Freudenthal subsurfaces \( \Delta_j \) and \( \Delta_j' \) contained in \( B_j^\mathrm{o} \)  such that the complement of \( (\Delta_j \cup \Delta_j')^\mathrm{o} \) in \( B_j \) is compact. 
For each \( j \in \{1, \ldots, k\} \), fix an open annular neighborhood \( R_j \) of \( \partial \Delta_j \) and \( R_j' \) of \( \partial \Delta_j' \), each of which is disjoint from \( \partial B_j \), and such that \( \overline R_j \) and \( \overline R_j' \) are subsurfaces.
Let
\[
N = \Sigma \cup \left[\bigcup_{i=1}^k(B_i \ssm (\Delta_i \cup \Delta_i'))\right] \cup \left[\bigcup_{j=1}^k (R_j \cup R_j')\right].
\]

Let \( U \) be the open neighborhood of the identity in \( \Homeo(M) \) obtained by applying Lemma~\ref{prop:fragmentation2} with the open cover \( \{N, \Delta_1^\mathrm{o}, \ldots, \Delta_k^\mathrm{o}, (\Delta_1')^\mathrm{o}, \ldots, (\Delta_k')^\mathrm{o}\} \). 
Given \( g \in U \), we can write \( g = g_0 \left(\prod_{i=1}^k g_ig_i'\right) \) with \( g_0 \in [\Homeo(M), \Homeo(M)] \), \( \supp(g_i) \subset \Delta_i^\mathrm{o} \), and \( \supp(g_i') \subset (\Delta_i')^\mathrm{o} \) for all \( i \in \{1, \ldots, k\} \). 
By Proposition~\ref{prop:main2}, \( g_i \) and \( g_i' \) can be expressed as commutators of homeomorphisms supported in \( B_i \) for each \( i \in \{1, \ldots, k\} \).
Therefore, \( U \subset [\Homeo(M), \Homeo(M)] \), and hence \( [\Homeo(M), \Homeo(M)] \) is open as it can be written as union of translates of \( U \). 
\end{proof}

\clearpage
\bibliographystyle{amsplain}
\bibliography{references}

\end{document}